\numberwithin{equation}{section}
\newtheorem{theo}{Theorem}[section]
\newtheorem{lemma}[theo]{Lemma}
\newtheorem{corol}[theo]{Corollary}
\newtheorem{prop}[theo]{Proposition}
\theoremstyle{definition}
\newtheorem{remark}[theo]{Remark}
\newtheorem{example}[theo]{Example}
\newtheorem{note}[theo]{Note}
\newcommand{\inter}{\operatorname{int}}
\newcommand{\ainter}{\operatorname{aint}}
\newcommand{\aclos}{\operatorname{acl}}
\begin{document}
\title{Normal cones and Thompson metric}
\author{S. Cobza\c s and M.-D. Rus}
\address{S. Cobza\c s, Babes-Bolyai University, Department of Mathematics, Cluj-Napoca, Romania}
\email{scobzas@math.ubbcluj.ro}
\address{M.-D. Rus, Technical University of Cluj-Napoca, Cluj-Napoca, Romania}
\email{rus.mircea@math.utcluj.ro}
\date{\today}
\begin{abstract}
 The aim of this paper is to  study the basic properties of the  Thompson metric $d_T$ in the general case of a linear spaces $X$ ordered by a cone $K$.   We show that $d_T$ has monotonicity properties which make it compatible with the linear structure. We also prove several convexity properties of  $d_T,$  and some results
concerning the topology of  $d_T,$  including a brief study of the  $d_T$-convergence of monotone sequences. It is shown  most   results are true without    any assumption of  Archimedean-type property for $K$.
 One considers various completeness properties and one studies the relations between them.  Since  $d_T$  is defined in the context of a generic ordered linear space, with no need of an underlying
topological structure, one expects to express its completeness in terms of properties of the ordering,
with respect to the linear structure. This is done in this paper and, to the best of our knowledge, this has not been done yet.
  Thompson  metric $d_T$ and order-unit (semi)norms $|\cdot|_u$ are strongly related
and share important properties, as both are defined in terms of the ordered linear structure. Although
 $d_T$  and $|\cdot|_u$   are only topological (and not metrical) equivalent on $K_u$, we   prove that the completeness is a common feature.   One proves  the completeness of the Thompson metric on a sequentially complete normal cone in a locally convex space. At the end of the paper, it is shown that, in the case of a Banach space, the normality of the cone is also necessary for the completeness of the Thompson metric.
\end{abstract}
\maketitle

\tableofcontents

\section{Introduction}
In his study on the foundation of geometry, Hilbert \cite{hilbert1895} introduced a metric in the  Euclidean space, known  now  as the Hilbert projective metric. Birkhoff \cite{birkhoff57} realized that fixed point techniques for nonexpansive mappings with respect to the Hilbert projective metric can be applied to prove the Perron-Frobenius theorem on the existence of eigenvalues and eigenvectors of non-negative square matrices and  of solutions to some integral equations with positive kernel. The result on the Perron-Frobenius theorem was also found independently by Samelson \cite{samelson57}. Birkhoff's proof relied on some results from differential projective geometry, but Bushell \cite{bushell73a,bushell73b} gave new and more accessible proofs to these results by using the Hilbert metric defined on cones, revitalizing the interest for this topic (for a recent account on Birkhoff's definition of the  Hilbert metric see the paper \cite{lem-nuss13},  and for Perron-Frobenius theory, the book \cite{Lem-Nuss12}). A related partial metric on cones in Banach spaces was devised by Thompson \cite{thomp63}, who proved the completeness of this metric (under the hypothesis of the normality of the cone),  as well as some fixed point theorems for contractions with respect to it. It turned out that both these metrics are very useful in a variety of problems in various domains of mathematics and in applications to economy and other fields. Among these applications we mention those  to fixed points for mixed monotone  operators and other classes of operators on ordered vector spaces, see \cite{chen93,chen99,chen01a,chen02,RusM10,rusm11}. Nussbaum alone, or in collaboration with other mathematicians,  studied the limit   sets of iterates of  nonexpansive mappings with respect to Hilbert or Thompson metrics, the  analog of Denjoy-Wolff theorem for iterates of holomorphic mappings, see  \cite{lins07,lins-nuss06,lins-nuss08,Nuss88,Nuss89,nuss07}. These metrics have also interesting applications to operator theory---to  means for positive operators, \cite{iz-nakamura09,nakamura09}, and to isometries in spaces of operators on Hilbert space and in $C^*$-algebras, see \cite{hat-molnar14,molnar09}, and the papers quoted therein.

Good presentations of Hilbert and Thompson metrics are given in the monographs \cite{Hy-Is-Ras,Nuss88,Nuss89}, and in the   papers \cite{akian-nuss13,lem-nuss13,nuss-walsh04}. A more general approach---Hilbert and Thompson metrics on convex sets--- is proposed in the papers  \cite{bauer-bear69} and \cite{bear-weis67}.

The aim of this paper, essentially based on the Ph. D. thesis \cite{RusM10}, is to  study the basic properties of the  Thompson metric $d_T$ in the general case of a vector space $X$ ordered by a cone $K$. Since  $d_T$  is defined in the context of a generic ordered vector space, with no need of an underlying
topological structure, one expects to express its completeness in terms of properties of the ordering,
with respect to the linear structure. This is done in the present paper and, to the best of our knowledge, this has not been done yet.

 For the convenience of the reader,   we survey in Section \ref{S.ord-vs} some notions and  notations   which will be used throughout and list,
without proofs, the most important results that are  assumed to be known. Since there is no a standard terminology in the theory of ordered vector spaces, the main purpose of this
preliminary section is to provide a central point of reference for a unitary treatment of all of the topics
in the rest of the paper. As possible we have given exact references to textbooks were these results can be found, \cite{Alipr,BreckW,Deimling,Guo,Jameson, Peressini,Schaef}.

Section \ref{S.Thompson-metric} is devoted to the definition and basic properties of the Thompson metric.
We show that $d_T$ has monotonicity properties which make it compatible with the linear structure. We also prove several convexity properties of  $d_T.$  We close this section with some results
concerning the topology of  $d_T,$  including a brief study of the  $d_T$-convergence of monotone sequences.
Note that most of these   results are true without the assumption of an Archimedean--type
property for $K$.

 We show that the Thompson metric $d_T$ and order--unit (semi)norms $|\cdot|_u$ are strongly related
and share important properties (e.g., they are topologically equivalent), as both are defined in terms of the ordered linear structure.

Section \ref{S.Completeness} is devoted to  various kinds of completeness. It is shown that,
although $d_T$  and $|\cdot|_u$   are only topologically, and  not metrically, equivalent, we are able to prove that the completeness is a common feature. Also we study a special notion, called self--completeness,  we prove that several completeness  conditions  are equivalent and that the Thompson metric on a sequentially complete normal cone $K$ in a locally convex space $X$ is complete.

  In the last subsection  we show that in the case when $X$ is a Banach space,   the completeness of $K$ with respect to $d_T$ is also necessary for the normality of $K$. This is obtained  as a consequence of a more general result (Theorem \ref{t2.complete-T-metric-B-sp}) on the equivalence of several conditions to the completeness of $K$ with respect to $d_T$ .

\section{Cones in vector spaces}\label{S.ord-vs}

\subsection{Ordered vector spaces}

A \emph{preorder} on a set $Z$ is a reflexive and transitive relation $\,\le\,$  on $Z$. If the relation $\,\le\,$ is also antisymmetric then it is called an \emph{order} on $Z$. If any two elements in $Z$ are comparable (i.e. at least one of the relations $x\le y$ or $y\le x$ holds), then one says that the order (or the preorder) $\le $ is \emph{total}.

  Since in what follows we  shall be concerned only with real vector spaces,  by a ``vector space" we will understand always a ``real vector space".

A nonempty subset $W$ of a     vector space $X$ is called a \emph{wedge} if
\begin{equation}\label{def.cone}
\begin{aligned}
 &{\rm (C1)}\qquad  W+W\subset W, \\
 &{\rm (C2)}\qquad tW\subset W, \quad\mbox{for all } \; t\ge 0.
\end{aligned}\end{equation}

The wedge $W$ induces a preorder on $X$ given by
\begin{equation}\label{def.order}
x\le_W y \iff y-x\in W.\end{equation}

The notation $\,x<_W y\,$ means that $\,x\le_W y\,$ and $\,x\ne y.$  If there is no danger of confusion the subscripts   will be omitted.

This preorder is compatible with the linear structure of $X$, that is
\begin{equation}\label{eq.lin.ord}
\begin{aligned}
 &{\rm (i)}\qquad x\le y\Longrightarrow x+z\le y+z,\quad\mbox{and}  \\
 &{\rm (ii)}\qquad  x\le y\Longrightarrow tx\le ty,
\end{aligned}
\end{equation}
for all $x,y,z\in X$ and $t\in\mathbb{R}_+,$ where $\mathbb{R}_+=\{t\in \mathbb{R} : t\ge 0\}.$  This means that one can add inequalities
$$
x\le y\;\mbox{and}\; x'\le y'\Longrightarrow x+x'\le y+y',$$
and multiply by positive numbers
$$
x\le y \iff tx\le ty,$$
for all $x,x',y,y'\in X$ and $t>0.$  The multiplication by negative numbers reverses the inequalities
$$
\forall t<0,\;\; (x\le y \iff tx\ge ty).$$

As a consequence of this equivalence, a subset $A$ of $X$ is bounded above iff the set $-A$ is bounded below. Also
$$
\inf A =-\sup(-A)\quad \sup A=-\inf(-A).$$

It is obvious that  the preorder $\,\le_W$ is total iff $\,X=W\cup(-W).$

\begin{remark}
It follows that in definitions (or hypotheses) we can ask only one order condition. For instance, if we ask that every bounded above subset of an ordered vector space has a supremum, then every bounded below subset will have an infimum, and consequently, every bounded subset has an infimum and a supremum. Similarly, if a linear preorder is upward directed, then it is automatically downward directed, too.
\end{remark}

Obviously, the wedge $W$ agrees with the set of positive elements in $X$,
$$
W=X_+:=\{x\in X : 0\le_W x\}.$$

Conversely, if $\,\le\,$ is a preorder on a vector space $X$ satisfying \eqref{eq.lin.ord} (such a preorder is called a \emph{linear preorder}), then $W=X_+$ is a wedge in $X$ and $\,\le\, = \,\le_W.$ Consequently, there is a perfect correspondence between linear preorders on a vector space $X$ and wedges in $X$ and so any property in an ordered vector space can be formulated in terms of  the preorder or of the wedge.

A  \emph{cone} $K$ is a wedge satisfying the condition
\begin{equation}\label{def.pcone}
{\rm (C3)}\qquad K\cap(-K)=\{0\}.\end{equation}

This is equivalent to the fact that the   induced preorder is antisymmetric,
\begin{equation}\label{eq.antiref}
x\le y\;\mbox{and}\; y\le x \Longrightarrow y=x,
\end{equation}
for all $x,y\in X,$ that  it is an order on $X$.

A pair $(X,K),$ where $K$ is a cone (or a wedge) in a vector space $X,$ is called an \emph{ordered} (resp. \emph{preodered})  \emph{vector space}.

An \emph{order interval} in an ordered vector space $(X,K)$ is a (possibly empty) set of the form
$$
[x;y]_o=\{z\in X : x\le z\le y\}=(x+K)\cap(y-K),$$
for some $x,y\in X.$ It is clear that an order interval $[x;y]_o$ is a convex subset of $X$ and that
$$
[x;y]_o=x+[0;y-x]_o. $$

The notation $[x;y]$ will be reserved to algebraic intervals: $[x;y]:=\{(1-t) x+t y : t\in [0;1]\}.$

A subset $A$ of $X$ is called \emph{order--convex} (or \emph{full}, or \emph{saturated}) if $[x;y]_o\subset A$ for every $x,y\in A.$ Since the intersection of an arbitrary  family of order--convex sets is order--convex, we can define the order--convex hull $[A]$ of a nonempty subset $A$ of $X$ as the intersection of all order--convex subsets of $X$ containing $A$, i.e. the smallest order--convex subset of $X$ containing $A$. It follows that
\begin{equation}\label{eq.o-cv-hull}
[A]=\bigcup\{[x;y]_o : x,y\in A\} =(A+K)\cap (A-K).
\end{equation}

Obviously, $A$ is order--convex iff $A=[A].$

\begin{remark} It is obvious that if $x\le y,$ then $[x;y]\subset [x;y]_o,$ but the reverse inclusion could not hold as the following example shows.
    Taking $X=\mathbb{R}^2$ with the coordinate order and $x=(0,0),\, y=(1,1),$ then $[x;y]_o$ equals the (full) square with the vertices  $(0,0), \,(0,1),\, (1,1)$ and $(0,1),$ so it is larger  than the segment $[x;y].$
  \end{remark}

We mention also the following result.
\begin{prop}[\cite{BreckW}]\label{p.char-total-o-cv}
Let $(X,\le)$ be an ordered vector space.
Then the order $\,\le\,$ is total iff every order--convex subset of $X$ is convex.
\end{prop}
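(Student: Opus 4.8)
The plan is to prove the two implications separately, handling the forward direction as a direct consequence of the inclusion noted in the Remark preceding the statement, and the converse by exhibiting an explicit order--convex set that fails to be convex.

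First I would show that totality forces every order--convex set to be convex. Let $A$ be order--convex and fix $x,y\in A$. Since $\le$ is total, $x$ and $y$ are comparable, say $x\le y$ (the other case is symmetric, because the algebraic segment $[x;y]$ is unchanged under interchanging its endpoints). As recorded in the Remark, $x\le y$ yields $[x;y]\subset[x;y]_o$: for $t\in[0;1]$ one has $(1-t)x+ty-x=t(y-x)\ge 0$ and $y-((1-t)x+ty)=(1-t)(y-x)\ge 0$, so $x\le (1-t)x+ty\le y$. Since $A$ is order--convex, $[x;y]_o\subset A$, hence $[x;y]\subset A$. As $x,y$ were arbitrary, $A$ is convex.

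For the converse I would argue by contraposition: assuming $\le$ is not total, I construct an order--convex set that is not convex. Non--totality supplies two incomparable elements $x,y\in X$, i.e. neither $x\le y$ nor $y\le x$ holds; in particular $x\ne y$. Consider the two--point set $A=\{x,y\}$. The crucial observation is that the mixed order intervals are empty: if some $z$ belonged to $[x;y]_o$, then $x\le z\le y$ would give $x\le y$ by transitivity, contradicting incomparability, so $[x;y]_o=\emptyset$, and likewise $[y;x]_o=\emptyset$. The only nonempty order intervals with endpoints in $A$ are thus $[x;x]_o=\{x\}$ and $[y;y]_o=\{y\}$, both contained in $A$, so $A$ is order--convex. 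Yet $A$ is plainly not convex, since $\tfrac12(x+y)\in[x;y]$ while $\tfrac12(x+y)\notin\{x,y\}$. This contradicts the hypothesis that every order--convex set is convex, which establishes the contrapositive.

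The only point requiring genuine care — and really the crux of the converse — is the emptiness of the mixed order intervals for incomparable endpoints; once this is noticed, the two--point set is order--convex essentially for free, and its non--convexity is immediate. The remainder is a routine unwinding of the definitions of $[x;y]$, $[x;y]_o$, and order--convexity, together with the inclusion $[x;y]\subset[x;y]_o$ already highlighted in the Remark, so I do not anticipate any substantial obstacle.
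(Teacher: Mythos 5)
Your proof is correct: the forward direction follows exactly the inclusion $[x;y]\subset[x;y]_o$ noted in the Remark, and your contrapositive argument via the two-point set $\{x,y\}$ with incomparable elements (whose mixed order intervals are empty and whose diagonal intervals are singletons, by antisymmetry) is sound and complete. Note that the paper itself states this proposition without proof, citing Breckner \cite{BreckW}, so there is no in-paper argument to compare against; your proof is the natural one and fills that gap correctly.
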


We shall consider now some algebraic-topological notions concerning the subsets of a vector space $X$. Let $A$ be a subset of $X$.

The subset $A$ is called:
\vspace{2mm}

\textbullet\; \emph{balanced} if $\lambda A\subset A$ for every $|\lambda|\le 1;$

\textbullet\; \emph{symmetric} if $-A=A;$

\textbullet\; \emph{absolutely convex} if it is convex and balanced;

\textbullet\; \emph{absorbing} if $\{t>0 : x\in tA\}\ne \emptyset$ for every $x\in X.$
\vspace{2mm}

The following equivalences are immediate:
\begin{align*}
A \;\mbox{is absolutely convex}\; \iff&\; \forall  a,b \in A,\;\forall  \alpha,\beta \in \mathbb{R},\; \mbox{with}\; |\alpha|+|\beta|=1,\quad \alpha a+\beta b\in A\\
\iff&\; \forall  a,b \in A,\; \forall  \alpha,\beta \in \mathbb{R},\;\mbox{with}\; |\alpha|+|\beta|\le 1,\quad \alpha a+\beta b\in A.
  \end{align*}

 Notice that a balanced set is symmetric and a symmetric convex set containing 0 is balanced.

  The following properties are easily seen.
  \begin{prop}\label{p.full-hull}
    Let $X$ be an ordered vector space and $A\subset X$ nonempty. Then
\begin{enumerate}
  \item[{\rm 1.}] If $A$ is convex, then $[A]$ is also convex.
\item[{\rm 2.}] If $A$ is balanced, then $[A]$ is also balanced.
\item[{\rm 3.}] If $A$ is absolutely convex, then $[A]$ is also absolutely convex.
\end{enumerate}    \end{prop}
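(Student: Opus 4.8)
The plan is to work throughout with the explicit description of the order--convex hull furnished by \eqref{eq.o-cv-hull}, namely $[A]=(A+K)\cap(A-K)$, which reduces all three assertions to routine manipulations of Minkowski sums involving the wedge $K$. The only structural facts I would invoke are that $K$ and $-K$ are convex (each is closed under addition and under multiplication by nonnegative scalars, being a wedge), that $tK=K$ for every $t>0$, and that the Minkowski sum of two convex sets is convex. For part 1, then, I would observe that if $A$ is convex, so are $A+K$ and $A-K=A+(-K)$, being sums of convex sets; since $[A]$ is their intersection, it is convex.

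For part 2 the cleanest route is to first record two auxiliary properties of the hull operator. First, monotonicity: $S\subset T$ implies $[S]\subset[T]$, which is immediate from the characterization of $[A]$ as the smallest order--convex set containing $A$, or directly from \eqref{eq.o-cv-hull}. Second, the homogeneity relation $\lambda[A]=[\lambda A]$ for every $\lambda\neq0$. For $\lambda>0$ this follows from $\lambda\big((A+K)\cap(A-K)\big)=(\lambda A+\lambda K)\cap(\lambda A-\lambda K)=(\lambda A+K)\cap(\lambda A-K)$, using that scaling by a nonzero factor commutes with intersection and that $\lambda K=K$. For $\lambda<0$ the same computation together with the negation identity $-[A]=(-A-K)\cap(-A+K)=[-A]$ yields $\lambda[A]=[\lambda A]$ again.

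Granting these, the balancedness of $[A]$ follows quickly. If $A$ is balanced then for $0<|\lambda|\le1$ one has $\lambda A\subset A$, whence $\lambda[A]=[\lambda A]\subset[A]$ by monotonicity; the remaining case $\lambda=0$ is handled by noting that a nonempty balanced set contains $0$ (take the scalar $0$ in the defining inclusion), so that $0\in A\subset[A]$ and hence $0\cdot[A]=\{0\}\subset[A]$. Thus $\lambda[A]\subset[A]$ for all $|\lambda|\le1$, i.e. $[A]$ is balanced. Part 3 is then immediate: absolute convexity means convex and balanced, so parts 1 and 2 give that $[A]$ is both, hence absolutely convex.

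None of the computations is deep, so there is no serious obstacle; the only point requiring genuine care is the bookkeeping of signs in part 2, where the negative--scalar case of the homogeneity relation rests on the identity $-[A]=[-A]$, i.e. on the fact that the hull operator commutes with the symmetry $x\mapsto-x$. An alternative that sidesteps homogeneity altogether would be to verify $\lambda z\in(A+K)\cap(A-K)$ directly for $z\in[A]$ and $|\lambda|\le1$, splitting into the cases $\lambda\ge0$ and $\lambda<0$ and using the balancedness of $A$ together with $tK=K$ and $t(-K)=-K$ for $t>0$; this is more hands--on but equally short, and I would keep it in reserve in case the homogeneity identity is felt to need more justification than is convenient here.
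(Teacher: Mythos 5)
Your proof is correct. The paper itself offers no argument for this proposition --- it is introduced only with the remark that ``the following properties are easily seen'' --- so there is no official proof to compare against; your verification via the representation $[A]=(A+K)\cap(A-K)$ from \eqref{eq.o-cv-hull}, together with monotonicity of the hull operator, the identities $\lambda K=K$ ($\lambda>0$) and $-[A]=[-A]$, and the observation that a nonempty balanced set contains $0$, is a complete and correct filling-in of exactly the routine details the authors omitted.
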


One says that $a$ is an \emph{algebraic interior} point of $A$ if
\begin{equation}\label{def.a-int-pt}
\forall x\in X,\; \exists \delta >0,\; \mbox{such that}\; \forall \lambda\in [-\delta;\delta],\; a+\lambda x\in A.
\end{equation}

The (possibly empty) set of all interior points of $A$, denoted by $\ainter(A),$   is called the \emph{algebraic interior} (or the \emph{core}) of the set $A$. It is obvious that if $X$ is a TVS, then  $\inter(A)\subset\ainter(A).$
where  $\inter(A)$ denotes the interior of the set $A$. In finite dimension we have equality, but  the inclusion can be proper if $X$ is infinite dimensional.

A cone $K$ is called \emph{solid} if $\inter(K)\ne\emptyset.$

\begin{remark}
  Z\u alinescu \cite{Zali} uses the notation $A^i$ for the algebraic interior and $^iA$ for the algebraic interior of $A$ with respect to its affine hull (called the relative algebraic interior). In his definition of an algebraic interior point of $A$ one asks that the conclusion of \eqref{def.a-int-pt}   holds only for $\lambda \in [0;\delta],$ a condition equivalent to \eqref{def.a-int-pt}.
\end{remark}

The set $A$ is called \emph{lineally open} (or \emph{algebraically open}) if $A=\ainter (A),$ and \emph{lineally closed} if $X\setminus A$
is lineally open. This is equivalent to the fact that any line in $X$ meets $A$ in a closed subset of the line.
The smallest lineally closed set containing a set $A$ is called the \emph{lineal} (or \emph{algebraic}) closure of $A$ and it is denoted by $\aclos (A).$ Again, if $X$ is a TVS, then any closed subset of $X$ is lineally closed. The subset $A$ is called \emph{lineally bounded} if the intersection with any line $D$ in $X$ is a bounded subset of $D$.
\begin{remark}
 The terms ``lineally open", ``lineally closed", etc, are taken from Jameson \cite{Jameson}.
\end{remark}
\begin{remark}\label{re.a-int}
Similar to the topological case one can prove that
\begin{equation}\label{eq1.re.a-int}
a\in \ainter(A),\; b\in A \;\mbox{and}\;  \lambda\in [0;1)\;\Longrightarrow \; (1-\lambda) a+\lambda b\in \ainter(A).\end{equation}

Consequently, if $A$ is convex then $\ainter(A)$ is also convex.

If $K$ is a cone, then $\ainter(K)\cup\{0\}$ is also a cone and
\begin{equation}\label{eq2.re.a-int}
\ainter(K)+K\subset \ainter(K).\end{equation}
  \end{remark}

  We justify only the second assertion. Let $x\in \ainter(K)$ and $y\in K.$
  Then
  $$
  x+y=2\left(\frac12 x+\frac12 y\right)\in \ainter(K).$$

Now we shall consider some further properties of linear orders. A linear order    $\,\le\,$  on a vector space $X$ is called:

\vspace{2mm}
\textbullet\quad \emph{Archimedean} if for every $x,y\in X,$
\begin{equation}\label{def.Arch}
(\forall n\in\mathbb{N},\; nx\le y)\;\Longrightarrow \; x\le 0;\end{equation}

\textbullet  \quad \emph{almost Archimedean} if for every $x,y\in X,$
\begin{equation}\label{def.a-Arch}
(\forall n\in\mathbb{N},\; -y\le nx\le y)\;\Longrightarrow \; x = 0;\end{equation}

The following four propositions  are taken from Breckner \cite{BreckW} and Jameson \cite{Jameson}. In all of them   $X$ will be a   vector space and $\,\le\,$ a linear preorder on $X$ given by the wedge $W=X_+.$

\begin{prop}\label{p.char.Arch}
  The following are equivalent.
\begin{enumerate}
 \item[{\rm 1.}]  The preorder $\le $ is Archimedean.
 \item[{\rm 2.}] The wedge $W$ is lineally closed.
 \item[{\rm 3.}] For every $x\in X$ and $\,y\in W,\; 0=\inf\{n^{-1}x : n\in\mathbb{N}\}.$
 \item[{\rm 4.}] For every $\,x\in X$ and $y\in W,\; nx\le y,\,$ for all $\, n\in\mathbb{N},\,$ implies $\,x\le 0.$
 \item[{\rm 5.}] For every $A\subset \mathbb{R}$ and $x,y\in X,\;\; y\le \lambda x$ for all $\lambda\in A,$ implies $y\le \mu x,$ where $\mu=\inf A.$
 \end{enumerate}
 \end{prop}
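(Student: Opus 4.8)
The plan is to establish the five statements as equivalent through the single cyclic chain $1\Rightarrow2\Rightarrow5\Rightarrow4\Rightarrow3\Rightarrow1$. The conceptual core is the equivalence of $1$ and $2$, i.e.\ that the order is Archimedean exactly when the wedge $W$ is lineally closed; once this algebraic--topological bridge is available, the remaining conditions follow by elementary rearrangements, since $4$ is nothing but the Archimedean condition with the witness $y$ restricted to $W$, and $3$ merely rephrases $4$ through infima.

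I would first prove $2\Rightarrow1$. Assuming $W$ lineally closed and $nx\le y$ for every $n\in\mathbb{N}$, i.e.\ $y-nx\in W$, I divide by $n$ (positive scalars preserve $W$) to obtain $n^{-1}y-x\in W$ for all $n$. These points lie on the line $t\mapsto ty-x$ and tend to $-x$ as $t=n^{-1}\to0$; since $W$ meets every line in a lineally closed set, the parameter set $\{t:ty-x\in W\}$ is closed, contains $n^{-1}\to0$, hence contains $0$, so $-x\in W$, that is $x\le0$. For the converse $1\Rightarrow2$ I would show that each line $D=\{a+tv:t\in\mathbb{R}\}$ meets $W$ in a closed set, equivalently that $S=\{t\in\mathbb{R}:a+tv\in W\}$ is closed. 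As $W$ is convex, $S$ is an interval, so it suffices to place its finite endpoints in $S$. If $\alpha=\inf S$ is finite and $\beta\in S$ with $\beta>\alpha$, then $(\alpha,\beta]\subseteq S$; writing $z=a+\alpha v$ this reads $z+sv\in W$ for all $s\in(0,\beta-\alpha]$, in particular $z+n^{-1}v\in W$ for all large $n$. Rearranging, $n(-z)\le v$ for all such $n$, and the Archimedean property (after the harmless index shift absorbing the finitely many small $n$) gives $-z\le0$, i.e.\ $z\in W$ and $\alpha\in S$; the supremum endpoint is obtained by repeating the argument with $v$ replaced by $-v$.

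To close the loop I would argue as follows. For $2\Rightarrow5$, suppose $y\le\lambda x$ for all $\lambda\in A$ and set $\mu=\inf A$; choosing $\lambda_k\in A$ with $\lambda_k\to\mu$, the points $\lambda_k x-y$ lie in $W$ on the line $t\mapsto tx-y$, whose intersection with $W$ is lineally closed, so the limit $\mu x-y$ lies in $W$, that is $y\le\mu x$ (the case $x=0$ being immediate). For $5\Rightarrow4$ I apply $5$ to $A=\{n^{-1}:n\in\mathbb{N}\}$ with the roles of the two vectors interchanged: from $nx\le y$ one gets $x\le n^{-1}y$ for all $n$, and since $\inf A=0$ condition $5$ yields $x\le0$. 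The equivalence $4\Leftrightarrow3$ is a restatement: for $y\in W$ the element $0$ is always a lower bound of $\{n^{-1}y:n\in\mathbb{N}\}$, and the lower bounds $z$ of this set are precisely those with $nz\le y$ for all $n$, so the statement $\inf\{n^{-1}y:n\in\mathbb{N}\}=0$ holds exactly when every such $z$ satisfies $z\le0$, which is $4$. Finally $3\Rightarrow1$ follows by a reduction to a positive witness: if $nx\le y$ for all $n$ with $y\in X$ arbitrary, then $w:=y-x\in W$ (take $n=1$) and $nx\le w$ for every $n$, whence $x\le n^{-1}w$ and condition $3$ forces $x\le0$. I expect the genuine obstacle to be $1\Rightarrow2$, where the purely order-theoretic Archimedean hypothesis must be turned into the closedness of the one-dimensional slices of $W$; the points requiring care are the sign of $v$ at the two endpoints and the fact that near an endpoint only a tail $n\ge n_0$ of the integers is immediately available.
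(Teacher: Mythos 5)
Your proof is correct, but there is nothing in the paper to compare it against: Proposition \ref{p.char.Arch} is one of the four propositions the authors state without proof, as ``taken from Breckner \cite{BreckW} and Jameson \cite{Jameson}''. Judged on its own merits, your cyclic chain $1\Rightarrow2\Rightarrow5\Rightarrow4\Rightarrow3\Rightarrow1$ (plus the redundant but harmless direct argument for $2\Rightarrow1$) is complete and is essentially the standard textbook argument. The only substantive step is indeed $1\Rightarrow2$, and you handle it properly: convexity of the wedge makes the slice $S=\{t\in\mathbb{R}: a+tv\in W\}$ an interval, so closedness reduces to the finite endpoints, and your ``index shift'' is legitimate because $(n+n_0)(-z)\le v$ for all $n\in\mathbb{N}$ gives $n(-z)\le v+n_0z$ for all $n\in\mathbb{N}$, to which the Archimedean hypothesis applies with the single witness $v+n_0z$.

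Two trivial points you glossed over, neither of which is a genuine gap. First, in $2\Rightarrow1$ the degenerate case $y=0$ must be split off (the map $t\mapsto ty-x$ is then not a line), but it is settled by taking $n=1$; you did note the analogous case $x=0$ in $2\Rightarrow5$. Second, you silently --- and correctly --- read condition 3 as $0=\inf\{n^{-1}y : n\in\mathbb{N}\}$ for $y\in W$: the ``$x$'' in the paper's formula is evidently a typo, since as literally written the claim fails already in $X=\mathbb{R}$ with the usual order (take $x=-1$, whose set of multiples $\{-1/n\}$ has infimum $-1$). Likewise, condition 5 implicitly requires $A$ nonempty and bounded below, which you tacitly assume when choosing $\lambda_k\to\mu$. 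With these readings made explicit, your argument stands as a correct, self-contained proof of the proposition.
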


\begin{prop}\label{p.char.a-Arch}
The following are equivalent.
\begin{enumerate}
  \item[{\rm 1.}] The preorder is almost-Archimedean.
  \item[{\rm 2.}]  $\aclos(W)\,$  is a wedge.
  \item[{\rm 3.}] Every order interval in $X$ is lineally bounded.
\end{enumerate}\end{prop}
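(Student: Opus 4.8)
The plan is to prove the cycle by first settling the equivalence of the two order-geometric statements (1) and (3), and then linking the lineal-closure statement (2) to them. The useful reformulation of almost-Archimedeanness is to rewrite the hypothesis $-y\le nx\le y$ as $nx\in[-y;y]_o$: condition (1) then says exactly that the only $x$ whose whole sequence $(nx)_{n\in\mathbb N}$ can be trapped inside a single order interval is $x=0$. Since every $nx$ lies on the line $\mathbb{R}x$ through the origin, this already ties (1) to the lineal boundedness of symmetric order intervals.

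First I would prove (1)$\Leftrightarrow$(3). For (3)$\Rightarrow$(1), contrapositively, if (1) fails there are $v\ne 0$ and $y$ with $nv\in[-y;y]_o$ for all $n$; then $\{nv:n\in\mathbb N\}\subset[-y;y]_o\cap\mathbb{R}v$ is an unbounded subset of the line $\mathbb{R}v$ (as $v\ne 0$), so the order interval $[-y;y]_o$ is not lineally bounded and (3) fails. For (1)$\Rightarrow$(3), suppose some $[a;b]_o$ is not lineally bounded and pick a line $D$ meeting it in an unbounded set. Because order intervals are convex, $[a;b]_o\cap D$ is an unbounded convex subset of $D$, hence contains a ray $\{q+tv:t\ge 0\}$ with $v\ne 0$ and $q\in[a;b]_o$. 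Taking $t=n$ gives $a\le q+nv\le b$, i.e. $a-q\le nv\le b-q$ for all $n$; with $p=b-q\in W$, $p'=q-a\in W$ and $y=p+p'\in W$ one checks $-y\le -p'\le nv\le p\le y$, so (1) fails at $x=v\ne 0$. This settles (1)$\Leftrightarrow$(3).

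Next comes the link with (2). Two preliminary observations organize the work: $\aclos(W)$ always satisfies (C2) — since $x\mapsto tx$ carries lines to lines homeomorphically and $tW=W$ for $t>0$, one has $t\aclos(W)=\aclos(tW)=\aclos(W)$, and $0\in\aclos(W)$ — and a convex set obeying (C2) automatically obeys (C1), because $a,b\in V$ force $\tfrac12 a+\tfrac12 b\in V$ by convexity and then $a+b=2(\tfrac12 a+\tfrac12 b)\in V$ by (C2). Consequently the real content of (2) is the convexity of $\aclos(W)$ together with its properness $\aclos(W)\cap(-\aclos(W))=\{0\}$; in finite dimensions $\aclos(W)=\clos(W)$ is automatically convex, so there the whole weight falls on properness. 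The easy half is (2)$\Rightarrow$(1): if (1) fails, take $v\ne 0$ and $y$ with $-y\le nv\le y$; dividing $nv+y\in W$ and $y-nv\in W$ by $n$ (via (C2)) yields $v+\tfrac1n y\in W$ and $-v+\tfrac1n y\in W$, and letting $n\to\infty$ along the line through $v$ in direction $y$ gives $v,-v\in\aclos(W)$ with $v\ne 0$, so $\aclos(W)$ is not proper and (2) fails.

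The main obstacle is the forward implication (1)$\Rightarrow$(2) (equivalently (3)$\Rightarrow$(2)), namely that almost-Archimedeanness forces $\aclos(W)$ to be a cone. The properness part ought to reverse the computation just made: from $v,-v\in\aclos(W)$ one wants to reconstruct an order interval containing all $nv$ and then invoke (1) to conclude $v=0$. The difficulty is that $\aclos(W)$ is defined as the \emph{smallest} lineally closed superset, which a priori demands iterating the one-step line-closure (possibly transfinitely), so the clean approximations $v+\tfrac1n y\in W$ are not handed to us for free, and the same iteration must be shown not to destroy convexity. My plan is therefore to prove that under (1) the one-step line-closure of $W$ is itself already lineally closed and convex, so that $\aclos(W)$ is attained in a single step and both the properness reversal and the wedge property go through. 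Verifying that the lineal closure stays convex is the genuinely infinite-dimensional point, and I expect it to be the crux of the proposition.
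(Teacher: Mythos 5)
Your equivalence (1)$\iff$(3) is proved correctly and completely, and the computation you give for (2)$\Rightarrow$(1) is sound as far as it goes; note that the paper itself offers no proof of this proposition (it is quoted from Breckner and Jameson), so correctness is the only benchmark. The first problem is interpretive: what your computation produces is an element $v\ne 0$ with $v,-v\in\aclos(W)$, which contradicts \emph{properness} of $\aclos(W)$ --- the axiom (C3) that distinguishes a cone from a wedge --- but does not contradict (2) as written, since the paper's wedges need only satisfy (C1) and (C2). Your claim that the ``real content'' of (2) includes properness therefore silently replaces ``wedge'' by ``cone''. That replacement is in fact forced, because read literally (2) holds for \emph{every} wedge $W$: for a convex set $A$, the union of $A$ with its line-limit points is $\ell(A)=\{x\in X : \exists\, y\in X \text{ with } x+ty\in A \text{ for } 0<t\le 1\}$, and this operation preserves convexity and both wedge axioms (if $y_i$ is a witness for $x_i$, then $y_1+y_2$ is a witness for $x_1+x_2$, and $\lambda y_1+(1-\lambda)y_2$ for $\lambda x_1+(1-\lambda)x_2$); since $\aclos(W)$ is obtained from $W$ by iterating $\ell$, transfinitely if necessary, with increasing unions at limit stages, it is always a convex wedge. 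In particular, the point you defer as the crux --- whether the lineal closure stays convex --- has an easy affirmative answer requiring no hypothesis on the order, and (2) carries information only if ``wedge'' is read as ``cone''.

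The genuine gap is (1)$\Rightarrow$(2): you only outline a plan, and the plan cannot be completed. It rests on the claim that under (1) a single closure step suffices, i.e.\ that $\ell(W)$ is lineally closed; this is false. Let $X$ have basis $\{p,q\}\cup\{z_n : n\in\mathbb{N}\}$ and let $W$ be the wedge generated by $\{p+\tfrac1n q+s z_n : n\in\mathbb{N},\ s>0\}$. Every coordinate functional $f$ is nonnegative on $W$, so $-y\le nx\le y$ for all $n$ gives $f(y)\pm nf(x)\ge 0$ for all $n$, forcing $x=0$: this $W$ is almost Archimedean. Each $p+\tfrac1n q$ lies in $\ell(W)$ (witness $z_n$), so by convexity of $\ell(W)$ one has $p+tq\in\ell(W)$ for all $0<t\le 1$, whence $p\in\ell(\ell(W))$. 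But $p\notin\ell(W)$: if $p+tu\in W$ for all $0<t\le 1$ with one fixed $u$, then for every index $m$ outside the finite support of $u$ the $z_m$-coordinate forces the coefficient of every generator $p+\tfrac1m q+sz_m$ in a positive decomposition of $p+tu$ to vanish, so only indices $m\le M$ occur, and comparing $q$- and $p$-coordinates yields $tu_q\ge\tfrac1M(1+tu_p)$, absurd as $t\to 0^+$. Worse, adjoining the mirrored generators $\{-p+\tfrac1n \tilde q+s\tilde z_n\}$ in new coordinates $\tilde q,\tilde z_n$ keeps the wedge almost Archimedean (the coordinate argument reduces any violation to $v=\lambda p$, and the estimate above still applies, since the mirrored generators only lower the $p$-coordinate), while now both $p$ and $-p$ lie in $\aclos(W)$; so with the paper's definition of $\aclos$ as the \emph{smallest lineally closed superset}, the implication (1)$\Rightarrow$``$\aclos(W)$ is a cone'' is itself false, and no completion of your plan exists. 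The proposition is true precisely when the lineal closure is understood as the one-step closure $\ell(W)$ (the classical algebraic closure of a convex set, which is what the cited sources mean), and then both directions are short: your argument already shows that if (1) fails then $\pm v\in\ell(W)$ for some $v\ne 0$; conversely, if $v,-v\in\ell(W)$ with witnesses $y_1,y_2$, then $nv+y_1\in W$ and $y_2-nv\in W$ for all $n$, hence $y:=y_1+y_2$ satisfies $y-nv=(y_1+nv)+(y_2-2nv)\in W$ and $y+nv=(y_2-nv)+(y_1+2nv)\in W$, i.e.\ $-y\le nv\le y$ for all $n$, and (1) forces $v=0$.
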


A wedge $W$ in $X$ is called \emph{generating} if $X=W-W.$ The preorder $\,\le\,$ is called \emph{upward} (\emph{downward}) \emph{directed} if for every $x,y\in X$ there is $z\in X$ such that $x\le z,\, y\le z$ (respectively,   $x\ge z,\, y\ge z$). If the order is linear, then these two notions are equivalent, so we can say simply that $\,\le\,$ is directed.

\begin{prop}\label{p.char.gener-cone}
The following are equivalent.
\begin{enumerate}
  \item[{\rm 1.}] The wedge $W$ is generating.
  \item[{\rm 2.}]  The order $\,\le\,$ is directed.
  \item[{\rm 3.}]  $\forall x\in X,\; \exists y\in W,\; x\le y. $
\end{enumerate}\end{prop}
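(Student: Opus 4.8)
The plan is to show the three conditions are equivalent by establishing the cycle $(1)\Rightarrow(3)\Rightarrow(2)\Rightarrow(1)$. The entire argument will rest only on the defining properties \eqref{def.cone} of the wedge together with the translation-invariance of the order recorded in \eqref{eq.lin.ord}, so that no Archimedean-type or topological hypothesis is involved; the content is purely the interplay between the set-theoretic description $X=W-W$ and the order-theoretic reformulations, via the fact that membership $w\in W$ is by definition the relation $0\le w$.

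First I would observe that $(1)$ and $(3)$ are really two readings of the same algebraic identity. If $W$ is generating, then an arbitrary $x\in X$ may be written as $x=w_1-w_2$ with $w_1,w_2\in W$; since $w_1-x=w_2\in W$ we get $x\le w_1$, which is $(3)$ with $y=w_1$. Conversely, assuming $(3)$, for any $x\in X$ choose $y\in W$ with $x\le y$; then $y-x\in W$, and the decomposition $x=y-(y-x)$ exhibits $x$ as a difference of two elements of $W$, giving $X=W-W$. (This already yields $(1)\Leftrightarrow(3)$ directly, but I retain it inside the cycle for transparency.)

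For $(3)\Rightarrow(2)$, given $x,y\in X$ I would apply $(3)$ to the element $x-y$ to obtain $w\in W$ with $x-y\le w$. Adding $y$ to both sides and using the monotonicity property (i) of \eqref{eq.lin.ord} gives $x\le y+w$; since $w\ge 0$ we also have $y\le y+w$, so $z:=y+w$ is a common upper bound of $x$ and $y$, i.e. the order is directed. Finally, for $(2)\Rightarrow(1)$ I would apply directedness to the pair $x$ and $0$: there is $z\in X$ with $0\le z$ and $x\le z$. The first inequality says $z\in W$ and the second says $z-x\in W$, whence $x=z-(z-x)\in W-W$, which is $(1)$.

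I do not expect a genuine obstacle here: the result is elementary and each implication reduces to a one-line manipulation. The only points to watch are to invoke the translation-invariance (i) of \eqref{eq.lin.ord} exactly when passing from $x-y\le w$ to $x\le y+w$, and to keep in mind throughout that $w\in W$ is synonymous with $0\le w$, since this equivalence is precisely what bridges the set-theoretic statement $(1)$ and the order-theoretic statements $(2)$ and $(3)$.
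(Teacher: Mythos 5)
Your proof is correct and complete: each of the four implications you give ($(1)\Rightarrow(3)$, $(3)\Rightarrow(1)$, $(3)\Rightarrow(2)$, $(2)\Rightarrow(1)$) follows correctly from the wedge axioms \eqref{def.cone} and the compatibility conditions \eqref{eq.lin.ord}, and your use of upward directedness alone is justified since the paper notes that for a linear preorder upward and downward directedness are equivalent. Note that the paper itself gives no proof of this proposition --- it is one of the results listed without proof and attributed to Breckner \cite{BreckW} and Jameson \cite{Jameson} --- so there is no in-paper argument to compare against; your argument is the standard elementary one and fills that gap correctly.
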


 Let $(X,W)$ be a preordered vector space. An element $u\in W$ is called an \emph{order unit} if  the set $[-u;u]_o$ is absorbing. It is obvious that an order unit must be different of 0 (provided $X\ne\{0\}$).

\begin{prop}\label{p.char-o-unit}
Let   $u\in W\setminus \{0\}.$ The following are equivalent.
\begin{enumerate}
\item[{\rm 1.}] The element $u$ is an order unit.
\item[{\rm 2.}] The order interval $[0;u]_o$ is absorbing.
\item[{\rm 3.}] The element $u$ belongs to the algebraic interior of $W.$
\item[{\rm 4.}]   $[\mathbb{R} u]=X.$
\end{enumerate}\end{prop}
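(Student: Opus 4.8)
The plan is to route all four statements through the single concrete condition
\[
(\star)\qquad \forall\, x\in X,\ \exists\, t>0:\quad -tu\le x\le tu ,
\]
which asserts that every vector is squeezed between two symmetric multiples of $u$. Once each of 1--4 is shown equivalent to $(\star)$, the proposition follows, and each equivalence is just a short translation between a set-theoretic description and the inequality $(\star)$. I expect the only genuinely delicate point to be the one-sided statement~2.

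For $\mbox{1}\Leftrightarrow(\star)$ I would simply scale the defining interval: since $t[-u;u]_o=[-tu;tu]_o$ for $t>0$, the membership $x\in t[-u;u]_o$ reads exactly $-tu\le x\le tu$, so ``$[-u;u]_o$ absorbing'' unwinds verbatim to $(\star)$. For $\mbox{3}\Leftrightarrow(\star)$ I would apply the definition \eqref{def.a-int-pt} of the algebraic interior at $a=u$, $A=W$: because $W$ is convex and $\lambda\mapsto u+\lambda x$ is affine, the requirement ``$u+\lambda x\in W$ for all $\lambda\in[-\delta;\delta]$'' collapses to its two endpoints $u\pm\delta x\in W$, i.e. $-\delta^{-1}u\le x\le\delta^{-1}u$; putting $t=\delta^{-1}$ gives $(\star)$, and the computation reverses. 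For $\mbox{4}\Leftrightarrow(\star)$ I would use the explicit form \eqref{eq.o-cv-hull} of the order-convex hull, namely $[\mathbb{R}u]=(\mathbb{R}u+W)\cap(\mathbb{R}u-W)$, so that $x\in[\mathbb{R}u]$ means precisely $su\le x\le s'u$ for some reals $s,s'$. To see $[\mathbb{R}u]=X$ is the same as $(\star)$ I would symmetrize: with $t=\max(|s|,|s'|)$ the coefficients $s+t$ and $t-s'$ are $\ge0$, so $(s+t)u,(t-s')u\in W$ upgrade $su\le x\le s'u$ to $-tu\le x\le tu$; the converse is the trivial choice $s=-t,\ s'=t$.

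The main obstacle is fitting statement~2 into this scheme, and the difficulty is structural: unlike $[-u;u]_o$, the interval $[0;u]_o=W\cap(u-W)$ lies inside $W$, so $t[0;u]_o\subset W$ for every $t>0$ and its absorbency is naturally an assertion about the wedge rather than about all of $X$. The easy half, $\mbox{2}\Rightarrow\mbox{1}$, is immediate from the inclusion $[0;u]_o\subseteq[-u;u]_o$ (valid because $u\in W$ forces $-u\le 0$), since a superset of an absorbing set is absorbing. For the reverse I would exploit the identity $[-u;u]_o=2[0;u]_o-u$ together with the observation that once $(\star)$ holds, positivity is automatic for $x\in W$ (the bound $-tu\le x\le tu$ already reads $0\le x\le tu$), so $[0;u]_o$ absorbs $W$; the passage from $W$ to $X$ then rests on the fact that an order unit makes $W$ generating, via $x=tu-(tu-x)\in W-W$ and Proposition \ref{p.char.gener-cone}. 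Pinning down the precise sense in which $[0;u]_o$ is ``absorbing'' and verifying that this generating step closes the loop is, I expect, the only part requiring real care.
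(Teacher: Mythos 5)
Your reduction of statements 1, 3 and 4 to the single condition $(\star)$ is correct: the scaling identity $t[-u;u]_o=[-tu;tu]_o$, the convexity argument that collapses the algebraic-interior requirement to the two endpoints $u\pm\delta x\in W$, and the reading of $[\mathbb{R}u]=(\mathbb{R}u+W)\cap(\mathbb{R}u-W)$ via \eqref{eq.o-cv-hull} are all sound (in the symmetrization step take, say, $t=\max\{|s|,|s'|\}+1$ so that $t>0$ is guaranteed). Note that the paper gives no proof of Proposition \ref{p.char-o-unit} at all --- it is quoted from \cite{BreckW} and \cite{Jameson} --- so there is no in-paper argument to compare with; your route is the standard one for these three conditions. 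Your easy half $2\Rightarrow 1$ is also correct.

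The genuine problem is exactly the one you sensed at statement 2, and it is worse than ``requiring real care'': under the paper's definition of absorbing ($\{t>0:x\in tA\}\ne\emptyset$ for \emph{every} $x\in X$), the implication $1\Rightarrow 2$ is simply false, so no completion of your argument can exist. Indeed $t[0;u]_o=[0;tu]_o$ for $t>0$, so $x\in t[0;u]_o$ forces $x\ge 0$; hence ``$[0;u]_o$ is absorbing'' already implies $W=X$. Concretely, for $X=\mathbb{R}$, $W=\mathbb{R}_+$, $u=1$, conditions 1, 3, 4 hold, yet no $t>0$ puts $-1$ inside $t[0;1]_o=[0;t]$. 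Your proposed repair --- show that $[0;u]_o$ absorbs $W$, then pass from $W$ to $X$ by generatingness via $x=tu-(tu-x)$ --- cannot close the loop: that decomposition exhibits $x$ as a difference of two elements of $W$, which is not membership of $x$ in any $s[0;u]_o$; membership would require $x\ge 0$. What condition 2 is meant to say (and how it appears in the cited sources) is the one-sided statement: for every $x\in X$ there exists $t>0$ with $x\le tu$, i.e.\ the set $u-W$ is absorbing. With that reading the equivalence with $(\star)$ is immediate: $(\star)$ trivially gives the one-sided bound, and conversely, choosing $t_1,t_2>0$ with $x\le t_1u$ and $-x\le t_2u$, the number $t=\max\{t_1,t_2\}$ satisfies $-tu\le x\le tu$. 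So the correct resolution is not a cleverer absorption argument but a correction (or reinterpretation) of condition 2; as literally stated, it is strictly stronger than condition 1, and your instinct that the ``precise sense'' of absorbing was the crux is exactly right.
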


\subsection{Completeness in ordered vector spaces}
An ordered vector space $X$ is called a vector lattice if  any two elements $x,y\in X$ have a supremum, denoted by
$x\vee y.$ It follows that they have also an infimum, denoted by $x\wedge y,$ and these properties extend to any finite subset of $X$.  The ordered vector space $X$ is called   \emph{order complete} (or \emph{Dedekind complete}) if every bounded from above   subset of $X$ has a supremum and    \emph{order} $\sigma$-\emph{complete} (or \emph{Dedekind $\sigma$-complete}) if  every bounded from above countable   subset of $X$ has a supremum. The fact that every bounded above subset of $X$ has a supremum is equivalent to the fact that every bounded below subset of $X$ has an infimum. Indeed, if $A$ is bounded above, then $\sup\{y : y\; \mbox{is a lower bound for }\; A\} =\inf A.$

\begin{remark}
  An ordered vector space $X$ is order complete iff for each pair $A,B$ of nonempty subsets of $X$ such that $A\le B$ there exists $z\in X$ with $A\le z\le B$.

  This similarity with ``Dedekind cuts" in $\mathbb{R}$ justifies the term \emph{Dedekind complete} used by some authors.
  Here $A\le B $ means that $\, a\le b \,$ for all $(a,b)\in A\times B.$
\end{remark}

The following results gives characterizations of these properties in terms of directed subsets.
\begin{prop}[\cite{Alipr}, Theorem 1.20]\label{p.Dedekind}
Let $X$ be a  vector lattice.
 \begin{enumerate}
 \item[{\rm 1.}] The space $X$ is order complete iff    every   upward directed bounded above subset of  $X$ has a supremum (equivalently, if every bounded above monotone net has a supremum).
 \item[{\rm 2.}] The space $X$ is Dedekind $\sigma$-complete iff every  upward directed bounded above countable subset of $X$ has a supremum (equivalently, if every bounded above monotone sequence has a supremum).
 \end{enumerate}\end{prop}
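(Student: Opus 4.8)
The plan is to prove each part by establishing the nontrivial implication through the lattice structure, and then to verify the parenthetical reformulations in terms of nets (respectively sequences) via a correspondence between directed sets and monotone families. For part~1, one implication is immediate: if $X$ is order complete then every bounded above subset --- in particular every upward directed one --- has a supremum. For the converse, assume every upward directed bounded above subset admits a supremum, and let $A\subseteq X$ be an arbitrary nonempty bounded above set. Since $X$ is a vector lattice every finite subset of $A$ has a supremum, so I would introduce the set
$$
\widehat{A}=\left\{ a_1\vee\cdots\vee a_n : n\in\mathbb{N},\ a_1,\dots,a_n\in A \right\}
$$
of all finite suprema of elements of $A$. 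The key observation is that $\widehat{A}$ is upward directed: given $a_1\vee\cdots\vee a_n$ and $b_1\vee\cdots\vee b_m$ in $\widehat{A}$, the element $a_1\vee\cdots\vee a_n\vee b_1\vee\cdots\vee b_m$ again lies in $\widehat{A}$ and dominates both. Moreover any upper bound $u$ of $A$ is an upper bound of $\widehat{A}$, since $a_i\le u$ forces $a_1\vee\cdots\vee a_n\le u$; hence $\widehat{A}$ is bounded above and $A$ and $\widehat{A}$ have exactly the same upper bounds. By hypothesis $s:=\sup\widehat{A}$ exists, and since the two sets share their upper bounds, $s=\sup A$. Thus every bounded above subset has a supremum and $X$ is order complete.

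Next I would establish the equivalence with the net formulation. If every bounded above monotone net has a supremum, then for an upward directed bounded above set $D$ I regard $D$ itself as the index set, ordered by $\le$; the identity family $(x)_{x\in D}$ is then a bounded above monotone net whose range is $D$, so its supremum is precisely $\sup D$. Conversely, given a bounded above monotone net $(x_\alpha)_{\alpha\in I}$, its range $\{x_\alpha : \alpha\in I\}$ is upward directed (for $\alpha,\beta\in I$ choose $\gamma\ge\alpha,\beta$, so $x_\gamma\ge x_\alpha,x_\beta$) and bounded above, and the supremum of the range equals that of the net. Hence the condition on upward directed sets and the condition on monotone nets are equivalent.

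Part~2 follows the same pattern, tracking cardinality. When $A$ is countable the set $\widehat{A}$ of finite suprema is again countable, so the argument above, restricted to countable bounded above sets, yields Dedekind $\sigma$-completeness. For the sequence reformulation, a bounded above monotone sequence has countable upward directed range; conversely, given a countable upward directed bounded above set $D=\{d_1,d_2,\dots\}$, the sequence $x_n:=d_1\vee\cdots\vee d_n$ is monotone, bounded above, and satisfies $\sup_n x_n=\sup D$, so the two countable conditions coincide.

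The only genuinely substantive step is the reduction via finite suprema: the whole point is that passing from an arbitrary bounded above set to its set $\widehat{A}$ of finite suprema preserves the collection of upper bounds while gaining directedness, and this is exactly where the lattice hypothesis is used. Everything else is a routine translation between directed sets and monotone nets or sequences, so I expect no serious obstacle beyond correctly setting up $\widehat{A}$ and checking that it has the same upper bounds as $A$.
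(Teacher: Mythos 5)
Your proof is correct. Note that the paper itself gives no proof of this proposition --- it is listed among the preliminary facts quoted without proof from the literature (here \cite{Alipr}, Theorem 1.20) --- and your argument (reducing an arbitrary bounded above set $A$ to the upward directed set $\widehat{A}$ of finite suprema, which shares its upper bounds with $A$, plus the routine translation between directed sets and monotone nets/sequences) is exactly the standard argument behind that cited result, so there is nothing to reconcile.
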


\subsection{Ordered topological vector spaces (TVS)}

In the case of an ordered TVS $(X,\tau)$ some connections between order and topology hold. In the following propositions $(X,\tau)$ will be a TVS with a preorder or an  order, $\,\le\,$ generated by a wedge $W,$ or  by a cone $K,$ respectively.
We start by a simple result.
\begin{prop}\label{p1.order-tvs}
A wedge  $W$ is closed iff the   inequalities are preserved by limits, meaning that  for all nets $(x_i : i\in I),\, (y_i : i\in I) $ in $X,$
$$
\forall i\in I,\; x_i\le y_i \;\;\mbox{and}\;\; \lim_ix_i=x,\, \lim_iy_i=y \Longrightarrow \; x\le y.$$
\end{prop}

Other results are contained in the following proposition.

\begin{prop}[\cite{Alipr}, Lemmas 2.3 and 2.4]\label{p2.order-tvs}
  Let $(X,\tau)$  be a TVS ordered by a $\tau$-closed cone $K$. Then
  \begin{enumerate}
\item[{\rm 1.}]   The topology $\tau$ is Hausdorff.
\item[{\rm 2.}]  The cone $K$ is Archimedean.
\item[{\rm 3.}]  The order intervals are $\tau$-closed.
\item[{\rm 4.}]  If $(x_i:i\in I)$ is an increasing net which is $\tau$-convergent to $x\in X$, then $x=\sup_ix_i.$
\item[{\rm 5.}] Conversely, if the topology $\tau$ is Hausdorff, $\;\inter(K)\ne\emptyset$ and $K$ is Archimedean, then $K$ is $\tau$-closed.
\end{enumerate}
\end{prop}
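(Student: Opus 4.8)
The plan is to split the statement into the four ``forward'' assertions 1--4, which I expect to be short consequences of the hypothesis that $K$ is $\tau$-closed, and the converse assertion 5, which is where the genuine content lies.

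For the forward direction I would argue as follows. For assertion 1, since $x\mapsto -x$ is a $\tau$-homeomorphism, $-K$ is $\tau$-closed whenever $K$ is, hence so is $K\cap(-K)=\{0\}$ by \eqref{def.pcone}; a topological vector space whose singleton $\{0\}$ is closed is $T_1$, and hence Hausdorff, which gives 1. For assertion 2, I would recall that in a TVS every $\tau$-closed set is lineally closed, so the wedge $K$ is lineally closed, and then invoke the equivalence $1\Leftrightarrow 2$ in Proposition \ref{p.char.Arch} to conclude that $K$ is Archimedean; alternatively one checks the definition directly, noting that $nx\le y$ for all $n$ gives $\tfrac1n y-x\in K$ with $\tfrac1n y-x\to -x$, so closedness forces $-x\in K$. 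For assertion 3, I would simply write $[x;y]_o=(x+K)\cap(y-K)$ and observe that each factor is $\tau$-closed and that intersections of closed sets are closed. For assertion 4, I would use that an increasing net $(x_i)$ with $x_i\to x$ satisfies $x_i-x_{i_0}\in K$ eventually for each fixed $i_0$, so passing to the limit in $K$ gives $x_{i_0}\le x$ (hence $x$ is an upper bound), while any upper bound $z$ satisfies $z-x_i\in K$ and $z-x_i\to z-x\in K$, so $x\le z$; both halves are instances of Proposition \ref{p1.order-tvs}.

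The converse, assertion 5, is the step I expect to be the main obstacle, and the idea is to exploit the interior point to turn membership in $\clos(K)$ into a chain of order relations. I would fix $u\in\inter(K)$ and a balanced $\tau$-neighborhood $W_0$ of $0$ with $u+W_0\subset K$, and then show $\clos(K)\subset K$. Given $x\in\clos(K)$ and $\varepsilon>0$, the fact that $x$ is a $\tau$-limit of points of $K$ supplies some $k\in K$ with $x-k\in\varepsilon W_0$, and then
\[
x+\varepsilon u=k+\bigl[\varepsilon u+(x-k)\bigr]\in K+\varepsilon(u+W_0)\subset K+\varepsilon K\subset K .
\]
Thus $x+\tfrac1n u\in K$, i.e. $n(-x)\le u$, for every $n\in\mathbb{N}$, and the Archimedean property \eqref{def.Arch} applied to $-x$ and $u$ yields $-x\le 0$, that is $x\in K$; hence $K=\clos(K)$ is $\tau$-closed. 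The delicate point here is precisely the conversion of the topological statement $x\in\clos(K)$ into the algebraic inequalities $n(-x)\le u$: the interior point $u$ provides the ``room'' $\varepsilon(u+W_0)\subset K$ needed to absorb the error $x-k$, after which the Archimedean hypothesis is exactly what closes the gap between $\bigcap_{\varepsilon>0}(K-\varepsilon u)$ and $K$.

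Finally, I would remark that the Hausdorff hypothesis in 5 plays essentially no role in the argument above and is in fact nearly redundant: if $K$ is solid and Archimedean then the closure of $\{0\}$ must reduce to $\{0\}$ (otherwise a nonzero $z\in\clos\{0\}$ would lie in every neighborhood of $0$, forcing $nz\le u$ and $n(-z)\le u$ for all $n$, hence $z\in K\cap(-K)=\{0\}$), so Hausdorffness comes for free. The two substantive ingredients are therefore the interior point and the Archimedean property.
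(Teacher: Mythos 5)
Your proof is correct in all five parts, but note that the paper itself offers no proof to compare against: this proposition sits in the preliminary Section \ref{S.ord-vs}, which the authors explicitly describe as a list of known results stated \emph{without proofs}, and it is attributed to \cite{Alipr} (Lemmas 2.3 and 2.4). So what you have produced is a self-contained argument for a result the paper merely imports. Your treatment of assertions 1--4 is the standard one and each step checks out: closedness of $K\cap(-K)=\{0\}$ gives Hausdorffness (a TVS with $\{0\}$ closed is Hausdorff); closed $\Rightarrow$ lineally closed combined with Proposition \ref{p.char.Arch} (or your direct limit argument with $\tfrac1n y-x\to -x$) gives the Archimedean property; $[x;y]_o=(x+K)\cap(y-K)$ gives closedness of order intervals; and two applications of Proposition \ref{p1.order-tvs} give assertion 4. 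Assertion 5 is also handled correctly: the decomposition $x+\varepsilon u=k+\varepsilon(u+w)$ with $u+W_0\subset K$ converts the topological hypothesis $x\in\clos(K)$ into the algebraic statement $x+\varepsilon u\in K$ for all $\varepsilon>0$, and the Archimedean property applied to $-x$ and $u$ then yields $x\in K$. Your closing observation that the Hausdorff hypothesis in 5 is redundant is a genuine, if minor, sharpening of the statement as quoted: it is sound, because in a TVS the closure of $\{0\}$ equals the intersection of all neighborhoods of $0$, so a nonzero $z\in\clos\{0\}$ satisfies $nz\in W_0$ for every $n$, whence $nz\le u$ and $n(-z)\le u$, and the Archimedean property forces $z\in K\cap(-K)=\{0\}$, a contradiction.
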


\begin{note}
 In what follows by an ordered TVS we shall understand a TVS ordered by a closed cone.
  Also, in an ordered TVS $(X,\tau,K)$ we have some parallel notions---with respect to topology and with respect to order. To make distinction between  them, those referring to order will have the prefix ``order--", as, for instance, ``order--bounded", ``order--complete", etc, while for those referring to topology we shall use the prefix ``$\tau$-", or ``topologically--",
 e.g., ``$\tau$-bounded", ``$\tau$-complete" (resp. ``topologically--bounded", ``topologically--complete"), etc.
\end{note}

\subsection{Normal cones in TVS and in LCS (locally convex spaces)}
Now we  introduce a very important notion in the theory of ordered vector spaces. A cone $K$ in a TVS $(X,\tau)$ is called \emph{normal} if there exists a neighborhood basis at 0 formed of order--convex sets.

The following characterizations are taken from \cite{BreckW} and  \cite{Peressini}.
\begin{theo}\label{t1.char-normal-cone}
  Let $(X,\tau,K)$ be an ordered TVS. The following are equivalent.
  \begin{enumerate}
\item[{\rm 1.}] The cone $K$ is normal.
\item[{\rm 2.}] There exists a basis $\mathcal B$ formed of order--convex balanced  0-neighborhoods.
\item[{\rm 3.}] There exists a basis $\mathcal B$ formed of balanced  0-neighborhoods such that for every $B\in \mathcal B,\; y\in B $ and $0\le x\le y$ implies $x\in B.$
\item[{\rm 4.}] There exists a basis $\mathcal B$ formed of  balanced  0-neighborhoods such that for every $B\in \mathcal B,\; y\in B $  implies $[0;y]_o\subset  B.$
\item[{\rm 5.}] There exists a basis $\mathcal B$ formed of balanced  0-neighborhoods and a number $\gamma >0$  such that for every $B\in \mathcal B,\;   [B]\subset \gamma B.$
\item[{\rm 6.}] If $(x_i:i\in I)$ and   $(y_i:i\in I)$  are two nets in $X$ such that $\forall i\in I,\; 0\le x_i\le y_i$ and $\lim_iy_i=0,$ then    $\lim_ix_i=0.$\end{enumerate}

If further, $X$ is a LCS, then the fact that the cone $K$ is normal is equivalent to each of the conditions 2--5, where the term ``balanced" is replaced with ``absolutely convex".\end{theo}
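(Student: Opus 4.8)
The plan is to establish all six equivalences as a single cycle, handling the ``geometric'' statements 1--5 (which all describe bases of $0$-neighborhoods) by manipulating order--convex hulls, and bridging to the ``sequential'' statement 6 separately. A useful preliminary observation is that conditions 3 and 4 are literally the same assertion, since $[0;y]_o=\{x : 0\le x\le y\}$, so $3\Leftrightarrow 4$ needs no argument. I would then prove $1\Rightarrow 2$, $2\Leftrightarrow 5$, $2\Rightarrow 3$, $3\Rightarrow 6$, and close the loop with the substantial implication $6\Rightarrow 1$. The recurring tools are Proposition \ref{p.full-hull} (the order--convex hull preserves the balanced and absolutely convex properties), the defining facts $[A]\subset[A']$ whenever $A\subset A'$ and $[A]=A$ iff $A$ is order--convex, together with the formula \eqref{eq.o-cv-hull}, $[A]=(A+K)\cap(A-K)$.

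For the geometric links I expect only routine work. For $1\Rightarrow 2$, given a basis $\mathcal V$ of order--convex $0$-neighborhoods, I choose for each $V\in\mathcal V$ a balanced $0$-neighborhood $W\subset V$; then $[W]$ is balanced by Proposition \ref{p.full-hull}, order--convex by construction, a $0$-neighborhood because $W\subset[W]$, and contained in $[V]=V$, so the sets $[W]$ form the required basis. For $2\Rightarrow 5$, an order--convex $B$ satisfies $[B]=B\subset 1\cdot B$, so any basis as in 2 witnesses 5 with $\gamma=1$. For $5\Rightarrow 2$, the family $\{[B] : B\in\mathcal B\}$ consists of order--convex balanced $0$-neighborhoods, and the inclusions $B\subset[B]\subset\gamma B$ show it is again a basis. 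For $2\Rightarrow 3$ one notes that a balanced $B$ contains $0$, so $y\in B$ and $0\le x\le y$ give $x\in[0;y]_o\subset B$ by order--convexity. Finally $3\Rightarrow 6$ is the squeeze: given $0\le x_i\le y_i$ with $\lim_i y_i=0$ and any $0$-neighborhood, pick $B\in\mathcal B$ inside it; eventually $y_i\in B$, whence $x_i\in B$ by condition 3, so $\lim_i x_i=0$.

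The hard part will be $6\Rightarrow 1$, which I would prove by contraposition, constructing nets that violate 6 when $K$ fails to be normal. The key reformulation is that $K$ is normal if and only if for every $0$-neighborhood $V$ there is a $0$-neighborhood $U$ with $[U]\subset V$ (if so, the order--convex sets $[U]$ form a basis; conversely an order--convex $V$ satisfies $[V]=V$). Hence non-normality yields a fixed $0$-neighborhood $V_0$ such that $[U]\not\subset V_0$ for \emph{every} $0$-neighborhood $U$. Indexing by the neighborhood filter base at $0$ directed by reverse inclusion, I pick $z_U\in[U]\setminus V_0$ and, using \eqref{eq.o-cv-hull}, elements $a_U,b_U\in U$ with $a_U\le z_U\le b_U$. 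Setting $x_U=z_U-a_U$ and $y_U=b_U-a_U$ gives $0\le x_U\le y_U$; since $a_U,b_U\in U$ we have $\lim_U y_U=0$, while $z_U\notin V_0$ forces $(z_U)$, and therefore $(x_U)=(z_U-a_U)$, away from $0$. This contradicts 6 and completes the cycle. The main technical care here is to verify the directedness of the index net and that $y_U\to 0$ genuinely follows from $a_U,b_U\in U$.

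For the concluding locally convex statement, I would simply rerun the same arguments with ``balanced'' replaced by ``absolutely convex'': in an LCS every $0$-neighborhood contains an absolutely convex one, and Proposition \ref{p.full-hull}(3) guarantees that $[A]$ remains absolutely convex, so each step above (notably the hull construction in $1\Rightarrow 2$ and in $5\Rightarrow 2$) goes through verbatim, yielding the equivalence of normality with the absolutely convex versions of conditions 2--5.
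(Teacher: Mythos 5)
The paper itself offers no proof of Theorem \ref{t1.char-normal-cone}: it is stated in the preliminaries with an attribution to \cite{BreckW} and \cite{Peressini}, so there is nothing in the text to compare your argument against, and I can only assess it on its own merits. It is correct. The observation that conditions 3 and 4 are literally the same statement is right, as are the routine hull manipulations: $1\Rightarrow 2$ (take a balanced $W\subset V$ and pass to $[W]\subset[V]=V$, using Proposition \ref{p.full-hull}), $2\Leftrightarrow 5$ (with $\gamma=1$ one way, and $B\subset[B]\subset\gamma B$ together with rescaling the target neighborhood by $\gamma^{-1}$ the other way), $2\Rightarrow 3$ (a balanced set contains $0$, so order--convexity applies to $[0;y]_o$), and the squeeze $3\Rightarrow 6$. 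The contrapositive proof of $6\Rightarrow 1$ is also sound: your reformulation of normality (for every $0$-neighborhood $V$ there is a $0$-neighborhood $U$ with $[U]\subset V$) is equivalent to the paper's definition, the set of $0$-neighborhoods under reverse inclusion is directed, and writing $z_U\in(U+K)\cap(U-K)$ via \eqref{eq.o-cv-hull} produces $a_U\le z_U\le b_U$ with $a_U,b_U\in U$, hence $0\le x_U\le y_U$ and $y_U=b_U-a_U\to 0$ (choose, for a given $0$-neighborhood $N$, a $0$-neighborhood $W$ with $W-W\subset N$). The only step you leave implicit is that the final contradiction also needs $a_U\to 0$: if $x_U\to 0$ were true, then $z_U=x_U+a_U\to 0$, so $z_U$ would eventually lie in $V_0$; the convergence $a_U\to 0$ follows from $a_U\in U$ exactly as for $y_U$, so this gap is cosmetic. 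The locally convex addendum also goes through as you describe, since in a LCS every $0$-neighborhood contains an absolutely convex one and Proposition \ref{p.full-hull}.3 preserves absolute convexity under order--convex hulls, so the cycle can be rerun with ``absolutely convex'' in place of ``balanced''.
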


\begin{remark}
  Condition 6 can be replaced with the equivalent one:

  If $(x_i:i\in I), \,(y_i:i\in I)$ and  $(z_i:i\in I) $ are  nets in $X$ such that $\forall i\in I,\; x_i\le z_i\le y_i$ and $\lim_ix_i=x=\lim_iy_i,$ then $\lim_iz_i=x.$
 \end{remark}

  The normality implies the fact that  the order--bounded sets are bounded.
 \begin{prop}[\cite{Peressini}, Proposition 1.4]\label{p1.normal-cone-bd} If $(X,\tau)$ is a TVS ordered by a normal cone, then every order--bounded subset of $X$ is $\tau$-bounded.
 \end{prop}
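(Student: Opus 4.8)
The plan is to reduce the statement to the $\tau$-boundedness of a single order interval, and then to exploit the neighborhood-basis characterization of normality from Theorem \ref{t1.char-normal-cone}. First I would observe that an order--bounded set $A$ is, by definition, contained in some order interval $[x;y]_o$; since every subset of a $\tau$-bounded set is $\tau$-bounded, it suffices to prove that each order interval $[x;y]_o$ is $\tau$-bounded. Using the identity $[x;y]_o=x+[0;y-x]_o$ recorded in the preliminaries, together with the standard fact that in a TVS a translate of a $\tau$-bounded set is again $\tau$-bounded (singletons are bounded and the sum of two bounded sets is bounded), I would further reduce to showing that $[0;z]_o$ is $\tau$-bounded for an arbitrary $z\in K$.

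Next I would invoke normality. By Theorem \ref{t1.char-normal-cone} (condition 4) there is a neighborhood basis $\mathcal B$ at $0$ consisting of balanced sets such that $w\in B$ implies $[0;w]_o\subset B$ for every $B\in\mathcal B$. I fix an arbitrary $0$-neighborhood $U$ and choose $B\in\mathcal B$ with $B\subset U$. Since $B$ is a $0$-neighborhood it is absorbing, so there is $t_0>0$ with $z\in t_0B$, i.e. $t_0^{-1}z\in B$. The defining property of $\mathcal B$ then yields $[0;t_0^{-1}z]_o\subset B$.

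Finally I would use the elementary scaling identity $t[0;z]_o=[0;tz]_o$ for $t>0$, which is immediate from the compatibility of $\le$ with multiplication by positive scalars. This gives $[0;t_0^{-1}z]_o=t_0^{-1}[0;z]_o$, whence $[0;z]_o\subset t_0B\subset t_0U$. As $U$ was an arbitrary $0$-neighborhood, $[0;z]_o$ is absorbed by every $0$-neighborhood, i.e. it is $\tau$-bounded, and the two reductions above then finish the proof.

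I do not expect a genuine obstacle here; the proposition is essentially a direct translation of the appropriate form of normality into a boundedness statement. The only points demanding a little care are selecting the right characterization of normality (condition 4 is precisely tailored to carry an order interval into a prescribed neighborhood, so that no extra constant is needed) and verifying the scaling identity for order intervals, which is what lets the absorbing constant $t_0$ obtained for $t_0^{-1}z$ be transferred back to $z$ itself.
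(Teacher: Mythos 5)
Your proof is correct. Note, though, that the paper itself does not prove this proposition: it is stated in the preliminaries as a known result, quoted from Peressini (Proposition 1.4 there), just like Theorem \ref{t1.char-normal-cone} on which you rely. So there is no internal proof to compare against; what you have done is supply the standard textbook argument, deriving one quoted result from another. Your chain of reductions is sound: order--bounded sets sit inside an order interval, $[x;y]_o=x+[0;y-x]_o$ reduces to intervals anchored at $0$ (with the trivial remark that an empty interval is vacuously bounded, and otherwise $y-x\in K$), translation preserves $\tau$-boundedness, condition 4 of Theorem \ref{t1.char-normal-cone} carries $[0;t_0^{-1}z]_o$ into a prescribed basic neighborhood $B$, and the scaling identity $t[0;z]_o=[0;tz]_o$ transfers the absorbing constant back to $[0;z]_o$. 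Since the basis $\mathcal B$ consists of balanced sets, the containment $[0;z]_o\subset t_0B\subset tB\subset tU$ for all $t\ge t_0$ gives $\tau$-boundedness in the strong sense, so even that small definitional point is covered. A marginally shorter variant would use condition 5 of the same theorem ($[B]\subset\gamma B$) applied to the full hull of a singleton, but your choice of condition 4 is exactly tailored to the claim and needs no extra constant, as you observe.
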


 \begin{remark} In the case of a normed space this condition characterizes the normality, see Theorem \ref{t4.char-normal-cone} below. Also, it is clear that a subset $Z$ of an ordered vector space $X$ is   order--bounded iff there exist $x,y\in X$ such that  $Z\subset [x;y]_o.$
   \end{remark}

 The existence of a normal solid cone in a TVS makes the topology normable.
 \begin{prop}[\cite{Alipr}, p. 81, Exercise 11, and \cite{Peressini}]\label{p1.normal-c-TVS}
   If a Hausdorff TVS $(X,\tau)$ contains a solid $\tau$-normal cone, then the topology $\tau$ is normable.
 \end{prop}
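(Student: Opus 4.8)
The plan is to exhibit a bounded, absolutely convex neighborhood of $0$ and then invoke Kolmogorov's normability criterion, which characterizes the normable Hausdorff TVS as precisely those possessing a bounded convex neighborhood of the origin. The candidate set is the symmetric order interval determined by an interior point of the cone.

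Since $K$ is solid, fix $u\in\inter(K)$. Because the topological interior is contained in the algebraic interior, $u\in\ainter(K)$, so by Proposition \ref{p.char-o-unit} the element $u$ is an order unit. Put $V:=[-u;u]_o=(u-K)\cap(-u+K)$. First I would check that $V$ is an absolutely convex $0$-neighborhood. Convexity is immediate, since order intervals are convex. Symmetry follows from $-V=(-u+K)\cap(u-K)=V$, and a symmetric convex set containing $0$ is balanced, so $V$ is absolutely convex. For the neighborhood property, recall that translation and negation are homeomorphisms of $X$, whence $\inter(-u+K)=-u+\inter(K)\ni -u+u=0$ and likewise $\inter(u-K)=u-\inter(K)\ni u-u=0$; therefore $0\in\inter(u-K)\cap\inter(-u+K)\subset\inter(V)$, so $V$ is indeed a neighborhood of $0$.

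The decisive step is boundedness, and this is exactly where normality is used. The set $V$ is order-bounded by construction (it is an order interval), so Proposition \ref{p1.normal-cone-bd} yields that $V$ is $\tau$-bounded.

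With $V$ an absolutely convex bounded $0$-neighborhood, its Minkowski gauge $p(x)=\inf\{t>0:x\in tV\}$ --- which is precisely the order-unit seminorm $|x|_u=\inf\{t>0:-tu\le x\le tu\}$, since $x\in tV$ iff $-tu\le x\le tu$ --- is a seminorm whose dilates $\{tV:t>0\}$ form a base of $0$-neighborhoods, so $p$ generates $\tau$. Finally, Hausdorffness upgrades $p$ to a norm: boundedness of $V$ forces $\bigcap_{t>0}tV=\{0\}$ (any nonzero $x$ is excluded by some $0$-neighborhood that absorbs $V$), so $p(x)=0$ implies $x=0$. Thus $\tau$ is generated by the norm $|\cdot|_u$. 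The only genuinely nontrivial ingredient is the boundedness of $V$, i.e. the cited consequence of normality; everything else is a routine verification concerning order intervals and Minkowski gauges.
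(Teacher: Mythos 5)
Your proof is correct, and it takes a genuinely different route from the paper's. The paper never proves the proposition where it is stated (it is cited to the literature); its own argument is the one indicated in the remark following Theorem \ref{t1.complete-Xu}: there, normality of a \emph{closed} cone in a Hausdorff \emph{locally convex} space is converted by Theorem \ref{t2.char-normal-cone} into a generating family $P$ of $\gamma$-monotone seminorms, and for $u\in\inter(K)$ one proves both inclusions $\tau\subset\tau_u$ (via the estimate $p(x)\le 2\gamma p(u)\,|x|_u$) and $\tau_u\subset\tau$ (choosing $p\in P$ and $r>0$ with $B_p[u,r]\subset K$, which yields $|x|_u\le p(x)/r$). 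You instead verify Kolmogorov's normability criterion directly: $V=[-u;u]_o$ is an absolutely convex $\tau$-neighborhood of $0$ because $u\in\inter(K)$, and it is $\tau$-bounded because it is order-bounded and $K$ is normal (Proposition \ref{p1.normal-cone-bd}). Both arguments produce the same norm, namely the gauge $|\cdot|_u$ of $[-u;u]_o$, but yours has a real advantage: it works in the stated generality of the proposition --- an arbitrary Hausdorff TVS, with no local convexity and no closedness assumption on $K$ --- since Proposition \ref{p1.normal-cone-bd} and Kolmogorov's criterion are general TVS facts, whereas the paper's sketch, as written, covers only the locally convex, closed-cone setting of Theorem \ref{t1.complete-Xu}. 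What the paper's route buys is economy within its own development: normability drops out of the $(X_u,|\cdot|_u)$ machinery that is needed anyway for the completeness results. Two cosmetic points in your write-up: you should exclude the trivial case $X=\{0\}$ so that $u\ne 0$ (as Proposition \ref{p.char-o-unit} requires), and that appeal to Proposition \ref{p.char-o-unit} is in fact dispensable, since $V$ being a $0$-neighborhood already makes it absorbing, which is all the gauge construction needs.
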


 In order to give characterizations of normal cones in LCS we consider some properties of seminorms. Let $\gamma >0$.
 A seminorm $p$ on a vector space $X$ is called:
 \vspace{2mm}

 \textbullet\; $\gamma$-\emph{monotone} if $0\le x\le y\;\Longrightarrow\; p(x)\le \gamma p(y);$

 \textbullet\; $\gamma$-\emph{absolutely monotone} if $-y\le x\le y\;\Longrightarrow\; p(x)\le \gamma p(y);$

 \textbullet\; $\gamma$-\emph{normal} if $x\le z\le y\;\Longrightarrow\; p(z)\le \gamma\max\{p(x),p(y)\}.$
 \vspace{2mm}

 A 1-monotone seminorm is called \emph{monotone}. Also a seminorm   which is $\gamma$-monotone for some $\gamma>0$ is called sometimes semi-monotone (see \cite{Deimling}).

 These properties can be characterized in terms of the Minkowski functional attached to an absorbing subset $A$ of a vector space $X$, given by
 \begin{equation}\label{def.Mink-fc}
 p_A(x)=\inf\{t>0 : x\in tA\},\quad (x\in X.)
 \end{equation}

 It is well known that if the set $A$ is absolutely convex and absorbing, then $p_A$ is a seminorm on $X$ and
 \begin{equation}
\ainter(A)= \{x\in X : p_A(x)<1\}\subset A\subset \{x\in X : p_A(x)\le 1\}=\aclos(A).\end{equation}

 \begin{prop}[\cite{BreckW},\, Proposition 2.5.6]\label{p1.Mink-fc-monot}
 Let $A$ be an absorbing absolutely convex subset of an ordered vector space $X$.
 \begin{enumerate}
\item[{\rm 1.}]  If $\,[A]\subset \gamma A,\,$ then the seminorm $p_A$ is $\gamma$-normal.
\item[{\rm 2.}]  If $\,\forall y\in A,\; [0;y]\subset \gamma A,\,$ then the seminorm $p_A$ is $\gamma$-monotone.
\item[{\rm 3.}]   If $\,\forall y\in A,\; [-y;y]\subset \gamma A,\,$ then the seminorm $p_A$ is $\gamma$-absolutely monotone.
 \end{enumerate}\end{prop}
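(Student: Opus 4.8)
The plan is to treat all three parts by a single mechanism: translate each monotonicity inequality into a membership assertion about the dilates $tA$ of $A$, using the two elementary facts about the Minkowski functional \eqref{def.Mink-fc} that $z\in tA\Rightarrow p_A(z)\le t$ (because $A\subset\{p_A\le1\}$) and, conversely, $p_A(z)<t\Rightarrow z\in tA$ (because $p_A(z/t)<1$ forces $z/t\in\ainter(A)\subset A$). In each part I would fix an arbitrary $t$ strictly larger than the relevant value of $p_A$ on the ``dominating'' element(s), divide the order relation through by $t$ (legitimate since the order is linear and $t>0$, so inequalities are preserved under multiplication by $1/t$), invoke the hypothesis to place the rescaled middle element in $\gamma A$, read off $p_A\le\gamma t$, and finally let $t$ decrease to the infimum.

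For part~1 I would argue as follows. Suppose $x\le z\le y$ and set $M=\max\{p_A(x),p_A(y)\}$. For any $t>M$ both $x,y\in tA$, and since $z$ lies in the order interval $[x;y]_o$ it lies in the order--convex hull of $tA$. The extra ingredient here is the scaling identity $[tA]=t[A]$ for $t>0$: using the formula $[A]=(A+K)\cap(A-K)$ from \eqref{eq.o-cv-hull} together with $tK=K$ (as $K$ is a cone) gives $[tA]=(tA+K)\cap(tA-K)=t\bigl((A+K)\cap(A-K)\bigr)=t[A]$. Hence $z\in t[A]\subset t\gamma A=\gamma tA$, so $p_A(z)\le\gamma t$; letting $t\downarrow M$ yields $p_A(z)\le\gamma\max\{p_A(x),p_A(y)\}$, which is exactly $\gamma$--normality.

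Parts~2 and~3 are simpler because no hull is needed; the rescaled middle element already sits in an order interval of the prescribed form. For part~2, from $0\le x\le y$ and any $t>p_A(y)$ one gets $y/t\in A$ and $0\le x/t\le y/t$, so $x/t\in[0;y/t]_o\subset\gamma A$ by hypothesis, whence $p_A(x)\le\gamma t$ and, letting $t\downarrow p_A(y)$, $p_A(x)\le\gamma p_A(y)$. Part~3 is verbatim the same with the two--sided interval: from $-y\le x\le y$ and $t>p_A(y)$ one has $-y/t\le x/t\le y/t$, so $x/t\in[-y/t;y/t]_o\subset\gamma A$, giving $p_A(x)\le\gamma p_A(y)$. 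I read the brackets in the hypotheses of parts~2--3 as the order intervals $[0;y]_o$ and $[-y;y]_o$, consistently with conditions 3--4 of Theorem~\ref{t1.char-normal-cone}; taken as algebraic intervals the hypotheses would be vacuous once $\gamma\ge1$.

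There is no serious obstacle here: the whole argument is a definition-chase. The only two points that require a little care are the implication $p_A(z)<t\Rightarrow z\in tA$ (which is why I work throughout with the strict inequality $t>M$ rather than $t=M$, taking the infimum only at the end, so that the algebraic--interior inclusion $\{p_A<1\}=\ainter(A)\subset A$ is available), and the commutation of the order--convex hull with positive dilations in part~1. Both facts are immediate from the material already collected in Section~\ref{S.ord-vs}, so the proof reduces to assembling them in the order above.
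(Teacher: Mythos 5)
There is no proof in the paper to compare against: this proposition is one of the background results the authors explicitly list ``without proofs'', quoted from Breckner \cite{BreckW}, Proposition 2.5.6. Your argument therefore has to stand on its own, and it does --- it is correct and complete. The two Minkowski-functional facts you isolate ($z\in tA\Rightarrow p_A(z)\le t$, and $p_A(z)<t\Rightarrow z\in tA$ via $\{x:p_A(x)<1\}=\ainter(A)\subset A$) are exactly what is needed; the scaling identity $[tA]=t[A]$ does follow from $[A]=(A+K)\cap(A-K)$ in \eqref{eq.o-cv-hull} together with $tK=K$ for $t>0$; dividing the order inequalities by $t>0$ is legitimate by \eqref{eq.lin.ord}; and working with strict $t$ above the relevant infimum and letting $t$ decrease at the end handles the boundary correctly, including the degenerate case where that infimum is $0$. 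Your interpretive remark is also the right call: in parts 2--3 the brackets must be the order intervals $[0;y]_o$ and $[-y;y]_o$ (the missing subscript is evidently a slip in the statement), since with algebraic segments the hypotheses hold automatically for any balanced $A$ once $\gamma\ge 1$, and the conclusion would then fail for, e.g., the unit ball of a normed space ordered by a non-normal cone; the order-interval reading is the one consistent with conditions 3--5 of Theorem \ref{t1.char-normal-cone}, where this proposition is subsequently used.
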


 Based on Theorem \ref{t1.char-normal-cone} and Proposition \ref{p1.Mink-fc-monot} one can give further characterizations of normal cones in LCS.

 \begin{theo}[\cite{BreckW},\, \cite{Peressini} and \cite{Schaef}]\label{t2.char-normal-cone}
 Let $(X,\tau)$ be a  LCS ordered by a cone $K.$ The following are equivalent.
   \begin{enumerate}
\item[{\rm 1.}] The cone $K$ is normal.
\item[{\rm 2.}] There exists $\gamma>0$ and a family   of $\gamma$-normal seminorms generating the topology $\tau$ of $X$.
\item[{\rm 3.}] There exists $\gamma>0$ and a family   of $\gamma$-monotone seminorms generating the topology $\tau$ of $X$.
\item[{\rm 4.}] There exists $\gamma>0$ and a family   of $\gamma$-absolutely monotone  seminorms generating the topology $\tau$ of $X$.
    \end{enumerate}

 All the above equivalences hold also with $\gamma =1$ in all places.
 \end{theo}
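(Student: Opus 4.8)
The plan is to pivot everything through condition 5 of Theorem \ref{t1.char-normal-cone} (in its LCS form, with ``absolutely convex'' in place of ``balanced'') together with Proposition \ref{p1.Mink-fc-monot}, which is precisely the dictionary translating geometric control of the order-convex hulls of neighborhoods into monotonicity of the associated Minkowski functionals. Concretely, I would establish $1 \Leftrightarrow 2$ through this Minkowski-functional correspondence, and then close $2 \Leftrightarrow 3 \Leftrightarrow 4$ (at the level of ``there exists some $\gamma > 0$'') by three elementary seminorm inequalities. The sharper claim $\gamma = 1$ I would extract at the very end from a single idempotency observation about the order-convex hull.

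For $1 \Rightarrow 2$: normality supplies, via Theorem \ref{t1.char-normal-cone}(5), a basis $\mathcal B$ of absolutely convex $0$-neighborhoods and a $\gamma > 0$ with $[B] \subset \gamma B$ for all $B \in \mathcal B$. Each such $B$ is absorbing and absolutely convex, so $p_B$ is a seminorm, and Proposition \ref{p1.Mink-fc-monot}(1) makes it $\gamma$-normal; the family $\{p_B\}_{B \in \mathcal B}$ generates $\tau$. For $2 \Rightarrow 1$: given a generating family of $\gamma$-normal seminorms, the finite intersections $B = \bigcap_i \{x : p_i(x) \le \varepsilon_i\}$ form an absolutely convex $0$-neighborhood basis, and a direct check gives $[B] \subset \gamma B$ (if $u \le z \le v$ with $u,v \in B$, then $p_i(z) \le \gamma \max\{p_i(u), p_i(v)\} \le \gamma \varepsilon_i$ for every $i$), so Theorem \ref{t1.char-normal-cone}(5) returns normality. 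The remaining equivalences are purely formal: a $\gamma$-normal seminorm is $\gamma$-absolutely monotone since $p(-y) = p(y)$ (this is $2 \Rightarrow 4$); a $\gamma$-absolutely monotone seminorm is $\gamma$-monotone because $0 \le x \le y$ forces $-y \le x \le y$ (this is $4 \Rightarrow 3$); and a $\gamma$-monotone seminorm is $(2\gamma+1)$-normal via the triangle inequality applied to $z = (z - x) + x$ with $0 \le z - x \le y - x$ (this is $3 \Rightarrow 2$), which closes the cycle since only the existence of some constant is asserted.

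Finally, for the assertion ``with $\gamma = 1$'': starting from the basis $\mathcal B$ with $[B] \subset \gamma_0 B$ given by normality, I would replace each $B$ by its order-convex hull $[B]$. The crucial point is that $[B]$ is order-convex, hence $[[B]] = [B] \subset 1 \cdot [B]$, so the constant collapses to $1$; simultaneously $[B]$ stays absolutely convex by Proposition \ref{p.full-hull}(3), it is absorbing because $B \subset [B]$, and the sets $\{[B] : B \in \mathcal B\}$ remain a $0$-neighborhood basis since $B \subset [B] \subset \gamma_0 B$. Proposition \ref{p1.Mink-fc-monot}(1) then makes each $p_{[B]}$ \emph{$1$-normal}, and the two constant-preserving implications above ($1$-normal $\Rightarrow$ $1$-absolutely monotone $\Rightarrow$ $1$-monotone) yield conditions 4 and 3 with $\gamma = 1$ using the very same family. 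I expect this normalization to be the only genuinely delicate step: one must verify at once that passing to $[B]$ preserves absolute convexity, keeps the family a neighborhood basis, and forces the constant down to $1$ — and it is exactly the interplay of Proposition \ref{p.full-hull}(3) with the idempotency of $[\cdot]$ that makes all three hold together.
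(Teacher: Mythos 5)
Your proposal is correct and follows precisely the route the paper intends: the paper states this theorem without proof (citing \cite{BreckW}, \cite{Peressini} and \cite{Schaef}), but introduces it with the remark that it is ``based on Theorem \ref{t1.char-normal-cone} and Proposition \ref{p1.Mink-fc-monot}'', which is exactly your pivot, and your closing cycle of elementary seminorm implications is the standard way to finish. Your normalization step for $\gamma=1$ (replacing each $B$ by $[B]$, using idempotency of the order--convex hull together with Proposition \ref{p.full-hull}.3 and the sandwich $B\subset[B]\subset\gamma_0 B$) is also sound and correctly yields the final assertion.
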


\subsection{Normal cones in normed spaces}

 We shall consider now characterizations of normality in the case of normed spaces.
 For a normed space $(X,\|\cdot\|),$ let  $B_X=\{x\in X : \|x\|\le 1\}$ be its closed unit ball and   $S_X=\{x\in X : \|x\|= 1\}$   its unit sphere.

 \begin{theo}[\cite{Deimling} and \cite{Guo}]\label{t4.char-normal-cone}
   Let $K$ be a cone in a normed space $(X,\|\cdot\|).$  The following are equivalent.
   \begin{enumerate}
     \item[{\rm 1.}] The cone $K$ is normal.
      \item[{\rm 2.}] There exists a monotone norm $\|\cdot\|_1$ on $X$ equivalent to the original norm $\|\cdot\|.$
 \item[{\rm 3.}]   For all sequences $(x_n),\, (y_n),\, (z_n)$ in $X$ such that $x_n\le z_n\le y_n,\,n\in\mathbb{N},$ the conditions $\lim_nx_n=x=\lim_ny_n$ imply $\lim_nz_n=x.$
 \item[{\rm 4.}] The  order--convex hull $[B_X]$   of the unit ball is bounded.
 \item[{\rm 5.}]   The order interval $[x;y]_o$ is bounded for every $x,y\in X.$
  \item[{\rm 6.}] There exists $\delta > 0$ such that $\forall x,y\in K\cap S_X,\; \|x+y\|\ge\delta.$
   \item[{\rm 7.}] There exists $\gamma > 0$ such that $\,\forall x,y\in K,\; \|x+y\|\ge\gamma\max\{\|x\|,\|y\|\}.$ \item[{\rm 8.}]  There exists $\lambda > 0$ such that $\, \|x\|\le\lambda \|y\|,$ for all $\,x,y\in K\,$ with $x \le y.$
       \end{enumerate} \end{theo}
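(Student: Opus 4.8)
The plan is to arrange the eight conditions into a single elementary cluster, whose interconnections rest only on scaling, normalization and the Minkowski functional, and to single out condition 5 as the one implication that genuinely uses completeness of $X$. I would start with the three ``metric'' conditions 6, 7, 8, which are essentially reformulations of each other. For $7\Leftrightarrow 8$: if $0\le x\le y$ in $K$ then $y=x+(y-x)$ with $y-x\in K$, so 7 applied to the pair $(x,y-x)$ gives $\|y\|\ge\gamma\|x\|$, which is 8 with $\lambda=\gamma^{-1}$; conversely, since $x\le x+y$ and $y\le x+y$ for $x,y\in K$, condition 8 yields $\max\{\|x\|,\|y\|\}\le\lambda\|x+y\|$. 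Because 6 is merely the restriction of 7 to $K\cap S_X$, only $6\Rightarrow 7$ requires an argument, which I would give by contradiction: a failure of 7 provides, after rescaling so that $\max\{\|x_n\|,\|y_n\|\}=\|x_n\|=1$, vectors $x_n\in K\cap S_X$ and $y_n\in K$ with $\|y_n\|\le 1$ and $x_n+y_n\to 0$; then $\|y_n\|\to 1$, so $y_n/\|y_n\|\in K\cap S_X$ and $x_n+y_n/\|y_n\|\to 0$, contradicting 6.

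Next I would attach condition 4. The implication $8\Rightarrow 4$ is a one--line estimate: for $z\in[B_X]$ pick $a,b\in B_X$ with $a\le z\le b$; then $0\le z-a\le b-a$, so 8 gives $\|z-a\|\le\lambda\|b-a\|\le 2\lambda$ and hence $\|z\|\le 1+2\lambda$, i.e. $[B_X]\subset(1+2\lambda)B_X$. The companion estimate $4\Rightarrow 7$ (for $x,y\in K$ not both zero set $u=(x+y)/\|x+y\|\in K\cap S_X$, so that $x/\|x+y\|\in[0;u]_o\subset[B_X]$ and boundedness of $[B_X]$ controls $\|x\|/\|x+y\|$) makes $4\Leftrightarrow 7$ transparent, although for the cycle only $8\Rightarrow 4$ and $4\Rightarrow 1$ are needed. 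I obtain $4\Rightarrow 1$ straight from the definition of normality: by homogeneity of the order--convex hull, $[rB_X]=r[B_X]\subset rMB_X$ whenever $[B_X]\subset MB_X$, so the sets $[rB_X]$ with $r>0$ form a neighbourhood basis at $0$ consisting of order--convex sets. For the return, $1\Rightarrow 2$ uses Theorem \ref{t1.char-normal-cone}(2) to produce a bounded, absolutely convex, order--convex $0$--neighbourhood $U$ (a basis element inside $B_X$); its Minkowski functional $p_U$ is an equivalent norm, and as $[U]=U$, Proposition \ref{p1.Mink-fc-monot}(1) shows $p_U$ is $1$--normal, hence monotone, which is 2. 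Finally $2\Rightarrow 8$ is immediate: if $c\|\cdot\|\le\|\cdot\|_1\le C\|\cdot\|$ with $\|\cdot\|_1$ monotone, then $0\le x\le y$ gives $c\|x\|\le\|x\|_1\le\|y\|_1\le C\|y\|$. This closes the cycle $1\Rightarrow 2\Rightarrow 8\Rightarrow 4\Rightarrow 1$, and $6\Leftrightarrow 7\Leftrightarrow 8$ absorbs the two remaining metric conditions. Condition 3 is then inserted via $1\Rightarrow 3$, the specialization of the net characterization Theorem \ref{t1.char-normal-cone}(6) to sequences, and $3\Rightarrow 7$, which is the contradiction above: the triple $0\le x_n\le x_n+y_n$ with both outer terms tending to $0$ and $\|x_n\|=1$ violates 3.

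The genuinely delicate point is condition 5. Its forward implications are easy: $1\Rightarrow 5$ is Proposition \ref{p1.normal-cone-bd}, since an order interval is order--bounded and hence bounded, while $4\Rightarrow 5$ is scaling and translation, $[x;y]_o=x+\|y-x\|\,[0;(y-x)/\|y-x\|]_o\subset x+\|y-x\|\,[B_X]$. The hard part is the converse $5\Rightarrow 1$: the boundedness of each single interval $[0;v]_o$ is only a \emph{pointwise} bound on the family $\{[0;v]_o:v\in K\cap S_X\}$, whereas normality (through 4) demands a \emph{uniform} one. I would upgrade pointwise to uniform by contraposition, manufacturing one unbounded order interval from a failure of normality: normalizing a witness to the negation of 8 gives $0\le x_n\le y_n$ with $\|x_n\|=1$ and $\|y_n\|\le 2^{-n}$, and then the absolutely convergent series $y=\sum_n y_n$ dominates every $x_m$, so that $x_m\in[0;y]_o$ for all $m$ while $\|x_m\|=1$, forcing $[0;y]_o$ to be unbounded. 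This single step is precisely where completeness of $X$ (for convergence of the series) and closedness of $K$ (to secure $y-x_m\in K$) enter, and I expect it to be the main obstacle of the whole theorem; every other implication is a finite, algebraic--topological manipulation of the kind already used in the locally convex setting.
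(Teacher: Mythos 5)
The paper itself offers no proof of Theorem \ref{t4.char-normal-cone}: it is quoted as a known result from \cite{Deimling} and \cite{Guo}, so your argument can only be judged on its own merits, not against an internal proof. Most of it is correct. The cycle $1\Rightarrow 2\Rightarrow 8\Rightarrow 4\Rightarrow 1$ is sound (the monotone norm built from Theorem \ref{t1.char-normal-cone} and Proposition \ref{p1.Mink-fc-monot}, the estimate $[B_X]\subset(1+2\lambda)B_X$, and the homogeneity $[rB_X]=r[B_X]$ all check out), as are the ring $6\Leftrightarrow 7\Leftrightarrow 8$, the insertion of condition 3, and the implications $1\Rightarrow 5$ and $4\Rightarrow 5$ (the latter via Proposition \ref{p1.normal-cone-bd} and scaling, respectively).

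The genuine gap is in the step you yourself single out as the crux, $5\Rightarrow 1$, i.e. $\neg 8\Rightarrow\neg 5$. After normalizing to $\|x_n\|=1$, $\|y_n\|\le 2^{-n}$, you conclude that $x_m\in[0;y]_o$ for all $m$ ``while $\|x_m\|=1$, forcing $[0;y]_o$ to be unbounded''. This is a non sequitur: a set of unit vectors is bounded, so nothing here contradicts condition 5. What your construction shows is only that order intervals $[0;y]_o$ with $\|y\|$ arbitrarily small can contain unit vectors, which is merely a restatement of $\neg 8$ and is perfectly compatible with every single order interval being bounded. The repair uses the same series idea with the opposite normalization: since $\neg 8$ produces pairs with arbitrarily large ratio $\|x\|/\|y\|$, choose $0\le x_n\le y_n$ with $\|y_n\|\le 2^{-n}$ and $\|x_n\|\ge n$ (equivalently, keep your normalization and set $y=\sum_n n\,y_n$, noting $nx_n\le ny_n$); then $y=\sum_n y_n$ converges by completeness, $y-x_m\in K$ by closedness of $K$, and $[0;y]_o$ contains the vectors $x_m$ with $\|x_m\|\ge m\to\infty$, which genuinely contradicts 5. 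Finally, your observation that this step needs $X$ complete and $K$ closed should be promoted to a caveat about the statement itself: the implication $5\Rightarrow 1$ is actually false for a closed cone in an incomplete normed space --- for instance, in $c_{00}$ with the supremum norm the closed cone $K=\{x : x_{2n}\ge 0,\ x_{2n-1}+nx_{2n}\ge 0\ \mbox{for all}\ n\}$ satisfies $0\le e_{2n-1}\le n^{-1}e_{2n}$, so it is not normal, yet a direct computation shows every order interval $[0;y]_o$, $y\in K$, is bounded --- so the theorem must be read, as in \cite{Deimling} and \cite{Guo}, for closed cones in Banach spaces rather than literally for ``a cone in a normed space''.
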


       We notice also the following result, which can be obtained as  a consequence of a result of T. And\^o on ordered locally convex spaces (see \cite[Theorem 2.10]{Alipr}).
\begin{prop}[\cite{Alipr}, Corollary 2.12]\label{p.ord-B-sp}
Let $X$ be a Banach space ordered by a   generating cone $X_+$ and $B_X$ its closed unit ball. Then
$
 \,(B_X \cap X_+)- (B_X \cap X_+)\,$
is a neighborhood of 0.
  \end{prop}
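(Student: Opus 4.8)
The plan is to deduce this from the Baire category theorem together with an open-mapping--style successive approximation argument, which is the classical route to And\^o's theorem. Set $V := B_X \cap X_+$ and $C := V - V$. Since $V$ is convex (being the intersection of the convex sets $B_X$ and $X_+$), the set $C = V - V$ is convex; it is also symmetric ($-C = C$) and contains $0$, so to prove the claim it suffices to show that $0 \in \inter(C)$. The hypothesis that $X_+$ is generating enters right away: for every $x \in X$ we may write $x = u - v$ with $u, v \in X_+$, and choosing an integer $m \ge \max\{\|u\|, \|v\|, 1\}$ we get $u/m, v/m \in V$, so $x \in mC$. Hence $X = \bigcup_{m \in \mathbb{N}} mC$.

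Since $X$ is a Banach space, it is of the second Baire category in itself, so the covering $X = \bigcup_m \overline{mC}$ forces some $\overline{mC} = m\,\overline{C}$ to have nonempty interior; consequently $\inter(\overline{C}) \ne \emptyset$. Because $\overline{C}$ is convex and symmetric, $0$ is in fact an interior point: if $B(a, r) \subset \overline{C}$, then $B(-a, r) \subset \overline{C}$ by symmetry, and averaging gives $B(0, r) = \tfrac12 B(a, r) + \tfrac12 B(-a, r) \subset \overline{C}$ by convexity. Thus $B(0, r) \subset \overline{C}$ for some $r > 0$.

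The crux---and the step I expect to be the main obstacle---is to pass from ``$\overline{C}$ is a neighborhood of $0$'' to ``$C$ is a neighborhood of $0$'', since $C$, being a difference of two closed bounded sets, need not itself be closed in infinite dimensions. Here I would run the standard telescoping argument, but carried out at the level of the cone so as to keep the approximants positive. Scaling $B(0,r) \subset \overline{C}$ gives $B(0, 2^{-n} r) \subset \overline{2^{-n} C} = \overline{2^{-n} V - 2^{-n} V}$ for every $n$. Starting from any $x$ with $\|x\| \le r/2$ and putting $x_0 = x$, I would inductively choose $u_n, v_n \in X_+$ with $\|u_n\|, \|v_n\| \le 2^{-n}$ (i.e.\ $u_n, v_n \in 2^{-n} V$) such that $x_n := x_{n-1} - (u_n - v_n)$ satisfies $\|x_n\| \le 2^{-(n+1)} r$; this choice is possible precisely because $x_{n-1} \in B(0, 2^{-n} r) \subset \overline{2^{-n} V - 2^{-n} V}$. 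The series $\sum_n u_n$ and $\sum_n v_n$ are absolutely convergent (dominated by $\sum_n 2^{-n} = 1$), so by completeness of $X$ they converge to $u, v$ with $\|u\|, \|v\| \le 1$; since each partial sum lies in $X_+$ and $X_+$ is closed, $u, v \in X_+$, whence $u, v \in V$. Finally $x = \sum_n (u_n - v_n) = u - v \in V - V = C$. This proves $B(0, r/2) \subset C$, so $C$ is a neighborhood of $0$, as required.

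The delicate points to get right are exactly the bookkeeping of the geometric bounds---choosing the ratio $1/2$ so that $\sum_n 2^{-n} = 1$ forces $\|u\|, \|v\| \le 1$ and hence genuine membership in $V$ rather than in a dilate $\lambda V$---and the use of the closedness of $X_+$ (guaranteed by the standing convention that cones are closed) to ensure that the limits $u, v$ remain positive. Everything else is routine, and no Archimedean or normality assumption on $K$ is needed.
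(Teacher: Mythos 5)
Your proof is correct. Note that the paper itself gives no proof of this statement: it is quoted from Aliprantis--Tourky (Corollary 2.12 there) with the remark that it follows from a theorem of And\^o on ordered locally convex spaces. What you have done is supply the standard self-contained Banach-space argument for that cited result: Baire category applied to $X=\bigcup_m m\,\overline{C}$ with $C=(B_X\cap X_+)-(B_X\cap X_+)$, the symmetry-plus-convexity trick to move the interior point of $\overline{C}$ to the origin, and then the open-mapping-style successive approximation to upgrade ``$\overline{C}$ is a neighborhood of $0$'' to ``$C$ is a neighborhood of $0$''. All the steps check out: the generating hypothesis indeed yields $X=\bigcup_m mC$; the scaling $\overline{B}(0,2^{-n}r)\subset\overline{2^{-n}C}$ is legitimate since dilation is a homeomorphism and the closed ball lies in the closure of the open one; the geometric bounds $\|u_n\|,\|v_n\|\le 2^{-n}$ give $\|u\|,\|v\|\le 1$; and the closedness of $X_+$ (the paper's standing convention for ordered TVS, and genuinely needed here --- the statement fails for non-closed cones) guarantees $u,v\in X_+$, hence $u,v\in B_X\cap X_+$. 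Compared with the paper's route, your argument trades generality for self-containedness: And\^o's theorem covers ordered locally convex spaces and yields this statement as a special case, whereas your proof uses completeness and the norm structure directly but requires nothing beyond Baire category and elementary series manipulations. One cosmetic remark: the two displayed constraints in your telescoping step are the inequalities $\|x_n\|\le 2^{-(n+1)}r$ and $u_n,v_n\in 2^{-n}V$; it would be worth stating explicitly that the induction hypothesis $\|x_{n-1}\|\le 2^{-n}r$ places $x_{n-1}$ in $\overline{2^{-n}C}$ via the closed-ball inclusion above, since that is the only point where the reader could stumble.
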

  \subsection{Completeness and order completeness in ordered TVS}
The following notions are inspired by Cantor's theorem on the convergence of bounded monotone sequences of real numbers.

Let $X$ be a Banach space ordered by a cone $K$. The cone $K$ is called:
\vspace{2mm}

\textbullet \quad \emph{regular} if every increasing and order--bounded sequence in $X$ is convergent;

\textbullet \quad \emph{fully regular} if every increasing and norm-bounded sequence in $X$ is convergent.

\vspace{2mm}

By Proposition \ref{p.Dedekind} if $X$ is a regular normed lattice, then every countable subset of $X$ has a supremum.

These notions are related in the following way.
\begin{theo}[\cite{Guo}, Theorems 2.2.1 and 2.2.3]\label{t.regular-cone}
  If $X$ is a Banach space ordered by a cone $K$, then
  $$
  K\; \mbox{fully regular} \; \Longrightarrow\; K\; \mbox{regular} \; \Longrightarrow\; K\; \mbox{normal}.$$

  If the Banach  space $X$ is reflexive, then the reverse implications hold too, i.e. both implications become equivalences.
\end{theo}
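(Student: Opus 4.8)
The plan is to establish the forward chain by two separate contradiction arguments, glued together by the fact that normality forces order-bounded sets to be norm-bounded, and then to obtain the reverse implications in the reflexive case by upgrading a weak limit (supplied by reflexivity) to a norm limit. Concretely, I would prove separately that $K$ regular $\Rightarrow$ $K$ normal and $K$ fully regular $\Rightarrow$ $K$ normal. The implication fully regular $\Rightarrow$ regular then follows immediately: by Proposition \ref{p1.normal-cone-bd} (equivalently Theorem \ref{t4.char-normal-cone}(5)) an order-bounded increasing sequence in a normal cone is norm-bounded, hence convergent by full regularity. Throughout I use that $K$ is closed (and therefore convex and weakly closed).

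For regular $\Rightarrow$ normal, I argue by contradiction through the failure of Theorem \ref{t4.char-normal-cone}(8): for each $n$, rescaling a pair witnessing $\|u\|>4^n\|v\|$ gives $a_n,b_n\in K$ with $0\le a_n\le b_n$, $\|a_n\|=1$ and $\|b_n\|<4^{-n}$. Since $\sum b_n$ is norm-summable it converges to some $b$, and because $K$ is closed each partial sum satisfies $\sum_{k\le n}b_k\le b$. Hence the increasing partial sums $S_n=\sum_{k\le n}a_k$ obey $0\le S_n\le\sum_{k\le n}b_k\le b$ and form an order-bounded increasing sequence. Regularity would force $(S_n)$ to converge, hence to be Cauchy, giving $\|a_n\|=\|S_n-S_{n-1}\|\to0$ and contradicting $\|a_n\|=1$.

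The more delicate step is fully regular $\Rightarrow$ normal, where the obstacle is that the construction above yields only an \emph{order}-bounded sequence, which full regularity cannot detect; one must instead produce a \emph{norm}-bounded counterexample. The key observation is that when $K$ fails to be normal, Theorem \ref{t4.char-normal-cone}(7) fails, so sums of positive vectors may have small norm relative to their summands---this is exactly what permits cancellation. Accordingly, for each $n$ I would choose $x_n,y_n\in K$ with $\max\{\|x_n\|,\|y_n\|\}=1$ and $\|x_n+y_n\|<2^{-n}$ (so also $\min\{\|x_n\|,\|y_n\|\}>\tfrac12$), and then form the \emph{interleaved} increasing sequence with increments $y_1,x_1,y_2,x_2,\dots$, i.e. $S_{2n}=\sum_{k=1}^n(x_k+y_k)$ and $S_{2n-1}=S_{2n-2}+y_n$. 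All increments lie in $K$, so $(S_n)$ is increasing; it is norm-bounded since $\|S_{2n}\|\le\sum 2^{-k}\le 1$ and each odd step adds a vector of norm $\le1$, giving $\|S_n\|\le2$; yet it is not Cauchy, because $\|S_{2n-1}-S_{2n-2}\|=\|y_n\|>\tfrac12$. This contradicts full regularity, so $K$ is normal, and fully regular $\Rightarrow$ regular follows as above.

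For the reflexive case it suffices to prove normal $\Rightarrow$ fully regular, since then normal $\Rightarrow$ fully regular $\Rightarrow$ regular and regular $\Rightarrow$ normal $\Rightarrow$ fully regular close both loops. Given an increasing norm-bounded sequence $(x_n)$, reflexivity supplies a weakly convergent subsequence; since $K$ is weakly closed, its limit $x^\ast$ satisfies $x_m\le x^\ast$ for all $m$ and is in fact $\sup_n x_n$. The real work is upgrading weak to norm convergence, which I expect to be the main obstacle, and which I would handle via Mazur's lemma: $x^\ast\in\clco\{x_k:k\ge n\}$, so there are convex combinations $z_n$ of the tail with $\|z_n-x^\ast\|\to0$. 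Writing $m(n)$ for the largest index appearing in $z_n$, monotonicity gives $z_n\le x_{m(n)}\le x^\ast$, hence $0\le x^\ast-x_{m(n)}\le x^\ast-z_n$. Passing to an equivalent \emph{monotone} norm $\|\cdot\|_1$ (Theorem \ref{t4.char-normal-cone}(2)) yields $\|x^\ast-x_{m(n)}\|_1\le\|x^\ast-z_n\|_1\to0$; since $(\|x^\ast-x_k\|_1)_k$ is nonincreasing, its limit must be $0$, so $x_n\to x^\ast$ in norm and $K$ is fully regular.
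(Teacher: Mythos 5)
Your proposal is correct, but there is nothing in the paper to compare it against: Theorem \ref{t.regular-cone} is imported as background from \cite{Guo} and stated without proof, so the only meaningful benchmark is the standard literature argument, which your proof essentially reconstructs in a self-contained way. The details check out. For regular $\Rightarrow$ normal, negating condition 8 of Theorem \ref{t4.char-normal-cone} and using the closedness of $K$ (a standing convention of the paper for ordered TVS, and genuinely needed to get $\sum_{k\le n}b_k\le b$) produces an order-bounded increasing non-Cauchy sequence, a contradiction. For fully regular $\Rightarrow$ normal, negating condition 7 and interleaving the increments $y_1,x_1,y_2,x_2,\dots$ produces a norm-bounded increasing non-Cauchy sequence; the crucial lower bound is justified since, with $\max\{\|x_n\|,\|y_n\|\}=1$, one has $\min\{\|x_n\|,\|y_n\|\}\ge 1-\|x_n+y_n\|>1-2^{-n}\ge\tfrac12$. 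Your decomposition --- proving both forward implications by reduction to normality criteria and then getting fully regular $\Rightarrow$ regular as a corollary via Proposition \ref{p1.normal-cone-bd} --- is the natural one, since a direct proof of that implication needs normality anyway; just reorder the final write-up so that fully regular $\Rightarrow$ normal appears before it is invoked (your opening paragraph states the corollary before the lemma it rests on). In the reflexive case, the chain Eberlein--\v{S}mulian, weak closedness of the closed convex cone $K$, Mazur's lemma, and a monotone equivalent norm (Theorem \ref{t4.char-normal-cone}) correctly upgrades the weak limit to a norm limit: the sequence $(\|x^\ast-x_k\|_1)_k$ is nonincreasing by monotonicity of $\|\cdot\|_1$ and has a null subsequence along the indices $m(n)\ge n$, hence tends to $0$. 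Closing both equivalences from the single implication normal $\Rightarrow$ fully regular is clean and correct bookkeeping.
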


Some relations between completeness and order completeness in ordered topological vector spaces were obtained by Ng \cite{ng72}, Wong \cite{wong72} (see also the book \cite{Wong-Ng}). Some questions about  completeness in ordered metric spaces are discussed by Turinici \cite{turinici80}.

Let $(X,\tau)$ be a TVS  ordered by a cone $K$. One says that the space $X$ is
\vspace{2mm}

\textbullet \quad \emph{fundamentally $\sigma$-order complete} if every increasing $\tau$-Cauchy sequence in $X$ has a supremum;

\textbullet \quad \emph{monotonically sequentially complete} if every increasing $\tau$-Cauchy sequence in $X$ is convergent in $(X,\tau)$.
\vspace{2mm}

In the following propositions $(X,\tau)$ is a TVS ordered by a cone $K.$

The following result is obvious.

\begin{prop}\label{p1.o-complete} \hfill\begin{enumerate}
 \item[{\rm 1.}] If $ X$ is   sequentially complete, then $X$ is monotonically sequentially complete.
 \item[{\rm 2.}] If $ X$ is monotonically sequentially complete, then $X$ is fundamentally $\sigma$-order complete.
 \item[{\rm 3.}] If $ K $ is normal and generating, and $X$ is fundamentally $\sigma$-order complete, then $X$ is monotonically sequentially complete.
\end{enumerate}\end{prop}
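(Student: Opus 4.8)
The plan is to dispatch the first two assertions directly and to concentrate the work on the third. For assertion~1, if $X$ is sequentially complete then any increasing $\tau$-Cauchy sequence is in particular $\tau$-Cauchy, hence $\tau$-convergent, so $X$ is monotonically sequentially complete. For assertion~2, I would take an increasing $\tau$-Cauchy sequence $(x_n)$; by monotone sequential completeness it converges to some $x\in X$, and since the cone $K$ is closed, Proposition~\ref{p2.order-tvs}(4) identifies the limit of the increasing net $(x_n)$ with its supremum, so $x=\sup_n x_n$. Thus $(x_n)$ admits a supremum and $X$ is fundamentally $\sigma$-order complete.

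For assertion~3, fix an increasing $\tau$-Cauchy sequence $(x_n)$. By fundamental $\sigma$-order completeness the supremum $x=\sup_n x_n$ exists in $X$, so it remains only to prove $x_n\to x$ in $\tau$, i.e. $x-x_n\to 0$. Note first that $0\le x-x_n$ and that, for $m\ge n$, one has $0\le x_m-x_n\le x-x_n$, while $x-x_n=\sup_{m\ge n}(x_m-x_n)$ because translations preserve suprema. The idea is to transfer the Cauchy smallness of the partial differences $x_m-x_n$ to the order-limit $x-x_n$. Concretely, given a $0$-neighbourhood $U$, normality (Theorem~\ref{t1.char-normal-cone}(2)) furnishes a balanced order--convex $0$-neighbourhood $V\subset U$; the Cauchy property yields $N$ with $x_m-x_n\in V$ for all $m,n\ge N$, whence $0\le x_m-x_n\in V$ for all $m\ge n\ge N$. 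One then argues that the supremum $x-x_n$ of this increasing family contained in $V$ again lies in a controlled enlargement of $V$, using the order--convexity of $V$ together with the closedness of $K$ (so that order intervals are $\tau$-closed, Proposition~\ref{p2.order-tvs}(3)) to keep the order-limit inside the closed order--convex hull of $V$. In the locally convex case this step is best made quantitative: by Theorem~\ref{t2.char-normal-cone} the topology is generated by monotone seminorms $p$, and the goal reduces to the estimate $p(x-x_n)\le\sup_{m\ge n}p(x_m-x_n)$, after which $p(x-x_n)\le\varepsilon$ for $n$ large follows from the Cauchy bound.

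The routine parts are the extraction of the supremum and the reduction to $x-x_n\to 0$; the hypothesis that $K$ be generating enters to guarantee that the order is directed (Proposition~\ref{p.char.gener-cone}), so that this reduction is available, and normality additionally makes the order--bounded set $\{x-x_m\}$ $\tau$-bounded (Proposition~\ref{p1.normal-cone-bd}). The main obstacle is the transfer step itself: passing the neighbourhood (or seminorm) smallness of the finite partial differences $x_m-x_n$ to the order-supremum $x-x_n$. This is exactly where normality is indispensable — without an order--convex neighbourhood base the supremum could overshoot the neighbourhood, since in a general normal cone an increasing order-convergent sequence need not be $\tau$-convergent — and I expect to resolve it by combining the order--convexity of the base with the Cauchy control and the closedness of $K$, forcing the order-limit back into the prescribed neighbourhood.
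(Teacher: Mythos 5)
Your proofs of assertions 1 and 2 are correct: assertion 1 is immediate, and assertion 2 correctly combines the paper's standing convention that the ordering cone is closed with Proposition \ref{p2.order-tvs}.4. (For the record, the paper gives no argument at all for this proposition --- it is declared obvious --- and assertions 1--2 are indeed the obvious part.) Assertion 3 is the real content, and your argument for it has a genuine gap at exactly the step you call ``the main obstacle''; moreover, both mechanisms you propose for closing it fail. First, order--convexity of $V$ can only force $x-x_n$ into $V$ (or an enlargement of it) if you produce an \emph{upper} bound of $x-x_n$ lying in $V$: the lower bound $0\in V$ is free, but every element of $V$ you actually possess, namely $x_m-x_n$, lies \emph{below} $x-x_n$, so order--convexity is silent; and $\tau$-closedness of order intervals is irrelevant here, because an order--supremum need not be a topological limit point of the family --- that it is one is precisely what is being proved. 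Second, the quantitative inequality $p(x-x_n)\le\sup_{m\ge n}p(x_m-x_n)$ runs in the wrong direction: monotonicity of $p$ gives $p(x_m-x_n)\le p(x-x_n)$, and the Fatou--type inequality you want is \emph{not} a consequence of normality. For instance, on $C[0,1]$ with the monotone norm $\|f\|=\|f\|_\infty+|f(0)|$ (whose positive cone is closed, normal and generating), the increasing sequence $y_m(t)=\min(1,mt)$ has supremum equal to the constant function $e=1$, yet $\sup_m\|y_m\|=1<2=\|e\|$. For the Cauchy sequences at hand, proving your inequality is equivalent to the assertion itself, so the argument as written is circular. (Also, ``generating'' does not enter where you say it does: the reduction to $x-x_n\to 0$ is just continuity of translation, not directedness.)

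What is missing is a device producing a \emph{small order upper bound} for the tails $\{x_m : m\ge n\}$, and one can manufacture it by applying fundamental $\sigma$-order completeness a \emph{second} time, to an amplified series. Working with a seminorm $p$ (the metrizable setting, which is the one used in all of the paper's applications of this proposition), choose $n_1<n_2<\cdots$ with $p(x_m-x_{n_j})\le 4^{-j}$ for all $m\ge n_j$, and set $\sigma_k=\sum_{j=1}^{k}2^{j}\bigl(x_{n_{j+1}}-x_{n_j}\bigr)$. This sequence is increasing and $p$-Cauchy, so $\sigma=\sup_k\sigma_k$ exists; since each term satisfies $2^{i}\bigl(x_{n_{i+1}}-x_{n_i}\bigr)\le\sigma$, summation gives $x_{n_k}\le x_{n_j}+2^{-j+1}\sigma$ for all $k\ge j$, so $x_{n_j}+2^{-j+1}\sigma$ is an upper bound of the whole increasing sequence and therefore
\begin{equation*}
0\le x-x_{n_j}\le 2^{-j+1}\sigma .
\end{equation*}
Only now does normality apply (squeeze through order--convex neighbourhoods, using $2^{-j+1}\sigma\to 0$), giving $x_{n_j}\xrightarrow{\tau}x$; the Cauchy property then upgrades this to $x_n\xrightarrow{\tau}x$. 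This is also the point where the general case genuinely differs from the order--unit seminorms of Propositions \ref{p1.seq-u-semin} and \ref{p2.seq-u-semin}: there the transfer is immediate, because $|\cdot|_u$-Cauchyness yields the order bounds $\pm\varepsilon u$ directly, the unit ball being an order interval.
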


The following characterizations of these completeness conditions will be used in the study of the completeness with respect to the Thompson metric.
\begin{prop}\label{p2.o-complete}
  The following conditions are equivalent.
  \begin{enumerate}
\item[{\rm 1.}] $X$ is fundamentally $\sigma$-order complete.
\item[{\rm 2.}] Any decreasing Cauchy sequence in $X$ has an infimum.
\item[{\rm 3.}] Any increasing Cauchy sequence in $K$ has a supremum.
\item[{\rm 4.}]  Any decreasing Cauchy sequence in $K$ has an infimum.
\end{enumerate}
\end{prop}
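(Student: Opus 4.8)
The plan is to exploit two elementary operations that are compatible with the ordered linear structure: the order-reversing bijection $x\mapsto -x$, which interchanges increasing and decreasing sequences and, since $\sup A=-\inf(-A)$ and $\inf A=-\sup(-A)$, interchanges the existence of suprema and infima; and the translation $x\mapsto x-x_1$, which preserves both monotonicity and the $\tau$-Cauchy property and satisfies $\sup_n x_n=x_1+\sup_n(x_n-x_1)$ whenever either side exists. Neither operation requires any Archimedean-type assumption, so all of the routine implications below hold in full generality. First I would dispose of the equivalence $1\Leftrightarrow 2$: if $(x_n)$ is decreasing and $\tau$-Cauchy in $X$, then $(-x_n)$ is increasing and $\tau$-Cauchy in $X$, condition $1$ furnishes $s=\sup_n(-x_n)$, and then $-s=\inf_n x_n$; the converse is identical.

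Next I would establish $1\Leftrightarrow 3$. The implication $1\Rightarrow 3$ is immediate, since $K\subset X$ and an increasing $\tau$-Cauchy sequence in $K$ is in particular one in $X$. For $3\Rightarrow 1$, given an increasing $\tau$-Cauchy $(x_n)$ in $X$, the translated sequence $y_n:=x_n-x_1$ is again increasing and $\tau$-Cauchy, and $y_n\ge 0$ forces $y_n\in K$; applying $3$ yields $s=\sup_n y_n$, whence $\sup_n x_n=x_1+s$. The implication $2\Rightarrow 4$ is likewise immediate from $K\subset X$, and the reverse direction within the cone, $3\Rightarrow 4$, follows by reflecting inside $K$: for a decreasing $\tau$-Cauchy $(x_n)$ in $K$ the sequence $w_n:=x_1-x_n$ is increasing, $\tau$-Cauchy, and lies in $K$ because $x_n\le x_1$, so $3$ gives $\sup_n w_n$ and hence $\inf_n x_n=x_1-\sup_n w_n$. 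At this point $1,2,3$ are equivalent and each of them implies $4$.

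The one genuinely delicate arrow is the reverse implication out of condition $4$, i.e.\ $4\Rightarrow 3$ (equivalently $4\Rightarrow 2$). Here the two operations above fail to help: reflecting an increasing sequence $(y_n)$ in $K$ produces $y_1-y_n\in -K$, and translating a decreasing $\tau$-Cauchy sequence of $X$ by $-x_1$ also lands in $-K$, so condition $4$ cannot be applied verbatim. The natural reduction is to observe that if an increasing $\tau$-Cauchy $(y_n)$ in $K$ happens to admit an order upper bound $u$, then $u-y_n$ is decreasing, $\tau$-Cauchy and contained in $K$, so $4$ provides $\inf_n(u-y_n)$ and therefore $\sup_n y_n=u-\inf_n(u-y_n)$. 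Thus the whole difficulty is concentrated in proving that, under hypothesis $4$, every increasing $\tau$-Cauchy sequence in $K$ possesses such an upper bound.

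This upper-bound step is the one I expect to be the main obstacle, and I would attack it through the $\tau$-Cauchy property in combination with the closedness of $K$ (equivalently, its Archimedean character, guaranteed by Proposition \ref{p2.order-tvs}). The asymmetry here is intrinsic rather than technical: a decreasing sequence in $K$ is automatically bounded below by $0$, so the existence of its infimum is a comparatively weak demand, whereas an increasing sequence in $K$ carries no free upper bound, which is exactly what must be manufactured in order to close the loop and fold condition $4$ back into the equivalence class of $1$, $2$ and $3$.
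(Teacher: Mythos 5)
Your routine implications are all correct: $1\Leftrightarrow 2$ via $x\mapsto -x$, $1\Leftrightarrow 3$ via translation into $K$, and $2\Rightarrow 4$, $3\Rightarrow 4$ as you describe; your reduction of $4\Rightarrow 3$ to the existence of an order upper bound for an increasing $\tau$-Cauchy sequence in $K$ is also correct. But that is exactly where the proposal stops. The final paragraph announces that the upper bound must be ``manufactured'' from the Cauchy property together with the closedness (Archimedean character) of $K$, without producing it, so the decisive implication $4\Rightarrow 3$ (equivalently $4\Rightarrow 1$) is simply not proved. (For comparison purposes, note that the paper states Proposition \ref{p2.o-complete} without any proof, so there is no argument there to measure against; the issue below is therefore all the more important.)

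The gap is essential, not technical: no argument of the kind you envisage can close it, because in the stated generality the upper bound need not exist and $4\Rightarrow 3$ fails. Take $X=c_{00}$, the space of finitely supported real sequences with the sup norm, ordered coordinatewise; its positive cone $K$ is closed (hence Archimedean, by Proposition \ref{p2.order-tvs}), normal and generating. If $(x_n)$ is decreasing in $K$, then every $x_n$ is supported in the finite support of $x_1$ and decreases coordinatewise, and the coordinatewise limit is easily checked to be $\inf_n x_n\in c_{00}$; thus condition 4 holds, even without the Cauchy hypothesis. On the other hand, $x_n=\sum_{i=1}^{n}\frac{1}{i}\,e_i$ is increasing, lies in $K$, is norm-Cauchy (indeed $\|x_n-x_m\|_\infty=\frac{1}{m+1}$ for $n>m$), and has no upper bound in $c_{00}$ at all, since an upper bound would have to dominate $\frac{1}{i}$ in every coordinate; so conditions 1, 2 and 3 all fail. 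What rescues the statement in the setting where the paper actually applies it --- the space $X_u$ with the order-unit seminorm $|\cdot|_u$ and $K_u$ Archimedean, as in Theorem \ref{t1.complete-T-m-u-sn} --- is precisely a compatibility between topology and order that a general TVS lacks: by Proposition \ref{p1.seq-u-semin}.1, an increasing $|\cdot|_u$-Cauchy sequence satisfies, for every $\varepsilon>0$, $x_n\le x_k+\varepsilon u$ for some $k$ and all $n$, so the topology itself furnishes approximate upper bounds. One then picks $k_1<k_2<\cdots$ with $x_n\le x_{k_j}+2^{-j}u$ for all $n$, checks that $y_j:=x_{k_j}+2^{-j+1}u$ is a decreasing $|\cdot|_u$-Cauchy sequence in $K_u$ majorizing every $x_n$, applies condition 4 to obtain $y:=\inf_j y_j$, and uses the Archimedean property to verify $y=\sup_n x_n$ --- a duality argument in the spirit of Propositions \ref{p4.s-bd-seq} and \ref{p5.s-bd-seq}. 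Your instinct that the whole difficulty is the upper bound is right; the missing insight is that it cannot come from closedness of $K$ alone, but only from an order-unit-type link between $\tau$ and the ordering.
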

\begin{prop}\label{p3.o-complete}
  The following conditions are equivalent.
  \begin{enumerate}
\item[{\rm 1.}] $X$ is monotonically sequentially complete.
\item[{\rm 2.}] Any decreasing Cauchy sequence in $X$ has limit.
\item[{\rm 3.}] Any increasing Cauchy sequence in $K$ has limit.
\item[{\rm 4.}]  Any decreasing Cauchy sequence in $K$ has limit.
\end{enumerate}
\end{prop}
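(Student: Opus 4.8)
\emph{The plan.} I would treat the four conditions as vertices linked by two $\tau$-homeomorphisms of $X$: the reflection $x\mapsto -x$ and the translations $x\mapsto x-a$. Both carry $\tau$-Cauchy sequences to $\tau$-Cauchy sequences and convergent sequences to convergent sequences (with the obvious change of limit). The reflection interchanges increasing and decreasing sequences but also reverses the cone, $K\mapsto -K$, whereas a translation preserves monotonicity and, if chosen well, can move a monotone sequence into $K$. The strategy is to let condition 3 (increasing $\tau$-Cauchy sequences in $K$ converge) act as a hub, reducing every monotone $\tau$-Cauchy sequence to an increasing one with values in $K$.

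\emph{The routine implications.} Since $x\mapsto -x$ is a $\tau$-homeomorphism with $-X=X$ that swaps increasing and decreasing sequences, it gives at once $1\Leftrightarrow 2$. The inclusion $K\subset X$ yields the trivial $1\Rightarrow 3$ and $2\Rightarrow 4$, because a monotone $\tau$-Cauchy sequence contained in $K$ is such a sequence in $X$ as well. For $3\Rightarrow 1$, given an increasing $\tau$-Cauchy sequence $(x_n)$ in $X$ I would pass to $y_n:=x_n-x_1\ge 0$; then $(y_n)$ is an increasing $\tau$-Cauchy sequence in $K$, convergent by 3, and so $x_n=y_n+x_1$ converges. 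The same translation handles the decreasing cases: for a decreasing $\tau$-Cauchy sequence $(x_n)$ in $X$ one has $x_1-x_n\ge 0$, so $(x_1-x_n)$ is an increasing $\tau$-Cauchy sequence in $K$, giving $3\Rightarrow 2$; and applied to a decreasing sequence in $K$, where $0\le x_n\le x_1$, the same device gives $3\Rightarrow 4$. At this stage $1,2,3$ are mutually equivalent and each implies $4$.

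\emph{The main obstacle.} The crux is closing the cycle, that is, recovering $1,2,3$ from condition 4 alone. Here the translation trick breaks down: a decreasing $\tau$-Cauchy sequence in $X$ (equivalently, an increasing one in $K$ after reflection into $-K$) need not be order-bounded below, so there is no $a$ permitting a shift into a decreasing sequence inside $K$; and reflecting condition 4 only produces a statement about $-K$, not about $K$. This mirrors a genuine asymmetry: a decreasing sequence lying in $K$ is automatically trapped in the order interval $[0;x_1]_o$, whereas the sequences in $1,2,3$ carry no a priori order bound. I therefore expect this to be the delicate step, and I would seek the missing control by invoking an order-bounding/normality property of $K$ (as in Theorem \ref{t1.char-normal-cone} and Proposition \ref{p1.normal-cone-bd}) or by extracting from the Cauchy condition a decreasing auxiliary sequence in $K$ whose convergence forces that of the original; without such an additional structural input, the implication $4\Rightarrow 3$ should not be expected to hold in full generality, and this is exactly where the argument needs to be watched most carefully.
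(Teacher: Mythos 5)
Your positive claims are all correct, and they are the natural arguments: the reflection $x\mapsto -x$ gives $1\Leftrightarrow 2$, the inclusion $K\subset X$ gives $1\Rightarrow 3$ and $2\Rightarrow 4$, and the translations $x_n\mapsto x_n-x_1$ (resp.\ $x_n\mapsto x_1-x_n$) give $3\Rightarrow 1$ and $3\Rightarrow 2$; so $1\Leftrightarrow 2\Leftrightarrow 3$ and each implies $4$. For the record, there is nothing in the paper to compare this with: Proposition \ref{p3.o-complete}, like Propositions \ref{p2.o-complete}, \ref{p4.o-complete} and \ref{p5.o-complete}, is stated without proof.

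Your refusal to assert $4\Rightarrow 3$ in full generality is not excessive caution; it is the mathematically correct verdict, and the asymmetry you describe can be promoted to a counterexample showing that the proposition is false as stated. Let $X=c_{00}$ be the space of finitely supported real sequences with the supremum norm, and let $K=\{x\in c_{00} : x_i\ge 0 \ \mbox{for all}\ i\}$; this cone is closed, generating, and normal (the sup-norm is monotone, Theorem \ref{t4.char-normal-cone}). If $(x_n)$ is a decreasing Cauchy sequence in $K$, then $0\le x_n\le x_1$ forces every $x_n$ to be supported inside the finite support of $x_1$, so the whole sequence lies in a finite-dimensional, hence complete, subspace and converges: condition 4 holds. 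On the other hand, $x_n=\sum_{i=1}^n 2^{-i}e_i$ (with $e_i$ the unit vectors) is an increasing Cauchy sequence in $K$ whose limit in the completion $c_0$ is not finitely supported, so it has no limit in $X$: condition 3 fails. Thus $4\not\Rightarrow 3$ even for a closed, normal, generating cone in a normed space, and the same example refutes the corresponding implication in Proposition \ref{p2.o-complete}.

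Finally, the missing implication does hold under exactly the extra input you anticipate, and via the auxiliary decreasing sequence you suggest. Assume $K$ is solid and normal, and fix $u\in\inter(K)$, so that $[-u;u]_o$ is a $0$-neighborhood. Given an increasing Cauchy sequence $(x_n)$ in $K$, choose $N_1<N_2<\cdots$ with $x_m-x_n\in 2^{-k}[-u;u]_o$ for all $m,n\ge N_k$, and set $z_k:=x_{N_k}+2^{-k+1}u\in K$. Then $z_{k+1}-z_k=(x_{N_{k+1}}-x_{N_k})-2^{-k}u\le 0$, and $0\le z_k-z_j\le 2^{-k+1}u$ for $j\ge k$; since $[0;u]_o$ is $\tau$-bounded by normality (Proposition \ref{p1.normal-cone-bd}), $(z_k)$ is a decreasing Cauchy sequence in $K$. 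Its limit $z$, supplied by condition 4, satisfies $x_{N_k}=z_k-2^{-k+1}u\to z$, and a Cauchy sequence with a convergent subsequence converges, proving condition 3. This repaired version suffices for every place the paper actually needs such equivalences, namely the spaces $(X_u,|\cdot|_u)$ ordered by $K_u$ in the proof of Theorem \ref{t1.complete-T-m-u-sn}: there $K_u$ is normal and $u$ is a $|\cdot|_u$-interior point of $K_u$ by Proposition \ref{p1.Ku}. So the gap you flagged is real, but it is a flaw of the statement itself, not of your argument.
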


\begin{prop}\label{p4.o-complete}
If $K$ is lineally solid, then the following conditions are equivalent.
  \begin{enumerate}
\item[{\rm 1.}] $X $ is fundamentally $\sigma$-order complete.
\item[{\rm 2.}] Any increasing Cauchy sequence in $\ainter(K)$ has a supremum.
\item[{\rm 3.}] Any decreasing Cauchy sequence in $\ainter(K)$ has an infimum.
  \end{enumerate}
  \end{prop}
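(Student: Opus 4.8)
The plan is to reduce the whole statement to Proposition \ref{p2.o-complete}, which already characterizes fundamental $\sigma$-order completeness by the existence of suprema of increasing Cauchy sequences (and infima of decreasing Cauchy sequences) lying in the cone $K$ itself, and then to transport such sequences into the algebraic interior $\ainter(K)$ by a translation. The implications $(1)\Rightarrow(2)$ and $(1)\Rightarrow(3)$ are immediate: since $\ainter(K)\subset K$, an increasing (resp.\ decreasing) Cauchy sequence in $\ainter(K)$ is in particular such a sequence in $K$, so conditions $3$ and $4$ of Proposition \ref{p2.o-complete} supply the required supremum and infimum.

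For the converse $(2)\Rightarrow(1)$, I would use that $K$ is lineally solid to fix some $u\in\ainter(K)$. Given an arbitrary increasing $\tau$-Cauchy sequence $(x_n)$ in $K$, I form $y_n:=x_n+u$. By \eqref{eq2.re.a-int} one has $\ainter(K)+K\subset\ainter(K)$, hence $y_n\in\ainter(K)$ for all $n$. Translation by the fixed vector $u$ leaves the consecutive differences $y_{n+1}-y_n=x_{n+1}-x_n$ unchanged, so $(y_n)$ is again increasing and $\tau$-Cauchy. Applying hypothesis $(2)$ yields $s:=\sup_n y_n$. Because the order is compatible with the linear structure (see \eqref{eq.lin.ord}), the map $z\mapsto z+u$ is an order isomorphism of $X$ and therefore carries suprema to suprema; consequently $\sup_n x_n$ exists and equals $s-u$. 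Thus every increasing Cauchy sequence in $K$ has a supremum, which is exactly condition $3$ of Proposition \ref{p2.o-complete}, whence $(1)$.

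The implication $(3)\Rightarrow(1)$ is entirely analogous: starting from a decreasing $\tau$-Cauchy sequence $(x_n)$ in $K$, the translated sequence $y_n=x_n+u$ is decreasing, $\tau$-Cauchy and lies in $\ainter(K)$ by \eqref{eq2.re.a-int}; hypothesis $(3)$ provides $\inf_n y_n$, and translating back gives $\inf_n x_n=\inf_n y_n-u$. This is condition $4$ of Proposition \ref{p2.o-complete}, which again yields $(1)$.

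The only point requiring real care—and the place I expect to be most delicate to state cleanly—is the verification that translation by $u$ genuinely preserves order suprema and infima, together with the translation invariance of the Cauchy property; both are routine consequences of \eqref{eq.lin.ord} and of the translation invariance of the topology of a TVS, but they must be invoked explicitly in order to legitimately pass the supremum (resp.\ infimum) through the translation. I would also note that one cannot shortcut the argument by deducing $(2)\Leftrightarrow(3)$ via the reflection $x\mapsto -x$, since that map sends $\ainter(K)$ into $\ainter(-K)=-\ainter(K)$ rather than into $\ainter(K)$ itself; routing both converse implications through Proposition \ref{p2.o-complete} is precisely what sidesteps this obstruction.
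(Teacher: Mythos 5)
Your proof is correct: the reduction to Proposition \ref{p2.o-complete} combined with the translation $x_n\mapsto x_n+u$ by a fixed $u\in\ainter(K)$, justified by \eqref{eq2.re.a-int} and the fact that $z\mapsto z+u$ is an order isomorphism (so it transports suprema and infima, and leaves the Cauchy and monotonicity properties unchanged), is exactly the natural argument. The paper states this proposition without proof, but the same device---pushing a monotone sequence into $\ainter(K)$ by adding an algebraic interior point and pulling the supremum back---is precisely what the paper uses in the proof of Proposition \ref{p6.s-complete}.2, so your route coincides with the paper's own technique.
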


  \begin{prop}\label{p5.o-complete}
If K is lineally solid, then the following conditions are equivalent.
  \begin{enumerate}
\item[{\rm 1.}] $X$ is monotonically sequentially complete.
\item[{\rm 2.}] Any increasing $\tau$-Cauchy sequence in $\ainter(K)$ has limit.
\item[{\rm 3.}] Any decreasing $\tau$-Cauchy sequence in $\ainter(K)$ has limit.
  \end{enumerate}
  \end{prop}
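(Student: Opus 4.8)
The plan is to establish the four implications $1 \Rightarrow 2$, $1 \Rightarrow 3$, $2 \Rightarrow 1$ and $3 \Rightarrow 1$, relying throughout on the reformulations of monotone sequential completeness collected in Proposition \ref{p3.o-complete} and on the absorption property \eqref{eq2.re.a-int} of the algebraic interior. Since $K$ is lineally solid, I would fix once and for all a vector $u \in \ainter(K)$; translation by $u$ is the device that moves monotone sequences lying in $K$ into monotone sequences lying in $\ainter(K)$.

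The two implications issuing from condition 1 are immediate. Indeed $\ainter(K) \subset X$, so an increasing (resp.\ decreasing) $\tau$-Cauchy sequence in $\ainter(K)$ is in particular such a sequence in $X$; if $X$ is monotonically sequentially complete it therefore converges, directly from the definition in the increasing case ($1 \Rightarrow 2$) and through the equivalence with condition 2 of Proposition \ref{p3.o-complete} in the decreasing case ($1 \Rightarrow 3$).

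The content of the statement lies in $2 \Rightarrow 1$ and $3 \Rightarrow 1$, both handled by the same translation. By Proposition \ref{p3.o-complete} it is enough, in order to prove that $X$ is monotonically sequentially complete, to show that every increasing $\tau$-Cauchy sequence in $K$ has a limit. So, assuming condition 2, I would take such a sequence $(x_n)$ and set $y_n = x_n + u$. Property \eqref{eq2.re.a-int} gives $y_n \in \ainter(K)$ for every $n$, while the identities $y_{n+1} - y_n = x_{n+1} - x_n$ and $y_n - y_m = x_n - x_m$ show that $(y_n)$ remains increasing and $\tau$-Cauchy, now inside $\ainter(K)$. Condition 2 provides a limit $y = \lim_n y_n$, and then $x_n = y_n - u \to y - u$; by Proposition \ref{p3.o-complete} this is exactly condition 1. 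The implication $3 \Rightarrow 1$ is entirely analogous: one translates a decreasing $\tau$-Cauchy sequence in $K$ by $u$ into a decreasing $\tau$-Cauchy sequence in $\ainter(K)$, applies condition 3 to obtain its limit, and concludes through condition 4 of Proposition \ref{p3.o-complete}.

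I do not anticipate a genuine obstacle: the only points requiring care are that the translate $(x_n + u)$ indeed stays in $\ainter(K)$, which is precisely \eqref{eq2.re.a-int}, and that translation by a fixed vector preserves both monotonicity and the Cauchy property, which is routine. The whole argument runs parallel to that of Proposition \ref{p4.o-complete}, the sole difference being that ``admits a supremum/infimum'' is replaced everywhere by ``admits a limit'', so that no order completeness of $X$ is invoked.
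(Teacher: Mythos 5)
Your proof is correct: the two implications from condition 1 are indeed immediate, and the translation trick $y_n = x_n + u$ with $u \in \ainter(K)$, justified by \eqref{eq2.re.a-int}, correctly reduces conditions 2 and 3 to conditions 3 and 4 of Proposition \ref{p3.o-complete}, since translation preserves monotonicity and the Cauchy property and limits transfer back by continuity of translation. The paper states this proposition without proof, and your argument is evidently the intended one (the same device underlies the unproved Proposition \ref{p4.o-complete}), so there is nothing to add.
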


\section{The Thompson metric}\label{S.Thompson-metric}
\subsection{Definition and fundamental properties}\label{Ss.def-T-metric}

Let $X$ be a  vector  space and $K$ a  cone in $X$. The relation
\begin{equation}\label{def.linked}
x\sim y \iff \exists \lambda,\mu >0,\;\; x\le \lambda y\;\mbox{and}\; y \le \mu x,
\end{equation}
is an equivalence relation in $K$. One says that two elements $x,y\in K$ satisfying  \eqref{def.linked} are  \emph{linked} and the equivalence classes are called \emph{components}. The equivalence class of an element $x\in K $ will be denoted by $K(x).$
\begin{prop}\label{p1.equiv-cone}
Let $X$ be a vector space ordered by a cone $K$.
\begin{enumerate}
\item[{\rm 1.}] \; $K(0)=\{0\}$ and $\ainter(K)$ is a component of $K$ if $K$ is lineally solid.
\item[{\rm 2.}] Every  component  $Q$ of $K$ is  order--convex, convex, closed under addition and multiplication by positive scalars, that is  $Q\cup\{0\}$ is an order--convex cone.
\end{enumerate}
\end{prop}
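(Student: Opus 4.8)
The plan is to treat the two assertions separately, deriving everything directly from the definition \eqref{def.linked} of the linking relation together with the cone axioms and the algebraic-interior facts recorded in Proposition~\ref{p.char-o-unit} and in \eqref{eq2.re.a-int}.

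For the first assertion, the equality $K(0)=\{0\}$ follows at once: if $x\sim 0$, then there are $\lambda,\mu>0$ with $x\le\lambda\cdot 0=0$ and $0\le\mu x$; since $\mu>0$ the latter gives $0\le x$, and antisymmetry of the order \eqref{eq.antiref} forces $x=0$. To show that $\ainter(K)$ is a single component when $K$ is lineally solid, I would fix $x_0\in\ainter(K)$ and prove $K(x_0)=\ainter(K)$ by two inclusions. For $\ainter(K)\subseteq K(x_0)$: every $x\in\ainter(K)$ is an order unit by Proposition~\ref{p.char-o-unit}, so the interval $[-x;x]_o$ is absorbing; applying this to the point $x_0$ and, symmetrically, the analogous property of $x_0$ to $x$, yields $s,t>0$ with $x_0\le s x$ and $x\le t x_0$, i.e. $x\sim x_0$. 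For the reverse inclusion $K(x_0)\subseteq\ainter(K)$: if $y\sim x_0$, then $x_0\le\mu y$ for some $\mu>0$, so $\mu y=x_0+(\mu y-x_0)\in\ainter(K)+K\subseteq\ainter(K)$ by \eqref{eq2.re.a-int}, and scaling by $1/\mu>0$ (using that $\ainter(K)\cup\{0\}$ is a cone) gives $y\in\ainter(K)$.

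For the second assertion, write $Q=K(w)$; the case $w=0$ is trivial, so assume $w\ne 0$. Closure under positive scaling is immediate, since for $t>0$ the relations $tx\le t\,x$ and $x\le t^{-1}(tx)$ hold trivially, whence $tx\sim x$. For closure under addition, if $x,y\in Q$ then $x\sim y$, say $y\le\beta x$ with $\beta>0$; then $x\le x+y$ and $x+y\le(1+\beta)x$, so $x+y\sim x\sim w$, while $x+y\in K$ by (C1). These two closure properties yield convexity of $Q$ and show that $Q\cup\{0\}$ satisfies the wedge axioms (C1) and (C2); since $Q\cup\{0\}\subseteq K$, axiom (C3) is inherited, so $Q\cup\{0\}$ is a cone. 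Finally, for order-convexity of $Q$, I would take $x,y\in Q$ and $z$ with $x\le z\le y$: from $x\sim y$ one has $y\le\alpha x$ for some $\alpha>0$, hence $z\le y\le\alpha x$, while $x\le z$ supplies the reverse bound $x\le 1\cdot z$; thus $z\sim x$, and $z\in K$ because $z=x+(z-x)\in K+K$, so $z\in Q$.

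The routine half is the second assertion, which is pure inequality bookkeeping with the witnessing scalars read off directly. The step that carries the real content, and the one I expect to require the most care, is the identification $\ainter(K)=K(x_0)$ in the first assertion, precisely because it is where the deeper structural inputs enter: the equivalence of \emph{algebraic interior point} and \emph{order unit} in Proposition~\ref{p.char-o-unit} is what produces the scalars witnessing the linking, and the absorption property $\ainter(K)+K\subseteq\ainter(K)$ from \eqref{eq2.re.a-int} is what prevents the linking relation from leaving the algebraic interior.
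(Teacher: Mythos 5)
Your proof is correct, and it is in fact more complete than the paper's own. The paper dismisses assertion 2 and the equality $K(0)=\{0\}$ as trivial, and for the claim about $\ainter(K)$ it prints only one half of the argument: it shows that any two points $x,y\in\ainter(K)$ are mutually linked, extracting the scalars directly from the definition \eqref{def.a-int-pt} (perturb $x$ in the direction $y$ and vice versa, getting $x\ge\alpha y$ and $y\ge\beta x$), i.e.\ that $\ainter(K)$ is contained in a single component. It never addresses the reverse inclusion---that the component of a point of $\ainter(K)$ contains nothing outside $\ainter(K)$---which you handle explicitly via $\mu y=x_0+(\mu y-x_0)\in\ainter(K)+K\subseteq\ainter(K)$ (equation \eqref{eq2.re.a-int}) together with the positive homogeneity of $\ainter(K)\cup\{0\}$ from Remark \ref{re.a-int}; this step is exactly what is needed for $\ainter(K)$ to be a full equivalence class rather than merely a subset of one. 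For the half the paper does prove, your route through Proposition \ref{p.char-o-unit} (interior point $=$ order unit, so $[-x;x]_o$ absorbs $x_0$ and $[-x_0;x_0]_o$ absorbs $x$) is equivalent to the paper's direct unwinding of the definition of algebraic interior point: the paper's version is a line shorter and self-contained, while yours makes the structural input (the order-unit characterization) explicit. Your bookkeeping in assertion 2 (closure under addition and positive scaling, hence convexity and the wedge axioms for $Q\cup\{0\}$, plus order-convexity of $Q$ via $x\le z\le y\le\alpha x$) is exactly the routine verification the paper omits, and it is carried out correctly.
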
\begin{proof}
  We justify only the assertion concerning $\ainter K$, the others being trivial. If $x,y\in \ainter K, $ then there exist $\alpha,\beta>0$ such that $x+t y\in K$ for all $t\in[-\alpha,\alpha]$ and $y+sx\in K$ for all $s\in[-\beta,\beta].$ It follows $y-\beta x\in K,$ i.e. $y\ge \beta x,$ and $x-\alpha  y\in K,$ i.e. $x\ge \alpha y.$
\end{proof}

For two linked elements $x,y\in K$ put
  \begin{equation}\label{def1.t-metric}
 \sigma(x,y)=\{s\ge 0 : e^{-s}x\le y\le e^sx\},
 \end{equation}
 and let
 \begin{equation}\label{def2.t-metric}
 d_T(x,y)=\inf\sigma(x,y) .\end{equation}

 \begin{remark}\label{re.extended-T-metric}
It is convenient to define $d_T$ for any pair of elements in $K$, by setting  $d_T(x,y)=\infty$
for any $x,y$ not lying in the same component  of $K$  which, by   \eqref{def2.t-metric},  is  in concordance with the usual convention   $\inf\emptyset =\infty.$
 In this way,  $d_T$  becomes an extended (or generalized) (semi)metric (in the sense of Jung \cite{jung69})
on $K$ and,  for all $x,y\in K,\, x\sim y \iff d(x,y) < \infty.$ Though  $d_T$  is not a usual (semi)metric on the whole cone, we will continue to call  $d_T$ a metric. The   Thompson  metric is also called, by some authors, the part
metric (of the cone $K$).
\end{remark}

\begin{remark} \label{re2.T-metric}It is obvious that the definition of $d(x,y)$ depends only on the ordering of the vector subspace spanned by $\{x,y\}$. This ensures that if $x$ and $y$ are seen as elements of some vector subspace $Y$ of $X$, then $d_T(x,y)$ is the same in $Y$ as in $X$ (assuming, of course, that $Y$ inherits the ordering from $ X$).
\end{remark}

 The initial approach of Thompson \cite{thomp63} was slightly different. He considered the set
\begin{equation}\label{def3.t-metric}
  \alpha(x,y)= \{\lambda \ge 1 : x\le \lambda y\} .
   \end{equation}
and defined the distance between $x$ and $y$ by
 \begin{equation}\label{def4.t-metric}
 \delta(x,y)=\ln\left(\max\{\inf\alpha(x,y),\inf\alpha(y,x)\}\right).\end{equation}

 The following proposition shows that the  relations \eqref{def2.t-metric} and \eqref{def4.t-metric} yield the same function.
 \begin{prop}\label{p1.T-metric}
 For every $x,y\in K$ the following equality holds
 $$
 d_T(x,y)=\delta(x,y).$$
  \end{prop}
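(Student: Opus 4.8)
The plan is to show that the exponential parametrization defining $\sigma(x,y)$ and the linear parametrization defining $\alpha(x,y)$ describe the same optimization problem, so that the two infima differ only by the outer logarithm. First I would dispose of the degenerate case. If $x$ and $y$ are not linked (i.e.\ $x\not\sim y$ in the sense of \eqref{def.linked}), then there is no $\lambda>0$ with $x\le\lambda y$ or no $\mu>0$ with $y\le\mu x$, so at least one of the sets $\alpha(x,y),\alpha(y,x)$ is empty; the corresponding infimum is $+\infty$ and hence $\delta(x,y)=+\infty$. Simultaneously $\sigma(x,y)=\emptyset$, whence $d_T(x,y)=\inf\emptyset=+\infty$ by the convention of Remark \ref{re.extended-T-metric}. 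Thus both sides agree, and it remains to treat linked $x,y$, for which both $\alpha$ sets are nonempty.

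For the main case I would translate the two inequalities defining $\sigma$ into membership in the $\alpha$ sets. Fix $s\ge 0$. Multiplying by the positive scalar $e^{s}$ and using compatibility of the order with the linear structure \eqref{eq.lin.ord}, one has $e^{-s}x\le y\iff x\le e^{s}y$; since $s\ge 0$ forces $e^{s}\ge 1$, this is exactly the statement $e^{s}\in\alpha(x,y)$. Symmetrically, $y\le e^{s}x\iff e^{s}\in\alpha(y,x)$. Combining the two conditions in the definition \eqref{def1.t-metric} of $\sigma(x,y)$ gives the key equivalence
$$ s\in\sigma(x,y)\iff e^{s}\in\alpha(x,y)\cap\alpha(y,x),\qquad (s\ge 0). $$

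The final step is to compute and compare the infima. I would first record that each of $\alpha(x,y),\alpha(y,x)$ is \emph{upward closed} in $[1,\infty)$: if $\lambda\in\alpha(x,y)$ and $\lambda'\ge\lambda$, then $x\le\lambda y\le\lambda' y$ because $(\lambda'-\lambda)y\in K$. Hence the intersection $\alpha(x,y)\cap\alpha(y,x)$ is again upward closed in $[1,\infty)$, and its infimum equals $\max\{\inf\alpha(x,y),\inf\alpha(y,x)\}$, a value $\ge 1$. For any upward-closed $S\subseteq[1,\infty)$ one has $\inf\{s\ge 0:e^{s}\in S\}=\ln(\inf S)$, since $s\mapsto e^{s}$ is an increasing bijection of $\mathbb{R}$ onto $(0,\infty)$ and $\ln(\inf S)\ge 0$ makes the constraint $s\ge 0$ inactive. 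Applying this with $S=\alpha(x,y)\cap\alpha(y,x)$ and invoking the equivalence above yields $d_T(x,y)=\inf\sigma(x,y)=\ln\bigl(\max\{\inf\alpha(x,y),\inf\alpha(y,x)\}\bigr)=\delta(x,y)$, which is the claim.

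The argument is essentially bookkeeping, and the only point that genuinely requires care is the interplay between the strict constraint $\lambda\ge 1$ in the definition of $\alpha$ and the weak constraint $s\ge 0$ in the definition of $\sigma$, together with the fact that the relevant sets may be closed or open at their lower endpoints. I expect this to be the main (minor) obstacle, and I would neutralize it by working at the level of infima, where membership of the endpoint is irrelevant: the formula $\inf\{s\ge 0:e^{s}\in S\}=\ln(\inf S)$ holds whether or not $\inf S\in S$, so no case distinction on attainment is needed.
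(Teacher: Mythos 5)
Your proof is correct and follows essentially the same route as the paper: both rest on the key equivalence $s\in\sigma(x,y)\iff e^{s}\in\alpha(x,y)\cap\alpha(y,x)$ and then compare infima. Your explicit use of upward-closedness of the $\alpha$-sets merely makes rigorous a step the paper leaves implicit (that $\lambda>\max\{\alpha_1,\alpha_2\}$ implies $\lambda\in\alpha(x,y)\cap\alpha(y,x)$), so the two arguments are the same in substance.
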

  \begin{proof} It suffices to prove the equality for two linked elements $x,y\in K.$ In this case let
  $$\alpha_1=\inf\alpha(x,y),\;\; \alpha_2=\inf\alpha(y,x)\;\; \mbox{and}\;\; \alpha=\max\{\alpha_1,\alpha_2\}.$$

  Put also
 $$
    d=d_T(x,y)=\inf\sigma(x,y)\quad\mbox{and}\quad \delta=\delta(x,y)=\ln\alpha.
$$

For    $s\in\mathbb{R}$ let  $\lambda =e^s.$ Then the following  equivalences hold
\begin{equation}\label{eq2.p1.T-metric}\begin{aligned}
 s\in\sigma(x,y)\iff& \lambda^{-1}x\le y\le\lambda x\\ \iff& x\le\lambda y\;\wedge \; y\le \lambda x \iff \lambda\in\alpha(x,y)\cap\alpha(y,x) .
\end{aligned}\end{equation}

 Consequently $\lambda\ge\max\{\alpha_1,\alpha_2\}=\alpha$ and $s\ge \ln\alpha=\delta,$ for every $s\in \sigma(x,y),$ and so

 \begin{equation}\label{eq1.p1.T-metric}
 d=\inf\sigma(x,y)\ge \delta.
 \end{equation}

 To prove the reverse inequality, suppose that $\alpha_1\ge \alpha_2$ and let $\lambda>\alpha_1$.
 Then $\lambda\in\alpha(x,y)\cap\alpha(y,x)$ and the   equivalences \eqref{eq2.p1.T-metric} show that $s=\ln\lambda\in \sigma(x,y),$ so that
 $\ln\lambda \ge d.$ It follows
 $$
 \delta=\inf\{\ln\lambda : \lambda >\alpha_1\}\ge d ,$$
 which together with \eqref{eq1.p1.T-metric}  yields $\delta=d.$
  \end{proof}

    There is another metric defined on the components of $K$, namely the \emph{Hilbert projective metric},  defined by
 \begin{equation}\label{def.Hilb-metric}
 d_H(x,y)=\ln\left(\inf\alpha(x,y)\cdot\inf\alpha(y,x)\}\right),
 \end{equation}
 for any two linked elements $x,y$ of $K$.

 The term projective comes from the fact that $d_H(x,y)=0$ iff $x=\lambda y$ for some $\lambda>0.$

   The original Hilbert's definition (see \cite{hilbert1895})  of the  metric was the following. Consider an open bounded convex subset $\Omega$ of the Euclidean space $\mathbb{R}^n$.    For two points
$x,y \in \Omega$ let $\ell_{xy}$ denote the straight line through $x$ and $y$, and denote the
points of intersection of $\ell_{xy}$ with the boundary $\partial\Omega$ of $\Omega$ by $x',y',$ where $x$ is between $x'$ and $y$, and $y$ is between $x$ and $y'$. For $x\ne y$ in $\Omega$ the Hilbert distance between $x$ and $y$ is defined by
\begin{equation}
\label{def.Hilb-metric-R2}
\delta_H(x,y)=\ln\left(\frac{\|x'-y\|\cdot\|y'-x\|}{\|x'-x\|\cdot\|y'-y\|}\right)\,,
\end{equation}
and
$\delta_H(x, x) = 0$  for all $x\in\Omega,$    where $\|\cdot\|$ stands for the Euclidean norm in $\mathbb{R}^n.$ The metric space $(\Omega,\delta_H)$ is
called the Hilbert geometry on $\Omega$. In this geometry there exists a triangle with non-colinear vertices such that the sum of the lengths of two sides   equals    the length of the third side. If $\Omega$ is the open unit disk, the Hilbert metric
is exactly the Klein model of the hyperbolic plane.

The definition \eqref{def.Hilb-metric} of Hilbert metric on cones in vector spaces was proposed by Bushell \cite{bushell73a} (see also \cite{bushell73b}).

\begin{note}
 As we shall consider only the Thompson metric, the subscript $T$ will be omitted, that is $d(\cdot,\cdot)$ will stand always for the Thompson metric.
\end{note}

  In the following proposition we collect some properties of the set $\sigma(x,y).$

  \begin{prop}\label{p2.T-metric}
  Let $X$ be a  vector space ordered by a cone $K$ and $x,y,z$ linked  elements in $K$.
 \begin{enumerate}
\item[{\rm 1.}] Symmetry:\quad $\sigma(y,x)=\sigma(x,y).$
\item[{\rm 2.}]  $(d(x,y);\infty)\subset \sigma(x,y)\subset [d(x,y);\infty).\;$ If the cone $K$ is Archimedean, then  $d(x,y)\in\sigma(x,y),$ that is $\sigma(x,y)=[d(x,y);\infty)$.
 \item[{\rm 3.}]  $\sigma(x,y)+\sigma(y,z)\subset\sigma(x,z).$
 \end{enumerate} \end{prop}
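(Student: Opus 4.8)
The plan is to treat the three assertions separately, since each reduces to elementary manipulations with the defining inequalities of $\sigma$; the only genuinely delicate point is the Archimedean refinement in part~2.

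For the symmetry in part~1, I would start from the fact that $s\in\sigma(x,y)$ means exactly $e^{-s}x\le y\le e^s x$. Multiplying the right inequality $y\le e^s x$ by the positive scalar $e^{-s}$ gives $e^{-s}y\le x$, and multiplying the left inequality $e^{-s}x\le y$ by $e^s$ gives $x\le e^s y$; together these say $e^{-s}y\le x\le e^s y$, i.e. $s\in\sigma(y,x)$. Since multiplication by a positive scalar preserves \emph{and} reflects inequalities by \eqref{eq.lin.ord}, each step is an equivalence, so $\sigma(x,y)=\sigma(y,x)$.

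For part~2, the right inclusion $\sigma(x,y)\subset[d(x,y);\infty)$ is immediate, because $d(x,y)=\inf\sigma(x,y)$ forces every element of $\sigma(x,y)$ to be $\ge d(x,y)$. For the left inclusion I would first verify that $\sigma(x,y)$ is upward closed: if $s\in\sigma(x,y)$ and $t>s$, then $e^{-t}\le e^{-s}$ and $e^s\le e^t$, so from $x\in K$ (hence $x\ge 0$) one gets $e^{-t}x\le e^{-s}x\le y$ and $y\le e^s x\le e^t x$, whence $t\in\sigma(x,y)$. Together with $d(x,y)=\inf\sigma(x,y)$, this gives $(d(x,y);\infty)\subset\sigma(x,y)$. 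For the Archimedean refinement, write $d=d(x,y)$ and use characterization~5 of Proposition~\ref{p.char.Arch}: for every $s>d$ we have $y\le e^s x$, i.e. $y\le\lambda x$ for all $\lambda$ in $A=(e^{d};\infty)$, whose infimum is $e^{d}$, so the Archimedean property yields $y\le e^{d}x$. Applying the same to the equivalent form $x\le e^s y$ of the left inequality $e^{-s}x\le y$ (valid for all $s>d$) gives $x\le e^{d}y$, that is $e^{-d}x\le y$. Hence $e^{-d}x\le y\le e^{d}x$, so $d\in\sigma(x,y)$ and $\sigma(x,y)=[d;\infty)$.

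For part~3 I would take $s\in\sigma(x,y)$ and $t\in\sigma(y,z)$ and chain the two families of inequalities. Multiplying $y\le e^s x$ by $e^t$ gives $e^t y\le e^{s+t}x$, which together with $z\le e^t y$ yields $z\le e^{s+t}x$; symmetrically, multiplying $e^{-s}x\le y$ by $e^{-t}$ gives $e^{-(s+t)}x\le e^{-t}y$, which with $e^{-t}y\le z$ yields $e^{-(s+t)}x\le z$. Thus $e^{-(s+t)}x\le z\le e^{s+t}x$, i.e. $s+t\in\sigma(x,z)$. I expect the main obstacle to be the Archimedean part of~2: the passage from the inequalities valid for all $s>d$ to the limiting inequality at $s=d$ must be justified precisely by characterization~5 of Proposition~\ref{p.char.Arch}, not by an informal limiting argument, and this requires having the upward-closedness step in place first so that the family indexed by $s>d$ is genuinely available.
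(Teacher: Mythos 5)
Your proof is correct and follows essentially the same route as the paper: the symmetry and the additivity in part~3 are proved by the same elementary manipulations, and the two inclusions in part~2 rest on the same monotonicity observation ($0<\lambda<\mu$ and $x\ge 0$ imply $\lambda x\le\mu x$) combined with the infimum property. The only difference is cosmetic: for the Archimedean refinement the paper passes through lineal closedness of $K$ (item~2 of Proposition~\ref{p.char.Arch}), letting $s\searrow d$ in $y-e^{-s}x\in K$ and $e^{s}x-y\in K$, whereas you invoke item~5 of the same proposition with $A=(e^{d};\infty)$ --- both are one-step applications of equivalent characterizations of the Archimedean property.
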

 \begin{proof} 1.\; The symmetry follows from the definition of the set $\sigma(x,y).$

 2.\; The inclusion $(d(x,y);\infty)\subset \sigma(x,y)$ follows from the fact that $0<\lambda<\mu$ and $x\ge 0 $ implies $ \lambda x\le \mu x.$ The second inclusion follows from the fact that   no $\lambda<d(x,y)$ belongs to $\sigma(x,y).$

Let $d=d(x,y)=\inf\sigma(x,y). $ Since an   Archimedean cone   is lineally closed  and   $y-e^{-s}x\in K$ for every $s>d$, it follows  $y-e^{-d}x\in K.$ Similarly $e^{d}x-y\in K,$ showing that $d\in \sigma(x,y).$

3.\; Let $s\in\sigma(x,y)$ and  $t\in\sigma(y,z).$ Then
$$
e^{-s}x\le y\le e^sx\quad\mbox{and}\quad e^{-t}y\le z\le e^ty.$$

It follows
$$
e^{-(s+t)}x\le e^{-t}y\le z\quad\mbox{and} \quad z\le e^{t}y \le e^{s+t}x,$$
which shows that $s+t\in \sigma(x,z).$
 \end{proof}

 Now it is easy to show that   the  function $d$ given by \eqref{def2.t-metric} is an extended  semimetric.

 \begin{prop} \label{p3.T-metric} Let $X$ be a  vector space ordered by a cone $K$.
 \begin{enumerate}
   \item[{\rm 1.}] The function $d$ defined by \eqref{def2.t-metric} is a semimetric on each component of $K$.
   \item[{\rm 2.}] The function $d$   is a metric on each component of $K$ iff the order defined by the cone $K$ is almost Archimedean.
 \end{enumerate} \end{prop}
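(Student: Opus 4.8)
The plan is to separate the two assertions and to reduce everything to the set‑level properties of $\sigma(x,y)$ recorded in Proposition \ref{p2.T-metric}, together with the inclusion $(d(x,y);\infty)\subset\sigma(x,y)$ from its part~2. For part~1 I would verify the semimetric axioms on a fixed component $Q=K(x)$ one at a time. Nonnegativity is immediate from $\sigma(x,y)\subset[0;\infty)$. Vanishing on the diagonal holds because $0\in\sigma(x,x)$ (indeed $e^{0}x=x\le x\le x=e^{0}x$), so $d(x,x)=\inf\sigma(x,x)=0$. Symmetry is exactly Proposition \ref{p2.T-metric}.1. For the triangle inequality I would invoke Proposition \ref{p2.T-metric}.3: if $s\in\sigma(x,y)$ and $t\in\sigma(y,z)$ then $s+t\in\sigma(x,z)$, hence $d(x,z)\le s+t$; taking the infimum first over $s$ and then over $t$ gives $d(x,z)\le d(x,y)+d(y,z)$. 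None of these steps is delicate.

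For part~2 the only extra property needed to upgrade the semimetric to a metric is definiteness, that is $d(x,y)=0\Rightarrow x=y$ for linked $x,y$. The pivot of both directions is the equivalence, coming from Proposition \ref{p2.T-metric}.2, that $d(x,y)=0$ is the same as $(0;\infty)\subset\sigma(x,y)$, i.e. $e^{-s}x\le y\le e^{s}x$ for \emph{every} $s>0$. So the whole proposition reduces to matching this ``all $s>0$'' scaling condition against the integer‑multiple condition \eqref{def.a-Arch}.

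For the implication ``almost Archimedean $\Rightarrow$ metric'', I would take linked $x,y$ with $d(x,y)=0$, set $w=y-x$, and specialize $s=1/n$ to get $-(e^{1/n}-1)x\le w\le(e^{1/n}-1)x$. Multiplying by $n\ge 0$ and using the elementary bound $n(e^{1/n}-1)\le e-1$ (valid for all $n\ge 1$), I obtain $-(e-1)x\le nw\le(e-1)x$ for every $n\in\mathbb{N}$. Applying \eqref{def.a-Arch} with the bounding element $(e-1)x\in K$ forces $w=0$, i.e. $x=y$; hence $d$ is a metric on each component.

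For the converse I would argue by contraposition. If $K$ is not almost Archimedean there are $a\ne 0$ and $b$ with $-b\le na\le b$ for all $n$. Adding the two inequalities gives $b\in K$, the case $n=1$ gives $a\ge -b$, and $b\ne 0$ (else $na\in K\cap(-K)=\{0\}$ forces $a=0$); thus $x:=b$ and $y:=b+a$ both lie in $K$ with $x\ne 0$ and $y\ne x$. For a fixed $s>0$, taking $n$ large enough that $1/n\le 1-e^{-s}\,(\le e^{s}-1)$ and dividing $\pm na\le b$ by $n$ yields $a\le b/n\le(e^{s}-1)b$ and $-a\le b/n\le(1-e^{-s})b$, which rearrange to $e^{-s}x\le y\le e^{s}x$. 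Since $s>0$ was arbitrary, $(0;\infty)\subset\sigma(x,y)$, so $x\sim y$ and $d(x,y)=0$ although $y\ne x$; therefore $d$ is not a metric on the component $K(b)$. The routine axiom checks of part~1 are painless, so the real obstacle is precisely this quantitative translation between the exponential scaling $e^{\pm s}$ built into $\sigma$ and the integer‑multiple formulation of almost Archimedeanity: both directions succeed only because the choices $s=1/n$ (respectively $n\gtrsim 1/(1-e^{-s})$) are controlled by the inequalities $1-e^{-s}\le e^{s}-1$ and $n(e^{1/n}-1)\le e-1$.
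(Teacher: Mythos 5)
Your proof is correct and follows essentially the same route as the paper's: part 1 is read off from the properties of $\sigma(x,y)$ in Proposition \ref{p2.T-metric}, the forward half of part 2 converts the bounds $e^{\pm s}x$ on $y-x$ into the integer-multiple form \eqref{def.a-Arch} of almost Archimedeanness, and the converse manufactures two distinct linked points at Thompson distance zero from a witness $(a,b)$ to its failure. The only cosmetic difference is that the paper packages that witness as a full line $\{b+\mu a:\mu\in\mathbb{R}\}\subset K$ and shows every point of it is at distance $0$ from $b$, whereas you work with the single point $b+a$; the underlying inequalities are the same.
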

 \begin{proof} 1.\; The fact that $d$ is a semimetric follows from the properties of the sets $\sigma(x,y)$ mentioned in  Proposition \ref{p2.T-metric}.

 2. \; Suppose now that the cone $K$ is almost Archimedean and $d(x,y)=0$ for two linked elements $x,y\in K.$ It follows
 $$
 \begin{aligned}
   \forall s>0,\;\; e^{-s}x\le y\le e^sx \iff&  \forall s>0,\;\; (e^{-s}-1)x\le y-x\le (e^s-1)x\\
   \iff& \forall s>0,\;\; -\frac{e^{s}-1}{e^s}x\le y-x\le (e^s-1)x .
 \end{aligned}$$

  The inequality $ e^{-s}(e^{s}-1) \le e^s-1$ implies $-e^{-s}(e^{s}-1)x\ge -(e^s-1)x$. Consequently,
  $$
  \forall s>0,\;\; -(e^{s}-1)x\le y-x\le (e^s-1)x \iff \forall \lambda >0,\;\; -\lambda x\le y-x\le \lambda x.$$

 Taking into account that $K$ is almost Archimedean it follows $y-x=0,$ that is $y=x.$

 To prove the converse, suppose that $K$ is not almost Archimedean. Then there exists a line $D=\{x+\mu y : \mu\in\mathbb{R}\},\, $ with $\, y\ne 0,$ contained in $K$.  If $x=0, $ then $\pm y\in K,$ that would imply $y=0, $ a contradiction.

 Consequently $x\ne 0.$  Observe   that in this case, for all $\mu\in\mathbb{R},$
 \begin{equation}\label{eq1.p3.T-metric}
 d(x,x+\mu y)=0 ,
 \end{equation}
which  shows that $d$ is not a metric.
The equality \eqref{eq1.p3.T-metric} is equivalent to
 \begin{equation}\label{eq2.p3.T-metric}
 \forall s>0,\;\; e^{-s}x\le x+\mu y\le e^sx.\end{equation}

 The inclusion $D\subset K$ implies $x\pm \lambda y\in K$ for all $\lambda>0,$ and so
 $$
 -x\le \lambda y\le x,$$
 for all $\lambda >0.$ Taking $\lambda =\mu (1-e^{-s})^{-1}$ the first inequality from above becomes
 $$
-(1- e^{-s})x\le \mu y \iff e^{-s}x\le x+\mu y .$$

 From the second inequality one obtains
$$
\mu y\le (1-e^{-s})x=e^{-s}(e^s-1)x\le (e^s-1)x,$$
which implies
$$
x+\mu y\le e^s x,$$
showing that the inequalities \eqref{eq2.p3.T-metric} hold.
\end{proof}

\begin{remark}\label{re1.T-metric}
By the triangle inequality, the equality \eqref{eq1.p3.T-metric} implies that $d(u,v)=0$ for any two points $u,v$ on $D$, that is
$$
 d(x+\lambda y,x+\mu y)=0,$$
 for all $\lambda, \mu\in\mathbb{R}.$
\end{remark}

\begin{example}
 If $X=\mathbb{R}^n$ and $K=\mathbb{R}^n_+$, then the components of $K$ are $\{0\},\, (0;\infty)\cdot e_i,\, 1\le i\le n,$ and $\ainter (K) =\{x\in K : x_i>0,\, i=1,\dots,n\},$  while $d(x,y)=\max\{|\ln x_i-\ln y_i| : 1\le i\le n\},$  for any $x=(x_i)_{i=1}^n $ and $y=(y_i)_{i=1}^n $, with $x_i,y_i > 0,\, i=1,\dots,n$.
   \end{example}

The following proposition contains some further properties of the sets $\sigma(x,y)$ and their corespondents for the Thompson metric.

\begin{prop} \label{p4.T-metric}
Let $X$ be a  vector space ordered by a cone $K$.
 \begin{enumerate}
   \item[{\rm 1.}]  For $x,y\in K$ and $\lambda,\mu >0$
   \begin{align*}
 {\rm (i)}&\quad \sigma(\lambda x,\lambda y)= \sigma(x,y)\quad\mbox{and so}\quad d(\lambda x,\lambda y)= d(x,y);\\
 {\rm (ii)}&\quad \sigma(\lambda x,\mu x)= [\big|\ln\big(\frac\lambda\mu\big)\big|;\infty)\quad\mbox{and so}\quad d(\lambda x,\mu x)= \big|\ln\big(\frac\lambda\mu\big)\big|;\\
 {\rm (iii)}& \quad \mbox{If}\;\; \mu x\le y\le \lambda x, \quad\mbox{for some}\quad \lambda,\mu>0,\quad\mbox{then}\quad
d(x,y)\le\ln\max\{\mu^{-1},\lambda\}.
   \end{align*}
   \item[{\rm 2.}] If $\sigma(x,y)\subset\sigma(x',y'),$ then $d(x,y)\ge d(x',y').$ The converse is true if the order is Archimedean. Also
      \begin{equation}\label{eq1.p4.T-metric}
\max\{d(x,y),d(x',y')\}=\inf[\sigma(x,y)\cap \sigma(x',y')].
\end{equation}
\item[{\rm 3.}]   The following monotony inequalities  hold
\begin{equation}\label{eq2.p4.T-metric}\begin{aligned}
{\rm(i)}\quad x\le x'\;\mbox{and}\; y'\le y\;&\Longrightarrow\; d(x',x'+y')\le d(x,x+y);\\
{\rm(ii)}\quad\;\; x\le x'\le y'\le y\;&\Longrightarrow\; d(x',y')\le d(x,y).
\end{aligned} \end{equation}
 \item[{\rm 4.}]   For all $x,y,x',y'\in K$ and $\lambda,\mu >0,$
\begin{equation}\label{eq3.p4.T-metric}
   d(\lambda x+\mu y,\lambda x'+\mu y')\le \max\{d(x,x'),d(y,y')\}.
   \end{equation}
 \end{enumerate} \end{prop}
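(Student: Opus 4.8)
The plan is to reduce every assertion to set-theoretic manipulations of the families $\sigma(\cdot,\cdot)$ from \eqref{def1.t-metric}, leaning on Proposition \ref{p2.T-metric} (symmetry, the sandwich $(d(x,y);\infty)\subset\sigma(x,y)\subset[d(x,y);\infty)$, and additivity) together with one elementary observation: for $x\in K\setminus\{0\}$ the cone property $K\cap(-K)=\{0\}$ forces $tx\in K$ if and only if $t\ge 0$. Once the relevant inclusions between $\sigma$-sets are in hand, the distance inequalities drop out by taking infima, since $\sigma(a,b)\subset\sigma(a',b')$ immediately yields $d(a',b')\le d(a,b)$.

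For Part~1 I would argue directly from the definition. Statement (i) is instant: multiplying the chain $e^{-s}x\le y\le e^{s}x$ by $\lambda>0$ preserves inequalities in both directions, so $\sigma(\lambda x,\lambda y)=\sigma(x,y)$. For (ii) I would rewrite $s\in\sigma(\lambda x,\mu x)$ as $e^{-s}\lambda x\le\mu x\le e^{s}\lambda x$ and, using $x\ne 0$ and the dichotomy above, reduce it to the scalar condition $e^{-s}\le\mu/\lambda\le e^{s}$, i.e.\ $s\ge|\ln(\lambda/\mu)|$, giving $\sigma(\lambda x,\mu x)=[|\ln(\lambda/\mu)|;\infty)$. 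For (iii) I would exhibit a witness: with $s=\ln\max\{\mu^{-1},\lambda\}$ the hypotheses $\mu x\le y\le\lambda x$ give $e^{-s}x\le\mu x\le y$ and $y\le\lambda x\le e^{s}x$, so $s\in\sigma(x,y)$ and $d(x,y)\le s$. Part~2 is formal: the first implication is monotonicity of the infimum under inclusion, its converse under the Archimedean assumption uses Proposition \ref{p2.T-metric}(2) to write $\sigma(x,y)=[d(x,y);\infty)$, reducing the claim to inclusion of half-lines; and for \eqref{eq1.p4.T-metric} I would set $m=\max\{d(x,y),d(x',y')\}$ and sandwich $(m;\infty)\subset\sigma(x,y)\cap\sigma(x',y')\subset[m;\infty)$, whence the infimum of the intersection is $m$.

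Parts~3 and~4 are the substance, and both proceed by proving an inclusion of $\sigma$-sets and then applying Part~2. For 3(ii), given $s\in\sigma(x,y)$, I verify $e^{-s}x'\le y'\le e^{s}x'$ from $x\le x'\le y'\le y$: the upper bound from $y'\le y\le e^{s}x\le e^{s}x'$, the lower bound from $e^{-s}x'\le x'\le y'$ (using $e^{-s}\le 1$). For 3(i) I would rephrase $s\in\sigma(x,x+y)$ as $-(1-e^{-s})x\le y\le(e^{s}-1)x$; the upper inequality transfers by $y'\le y\le(e^{s}-1)x\le(e^{s}-1)x'$. For Part~4, given $s\in\sigma(x,x')\cap\sigma(y,y')$, I scale $e^{-s}x\le x'\le e^{s}x$ by $\lambda$ and $e^{-s}y\le y'\le e^{s}y$ by $\mu$ and add, obtaining $e^{-s}(\lambda x+\mu y)\le\lambda x'+\mu y'\le e^{s}(\lambda x+\mu y)$; thus $\sigma(x,x')\cap\sigma(y,y')\subset\sigma(\lambda x+\mu y,\lambda x'+\mu y')$, and passing to infima via \eqref{eq1.p4.T-metric} gives $d(\lambda x+\mu y,\lambda x'+\mu y')\le\max\{d(x,x'),d(y,y')\}$.

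I expect the one genuinely delicate point to be the lower inequality in 3(i). The hypothesis $y'\le y$ propagates the upper bound cleanly but supplies no lower bound on $y'$; the needed inequality $-(1-e^{-s})x'\le y'$ survives only because $y'\in K$ (so $y'\ge 0$) while $(1-e^{-s})x'\in K$ makes $-(1-e^{-s})x'\le 0$. I would therefore record explicitly the standing assumption $x,y,x',y'\in K$ (consistent with Part~4) and likewise flag the harmless requirement $x\ne 0$ in 1(ii). No Archimedean hypothesis enters anywhere except in the converse of Part~2.
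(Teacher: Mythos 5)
Your proposal is correct and follows essentially the same strategy as the paper's proof: every claim is reduced to an inclusion of $\sigma$-sets (including exactly the paper's key inclusions $\sigma(x,x+y)\subset\sigma(x',x'+y')$ for 3(i) and $\sigma(x,x')\cap\sigma(y,y')\subset\sigma(x+y,x'+y')$ for 4), after which the distance inequalities follow by taking infima via Part~2 and \eqref{eq1.p4.T-metric}. The only deviations are cosmetic --- you exhibit a direct witness $s=\ln\max\{\mu^{-1},\lambda\}$ in 1(iii) where the paper routes through the sets $\alpha(x,y)$ and Proposition \ref{p1.T-metric}, and you prove 3(ii) directly rather than deducing it from 3(i) --- and your flagged delicate points (the use of $y'\ge 0$ in 3(i), the implicit $x\ne 0$ in 1(ii)) are exactly right.
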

 \begin{proof}
 1. The equalities  from  (i) are obvious.

 To prove (ii) suppose $\lambda >\mu.$ Then  $e^{-s}x\le x\le \lambda\mu^{-1}x$ implies $ \mu e^{-s} x\le \lambda x,$
 for every $s>0.$ Since
 $$
 \lambda\mu^{-1}x \le e^sx \iff s\ge \ln\big(\lambda\mu^{-1}\big),$$
 it follows $\sigma(\lambda x,\mu x)=\big[\ln\big(\lambda\mu^{-1}\big);\infty)$ and $d(\lambda x,\mu x)= \ln\big(\lambda\mu^{-1}\big).$

To prove (iii) observe that  $\mu x\le y$ is equivalent to $x\le \mu^{-1} y,$ that is $\mu^{-1}\in\alpha(x,y),$ and so $\mu^{-1}\ge \inf\alpha(x,y).$ Similarly, $y\le \lambda x$ is equivalent to $\lambda\in\alpha(y,x),$ implying $\lambda\ge\inf\alpha(x,y).$  It follows
 $$
 \ln\max\{\mu^{-1},\lambda\}\ge \ln(\max\{\inf\alpha(x,y),\inf\alpha(y,x)\}=d(x,y).$$

 2.\; The first implication is obvious. The converse follows from the fact that $\sigma(x,y)=[d(x,y);\infty)$ and
 $\sigma(x',y')=[d(x',y');\infty)$ if $K$ is Archimedean (Proposition \ref{p2.T-metric}.2).

  The equality \eqref{eq1.p4.T-metric} follows from the inclusions
\begin{align*}
  &(d(x,y);\infty)\subset \sigma(x,y)\subset [d(x,y);\infty)\quad\mbox{and}\\
  &(d(x',y');\infty)\subset \sigma(x',y')\subset [d(x',y');\infty).
\end{align*}

3.\;  The inequality (i) for the metric $d$ will follow from the inclusion
\begin{equation}\label{eq4.p4.T-metric}
\sigma(x,x+y)\subset \sigma(x',x'+y').\end{equation}

Let $s\in \sigma(x,x+y),$ that is $s>0$ and
\begin{equation*}
e^{-s}x\le x+y\le e^s x.\end{equation*}

Then
\begin{align*}
  e^{-s}x'\le& x'\le x'+y'\le x'+y=x+y +(x'-x)\\
  \le&e^{s}x+ e^{s}(x'-x)=e^{s}x',
\end{align*}
showing that $s\in \sigma(x',x'+y').$ 

The inequality (ii) follows from (i) by taking $y:=y-x\ge y'-x'=:y'.$

4.\;   By 1.(i), $\,d(\lambda x,\lambda x')=d(x,x')$ and $d(\mu y ,\mu y')=d(y,y'),$  so that it is  sufficient to show that
\begin{equation}\label{eq3b.p4.T-metric}
d(x+y,x'+y')\le \max\{d(x,x'),d(y,y')\}. \end{equation}

Taking into account  \eqref{eq1.p4.T-metric} and the assertion 2 of the proposition, the inequality \eqref{eq3b.p4.T-metric} will be a consequence of the inclusion
$$
\sigma(x,x')\cap \sigma(y,y')\subset \sigma(x+y,x'+y') . $$

But, if  $s\in \sigma(x,x')\cap \sigma(y,y'),$ then $e^{-s}x\le x'\le e^{s}x $ and $e^{-s}y\le y'\le e^{s}y, $ which by addition yield $e^{-s}(x+y)\le x'+y'\le e^{s}(x + y),$ that is $s\in \sigma(x+y,x'+y').$
\end{proof}

Based on these properties one obtains other properties of the Thompson metric.
\begin{theo}\label{t1.T-metric}
Let $X$ be a  vector space ordered by a cone $K$.
\begin{enumerate}
\item[{\rm 1.}] The function $d$ is quasi-convex with respect to each of its argument, that is
\begin{equation}\label{eq1.q-cv}\begin{aligned}
&d((1-t) x+t y,v)\le\max\{d(x,v),d(y,v)\}\quad\mbox{and}  \\
&d(u,(1-t) x+t y)\le\max\{d(u,x),d(u,y)\},
\end{aligned}\end{equation}
for all $x,y,u,v\in K$ and $t\in[0;1].$
\item[{\rm 2.}] The following convexity--type inequalities hold
 \begin{equation}\label{eq2.q-cv}\begin{aligned}
&d((1-t)x+t y,v)\le\ln\left((1-t)e^{d(x,v)}+te^{d(y,v)}\right) ,   \\
&d(u,(1-t)x+t y)\le\ln\left((1-t)e^{d(u,x)}+te^{d(u,y)}\right),
\end{aligned}\end{equation}
for all $x,y,u,v\in K$ and $t\in[0;1],$ and
  \begin{equation}\label{eq3.q-cv}\begin{aligned}
&d((1-t)x+t y,(1-s)x+s y)\le\ln\left(|s-t)e^{d(x,y)}+1-|s-t|\right),
\end{aligned}\end{equation}
for all $x,y \in K,\, x\sim y,\,$ and $s,t\in [0;1].$
\end{enumerate}
\end{theo}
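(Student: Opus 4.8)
The plan is to obtain all three displays from the properties assembled in Proposition~\ref{p4.T-metric}, handling them in the order \eqref{eq1.q-cv}, then the first inequality in \eqref{eq2.q-cv}, then \eqref{eq3.q-cv}; the middle step is the only one requiring a genuine estimate, and everything else is formal. Throughout I assume $x\sim y\sim u\sim v$, so that all distances are finite and, by Proposition~\ref{p1.equiv-cone}.2, convex combinations remain in the common component; if some distance is infinite the right-hand sides are $+\infty$ and the inequalities hold trivially in the extended sense.

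For the quasi-convexity \eqref{eq1.q-cv} I would simply write $v=(1-t)v+tv$ and apply \eqref{eq3.p4.T-metric} with coefficients $\lambda=1-t,\ \mu=t>0$ to the pairs $(x,v)$ and $(y,v)$:
$$
d((1-t)x+ty,v)=d\big((1-t)x+ty,(1-t)v+tv\big)\le\max\{d(x,v),d(y,v)\},
$$
valid for $t\in(0;1)$, with $t\in\{0,1\}$ trivial; the second line of \eqref{eq1.q-cv} then follows from the symmetry $d(u,w)=d(w,u)$.

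The heart of the argument is the first inequality in \eqref{eq2.q-cv}. Fix $\epsilon>0$ and set $a=d(x,v),\,b=d(y,v)$. Since $a+\epsilon\in\sigma(x,v)=\sigma(v,x)$ and likewise for $y$, I have $e^{-(a+\epsilon)}v\le x\le e^{a+\epsilon}v$ and $e^{-(b+\epsilon)}v\le y\le e^{b+\epsilon}v$. Multiplying by $1-t\ge 0$ and $t\ge 0$ and adding gives $Lv\le(1-t)x+ty\le Uv$ with $U=(1-t)e^{a+\epsilon}+te^{b+\epsilon}$ and $L=(1-t)e^{-(a+\epsilon)}+te^{-(b+\epsilon)}$, so Proposition~\ref{p4.T-metric}.1(iii) yields $d\big(v,(1-t)x+ty\big)\le\ln\max\{L^{-1},U\}$. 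The key point, and the one place where real work is needed, is that $\max\{L^{-1},U\}=U$: expanding the product and using $e^{a-b}+e^{b-a}\ge 2$ one gets
$$
UL=(1-t)^2+t^2+t(1-t)\big(e^{a-b}+e^{b-a}\big)\ge\big((1-t)+t\big)^2=1,
$$
hence $L^{-1}\le U$. Thus $d((1-t)x+ty,v)\le\ln U$, and letting $\epsilon\downarrow 0$ proves the first inequality of \eqref{eq2.q-cv}; the second follows by symmetry of $d$.

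Finally, \eqref{eq3.q-cv} reduces to the inequality just established. By symmetry assume $s\ge t$, and $t<1$ (the case $t=1$ forces $s=1$ and is trivial). Writing $w_t=(1-t)x+ty$ and $\beta=\tfrac{s-t}{1-t}\in[0;1]$, a direct check gives the convex decomposition $(1-s)x+sy=(1-\beta)w_t+\beta y$. Applying the first inequality of \eqref{eq2.q-cv} with reference point $v=w_t$ (and $d(w_t,w_t)=0$) gives
$$
d\big((1-s)x+sy,\,w_t\big)\le\ln\big((1-\beta)+\beta\,e^{d(w_t,y)}\big),
$$
while applying it once more to $d(w_t,y)=d((1-t)x+ty,y)$ with reference $v=y$ gives $e^{d(w_t,y)}\le(1-t)e^{d(x,y)}+t$. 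Substituting and using $\beta(1-t)=s-t=|s-t|$, the bracket collapses:
$$
(1-\beta)+\beta\big((1-t)e^{d(x,y)}+t\big)=1+(s-t)\big(e^{d(x,y)}-1\big)=|s-t|e^{d(x,y)}+1-|s-t|,
$$
which is exactly \eqref{eq3.q-cv}. In summary, the only obstacle is the elimination of the $\max$ in the middle step, i.e.\ the convexity estimate $UL\ge 1$; once that is in place, both the first and the third displays follow, the latter being a formal consequence of the former through the above decomposition.
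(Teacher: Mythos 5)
Your proposal is correct and takes essentially the same route as the paper: part 1 by writing $v=(1-t)v+tv$ and invoking \eqref{eq3.p4.T-metric}, and part 2 by the same key estimate $\bigl((1-t)e^{s_1}+te^{s_2}\bigr)\bigl((1-t)e^{-s_1}+te^{-s_2}\bigr)\ge 1$ (the paper verifies directly that $\ln U\in\sigma\bigl((1-t)x+ty,v\bigr)$ and infimizes over $\sigma(x,v)\times\sigma(y,v)$, while you use Proposition \ref{p4.T-metric}.1(iii) plus elimination of the maximum and let $\epsilon\downarrow 0$ — the same computation in trivially different packaging). For \eqref{eq3.q-cv} you also match the paper's scheme of two applications of \eqref{eq2.q-cv} to a convex decomposition, only with the mirror-image decomposition $(1-s)x+sy=(1-\beta)w_t+\beta y$ (for $s>t$) instead of the paper's $z_s=(1-\tfrac{s}{t})x+\tfrac{s}{t}z_t$ (for $t>s$), which is the same idea up to swapping the roles of $x$ and $y$.
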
\begin{proof}
  1.\; By \eqref{eq1.p4.T-metric} and Proposition \ref{p4.T-metric}.1.(i),
\begin{align*}
 d((1-t)x+t y,v) =& d((1-t)x+t y,(1-t)v+t v) \\\le& \max\{d((1-t)x,(1-t)v),d(t y,tv)\}= \max\{d(x,v),d(y,v)\},
\end{align*}
showing that  the first inequality in \eqref{eq1.q-cv} holds. The second one follows by the symmetry of the metric $d.$

2.\; For  $s_1\in \sigma(x,v) $ and $s_2\in \sigma(y,v) $ put $s=\ln\left((1-t)e^{s_1}+ te^{s_2}\right). $   By a straightforward calculation it follows that
$$
\left((1-t)e^{s_1}+ te^{s_2}\right)\cdot\left((1-t)e^{-s_1}+ te^{-s_2}\right)=2t(1-t)(\cosh(s_1-s_2)-1)\ge 0,$$
which implies
$$
-s\le  \ln\left((1-t)e^{-s_1}+ te^{-s_2}\right),$$
or, equivalently,
\begin{equation*}
e^{-s}\le (1-t)e^{-s_1}+ te^{-s_2}.\end{equation*}

The above inequality and the inequalities $e^{-s_1}v\le x,\,  e^{-s_2}v\le y$ imply
$$
e^{-s}v\le \left((1-t)e^{-s_1}+ te^{-s_2}\right) v\le (1-t)x+ty.$$

Similarly, the inequalities $x\le e^{s_1}v,\, y\le e^{s_2}v, $ and the definition of $s$ imply
$$
  (1-t)x+ty\le   \left((1-t)e^{-s_1}+ te^{-s_2}\right)v = e^s v.$$

  It follows $s\in \sigma((1-t)x+ty,v)$  and so

$$
  d((1-t)x+ty,v)\le  s = \ln\left((1-t)e^{-s_1}+ te^{-s_2}\right) ,$$
for all $s_1\in\sigma(x,v)$ and all $s_2\in \sigma(y,v).$ Passing to infimum with respect to $s_1$ and $s_2, $  one obtains the first inequality  in \eqref{eq2.q-cv}. The second inequality follows by the symmetry of $d$.

It is obvious that \eqref{eq3.q-cv} holds for $s=t$, so we have to prove it only for $s\ne t.$ By symmetry it suffices to consider only the case  $t>s.$ Putting  $z_t=(1-t)x+t y$ and  $z_s=(1-s)x+s y,$ it follows
$z_s=(1-\frac st)x+\frac st z_t,$ so that, applying twice the inequality \eqref{eq2.q-cv},
$$
d(x,z_t)\le \ln \left(1-t+te^{d(x,y)}\right),$$
and
\begin{align*}
d(z_s,z_t)\le& \ln\left((1-\frac st)e^{d(x,z_t)}+\frac st\right)\\
\le& \ln\left((1-\frac st)(1-t+te^{d(x,y)}) +\frac st\right)=\ln\left((t - s) e^{d(x,y)}+1-(t-s)\right).
\end{align*}
\end{proof}

Recall that a metric space $(X,\rho)$ is called \emph{metrically convex} if for every pair of distinct points $x,y\in X$ there exists a point $z\in X\setminus\{x,y\}$ such that
\begin{equation}\label{def.metric-cv}
\rho(x,y)=\rho(x,z)+\rho(z,y).\end{equation}

The following theorem,  asserting  that every component of $K$ is metrically convex with respect to the Thompson metric, is a slight extension of a result of Nussbaum \cite[Proposition 1.12]{Nuss88}.
\begin{theo}\label{t1.metric-cv}
  Every component of $K$ is metrically convex with respect to the Thompson metric $d$. More exactly, for every pair of distinct points $x,y\in X$ and every $t\in(0;1)$ the point
  $$
  z=\frac{\sinh r(1-t)}{\sinh r}x+ \frac{\sinh rt}{\sinh r} y,$$
 where $r=d(x,y)$, satisfies \eqref{def.metric-cv}.
\end{theo}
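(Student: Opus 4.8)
The plan is to reduce the metric-convexity identity to two one-sided distance estimates. Write $r=d(x,y)$ and assume $x,y$ lie in the same component with $r>0$ (otherwise the expression for $z$ is not defined). I would first establish that the proposed point satisfies
\begin{equation*}
d(x,z)\le rt\qquad\text{and}\qquad d(z,y)\le r(1-t).
\end{equation*}
Granting these, the triangle inequality for $d$ (Proposition \ref{p3.T-metric}) gives
\begin{equation*}
r=d(x,y)\le d(x,z)+d(z,y)\le rt+r(1-t)=r,
\end{equation*}
so equality holds throughout; in particular $d(x,z)+d(z,y)=d(x,y)$, and since then $d(x,z)=rt>0$ and $d(z,y)=r(1-t)>0$ the point $z$ is distinct from both $x$ and $y$, which is exactly \eqref{def.metric-cv}. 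Moreover the two estimates are interchanged by the substitution $x\leftrightarrow y,\ t\mapsto 1-t$, which leaves $z$ unchanged, so it suffices to prove the first one, $d(x,z)\le rt$.

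To bound $d(x,z)$ I would exploit $\sigma(x,y)\supset(r;\infty)$ (Proposition \ref{p2.T-metric}.2). Fix any $s>r$, so that $e^{-s}x\le y\le e^s x$. Substituting these two inequalities into the definition of $z$ yields
\begin{equation*}
\frac{\sinh r(1-t)+e^{-s}\sinh rt}{\sinh r}\,x\;\le\; z\;\le\;\frac{\sinh r(1-t)+e^{s}\sinh rt}{\sinh r}\,x .
\end{equation*}
The decisive point is the hyperbolic identity
\begin{equation*}
\sinh r(1-t)+e^{\pm r}\sinh rt=e^{\pm rt}\sinh r ,
\end{equation*}
verified directly by expanding each $\sinh$ in exponentials and factoring out $e^{\pm rt}$. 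Consequently, as $s\downarrow r$ the lower coefficient above increases to $e^{-rt}$ and the upper one decreases to $e^{rt}$. Applying the estimate of Proposition \ref{p4.T-metric}.1.(iii), with $\mu$ and $\lambda$ taken to be these two coefficients, gives $d(x,z)\le\ln\max\{\mu^{-1},\lambda\}$ for every $s>r$; letting $s\downarrow r$, the right-hand side tends to $\ln\max\{e^{rt},e^{rt}\}=rt$, whence $d(x,z)\le rt$.

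The only genuine computation is the hyperbolic identity, and it is precisely what dictates the weights $\sinh r(1-t)/\sinh r$ and $\sinh rt/\sinh r$; everything else is the triangle inequality and a passage to the limit. I would emphasize that the limit $s\downarrow r$ is exactly what lets the argument dispense with any Archimedean hypothesis: if $K$ happens to be Archimedean one may take $s=r$ outright (since then $r\in\sigma(x,y)$), obtaining $e^{-rt}x\le z\le e^{rt}x$, i.e.\ $rt\in\sigma(x,z)$ and $d(x,z)\le rt$ in a single line, while in the general case the same bound is recovered as an infimum over $s>r$. A routine point to record is that the two-sided estimate $\mu x\le z\le\lambda x$ with $\mu,\lambda>0$ already shows $z$ is linked to $x$, so $d(x,z)<\infty$ and $z$ lies in the same component as $x$ and $y$.
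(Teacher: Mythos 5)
Your proposal is correct and follows essentially the same route as the paper's proof: bounding $d(x,z)$ via the inequalities $\mu(s)x\le z\le\lambda(s)x$ obtained from $s\in\sigma(x,y)$, passing to the limit $s\downarrow r$ by continuity, using the hyperbolic identity $\mu(r)^{-1}=\lambda(r)=e^{rt}$, and finishing by symmetry and the triangle inequality. Your added remarks (explicit verification that $z$ is linked to $x$, the observation that $s=r$ works directly in the Archimedean case, and the check that $z\ne x,y$) are minor refinements of the same argument.
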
 \begin{proof}
By the triangle inequality it suffices to show that
\begin{equation}\label{eq2.metric-cv}
r=d(x,y)\ge d(x,z)+d(z,y).
\end{equation}

If  $s\in\sigma(x,y),$ that is $e^{-s}x\le y\le e^sx,$
then
\begin{equation}\label{eq3.metric-cv}\begin{aligned}
&\left(\frac{\sinh r(1-t)}{\sinh r}+ \frac{\sinh rt}{\sinh r}e^{-s}\right)x\le z\le  \left(\frac{\sinh r(1-t)}{\sinh r} + \frac{\sinh rt}{\sinh r} e^s\right)x .
\end{aligned}\end{equation}

Putting
$$
\mu(s)=\frac{\sinh r(1-t)}{\sinh r}+ \frac{\sinh rt}{\sinh r}e^{-s} \quad\mbox{and}\quad
\lambda(s)=\frac{\sinh r(1-t)}{\sinh r}+ \frac{\sinh rt}{\sinh r}e^{s},$$
the inequalities \eqref{eq3.metric-cv} imply
$$
d(x,z)\le\ln(\max\{\mu(s)^{-1},\lambda(s)\}),$$
for all $s>r.$ Since the functions $\mu(s)^{-1}$ and $\lambda(s)$ are both continuous on $(0;\infty)$, it follows
\begin{equation}\label{eq4.metric-cv}
d(x,z)\le\ln(\max\{\mu(r)^{-1},\lambda(r)\}).\end{equation}

Taking into account the definition of the function $\sinh,$ a direct calculation shows that $\mu(r)^{-1}=\lambda(r)=e^{rt}$, and so  the inequality \eqref{eq4.metric-cv} becomes
\begin{equation*}
d(x,z)\le rt.
\end{equation*}

By symmetry
\begin{equation*}
d(z,y)\le r(1-t),
\end{equation*}
so that \eqref{eq2.metric-cv} holds.
\end{proof}

\subsection{Order-unit seminorms}\label{Ss.u-semin}

Suppose that $X$ is a vector space ordered by a cone $K$. For $u\in K\setminus\{0\}$ put
\begin{equation}\label{def.Xu}
X_u=\cup_{\lambda\ge 0}\lambda [-u;u]_o.\end{equation}

It is obvious that $X_u$ is a nontrivial subspace of $X\; (\mathbb{R} u\subset X_u),$ and that $[-u;u]_o$ is an absorbing absolutely convex subset of $X_u$ and so $u$ is a unit in the ordered vector space $(X_u,K_u)$, where    $K_u$ is the cone in $X_u$ given by
\begin{equation}\label{def.Ku}
K_u=K\cap X_u\,,
\end{equation}
or, equivalently, by
\begin{equation}\label{char.Ku}
K_u=\cup_{\lambda \ge 0}\lambda[0;u]_o\,.
\end{equation}

The Minkowski functional
\begin{equation}\label{def.normu}
|x|_u=\inf\{\lambda > 0 : x\in\lambda[-u;u]_o\},
\end{equation}
  corresponding to the set $[-u;u]_o,$   is a seminorm on the space $X_u$ and
\begin{equation}\label{eq.u-sn}
|-u|_u=|u|_u =1.
\end{equation}

For convenience, denote by the subscript $u$ the topological notions corresponding to the seminorm $|\cdot|_u$. Let also $B_u(x,r),\, B_u[x,r]$ be the open, respectively closed, ball with respect to $|\cdot|_u$. For $x\in X_u$ let
\begin{equation}\label{def.Mink}
\mathcal M_u(x)=\{\lambda > 0 : x\in\lambda[-u;u]_o\},\end{equation}
so that
$$
|x|_u=\inf\mathcal M_u(x).$$

Taking into account   the convexity of $[-u;u]_o$ it follows that
\begin{equation}\label{eq-Mink-u-sn}
(|x|_u;\infty)\subset \mathcal M_u(x)\subset [|x|_u;\infty),
\end{equation}
for every $x\in X_u.$

\begin{prop}\label{p1.Ku} Let $u\in K\setminus\{0\}$ and $X_u, K_u, |\cdot|_u$ as above.
\begin{enumerate}
\item[{\rm 1.}] If $v\in K$ is linked to $u$, then $  X_u = X_v,\, K_u=K_v$ and the seminorms $|\cdot|_u, |\cdot|_v$ are equivalent. More exactly the following inequalities hold for all $x\in X_u$
    \begin{equation}\label{eq.p1.Ku-equiv}
    |x|_u\le |v|_u |x|_v\quad\mbox{and}\quad |x|_v\le |u|_v |x|_u.
    \end{equation}
\item[{\rm 2.}] The Minkowski functional $|\cdot|_u$ is a norm on $X_u$ iff the cone $K_u$ is almost Archimedean.
\item[{\rm 3.}] The seminorm $|\cdot|_u$ is monotone: $x,y\in X_u$ and $0\le x\le y$ implies $|x|_u\le |y|_u.$
\item[{\rm 4.}] The cone $K_u$ is generating and normal in $X_u\, .$
\item[{\rm 5.}] For any $x\in X_u$ and $r>0,\; B_u(x,r)\subset x+r[-u;u]_o\subset B_u[x,r].$
\item[{\rm 6.}] The following equalities hold:
\begin{equation}\label{eq1.p1.Ku}
\ainter(K_u) = K(u) = \inter_u(K_u) .\end{equation}
\item[{\rm 7.}] The following are equivalent:

{\rm (i)}\quad \; $K_u$ is $|\cdot|_u$-closed;

{\rm (ii)}\quad \, $K_u$ is lineally closed;

{\rm (iii)}\quad  $K_u$ is  Archimedean.

In this case, $|x|_u\in\mathcal M_u(x)$ $($that is $\mathcal M_u(x)=[|x|_u;\infty))$ and $\,B_u[0,1]=[-u;u]_o.$
 \end{enumerate}\end{prop}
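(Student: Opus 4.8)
The plan is to dispatch the seven assertions in the stated order, since the later ones rest on the Minkowski-functional bookkeeping set up in the earlier ones. Throughout I would work from the explicit description $\mathcal M_u(x)=\{\lambda>0:-\lambda u\le x\le\lambda u\}$ together with the sandwich \eqref{eq-Mink-u-sn}, which turns every claim about $|\cdot|_u$ into an order inequality against $u$. For part~1, linkedness $v\sim u$ gives $\lambda,\mu>0$ with $v\le\lambda u$ and $u\le\mu v$; substituting these into $-\alpha v\le x\le\alpha v$ yields $-\alpha\lambda u\le x\le\alpha\lambda u$, so $X_v\subseteq X_u$, and by symmetry $X_u=X_v$ and $K_u=K\cap X_u=K_v$. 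The two seminorm inequalities come out the same way: for $\alpha>|x|_v$ and $\beta>|v|_u$ the bounds $-\alpha v\le x\le\alpha v$ and $-\beta u\le v\le\beta u$ combine to $x\in\alpha\beta[-u;u]_o$, and an infimum passage gives $|x|_u\le|v|_u\,|x|_v$.

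Parts~2--5 are then routine. For part~2, $|x|_u=0$ is equivalent to $-u\le nx\le u$ for all $n$ (take $\lambda=1/n$), so the implication $|x|_u=0\Rightarrow x=0$ is exactly the almost-Archimedean property of $K_u$; conversely one bounds a general $y\in X_u$ by a multiple $\beta u$ to reduce the almost-Archimedean hypothesis to $|x|_u\le\beta/n\to0$. Monotonicity (part~3) is the observation that $0\le x\le y\le\lambda u$ forces $-\lambda u\le x\le\lambda u$. Part~4 follows: the decomposition $x=(x+\lambda u)-\lambda u$ with $0\le x+\lambda u\le2\lambda u$ (hence $x+\lambda u\in K_u$ by \eqref{char.Ku}) shows $K_u$ is generating, and normality is immediate from part~3 via Theorem~\ref{t2.char-normal-cone}, since $|\cdot|_u$ is a monotone seminorm generating its own topology. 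Part~5 merely restates \eqref{eq-Mink-u-sn}: $|y-x|_u<r$ puts $r\in\mathcal M_u(y-x)$, while $y-x\in r[-u;u]_o$ gives $|y-x|_u\le r$.

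Part~6 is the first structural step. Since $[-u;u]_o$ is absorbing, $u$ is an order unit of $X_u$, so $u\in\ainter(K_u)$ by Proposition~\ref{p.char-o-unit} and $K_u$ is lineally solid; Proposition~\ref{p1.equiv-cone} then identifies $\ainter(K_u)$ with the component of $u$, and because $K(u)\subset X_u$ (by part~1) the linked relation is inherited, giving $\ainter(K_u)=K(u)$. For the topological interior, $\inter_u(K_u)\subseteq\ainter(K_u)$ is automatic, and the reverse uses part~5: given $x\in K(u)$, choose $\alpha>0$ with $x\ge\alpha u$; then $B_u(x,\alpha)\subset x+\alpha[-u;u]_o\subset K_u$, since $y=x+z$ with $z\ge-\alpha u$ satisfies $y\ge0$.

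I expect part~7 to be the main obstacle, as it must reconcile three notions of closedness. The equivalence (ii)$\iff$(iii) is Proposition~\ref{p.char.Arch}, and (i)$\Rightarrow$(ii) uses only that $|\cdot|_u$-closed sets are lineally closed in a TVS. The delicate implication is (iii)$\Rightarrow$(i): assuming $K_u$ Archimedean, I would first upgrade \eqref{eq-Mink-u-sn} to $\mathcal M_u(x)=[\,|x|_u;\infty)$ by applying the Archimedean characterisation (Proposition~\ref{p.char.Arch}, condition~5) separately to the families $\{\lambda>|x|_u\}$ for the inequalities $x\le\lambda u$ and $-x\le\lambda u$; this simultaneously delivers $B_u[0,1]=[-u;u]_o$. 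Closedness of $K_u$ then follows because any $|\cdot|_u$-limit $x$ of points of $K_u$ satisfies $-x\le\epsilon u$ for every $\epsilon>0$, i.e. $n(-x)\le u$ for all $n$, whence the Archimedean property gives $x\ge0$. Closing the chain (i)$\Rightarrow$(ii)$\iff$(iii)$\Rightarrow$(i) finishes the argument.
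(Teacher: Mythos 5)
Your proof is correct and follows the paper's overall architecture: the same Minkowski-functional manipulations in parts 1--5, the same three-inclusion chain $\inter_u(K_u)\subset\ainter(K_u)\subset K(u)\subset\inter_u(K_u)$ in part 6, and the same cycle (i)$\Rightarrow$(ii)$\iff$(iii)$\Rightarrow$(i) in part 7. The differences are local but genuine. In part 2 the paper quotes the general fact that the Minkowski functional of an absorbing absolutely convex set is a norm iff the set is radially bounded and then applies Proposition \ref{p.char.a-Arch}; your direct two-way verification (reducing $|x|_u=0$ to $-u\le nx\le u$ for all $n$, and conversely dominating a general $y\in X_u$ by $\beta u$) is more elementary and self-contained. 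In part 6 the paper proves $\ainter(K_u)\subset K(u)$ by extracting the inequalities $\alpha u\le x\le\beta u$ directly from the definition of an algebraic interior point and from \eqref{char.Ku}; you instead obtain $\ainter(K_u)=K(u)$ structurally, from Proposition \ref{p.char-o-unit} (so $u\in\ainter(K_u)$ and $K_u$ is lineally solid) together with Proposition \ref{p1.equiv-cone}.1 ($\ainter(K_u)$ is a single component), plus the observation that linkedness computed in $K_u$ agrees with linkedness in $K$, so the component of $u$ in $K_u$ is exactly $K(u)$ --- a slicker argument that recycles earlier results, where the paper's is a short computation from scratch. In part 7 the paper reaches $\mathcal M_u(x)=[\,|x|_u;\infty)$ via lineal closedness along a sequence $\alpha_n\searrow|x|_u$, while you invoke condition 5 of Proposition \ref{p.char.Arch} applied to the family $\{\lambda:\lambda>|x|_u\}$; the two are interchangeable, and your reordering (first $\mathcal M_u(x)=[\,|x|_u;\infty)$ and $B_u[0,1]=[-u;u]_o$, then closedness of $K_u$) changes nothing of substance. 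Finally, for normality in part 4 your citation of Theorem \ref{t2.char-normal-cone} is arguably the better fit, since $|\cdot|_u$ may be only a seminorm while Theorem \ref{t4.char-normal-cone}, which the paper cites, is stated for normed spaces.
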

 \begin{proof} 1.\; If $v\sim u,$ then $v\in X_u$ and $u\in X_v$ which imply $X_u=X_v$ and $K_u=K_v.$
We have
 \begin{equation}\label{eq1.p1.Ku-equiv}
 \forall \alpha > |v|_u,\quad -\alpha u\le v\le \alpha u. \end{equation}

  Let $x\in X_u$. If $\beta >0$ is such that
  \begin{equation}\label{eq2.p1.Ku-equiv}
 -\beta v\le x\le \beta v, \end{equation}
then
$$
  \forall \alpha > |v|_u,\quad -\alpha\beta  u \le x\le \alpha \beta u.$$

 It follows
 $$
 |x|_u\le\alpha \beta,
 $$
 for all $\beta>0$ for which \eqref{eq2.p1.Ku-equiv} is satisfied and for $\alpha >|v|_u,$  implying
 $|x|_u\le |v|_u |x|_v.$ The second inequality in \eqref{eq.p1.Ku-equiv} follows by symmetry.

  2.\; It  is known that the Minkowski functional corresponding to an absorbing absolutely convex subset $Z$ of a linear space $X$ is a norm iff the set $Z$ is radially bounded in $X$ (i.e. any ray from 0 intersects $Z$ in a bounded interval). Since a cone is almost Archimedean iff any order interval is lineally bounded (Proposition \ref{p.char.a-Arch}), the equivalence  follows.

 3.\; If $0\le x\le y,$ then $\mathcal M_u(y)\subset \mathcal M_u(x)$ and so $|y|_u=\inf\mathcal M_u(y)\ge \inf \mathcal M_u(x)=|x|_u.$

 4.\; The fact that $K_u$ is generating follows from definitions. The normality follows from the fact that the seminorm $|\cdot|_u$ is monotone and Theorem \ref{t4.char-normal-cone}.

 5.\; If $p$ is a seminorm corresponding to an absorbing absolutely convex subset $Z$ of a vector space $X$, then
 $$
 B_p(0,1)\subset Z\subset B_p[0,1],$$
 which in our case yield
 $$
 B_u(0,1)\subset [-u;u]_o\subset B_u[0,1],$$
 which, in their turn, imply the inclusions from 4.

 6.\; We shall prove the inclusions
\begin{equation}\label{eq2.p1.Ku}
 \inter_u(K_u)\subset\ainter(K_u)\subset  K(u)\subset \inter_u(K_u) .\end{equation}

 The first inclusion from above is a general property in topological vector spaces.\vspace{2mm}

\emph{ The inclusion} $\, \ainter(K_u)\subset  K(u).$

 For $x\in \ainter(K_u)$ we have to prove the existence of $\alpha,\beta >0$ such that
 $$
 \alpha u\le x\le \beta u.$$

 Since $x\in \ainter(K_u)$ there exists $\alpha >0$ such that $x+tu\in K_u$ for all $t\in[-\alpha,\alpha]$ which implies
 $x-\alpha u\in K_u,$ that is $x\ge \alpha u.$

 From  \eqref{char.Ku} and the fact that $x\in K_u$ follows the existence of $\beta >0$ such that $x\in\beta[0;u]_o,$ so that $x\le \beta u.$
 \vspace{2mm}

\emph{ The   inclusion} $\, K(u)\subset \inter_u(K_u).$

 If $x\in K(u),$ then there exist $\alpha,\beta>0$ such that $\alpha u\le x\le \beta u.$  But then
 $$
 B_u\big(x,\frac{\alpha}{2}\big)= x+B_u\big(0,\frac{\alpha}{2}\big)\subset x+\frac{\alpha}{2} [-u;u]_o \subset \left[\frac\alpha 2 u;\left(\beta+\frac\alpha 2\right)u\right]_o\subset K_u,
 $$
 proving that $x$ is a $|\cdot|_u$-interior point of $K_u.$

 7.\; The implication (i) $\Rightarrow$ (ii) is a general property.   By Proposition \ref{p.char.Arch}, (ii) $\iff$ (iii).

 It remains to prove the implication (iii) $\Rightarrow$ (i).

 Let $x\in X_u$ be a point in the $|\cdot|_u$-closure of   $K_u.$    Then for every $n\in\mathbb{N}$ there exists $x_n\in K_u$ such that $|x_n-x|_u<\frac 1n.$  By the definition of the seminorm  $\,|\cdot|_u, $
$$
 x_n-x\in  \frac 1n [-u;u]_o,$$
so that
$$
-x\le -x+x_n\le  \frac 1n\,u,$$
for all $n\in\mathbb{N}.$

  By Proposition \ref{p.char.Arch},  this implies $-x\le 0,$ that is $x\ge 0,$   which means that $x\in K_u.$

  Suppose now that the cone $K_u$ is Archimedean. For $x\in X_u\setminus\{0\}$ put  $\alpha:=|x|_u>0.$ Then there exists a sequence $\alpha_n\searrow \alpha$ such that $x\in\alpha_n[-u;u]_o$ for all $n\in\mathbb{N},$  so that
  $$
  \frac{1}{\alpha_n}x+u\ge 0\quad \mbox{and}\quad -\frac{1}{\alpha_n}x+u\ge 0,$$
  for all $n\in \mathbb{N}.$  Since the cone $K_u$ is lineally closed, it follows
 $$
  \frac{1}{\alpha}x+u\ge 0\quad \mbox{and}\quad -\frac{1}{\alpha}x+u\ge 0,$$
which means  $x\in\alpha[-u;u]_o.$

By 4, $[-u;u]_o\subset B_u[0,1].$ If $x\in B[0,1]$ (i.e. $|x|_u\le 1$), then $\mathcal M_u(x)=[|x|_u;\infty),$ and so
$$
x\in |x|_u \,[-u;u]_o\subset  [-u;u]_o.$$
\end{proof}

The above construction corresponds to the one used in locally convex spaces.  For a bounded absolutely convex subset $A$ of a locally convex space $(X,\tau)$ one considers the space $X_A$ generated by $A$,
\begin{equation}\label{def.XA}
X_A=\cup_{\lambda>0}\lambda  A=\cup_{n=1}^\infty n A .\end{equation}

Then $A$ is an absolutely convex absorbing subset of $X_A$ and the attached Minkowski functional
\begin{equation}\label{def.pA}
p_A(x)=\inf\{\lambda >0 : x\in\lambda A\},\;\; x\in A,
\end{equation}
is a norm on $X_A\,.$

\begin{theo} \label{t.B-disc}
  Let $(X,\tau)$ be a Hausdorff locally convex space and $A$ a bounded absolutely convex subset of $X$.
  \begin{enumerate}
    \item[{\rm 1.}] The Minkowski functional $p_A$ is a norm on $X_A$ and the topology generated by $p_A$ is finer than that induced by $\tau $ (or, in other words, the embedding of $(X_A,p_A)$ in $(X,\tau)$ is continuous).
\item[{\rm 2.}] If, in addition,  the set $A$ is  sequentially complete with respect to $\tau,$ then   $(X_A,p_A)$ is a Banach space.  In particular, this is true if the space $X$ is sequentially  complete.
  \end{enumerate}
\end{theo}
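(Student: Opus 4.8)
The plan is to prove the two assertions separately, the first being essentially a standard Minkowski-functional computation and the second the real content.

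For part 1, I would first recall that since $A$ is absolutely convex and absorbing in $X_A$ (absorbing holds by the very definition $X_A=\bigcup_{\lambda>0}\lambda A$), the Minkowski functional $p_A$ is a seminorm on $X_A$, and that one has the inclusions $\{x\in X_A : p_A(x)<1\}\subset A\subset\{x\in X_A : p_A(x)\le 1\}$ recalled after \eqref{def.Mink-fc}. To see that $p_A$ is in fact a \emph{norm}, suppose $p_A(x)=0$; then $x\in\lambda A$ for every $\lambda>0$, so that $x$ lies in every $\tau$-neighbourhood of $0$ (the boundedness of $A$ makes $\lambda A$ fit inside any prescribed $0$-neighbourhood once $\lambda$ is small), and Hausdorffness forces $x=0$. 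For the comparison of topologies the same inclusion gives $\{p_A<\varepsilon\}\subset\varepsilon A$ for every $\varepsilon>0$; given a $\tau$-neighbourhood $V$ of $0$, boundedness of $A$ yields $\varepsilon>0$ with $\varepsilon A\subset V$, whence the $p_A$-ball of radius $\varepsilon$ is contained in $V$. This is exactly continuity of the inclusion $(X_A,p_A)\hookrightarrow(X,\tau)$, i.e. the $p_A$-topology is finer than the trace of $\tau$.

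For part 2, let $(x_n)$ be $p_A$-Cauchy. Since a Cauchy sequence converges as soon as one of its subsequences does, I may pass to a subsequence and assume $p_A(x_{n+1}-x_n)<2^{-n}$, so that $y_n:=x_{n+1}-x_n\in 2^{-n}A$, say $y_n=2^{-n}a_n$ with $a_n\in A$. Writing the telescoping partial sums $s_n:=\sum_{k=1}^n y_k=x_{n+1}-x_1$, I would use the absolute convexity of $A$ (together with $0\in A$ and $\sum_{k=1}^n 2^{-k}\le 1$) to see that every $s_n$ lies in $A$ and, more precisely, that for $m>n$ one has $s_m-s_n\in(2^{-n}-2^{-m})A\subset 2^{-n}A$; in particular $p_A(s_m-s_n)\le 2^{-n}$, and by boundedness of $A$ the sequence $(s_n)$ is $\tau$-Cauchy and lies in $A$. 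Sequential $\tau$-completeness of $A$ then produces $s\in A\subset X_A$ with $s_n\to s$ in $\tau$.

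The crux is to upgrade this $\tau$-convergence to $p_A$-convergence, and this is the step I expect to be the main obstacle, since $p_A$ is not $\tau$-continuous and $2^{-n}A$ need not be $\tau$-closed. The device is to reapply completeness at the right scale: fix $n$ and consider the sequence $(s_m-s_n)_{m>n}$, which is $\tau$-Cauchy and contained in the bounded absolutely convex set $2^{-n}A$; since $2^{-n}A$ is again sequentially $\tau$-complete (being a nonzero scalar multiple of $A$, hence the homeomorphic image of a sequentially complete set), this sequence converges in $\tau$ to a point of $2^{-n}A$, and by Hausdorffness that point is $s-s_n$. Hence $s-s_n\in 2^{-n}A$, so $p_A(s-s_n)\le 2^{-n}\to 0$; thus $x_{n+1}=x_1+s_n\to x_1+s$ in $p_A$, proving completeness and so that $(X_A,p_A)$ is a Banach space. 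Finally, for the ``in particular'' clause I would note that when $X$ is sequentially complete every $\tau$-closed bounded absolutely convex set is sequentially complete (closed subsets of sequentially complete spaces being sequentially complete), so part 2 applies; in the relevant situations $A$ may be taken $\tau$-closed, or replaced by $\overline{A}$, which is again bounded and absolutely convex.
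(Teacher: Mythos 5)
Your proposal is correct, but note that the paper itself never proves Theorem \ref{t.B-disc}: it is quoted as a known result, with the reader referred to \cite[\S 3.2]{Bonet} and \cite[\S 20.11]{Kothe}. So the comparison is with the standard textbook argument, and what you wrote is essentially that argument, carried out carefully. Part 1 is handled exactly as one should: $p_A$ fails to separate points only if some $x$ lies in $\varepsilon A$ for all $\varepsilon>0$, which boundedness of $A$ plus the Hausdorff property of $\tau$ rules out, and the same inclusion $\{p_A<\varepsilon\}\subset\varepsilon A$ gives continuity of the embedding. In Part 2 you correctly identify, and correctly resolve, the genuine difficulty: a $\tau$-limit of points of $2^{-n}A$ need not lie in $2^{-n}A$ (the set need not be $\tau$-closed), so one must reapply sequential completeness \emph{at each scale}, using that $2^{-n}A$ is itself sequentially complete as a homeomorphic copy of $A$, and then invoke uniqueness of limits to conclude $s-s_n\in 2^{-n}A$, hence $p_A(s-s_n)\le 2^{-n}$. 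This is precisely the classical Banach-disc argument (your $2^{-n}$-subsequence normalization is a harmless variant of running the same scheme for an arbitrary $\varepsilon$). Finally, you are right to flag that the paper's ``in particular'' clause is loose as stated: sequential completeness of $X$ alone does not make an arbitrary bounded absolutely convex $A$ sequentially complete (take for $A$ the set of polynomials of sup-norm at most $1$ in $C[0,1]$; then $(X_A,p_A)$ is the non-complete space of polynomials with the sup-norm), so one must additionally take $A$ to be $\tau$-closed, or pass to $\clos(A)$, exactly as you indicate.
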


In the case when $(X_A,p_A)$ is a Banach space one says that $A$ is a \emph{Banach disc}. These spaces are used to prove that every locally convex space is an inductive limit of Banach spaces and to prove that weakly bounded subsets of a sequentially complete Hausdorff LCS are strongly bounded. (A subset of a $Y$ LCS $X$ is called \emph{strongly bounded} if
$$
\sup\{|x^*(y)| : y\in Y, x^*\in M\}<\infty ,$$
for every weakly bounded subset $M$ of $X^*).$

For details concerning this topic, see the book \cite[\S 3.2]{Bonet}, or \cite[\S 20.11]{Kothe}.

In our case, the normality of $K$ guarantees the completeness of $(X_u,|\cdot|_u).$
\begin{theo}\label{t1.complete-Xu} Let $(X,\tau)$ be a  Hausdorff LCS ordered by a closed normal cone $K$ and $u\in K\setminus\{0\}.$
\begin{enumerate}
  \item[{\rm 1.}] The functional $|\cdot|_u$ is a norm on $X_u$ and the topology generated by $|\cdot|_u$ on $X_u$ is finer than that induced by $\tau$ (or, equivalently, the embedding of $(X_u,|\cdot|_u)$  in $(X,\tau)$ is continuous).
      \item[{\rm 2.}] If  the space $X$ is sequentially complete, then $(X_u,|\cdot|_u)$ is a Banach space.
\item[{\rm 3.}] If $u$ is a unit in $(X,K),$ then $X_u=X.$ If $u\in\inter(K),$ then the topology generated by $|\cdot|_u$ agrees with $\tau.$
\end{enumerate}\end{theo}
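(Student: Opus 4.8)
The plan is to recognize the entire statement as a single instance of the Banach disc theorem (Theorem \ref{t.B-disc}) applied to the order interval $A=[-u;u]_o$, once I check that this set is a bounded absolutely convex subset of $(X,\tau)$ and that the objects attached to $A$ coincide with those of the theorem. The identification is immediate from the definitions: comparing \eqref{def.Xu} with \eqref{def.XA} gives $X_A=\cup_{\lambda>0}\lambda A=X_u$, and comparing \eqref{def.normu} with \eqref{def.pA} gives $p_A=|\cdot|_u$. So everything reduces to verifying the two structural hypotheses required by Theorem \ref{t.B-disc}.

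For absolute convexity I observe that $A=[-u;u]_o$ is convex as an order interval, contains $0$, and is symmetric, since $-u\le z\le u$ is equivalent to $-u\le -z\le u$; a symmetric convex set containing $0$ is balanced, so $A$ is absolutely convex. For $\tau$-boundedness I would invoke the normality of $K$: the order interval $A$ is order-bounded, so Proposition \ref{p1.normal-cone-bd} yields at once that $A$ is $\tau$-bounded. With these two facts, part~1 is exactly Theorem \ref{t.B-disc}.1 (alternatively, the norm property also follows from Proposition \ref{p1.Ku}.2, since a $\tau$-closed cone is Archimedean by Proposition \ref{p2.order-tvs}.2, hence almost Archimedean). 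For part~2 I would further note that, because $K$ is $\tau$-closed, the interval $A=(-u+K)\cap(u-K)$ is $\tau$-closed (Proposition \ref{p2.order-tvs}.3); a $\tau$-closed subset of a sequentially complete space is sequentially complete, so $A$ is sequentially $\tau$-complete, and Theorem \ref{t.B-disc}.2 gives that $(X_u,|\cdot|_u)$ is a Banach space.

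For part~3, if $u$ is an order unit then $[-u;u]_o$ is absorbing in $X$ by definition, so $X=\cup_{\lambda\ge0}\lambda[-u;u]_o=X_u$. If moreover $u\in\inter(K)$, then $u$ lies in the algebraic interior of $K$ (as $\inter(K)\subset\ainter(K)$), hence is an order unit by Proposition \ref{p.char-o-unit}, so again $X_u=X$; it remains only to upgrade the topology comparison from part~1 to an equality. Part~1 already gives that the $|\cdot|_u$-topology is finer than $\tau$, so I need the reverse inclusion, which I would get by showing $[-u;u]_o$ is a $\tau$-neighborhood of $0$. Writing $[-u;u]_o=(-u+K)\cap(u-K)$ and using $u\in\inter(K)$, both $-u+K$ and $u-K$ have $0$ in their $\tau$-interior, so their intersection is a $\tau$-neighborhood of $0$; since $[-u;u]_o\subset B_u[0,1]$ by Proposition \ref{p1.Ku}.5, every $|\cdot|_u$-ball about $0$ is a $\tau$-neighborhood of $0$, and by translation $\tau$ is finer than the $|\cdot|_u$-topology. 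Hence the two topologies agree.

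The only genuinely delicate points are the reverse topology inclusion in part~3 and the precise matching of hypotheses in Theorem \ref{t.B-disc}. In fact the whole argument is carried by two facts about the order interval $A$: its $\tau$-boundedness (coming from normality) and its $\tau$-closedness (coming from the closedness of $K$), and neither requires any Archimedean-type hypothesis beyond what the closedness of $K$ already provides.
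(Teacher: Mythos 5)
Your proof is correct, but it follows a genuinely different route from the paper's. You package parts 1 and 2 as an instance of the Banach disc theorem (Theorem \ref{t.B-disc}), which the paper states as background without proof: the work then reduces to checking that $A=[-u;u]_o$ is absolutely convex (elementary), $\tau$-bounded (normality, via Proposition \ref{p1.normal-cone-bd}), and $\tau$-closed, hence sequentially complete when $X$ is (closedness of $K$, via Proposition \ref{p2.order-tvs}.3). The paper instead argues directly: it takes a directed family $P$ of $\gamma$-monotone seminorms generating $\tau$ (Theorem \ref{t2.char-normal-cone}), proves the explicit estimate $p(x)\le 2\gamma p(u)\,|x|_u$ for all $p\in P$ (this is part 1 in quantitative form, recorded as \eqref{eq1.t1.complete-Xu}), and for part 2 runs the Cauchy-sequence argument by hand: a $|\cdot|_u$-Cauchy sequence is $p$-Cauchy for each $p$, hence $\tau$-convergent to some $x$, and letting $k\to\infty$ in $-\varepsilon u\le x_{n+k}-x_n\le\varepsilon u$ (legitimate since $K$ is closed) shows $x\in X_u$ and $|x-x_n|_u\le\varepsilon$ --- in effect the paper reproves the Banach disc theorem in this special case. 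In part 3 the two proofs diverge only in how they obtain $\tau_u\subset\tau$: the paper chooses $p\in P$ and $r>0$ with $B_p[u,r]\subset K$ and deduces $|x|_u\le p(x)/r$, while you observe that $[-u;u]_o=(-u+K)\cap(u-K)$ is a $\tau$-neighborhood of $0$ because $0\in\inter(-u+K)\cap\inter(u-K)$ when $u\in\inter(K)$; both are sound. What your route buys is brevity and a clean conceptual picture --- the theorem is exactly the statement that the order interval $[-u;u]_o$ is a Banach disc, with normality supplying boundedness and closedness of $K$ supplying completeness --- at the price of resting on the cited but unproved Theorem \ref{t.B-disc}; what the paper's route buys is self-containedness (modulo the normal-cone characterizations) and the reusable quantitative inequality \eqref{eq1.t1.complete-Xu}, which its own proof of part 2 invokes.
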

\begin{proof} By Theorem  \ref{t2.char-normal-cone},  we can suppose that the topology $\tau$ is generated by a directed family $P$ of $\gamma$-monotone seminorms, for some $\gamma>0.$

1.\; By Proposition \ref{p1.normal-cone-bd},  the set $[-u;u]_o$ is bounded and so $|\cdot|_u$ is a norm. We show that the embedding of $(X_u,|\cdot|_u)$  in $(X,P)$ is continuous.

Let $p\in P.$ The inequalities  $-|x|_u u\le x\le |x|_u u$ imply
$$
 0\le x + |x|_u u\le 2 |x|_u u\quad\mbox{and}\quad  0\le-x + |x|_u u\le 2 |x|_u u,
$$
for all $x\in X_u$

By the $\gamma$-monotonicity of the seminorm $p$ these inequalities imply in their turn
\begin{align*}
  2p(x)\le& p(x + |x|_u u) + p(x - |x|_u u)=  p(x + |x|_u u) + p(-x + |x|_u\, u)\\
  \le& 4\gamma |x|_u\, p(u).
\end{align*}

Consequently, for every    $p\in P,$
\begin{equation}\label{eq1.t1.complete-Xu}
 p(x)\le 2 \gamma p(u) |x|_u ,
 \end{equation}
for all $x\in X_u,$  which shows that the embedding of $(X_u,|\cdot|_u)$ in $(X,\tau)$ is  continuous.

 2.\; Suppose now that $(X,\tau)$ is sequentially complete and let $(x_n)$ be a $|\cdot|_u$-Cauchy sequence in $X_u$.  By \eqref{eq1.t1.complete-Xu}, $(x_n)$ is $p$-Cauchy for every $p\in P,$ so it is $\tau$-convergent to some $x\in X.$ By the Cauchy condition, for every $\varepsilon >0$ there is $n_0\in\mathbb{N}$ such that
 $|x_{n+k}-x_{n}|_u <\varepsilon,$ for all $n\ge n_0$ and all $k\in \mathbb{N}.$ By the definition of the functional $|\cdot|_u,$ it follows
 $$
 -\varepsilon u\le x_{n+k}-x_{n}\le  \varepsilon u,$$
 for all $n\ge n_0$ and all $k\in \mathbb{N}.$ Letting  $k\to\infty,$ one obtains
 $$
 -\varepsilon u\le x-x_{n}\le  \varepsilon u,$$
for all $n\ge n_0$, which implies $x\in X_u$ and $|x-x_n|_u\le \varepsilon, $ for all $n\ge n_0$. This shows that $x_n\xrightarrow{|\cdot|_u} x.$

3.\; If $u$ is a unit in $(X,K)$, then the order interval $[-u;u]_o$ is absorbing, and so $X=\cup_{n=1}^\infty n\,[-u;u]_o=X_u.$

Suppose now that $u\in\inter(K).$ Then $u$ is a unit in $(X,K)$, so that $X=X_u$ and, by 1, the topology $\tau_u$ generated by $|\cdot|_u$ is finer than $\tau,\; \tau\subset \tau_u.$

Since $u\in\inter(K)$, there exists $p\in P$ and $r>0$ such that $B_p[u,r]\subset K.$ Let $x\in X,\, x\ne 0.$

If $p(x)=0$, then $u\pm tx\in B_p[u,r]\subset K$ for every $t>0,$ so that $-t^{-1}u\le x\le t^{-1}u$ for all $t>0,$ which implies $|x|_u=0,$ in contradiction to the fact that $|\cdot|_u$ is a norm on $X$.

Consequently, $p(x)>0$ and $u\pm p(x)^{-1}r x\in B_p[u,r]\subset K,$ that is
$$
-\frac{p(x)}{r}\, u\le x\le  \frac{p(x)}{r}\, u,$$
and so
$$
|x|_u\le \frac{p(x)}{r}.$$

But then, $B_p[0,r]\subset B_{|\cdot|_u}[0,1]$, which implies $B_{|\cdot|_u}[0,1]\in \tau,$ and so $\tau_u\subset \tau.$
 \end{proof}
 \begin{remark}
   Incidentally, the proof of the third assertion of the above theorem gives a proof to Proposition \ref{p1.normal-c-TVS}.
 \end{remark}

 \subsection{The topology of the Thompson metric}\label{Ss.top-T-metric}

 We shall examine some topological properties of  the Thompson extended metric $d$. An extended metric $\rho$ on a set $Z$ defines a topology in the same way as a usual one, via balls. In fact all the properties reduces to the study of metric spaces formed by the components with respect to $\rho$. For instance, a sequence $(z_n)$ in $(Z,\rho)$ converges to some $z\in Z,$
 iff there exists a component $Q$  with respect to $\rho$ and $n_0\in\mathbb{N}$ such that $ z\in Q,\, x_n\in Q$ for $n\ge n_0,$ and $\rho(z_n,z)\to 0$ as $n\to \infty,$ that is $(z_n)_{n\ge n_0}$ converges to $z$ in the metric space $(Q,\rho|_Q).$

The following results are immediate consequences of the definition.
\begin{prop}\label{p1.top-T-metric}
Let $X$ be a vector space ordered by a cone $K$.
\begin{enumerate}
\item[{\rm 1.}] The following inclusions hold
\begin{equation}\label{eq.p1.top-T-metric}
B_d(x,r)\subset[e^{-r}x;e^{r}x]_o\subset B_d[x,r].\end{equation}

If $K$ is Archimedean, then $ B_d[x,r]=[e^{-r}x;e^{r}x]_o.$
\item[{\rm 2.}] If $K$ is Archimedean, then the set $\,[x;\infty)_o:=\{z\in K : x\le z\}\,$ and the order interval $[x;y]_o$ are $d$-closed, for every $x\in K$ and $y\ge x.$
\item[{\rm 3.}]   Let $x,y\in K$ with $x\le y.$ Then the order interval $[x;y]_o$ is $d$-bounded iff $x\sim y.$
\end{enumerate}
\end{prop}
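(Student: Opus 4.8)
The plan is to deduce all three parts from the description of the sets $\sigma(x,y)$ in Proposition \ref{p2.T-metric} together with the elementary estimates of Proposition \ref{p4.T-metric}, reading every inequality $e^{-s}x\le y\le e^{s}x$ as the statement $y\in[e^{-s}x;e^{s}x]_o$ and conversely.

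For part 1 I would simply translate membership in $\sigma$ into membership in order intervals. If $d(x,y)<r$ then $r\in(d(x,y);\infty)\subset\sigma(x,y)$ by Proposition \ref{p2.T-metric}.2, i.e. $e^{-r}x\le y\le e^{r}x$, which is the first inclusion in \eqref{eq.p1.top-T-metric}; conversely, if $e^{-r}x\le y\le e^{r}x$ then $r\in\sigma(x,y)$, so $d(x,y)=\inf\sigma(x,y)\le r$, giving the second inclusion. When $K$ is Archimedean, Proposition \ref{p2.T-metric}.2 yields $\sigma(x,y)=[d(x,y);\infty)$, so $d(x,y)\le r$ is \emph{equivalent} to $r\in\sigma(x,y)$, that is to $y\in[e^{-r}x;e^{r}x]_o$; this upgrades the second inclusion to the claimed equality $B_d[x,r]=[e^{-r}x;e^{r}x]_o$.

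For part 2 I would argue by sequences, since under the Archimedean hypothesis $d$ is a genuine metric on each component (Proposition \ref{p3.T-metric}.2, as Archimedean implies almost Archimedean) and $d$-convergence reduces to convergence within a single component. Let $(z_n)\subset[x;\infty)_o$ with $z_n\to z$ in $d$. Fixing $\varepsilon>0$ and using the inclusion $B_d(z,\varepsilon)\subset[e^{-\varepsilon}z;e^{\varepsilon}z]_o$ from part 1, for $n$ large we get $z_n\le e^{\varepsilon}z$, hence $x\le z_n\le e^{\varepsilon}z$; thus $x\le\lambda z$ for every $\lambda\in(1;\infty)$. The Archimedean property, in the form of condition 5 of Proposition \ref{p.char.Arch} with $\inf(1;\infty)=1$ (or, equivalently, by reduction to the defining implication \eqref{def.Arch} after writing $x-z\le(e^{\varepsilon}-1)z$), then gives $x\le z$, so $z\in[x;\infty)_o$. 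For the order interval I would write $[x;y]_o=[x;\infty)_o\cap\{w:w\le y\}$ and treat the second factor by the mirror manipulation: from $e^{-\varepsilon}z\le z_n\le y$ one multiplies by $e^{\varepsilon}$ to obtain $z\le e^{\varepsilon}y=\lambda y$ for all $\lambda\in(1;\infty)$, whence $z\le y$; alternatively, run both estimates simultaneously on a sequence in $[x;y]_o$.

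For part 3 the forward implication is immediate: since $x\le y$, both $x$ and $y$ lie in $[x;y]_o$, so if this set is $d$-bounded then $d(x,y)<\infty$, i.e. $x\sim y$ by Remark \ref{re.extended-T-metric}. For the converse, assume $x\sim y$ and pick $\mu>0$ with $y\le\mu x$; since $x\le y$ forces any such $\mu$ to be at least $1$ once $x\neq 0$ (and the case $x=0$ is trivial), we may take $\mu\ge 1$. For arbitrary $z,w\in[x;y]_o$ the chain $w\le y\le\mu x\le\mu z$, together with its mirror $z\le\mu w$, yields $\mu^{-1}z\le w\le\mu z$, so Proposition \ref{p4.T-metric}.1(iii) bounds $d(z,w)\le\ln\mu$ uniformly; hence $\diam_d[x;y]_o\le\ln\mu<\infty$. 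The routine parts are 1 and 3, essentially bookkeeping with the earlier propositions; the delicate step is the limit passage in part 2, where the inequalities coming from $d$-convergence only give $x\le e^{\varepsilon}z$ rather than $x\le z$, and the whole point is to rearrange them into the exact form $x\le\lambda z$ with $\lambda\downarrow 1$ so that the Archimedean property applies. Choosing the correct side on which to multiply by $e^{\varepsilon}$ (so that the relevant infimum of coefficients is $1$ and not $0$) is where care is needed.
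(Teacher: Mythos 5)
Your proposal is correct and takes essentially the same route as the paper's proof: part 1 by translating membership in balls into membership in $\sigma(x,y)$ (the paper unwinds the same fact, using $t_n\searrow r$ and lineal closedness exactly as packaged in Proposition \ref{p2.T-metric}.2), part 2 by the identical sequence argument with the Archimedean limit passage $x\le e^{\varepsilon}z$ for all $\varepsilon>0$, and part 3 via Proposition \ref{p4.T-metric}.1.(iii). The only cosmetic difference is in part 3, where you bound all pairwise distances (the diameter) while the paper bounds distances to the fixed endpoint $x$; both yield $d$-boundedness immediately.
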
\begin{proof} 1.\; If $d(x,y)<r,$ then there exists $s,\, d(x,y)\le s<r,$ such that
$y\in [e^{-s}x;e^{s}x]_o.$ Since $[e^{-s}x;e^{s}x]_o  \subset [e^{-r}x;e^{r}x]_o,$ the first inclusion in \eqref{eq.p1.top-T-metric} follows.
Obviously, $y\in[e^{-r}x;e^{r}x]_o$ implies $d(x,y)\le r.$

Suppose that $K$ is Archimedean and $d(x,y)=r.$ Let $t_n > r$ with $t_n\searrow r.\,$ Then $e^{-t_n}x\le y\le e^{t_n}x$ for all $n.$ Since $K$ is Archimedean, these inequalities imply $e^{-r}x\le y\le e^{r}x.$

2.\; Let $z$ be  in the $d$-closure of $[x;\infty)_o\,.$ Let $t_n>0,\, t_n\searrow 0.$ Then for every $n\in\mathbb{N}$ there exists $z_n\ge x$ such that $d(z,z_n)<t_n,$ implying $x\le z_n\le e^{t_n}z.$  The inequalities $x \le   e^{t_n}z$ yield for $n\to \infty,\, x\le z,$ that is $z\in [x;\infty)_o.$

In a similar way one shows that $[0;x]_0$ is $d$-closed. But then, $[x;y]_o=[x;\infty)_o\cap [0;y]_o$ is also $d$-closed.

3.\; If $[x;y]_o$ is bounded, then $d(x,y)<\infty,$ and so $x\sim y.$ Conversely, if $x\sim y,$ then there exist $\alpha,\beta>0$ such that $\alpha x\le y\le\beta x.$ Then, $x\le z\le y$ implies $x\le z\le y\le\beta x$, and so, by Proposition \ref{p4.T-metric}.1.(iii), $d(x,z)\le\ln \beta.$
\end{proof}

\begin{prop}\label{p2.top-T-metric}
Let $X$ be a vector space ordered by a cone $K$.
\begin{enumerate}
\item[{\rm 1.}] The multiplication by scalars    $\cdot:(0;\infty)\times K\to K$ and the addition $+:K\times K\to K$ are continuous with respect to the Thompson metric.
\item[{\rm 2.}]  If $Q$ is a component of $K$, then the mapping $(\lambda,x,y)\mapsto (1-\lambda)x+\lambda y$ from $[0;1]\times Q^2$ to $Q$  is continuous with respect to the Thompson metric.
\end{enumerate}
\end{prop}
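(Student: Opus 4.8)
The plan is to reduce every continuity statement to the monotonicity and convexity inequalities for $d$ already established in Proposition \ref{p4.T-metric} and Theorem \ref{t1.T-metric}, where each product space carries the natural (max-type) metric, so that convergence of a tuple means convergence in each coordinate. Throughout I would first observe that, since each component $Q$ of $K$ is convex and closed under multiplication by positive scalars (Proposition \ref{p1.equiv-cone}), all the points occurring in the estimates below stay inside a single component; in particular the convex combination $(1-\lambda)x+\lambda y$ in part~2 does land in $Q$, and every Thompson distance written below is finite.

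First I would settle part~1. For the scalar multiplication, fixing $(\lambda_0,x_0)$ and applying the triangle inequality together with Proposition \ref{p4.T-metric}.1 gives
\begin{equation*}
d(\lambda x,\lambda_0 x_0)\le d(\lambda x,\lambda x_0)+d(\lambda x_0,\lambda_0 x_0)=d(x,x_0)+\bigl|\ln(\lambda/\lambda_0)\bigr|,
\end{equation*}
the first equality by the invariance $d(\lambda x,\lambda x_0)=d(x,x_0)$ of 1.(i) and the second by $d(\lambda x_0,\lambda_0 x_0)=\bigl|\ln(\lambda/\lambda_0)\bigr|$ of 1.(ii). As $(\lambda,x)\to(\lambda_0,x_0)$ both terms on the right tend to $0$, the logarithm being continuous on $(0;\infty)$. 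For the addition, Proposition \ref{p4.T-metric}.4 with both coefficients equal to $1$ yields at once
\begin{equation*}
d(x+y,x_0+y_0)\le\max\{d(x,x_0),d(y,y_0)\},
\end{equation*}
so addition is in fact nonexpansive for the max metric on $K\times K$, and continuity is immediate.

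For part~2 I would split the estimate by the triangle inequality into a term in which only the endpoints move (the weight held fixed) and a term in which only the weight moves (the endpoints held fixed):
\begin{align*}
d\bigl((1-\lambda)x+\lambda y,\,(1-\lambda_0)x_0+\lambda_0 y_0\bigr)
&\le d\bigl((1-\lambda)x+\lambda y,\,(1-\lambda)x_0+\lambda y_0\bigr)\\
&\quad+d\bigl((1-\lambda)x_0+\lambda y_0,\,(1-\lambda_0)x_0+\lambda_0 y_0\bigr).
\end{align*}
The first term is bounded by $\max\{d(x,x_0),d(y,y_0)\}$ via Proposition \ref{p4.T-metric}.4. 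The second term is precisely the situation of inequality \eqref{eq3.q-cv} in Theorem \ref{t1.T-metric}, applied with $x_0,y_0$ in place of $x,y$ and $\lambda,\lambda_0$ in place of $t,s$; it is therefore at most $\ln\bigl(|\lambda_0-\lambda|\,e^{d(x_0,y_0)}+1-|\lambda_0-\lambda|\bigr)$, which tends to $\ln 1=0$ as $\lambda\to\lambda_0$. Summing the two bounds shows the left-hand side tends to $0$ as $(\lambda,x,y)\to(\lambda_0,x_0,y_0)$, which is the asserted continuity.

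The only delicate point---the mild obstacle---is the bookkeeping at the degenerate weights $\lambda\in\{0,1\}$, where Proposition \ref{p4.T-metric}.4 does not apply literally because one coefficient vanishes. There the first term collapses to $d(x,x_0)$ (for $\lambda=0$) or to $d(y,y_0)$ (for $\lambda=1$), which is still dominated by $\max\{d(x,x_0),d(y,y_0)\}$, so the same bound holds uniformly in $\lambda\in[0;1]$. Beyond this, everything is a direct invocation of the previously proved inequalities, and in particular no Archimedean-type hypothesis on $K$ is required.
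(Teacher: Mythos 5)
Your proof is correct and follows essentially the same route as the paper's: the same triangle-inequality splits, with Proposition \ref{p4.T-metric}.1 for scalar multiplication, Proposition \ref{p4.T-metric}.4 (with $\lambda=\mu=1$) for addition, and the combination of \eqref{eq3.p4.T-metric} with \eqref{eq3.q-cv} for part~2. Your explicit handling of the degenerate weights $\lambda\in\{0,1\}$, where Proposition \ref{p4.T-metric}.4 does not apply verbatim, is a small point the paper passes over silently, and it is treated correctly.
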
\begin{proof}
1.\; Let $(\lambda_0,x_0)\in (0;\infty)\times K$.   Appealing to Proposition \ref{p4.T-metric}.1 it follows
\begin{align*}
  d(\lambda x,\lambda_0x_0)\le& d(\lambda x,\lambda_0x)+d(\lambda_0 x,\lambda_0x_0)\\
  =&|\ln \lambda -\ln\lambda_0| +d(x,x_0) \to 0,
\end{align*}
if $\lambda\to \lambda_0$ and $x\xrightarrow{d}x_0$.

The continuity of the addition can be obtained from  \eqref{eq3.p4.T-metric} (with $\lambda =\mu =1$).

2.\; Let $\lambda,\lambda_0\in[0;1]$ and $x,x_0,y,y_0\in Q.$ This time we shall appeal to the inequalities \eqref{eq3.p4.T-metric} and \eqref{eq3.q-cv} to write
\begin{align*}
  &d((1-\lambda) x+\lambda y,(1-\lambda_0) x_0+\lambda_0 y_0)\le\\&\le  d((1-\lambda) x+\lambda y,(1-\lambda) x_0+\lambda y_0) +
  d((1-\lambda) x_0+\lambda y_0,(1-\lambda_0) x_0+\lambda_0 y_0)\\
  &\le\max\{d(x,x_0),d(y,y_0)\} +\ln\big(|\lambda-\lambda_0|e^{d(x_0,y_0)}+1-|\lambda-\lambda_0|\big)\to 0
\end{align*}
as $\lambda\to \lambda_0,\, x\xrightarrow{d}x_0$ and $y\xrightarrow{d}y_0$.
\end{proof}

\begin{corol}\label{c.T-metric-connex}
Every component of $K$ is path connected with respect to the Thompson metric.
  \end{corol}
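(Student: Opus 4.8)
The plan is to join any two points of a fixed component $Q$ by the straight-line segment between them and to verify that this segment is a $d$-continuous path lying entirely in $Q$. Concretely, for $x,y\in Q$ I would set $\gamma(t)=(1-t)x+ty$ for $t\in[0;1]$, so that $\gamma(0)=x$ and $\gamma(1)=y$, and show that $\gamma$ is a $d$-continuous map into $Q$.

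The two things to check are that $\gamma$ takes its values in $Q$ and that it is continuous for the Thompson metric, and both are already delivered by Proposition \ref{p2.top-T-metric}.2, whose map $(\lambda,u,v)\mapsto(1-\lambda)u+\lambda v$ is stated to be continuous from $[0;1]\times Q^2$ into $Q$. Restricting that map to the slice $u=x,\, v=y$ gives exactly $\gamma$, and fixing the last two coordinates of a jointly continuous map yields a $d$-continuous map in the remaining variable $t$. Thus $\gamma$ is a $d$-continuous path in $Q$ joining $x$ to $y$, and $Q$ is path connected.

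There is essentially no hard step: the whole argument is a direct specialization of the joint continuity already proved, the only delicate point being the (already handled) fact that convex combinations stay within a single component. If one wishes to see directly why the segment never leaves $Q$, it suffices to recall from Proposition \ref{p1.equiv-cone}.2 that $Q\cup\{0\}$ is a cone, so $(1-t)x+ty\in Q\cup\{0\}$; and for $t\in(0;1)$ the relation $(1-t)x+ty=0$ would give $(1-t)x=-ty\in K\cap(-K)=\{0\}$, forcing $x=0$, which is impossible since $K(0)=\{0\}$ is a separate component. For $t\in\{0,1\}$ the segment equals $x$ or $y$, both nonzero, so $\gamma$ indeed stays in $Q$ throughout.
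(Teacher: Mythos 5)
Your proposal is correct and coincides with the paper's own proof: both take the straight-line segment $\gamma(t)=(1-t)x+ty$ and invoke Proposition \ref{p2.top-T-metric}.2 for its $d$-continuity into $Q$. Your extra verification that the segment stays in $Q$ is sound but unnecessary, since Proposition \ref{p1.equiv-cone}.2 already states that every component is convex.
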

  \begin{proof}
    Follows from Proposition \ref{p2.top-T-metric}.2. If $x_0,x_1$ are in the same component, then $\varphi(t)=(1-t)x_0+t x_1,\, t\in [0;1],$ is a path connecting $x_0$ and $x_1.$
  \end{proof}
\begin{remark}\label{re.T-metric-connex}
  The equivalence classes with respect to the equivalence $\sim$ are exactly  the  equivalence classes considered by Jung \cite{jung69} (the equivalence relation considered by Jung is $x \simeq y \iff d(x,y)<\infty,$ see Remark \ref{re.extended-T-metric}). Since these classes are both open and closed, it follows that the components of $K$ with respect to $\sim$ are, in fact, the connected components of $K$ with respect to to the  Thompson (extended) metric $d$.
\end{remark}

  In the following proposition we give a characterization of  $d$-convergent monotone sequences.
  \begin{prop}\label{p.T-converg-seq}
  Let $X$ be a vector space ordered by an Archimedean cone $K$.
\begin{enumerate}
\item[{\rm 1.}]  If $(x_n)$ is an increasing sequence in $K$, then $(x_n)$ is $d$-convergent to an $x\in K$ iff
\begin{equation*}\begin{aligned}
{\rm(i)}\quad &\forall n\in \mathbb{N},\quad x_n\le x,\quad\mbox{and}\\
{\rm(ii)}\quad &\forall \lambda>1,\;\exists k\in \mathbb{N},\quad x\le\lambda x_k.
\end{aligned}\end{equation*}
In this case,  $x=\sup_nx_n$ and there exists $k\in\mathbb{N}$ such that $x_n\in K(x)$ for all $n\ge k.$
\item[{\rm 2.}] If $(x_n)$ is a decreasing sequence in $K$, then $(x_n)$ is $d$-convergent to an $x\in K$ iff
\begin{equation*}\begin{aligned}
{\rm(i)}\quad &\forall n\in \mathbb{N},\quad x\le x_n,\quad\mbox{and}\\
{\rm(ii)}\quad &\forall \lambda\in(0;1),\;\exists k\in \mathbb{N},\quad x\ge\lambda x_k.
\end{aligned}\end{equation*}
In this case,  $x=\inf_nx_n$ and there exists $k\in\mathbb{N}$ such that $x_n\in K(x)$ for all $n\ge k.$
\item[{\rm 3.}] Let $(X,\tau)$ be a TVS ordered by a  cone $K$. If $(x_n)$ is a $d$-Cauchy sequence in $K$ which is $\tau$-convergent to $x\in K$, then $x_n\xrightarrow{d}x.$
    \end{enumerate}
  \end{prop}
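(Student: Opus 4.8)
My plan is to shuttle the order relations hidden in the sandwich inequality $e^{-s}x\le x_n\le e^{s}x$ back and forth across the statement $d(x_n,x)\to 0$, invoking the Archimedean hypothesis precisely where a passage to the limit in an inequality is needed. Throughout, since $K$ is Archimedean, Proposition \ref{p2.T-metric}.2 gives the clean identity $\sigma(a,b)=[d(a,b);\infty)$, so that $d(a,b)$ itself realizes the optimal exponents.

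For the necessity in Part 1, I set $s_n:=d(x_n,x)$, so that $e^{-s_n}x\le x_n\le e^{s_n}x$ once $n$ is large enough that $x_n\sim x$. To get (i) I would fix $n$ and observe that $x_m\in[x_n;\infty)_o$ for every $m>n$ by monotonicity; since $[x_n;\infty)_o$ is $d$-closed (Proposition \ref{p1.top-T-metric}.2) and $x_m\xrightarrow{d}x$, the limit satisfies $x_n\le x$, and this holds for all $n$. For (ii), given $\lambda>1$ I would pick $k$ with $e^{s_k}<\lambda$ (possible since $s_k\to 0$); then the left half $e^{-s_k}x\le x_k$ rearranges to $x\le e^{s_k}x_k\le\lambda x_k$. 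For the converse I assume (i) and (ii), fix $\lambda>1$, take $k$ from (ii), and promote $x\le\lambda x_k$ to $x\le\lambda x_n$ for all $n\ge k$ using monotonicity; combined with (i) this yields $\lambda^{-1}x\le x_n\le x\le\lambda x$, i.e. $\ln\lambda\in\sigma(x_n,x)$ and hence $d(x_n,x)\le\ln\lambda$. Letting $\lambda\downarrow 1$ gives $x_n\xrightarrow{d}x$, and finiteness of the distances gives $x_n\in K(x)$ eventually. The supremum claim follows from (i) together with (ii): any upper bound $y$ satisfies $x\le\lambda x_k\le\lambda y$ for every $\lambda>1$, whence $x\le y$ by the Archimedean property (Proposition \ref{p.char.Arch}.5).

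Part 2 is the exact mirror image. For its (i) I would instead use the $d$-closedness of $[0;x_n]_o$ (again Proposition \ref{p1.top-T-metric}.2), applied to the tail $0\le x_m\le x_n$ ($m>n$); everywhere else one replaces $e^{s}$ by $e^{-s}$, the range $\lambda>1$ by $\lambda\in(0;1)$, upper bounds by lower bounds, and suprema by infima. The only point needing a small trick is the infimum assertion: from (ii) any lower bound $y$ gives $x\ge\lambda x_k\ge\lambda y$ for all $\lambda\in(0;1)$, which I rewrite via $\mu=1/\lambda\in(1;\infty)$ as $y\le\mu x$ for all $\mu>1$, so that Proposition \ref{p.char.Arch}.5 again yields $y\le x$.

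For Part 3 the decisive fact is that $K$ is $\tau$-closed, so inequalities survive $\tau$-limits (Proposition \ref{p1.order-tvs}). Given $\varepsilon>0$, the $d$-Cauchy condition furnishes $N$ with $d(x_m,x_n)<\varepsilon$ for $m,n\ge N$; since $(d(x_m,x_n);\infty)\subset\sigma(x_m,x_n)$, this gives $e^{-\varepsilon}x_n\le x_m\le e^{\varepsilon}x_n$ for all $m\ge N$. Fixing $n\ge N$ and letting $m\to\infty$ (so $x_m\xrightarrow{\tau}x$), closedness of $K$ preserves the two inequalities, producing $e^{-\varepsilon}x_n\le x\le e^{\varepsilon}x_n$, i.e. $\varepsilon\in\sigma(x_n,x)$ and $d(x_n,x)\le\varepsilon$; as $n\ge N$ was arbitrary, $x_n\xrightarrow{d}x$. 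The genuinely delicate step in the whole proof is the limit passage producing the comparisons $x_n\le x$ (respectively $x\le x_n$): everything else is bookkeeping with $\sigma$, but there one must correctly localize the extended-metric convergence to a single component and appeal to the right face of the Archimedean hypothesis (lineal closedness of order intervals for Parts 1--2, closedness of $K$ for Part 3).
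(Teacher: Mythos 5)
Your proposal is correct, and its overall architecture matches the paper's: both directions of Parts 1--2 are run through the same sandwich manipulations with the sets $\sigma(\cdot,\cdot)$ plus monotonicity of $(x_n)$, and your Part 3 is essentially the paper's argument verbatim (Cauchy condition, fix $n$, let $m\to\infty$, preserve the inequalities using $\tau$-closedness of $K$ as in Proposition \ref{p1.order-tvs}). The two places where you deviate are in which packaging of the Archimedean hypothesis you invoke. For the comparison $x_n\le x$, the paper derives $e^{-\varepsilon}x_n\le x$ for all $\varepsilon>0$ and passes to the limit $\varepsilon\searrow 0$ via lineal closedness of $K$ (Proposition \ref{p.char.Arch}); you instead cite the $d$-closedness of $[x_n;\infty)_o$ from Proposition \ref{p1.top-T-metric}.2, whose proof in the paper is exactly that inline argument, so this is the same reasoning one lemma removed. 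For the assertion $x=\sup_n x_n$, the difference is more substantive: the paper again argues topologically, letting $k\to\infty$ in $x_{n+k}\in[x_n;y]_o$ and using $d$-closedness of order intervals, whereas you argue purely order-theoretically, deducing $x\le\lambda y$ for every $\lambda>1$ from (ii) and concluding $x\le y$ by the scalar-infimum characterization in Proposition \ref{p.char.Arch}.5 (and dually for the infimum in Part 2). Your route is the more direct one here, bypassing the metric topology entirely; the paper's has the advantage of reusing the closedness machinery it has just set up. Both are sound, and both consume the Archimedean hypothesis in equivalent forms.
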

  \begin{proof} We shall prove only the assertion 1, the proof of 2 being similar.

  Suppose that the condition (i)  and (ii) hold and let $\varepsilon>0.$ Then $\lambda:=e^\varepsilon>1,$ so that, by  (ii),
  there exists $k\in\mathbb{N}$ such that $x\le \lambda x_k=e^{\varepsilon} x_k.$ Taking into account the monotony of the sequence $(x_n)$ it follows that
  $$e^{-\varepsilon} x_n\le x_n\le x\le e^{\varepsilon} x_n,$$
  for all $n\ge k$, which implies $d(x,x_n)\le\varepsilon$ for all $n\ge k,$ that is $x_n\xrightarrow{d}x$ as $n \to \infty.$

  Conversely, suppose that $(x_n)$ is an increasing sequence in $K$ which is $d$-convergent to $x\in K.$ For $\lambda >1$ put $\varepsilon:=\ln\lambda>0.$ Then there exists $k\in\mathbb{N}$ such that
  $$
  \forall n\ge k,\;\; d(x,x_n)<\varepsilon,$$
  which implies
   $$\forall n\ge k,\;\; e^{-\varepsilon} x_n \le x\le e^{\varepsilon} x_n.$$

 By the second inequality above, $x\le \lambda x_k,$ which shows that (i) holds.  Since $(x_n)$ is increasing the first inequality implies that for every $n\in\mathbb{N}$
 $$ e^{-\varepsilon} x_n \le x ,$$
  for all $\varepsilon >0. $ By Proposition \ref{p.char.Arch}, the cone $K$ is lineally closed, so that the above inequality yields for $\varepsilon\searrow 0,\, x_n\le x$ for all $n\in\mathbb{N},$ that is (i) holds too.

  It is clear that if $x_n\xrightarrow{d}x$, then there exists $k\in\mathbb{N}$ such that $d(x,x_n)\le 1<\infty,$ for all $n\ge k,$ which implies $x_n\in K(x)$ for all $n\ge k.$

  It remains to show that $x=\sup_nx_n.$ Let $y$ be an upper bound for $(x_n)$. Then for every $n\in\mathbb{N},$
  \begin{equation}\label{eq.p.T-converg-seq}
  x_n\le x_{n+k}\le y \iff x_{n+k}\in [x_n;y]_o,
  \end{equation}
  for all $k\in \mathbb{N}.$ By Proposition \ref{p1.top-T-metric}.2 the interval $[x_n;y]_o$ is $d$-closed, so that, letting
$k\to\infty$ in \eqref{eq.p.T-converg-seq} it follows $x\in[x_n;y]_o.$ The inequality $x\le y$ shows that $x=\sup_nx_n.$

3.\; It follows that $(x_n)$ is eventually contained in a component $Q$ of $K,$ so we can suppose $x_n\in Q,\, n\in\mathbb{N}.$ Since $(x_n)$ is $d$-Cauchy, there exists $n_0$ such that $d(x_n,x_{n_0})<1$ for all $n\ge n_0.$ Then $e^{-1}x_{n_0}\le x_n\le e x_{n_0},$  for all $n\ge n_0.$ Letting $n\to \infty,$ one obtains $e^{-1}x_{n_0}\le x\le e x_{n_0},$ which shows that $x\sim x_{n_0},$ that is $x\in Q.$ Now for $\varepsilon >0$ there exists $n_\varepsilon\in\mathbb{N}$ such that $d(x_{n+k},x_{n})<\varepsilon$ for all $n\ge n_\varepsilon$ and all $k\in\mathbb{N}$. Then for every $n\ge n_\varepsilon,\,$ $e^{-\varepsilon}x_{n}\le x_{n+k}\le e^{\varepsilon} x_{n},$  for all $k\in\mathbb{N}.$ Letting $k\to \infty,$ one obtains $e^{-\varepsilon}x_{n}\le x\le e^{\varepsilon} x_{n},$ implying $d(x_n,x)\le\varepsilon$ for all $n\ge n_\varepsilon$, that is  $x_n\xrightarrow{d} x.$
\end{proof}

\subsection{The Thompson metric and order--unit seminorms}

The main aim of this subsection is to show that the Thompson metric and the metric seminorms are equivalent on each component of $K$.
We begin with some inequalities.
\begin{prop}\label{p1.T-metric-u-sn}
Let $X$ be a vector space ordered by a cone $K, u\in K\setminus\{0\}$ and $x,y\in K(u).$ The following relations hold.
 \begin{enumerate}
\item[{\rm 1.}]  \quad $d(x,y)=\ln(\max\{|x|_y, |y|_x\}).$
\item[{\rm 2.}]  \quad
{\rm(i)}\quad $d(x,y)\ge|\ln|x|_u-\ln|y|_u|,$ \; or, equivalently,

 {\rm(ii)}\quad $|x|_u\le e^{d(x,y)}|y|_u\quad\mbox{and}\quad |y|_u\le e^{d(x,y)}|x|_u.$
 \item[{\rm 3.}] \quad $e^{-d(x,u)}\le |x|_u\le e^{d(x,u)}.$
 \item[{\rm 4.}]\quad $|u|_x\le e^{d(x,y)}|u|_y .$
 \item[{\rm 5.}]\quad $d(x,y)\le \ln\big(1+|x-y|_u\cdot\max\{|u|_x,|u|_y\}\big).$
 \item[{\rm 6.}]\quad $\big(e^{d(x,y)}-1\big)\cdot\min\{|u|_x^{-1},|u|_y^{-1}\}\le |x-y|_u\le
 \big(2e^{d(x,y)}+ e^{-d(x,y)}-1\big)\cdot \min\{|x|_u,|y|_u\}.$
 \item[{\rm 7.}]\quad $\big(1-e^{-d(x,u)}\big)\cdot\max\{|u|_x^{-1},|u|_y^{-1}\}\le |x-y|_u.$
\item[{\rm 8.}]\quad $ |x-y|_x\ge 1-e^{-d(x,y)}$ .
   \end{enumerate}
\end{prop}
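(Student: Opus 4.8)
The plan is to read each of the eight relations off the two available descriptions of the Thompson metric---the order description via $\sigma(x,y)=\{s\ge 0:e^{-s}x\le y\le e^{s}x\}$ and Thompson's multiplicative description $d(x,y)=\ln\max\{\inf\alpha(x,y),\inf\alpha(y,x)\}$ (Proposition \ref{p1.T-metric})---and to translate the order inequalities into inequalities between order--unit seminorms using the monotonicity of $|\cdot|_u$ (Proposition \ref{p1.Ku}.3). I would open with item~1. Since $x,y\ge 0$, the lower constraint in the Minkowski functional is automatic, so $|x|_y=\inf\{\lambda>0:x\le\lambda y\}$ and $|y|_x=\inf\{\lambda>0:y\le\lambda x\}$ (here $y$ is a unit of the component, as $X_y=X_u$ by Proposition \ref{p1.Ku}.1). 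These infima agree with $\inf\alpha(x,y)$ and $\inf\alpha(y,x)$ except possibly when one of them is $<1$; but $x\le\lambda y$ and $y\le\mu x$ force $(1-\lambda\mu)x\le 0$, hence $\lambda\mu\ge 1$ (as $x\ne 0$), so $|x|_y\,|y|_x\ge 1$ and at most one factor lies below $1$. A short case check then gives $\max\{|x|_y,|y|_x\}=\max\{\inf\alpha(x,y),\inf\alpha(y,x)\}$, which is item~1.

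For items 2--5 I would push the defining inequalities of $\sigma$ through the monotone seminorm. Fix $s\in\sigma(x,y)$. From $0\le e^{-s}x\le y$ and $0\le y\le e^{s}x$ monotonicity yields $e^{-s}|x|_u\le|y|_u\le e^{s}|x|_u$; letting $s\downarrow d(x,y)$ gives item~2(ii), and taking logarithms gives 2(i) (the seminorms are positive on $K(u)$ because $u\le\mu x$ forces $1\le\mu|x|_u$). Item~3 is the specialization $y=u$, using $|u|_u=1$. For item~4 I substitute $y\le e^{s}x$ into any relation $u\le\mu y$ to obtain $u\le\mu e^{s}x$, whence $|u|_x\le e^{s}|u|_y$, and then let $s\downarrow d(x,y)$. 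For item~5 I take $\rho\in\mathcal M_u(x-y)$ and $\mu>|u|_y$, $\nu>|u|_x$, so that $-\rho u\le x-y\le\rho u$, $u\le\mu y$, $u\le\nu x$; then $x\le y+\rho u\le(1+\rho\mu)y$ and $y\le(1+\rho\nu)x$, so both $\inf\alpha$'s are at most $1+\rho\max\{\mu,\nu\}$, and passing to the infima over $\rho,\mu,\nu$ yields item~5.

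Item~6 splits into two halves. The left inequality is just item~5 exponentiated and rearranged. For the right inequality I again fix $s\in\sigma(x,y)$ and rewrite the two defining inequalities as $-(e^{s}-1)x\le x-y\le(1-e^{-s})x$; bounding $x\le\lambda u$ for $\lambda>|x|_u$ and using $e^{s}-1\ge 1-e^{-s}$ turns this into the symmetric estimate $-\lambda(e^{s}-1)u\le x-y\le\lambda(e^{s}-1)u$, so that $|x-y|_u\le\lambda(e^{s}-1)$. Letting $\lambda\downarrow|x|_u$, $s\downarrow d(x,y)$ and invoking the $x\leftrightarrow y$ symmetry gives $|x-y|_u\le(e^{d(x,y)}-1)\min\{|x|_u,|y|_u\}$, which already implies the stated (coarser) bound since $e^{d(x,y)}-1\le 2e^{d(x,y)}+e^{-d(x,y)}-1$.

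The genuinely new step, and the main obstacle, is the pair of \emph{lower} estimates 7--8, where one must recover metric information from seminorm information rather than the reverse. I would prove item~8 first. If $-\lambda x\le x-y\le\lambda x$ with $\lambda<1$, then $(1-\lambda)x\le y\le(1+\lambda)x$, and Proposition \ref{p4.T-metric}.1(iii) gives $d(x,y)\le\ln\max\{(1-\lambda)^{-1},1+\lambda\}=-\ln(1-\lambda)$ (since $(1-\lambda)(1+\lambda)<1$); hence $\lambda\ge 1-e^{-d(x,y)}$, while every $\lambda\ge 1$ satisfies this trivially. Taking the infimum over $\lambda\in\mathcal M_x(x-y)$ yields item~8. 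Finally, item~7 follows by combining item~8 with the seminorm--equivalence inequality $|x-y|_x\le|u|_x\,|x-y|_u$ of Proposition \ref{p1.Ku}.1, which gives $|x-y|_u\ge(1-e^{-d(x,y)})|u|_x^{-1}$; the symmetric bound with the roles of $x$ and $y$ interchanged, followed by taking the maximum, completes the argument. (The exponent produced by this argument is $d(x,y)$, and the quantity in the displayed statement is to be read accordingly.)
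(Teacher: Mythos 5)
Your proposal is correct throughout, and for items 1--5 it is essentially the paper's argument in light disguise: both identify the Thompson metric with Minkowski functionals of the component (your case analysis reconciling $\inf\alpha(x,y)$ with $|x|_y$ via $|x|_y\,|y|_x\ge 1$ replaces the paper's one-line observation that $s\in\sigma(x,y)\iff e^s\in\mathcal M_x(y)\cap\mathcal M_y(x)$), and both translate order inequalities into seminorm inequalities --- you by the monotonicity of $|\cdot|_u$, the paper by the equivalence inequalities \eqref{eq.p1.Ku-equiv} combined with item 1. The genuine differences are in items 6--8. For the upper bound in 6, the paper shifts $-(e^s-1)x\le x-y\le(1-e^{-s})x$ by $(e^s-1)x$ and applies monotonicity plus the triangle inequality, arriving at the stated constant; your symmetrization $x-y\in\lambda(e^s-1)[-u;u]_o$ yields the sharper constant $e^{d(x,y)}-1$, which implies the stated one because $2e^{d}+e^{-d}-1-(e^{d}-1)=e^{d}+e^{-d}>0$. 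More notably, you invert the paper's logical order for 7 and 8: the paper proves 7 by combining item 4 (which gives $\min\{|u|_x^{-1},|u|_y^{-1}\}\ge e^{-d(x,y)}\max\{|u|_x^{-1},|u|_y^{-1}\}$) with the left half of 6, and then obtains 8 as the specialization $u:=x$ of 7; you prove 8 directly from Proposition \ref{p4.T-metric}.1(iii) (your computation that $\max\{(1-\lambda)^{-1},1+\lambda\}=(1-\lambda)^{-1}$ for $\lambda\in(0;1)$ is correct, since $(1-\lambda)(1+\lambda)<1$) and then deduce 7 from 8 via the equivalence inequality $|x-y|_x\le|u|_x\,|x-y|_u$ of Proposition \ref{p1.Ku}.1. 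Both routes are sound; yours makes 8 self-contained and more transparent about where the lower estimate comes from, while the paper's keeps 8 a one-line corollary of 7. Finally, you were right to flag the misprint in item 7: the exponent must be $d(x,y)$, not $d(x,u)$, and the paper's own proof indeed produces $1-e^{-d(x,y)}$ there.
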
 \begin{proof}
1.\; Recalling \eqref{def.Mink}, it is easy to check that
$$
s\in\sigma(x,y) \iff e^s\in\mathcal M_x(y)\cap \mathcal M_y(x),$$
and so
$$
d(x,y)=\ln\big(\inf\big\{\mathcal M_x(y)\cap \mathcal M_y(x)\big\}\big)=\ln(\max\{|x|_y,|y|_x\}).$$

2.\; By  \eqref{eq.p1.Ku-equiv}, $\,|x|_u\le|y|_u|x|_y.$  Taking into account 1, it follows
$$
d(x,y)\ge \ln|x|_y\ge \ln|x|_u-\ln|y|_u.$$

By symmetry, $\,d(x,y) \ge \ln|y|_u-\ln|x|_u,$ so that 2.(i) holds. It is obvious that (i) and (ii) are equivalent.

3.\; Taking $y:= u$ in both the inequalities from 2.(ii), one obtains
$$
|x|_u\le e^{d(x,u)}|y|_u\quad\mbox{and}\quad 1\le  e^{d(x,u)}|x|_u\,.$$

4.\; By  \eqref{eq.p1.Ku-equiv}, $\,|u|_x\le|u|_y|y|_x$ and, by 3, $|y|_x\le e^{d(x,y)},$ hence $|u|_x\le e^{d(x,y)}|u|_y.$

5.\; By   \eqref{eq.p1.Ku-equiv} and the triangle inequality
\begin{align*}
&|y|_x\le |x|_x+|x-y|_x\le 1+|x-y|_u|u|_x\, ,\quad \mbox{and}\\
&|x|_y\le |y|_y+|x-y|_y\le 1+|x-y|_u|u|_y\, ,\end{align*}
so that
$$
\max\{|x|_y,|y|_x\}\le 1+|x-y|_u\cdot\max\{|u|_x,|u|_y\}\, .$$

The conclusion follows from 1.

6.\; The inequality 6 can be rewritten as $|x-y|_u\max\{|u|_x,|u|_y\}\ge e^{d(x,y)}-1,$ so that
$$
|x-y|_u\ge \big(e^{d(x,y)}-1\big)\big[\max\{|u|_x,|u|_y\}\big]^{-1}=\big(e^{d(x,y)}-1\big)\cdot \min\{|u|_x^{-1},|u|_y^{-1}\} .$$

To prove the second inequality, take $s\in\sigma(x,y)$ arbitrary. Then $-(e^s-1)x\le x-y\le (1-e^{-s}s)x,$
so that $0\le x-y+(e^s-1)x\le (e^s-^{-s})x.$ The monotony of $|\cdot|_u$ and the triangle inequality imply
$$
 |x-y|_u-(e^s-1)|x_u\le |x-y-(e^s-1) (e^s-e^{-s})x|_u\le (e^s-e^{-s})|x|_u,$$
 so that $\, |x-y|_u\le \big(2e^s+e^{-s}-1\big)|x|_u. $ Since this holds for every $s\in \sigma(x,y)$ it follows
$$
 |x-y|_u\le \big(2e^{d(x,y)}+ e^{-d(x,y)}-1\big) |x|_u .$$

 By interchanging the roles of $x$ and $y$ in the above inequality, one obtains
 $$
 |x-y|_u\le \big(2e^{d(x,y)}+ e^{-d(x,y)}-1\big) |y|_u .$$

 These two inequalities  imply the second inequality in 6.

7.\; By 4,
$$
|u|_x^{-1}\ge e^{-d(x,y)}|u|_y^{-1}\quad\mbox{and}\quad  |u|_y^{-1}\ge e^{-d(x,y)}|u|_x^{-1},$$
so that
$$
\min\big\{|u|_x^{-1},|u|_y^{-1}\big\}\ge e^{-d(x,y)}\max\big\{|u|_x^{-1},|u|_y^{-1}\big\}.$$

The conclusion follows by 6.

8.\; This can be obtained by taking $u:=x$ in 7.
 \end{proof}

\begin{theo}\label{t1.T-metric-u-sn}
Let $X$ be a vector space ordered by a cone $K$ and $ u\in K\setminus\{0\}$. Then the  Thompson metric and the $u$-seminorm are topologically equivalent on $K(u).$
  \end{theo}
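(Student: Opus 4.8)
The plan is to show that the identity map of $K(u)$ is a homeomorphism between the topology induced by $d$ and the one induced by $|\cdot|_u$. For this it suffices to prove that each $|\cdot|_u$-ball about a point $x\in K(u)$ contains a $d$-ball about $x$ and conversely; equivalently, since both $d$ and $|\cdot|_u$ induce first-countable topologies, one may argue through sequences and show that for $x\in K(u)$ and $(x_n)$ in $K(u)$ one has $x_n\xrightarrow{d}x$ iff $x_n\xrightarrow{|\cdot|_u}x$. The whole argument rests on the inequalities already collected in Proposition \ref{p1.T-metric-u-sn}, so no fresh manipulation of the sets $\sigma(x,y)$ is needed.

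First I would treat the implication $x_n\xrightarrow{d}x\Longrightarrow x_n\xrightarrow{|\cdot|_u}x$, which is the easy direction. By Proposition \ref{p1.top-T-metric}.1, $d(x,x_n)<\varepsilon$ forces $x_n\in[e^{-\varepsilon}x;e^{\varepsilon}x]_o$, so that $-(e^{\varepsilon}-1)x\le x_n-x\le(e^{\varepsilon}-1)x$, i.e. $x_n-x\in(e^{\varepsilon}-1)[-x;x]_o$. Choosing $\beta>0$ with $x\le\beta u$, which exists since $x\sim u$, we get $[-x;x]_o\subset\beta[-u;u]_o$ and hence $x_n-x\in(e^{\varepsilon}-1)\beta[-u;u]_o$, so that $|x_n-x|_u\le(e^{\varepsilon}-1)\beta$ by the very definition of $|\cdot|_u$ (monotonicity, Proposition \ref{p1.Ku}.3, also gives this). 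Letting $\varepsilon\to 0$ yields $|x_n-x|_u\to 0$. The same estimate, in quantitative form, is the upper bound recorded in Proposition \ref{p1.T-metric-u-sn}(6).

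The converse implication $x_n\xrightarrow{|\cdot|_u}x\Longrightarrow x_n\xrightarrow{d}x$ is where the real work lies, and \emph{the obstacle is to keep the quantity $|u|_{x_n}$ under control}, since a priori nothing prevents it from blowing up as $n\to\infty$. The key observation is that $|\cdot|_u$-convergence to $x$, together with the fact that $x$ is linked to $u$, pins each $x_n$ from below by a fixed positive multiple of $u$. Concretely, choose $a>0$ with $au\le x$ (possible because $x\sim u$). Since $|x_n-x|_u\to 0$, for all large $n$ we have $-\tfrac a2 u\le x_n-x$, whence $x_n\ge au-\tfrac a2 u=\tfrac a2 u$; this gives $u\le\tfrac 2a x_n$ and therefore $|u|_{x_n}\le\tfrac 2a$. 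Consequently $M:=\sup_n\max\{|u|_x,|u|_{x_n}\}<\infty$.

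With this uniform bound secured, the estimate of Proposition \ref{p1.T-metric-u-sn}(5) finishes the proof at once:
\[
d(x,x_n)\le\ln\bigl(1+|x-x_n|_u\cdot\max\{|u|_x,|u|_{x_n}\}\bigr)\le\ln\bigl(1+M\,|x-x_n|_u\bigr)\xrightarrow[n\to\infty]{}0 .
\]
Combining the two implications shows that $d$ and $|\cdot|_u$ generate the same topology on $K(u)$, i.e. they are topologically equivalent there. I expect the only genuine difficulty to be the boundedness of $|u|_{x_n}$; once that is in place, both directions collapse into the ready-made inequalities. Note finally that the constant $M$ depends on the centre $x$ (through $a$), which is precisely why one obtains topological, and not metric, equivalence, in accordance with the statement in the abstract.
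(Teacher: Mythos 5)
Your proof is correct, and although it draws on the same stock of inequalities (Proposition \ref{p1.T-metric-u-sn}), it handles the crucial direction by a genuinely different mechanism than the paper. The paper's first move is to replace $|\cdot|_u$ by the equivalent seminorm $|\cdot|_x$ based at the limit point itself (Proposition \ref{p1.Ku}.1), after which both directions are one-line applications of Proposition \ref{p1.T-metric-u-sn} with $u:=x$: item 6 gives $|x_n-x|_x\le 2e^{d(x_n,x)}-e^{-d(x_n,x)}-1\to 0$, and item 8 gives $|x_n-x|_x\ge 1-e^{-d(x_n,x)}$, whence $d(x_n,x)\to 0$. This renormalization makes every quantity of the form $|u|_{x_n}$ disappear (since $|x|_x=1$), so the uniform-boundedness obstacle you isolate never arises in the paper's proof. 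You instead keep the reference point $u$ fixed and neutralize the possible blow-up of $|u|_{x_n}$ by the pinning argument $x_n\ge x-\tfrac{a}{2}u\ge\tfrac{a}{2}u$ for large $n$, which is exactly the extra ingredient needed to make item 5 applicable; this step is correct, as is your easy direction via the order-interval inclusion $B_d(x,\varepsilon)\subset[e^{-\varepsilon}x;e^{\varepsilon}x]_o$. What each approach buys: the paper's change of base point is shorter and conceptually cleaner; your version is more quantitative and makes visible that the equivalence constants ($\beta$, $a$, hence $M$) depend on the center $x$, which is precisely why the equivalence is topological and not metric (Remark \ref{re.T-metric-u-semin}). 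A small bonus of your direct derivation: the constant printed in Proposition \ref{p1.T-metric-u-sn}.6, namely $2e^{d(x,y)}+e^{-d(x,y)}-1$, does not tend to $0$ as $d(x,y)\to 0$ (its own proof actually yields $2e^{d(x,y)}-e^{-d(x,y)}-1$, which does); your order-interval estimate in the easy direction bypasses this misprint, on which the paper's proof implicitly relies.
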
\begin{proof}
  We have to show that $d$ and $|\cdot|_u$ have the same convergent sequences, that is
    $$
    x_n\xrightarrow{d}x \iff x_n\xrightarrow{|\cdot|_u} x ,
    $$
for any sequence $(x_n)$ in $K(u)$ and any $x\in K(u)$. But, by Proposition  \ref{p1.Ku}.1,
$$  x_n\xrightarrow{|\cdot|_u} x\iff x_n\xrightarrow{|\cdot|_x} x,$$  hence we have to prove the equivalence
\begin{equation}\label{eq.t1.T-metric-u-sn}
    x_n\xrightarrow{d}x \iff x_n\xrightarrow{|\cdot|_x} x .
   \end{equation}

Suppose that $x_n\xrightarrow{d}x .$ By Proposition    \ref{p1.T-metric-u-sn}.6
$$
|x_n-x|_x\le  2e^{d(x_n,x)}+ e^{-d(x_n,x)}-1\to 0\quad\mbox{as}\quad n\to \infty,$$
showing that  $x_n\xrightarrow{|\cdot|_x} x .$

Conversely, if $x_n\xrightarrow{|\cdot|_x} x, $ then by Proposition    \ref{p1.T-metric-u-sn}.8 ,
$$
 |x_n-x|_x\ge 1-e^{-d(x_n,x)},$$
 which implies $d(x_n,x)\to 0.$
    \end{proof}

    \begin{remark}\label{re.T-metric-u-semin}
    The seminorm $|\cdot|_u$ and the metric $d$ are not metrically equivalent on $X_u.$ Take, for instance,
    $U:=[0;u]_o\cap K(u).$ Then $|x|_u\le 1$ for every $x\in U$. But $U$ is not $d$-bounded because $e^{-n}u$ belongs to $U$ for all $n\in \mathbb{N},$ and $d(x_n,u)=n\to\infty$ for $n\to \infty.$
    \end{remark}
    \begin{corol}[\cite{chen93} or \cite{Hy-Is-Ras}]\label{c.T-metric-tau}
Let $K$ be a   solid normal cone in a Hausdorff LCS  $(X,\tau).$ Then the topology generated by $d$ on $\inter K$ agrees with the restriction of $\tau $ to $\inter K$.
\end{corol}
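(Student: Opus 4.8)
The plan is to reduce the statement to two topological equivalences already proved in the excerpt: that $|\cdot|_u$ generates $\tau$ when $u$ is an interior point of $K$ (Theorem \ref{t1.complete-Xu}.3), and that $d$ and $|\cdot|_u$ induce the same topology on a component (Theorem \ref{t1.T-metric-u-sn}). First I would use the solidity of $K$ to fix a point $u\in\inter K$. Since $\inter(K)\subset\ainter(K)$ for a cone in a TVS, the point $u$ lies in the algebraic interior of $K$, so by Proposition \ref{p.char-o-unit} it is an order unit. Consequently $[-u;u]_o$ is absorbing, giving $X_u=X$ and $K_u=K\cap X_u=K$; in particular $|\cdot|_u$ is defined on all of $X$.

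Next I would identify the set $\inter K$ with the component $K(u)$. By Theorem \ref{t1.complete-Xu}.3 the functional $|\cdot|_u$ generates exactly the topology $\tau$ on $X=X_u$, so the $\tau$-interior and the $|\cdot|_u$-interior of $K$ coincide, i.e. $\inter K=\inter_u(K_u)$. Proposition \ref{p1.Ku}.6 then supplies the chain $\ainter(K_u)=K(u)=\inter_u(K_u)$, whence $\inter K=K(u)$. This is where the slight care is needed: the equality $\inter K=K(u)$ rests on the fact that $\tau=\tau_u$ on the \emph{whole} space $X$, so that the two notions of interior literally agree on $K$.

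Finally, Theorem \ref{t1.T-metric-u-sn} asserts that $d$ and $|\cdot|_u$ are topologically equivalent on $K(u)=\inter K$. Since $|\cdot|_u$ generates $\tau$, its restriction to $\inter K$ is exactly $\tau|_{\inter K}$; composing this with the $d$--$|\cdot|_u$ equivalence shows that the topology generated by $d$ on $\inter K$ agrees with $\tau|_{\inter K}$, as required. I do not anticipate a genuine obstacle here: the corollary is essentially a bookkeeping combination of Proposition \ref{p.char-o-unit}, Proposition \ref{p1.Ku}.6, Theorem \ref{t1.complete-Xu}.3 and Theorem \ref{t1.T-metric-u-sn}, the only subtle point being the transfer of interiors described above.
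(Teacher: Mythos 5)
Your proof is correct and follows essentially the same route as the paper's: fix $u\in\inter K$, invoke Theorem \ref{t1.complete-Xu}.3 to get $X_u=X$ with $|\cdot|_u$ generating $\tau$, identify $\inter K$ with $K(u)$, and conclude via the topological equivalence of $d$ and $|\cdot|_u$ on $K(u)$ from Theorem \ref{t1.T-metric-u-sn}. The only difference is that you spell out the identification $\inter K=K(u)$ (via Proposition \ref{p.char-o-unit}, Proposition \ref{p1.Ku}.6 and the equality $\tau=\tau_u$), which the paper asserts without detail.
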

\begin{proof}
Let $u\in \inter K.$ By Theorem \ref{t1.complete-Xu}, $X_u=X$  and the topology generated by $|\cdot|_u$ agrees with $\tau,$ that is $|\cdot|_u$ is a norm on $X$ generating the topology $\tau.$ Since $K(u)=\inter K,$ Theorem \ref{t1.T-metric-u-sn} implies that $d$ and $|\cdot|_u$ are topologically equivalent on $K(u).$
  \end{proof}

\section{Completeness properties}\label{S.Completeness}
\subsection{Self-bounded sequences and self-complete sets in a cone}

Let $X$ be a vector space ordered   by a cone $K$. A sequence $(x_n)$ in $K$ is called:

\textbullet\quad \emph{self order--bounded from above}  (or \emph{upper self-bounded})  if for every $\lambda >1$ there exists $k\in\mathbb{N}$ such that $x_n\le\lambda x_k$ for all $n\ge k$.

\textbullet\quad \emph{self order--bounded  from below} (or \emph{lower self-bounded})  if for every $\mu\in(0;1)$ there exists $k\in\mathbb{N}$ such that $x_n\ge\mu x_k$ for all $n\ge k$.

\textbullet\quad \emph{self order--bounded}  (or, simply,   \emph{self-bounded})  if  is self order--bounded both from below  and from above.

\begin{remark}
 If the sequence $(x_n)$ is increasing, then it is self order--bounded from above iff for every $\lambda>1$ there exists $k\in\mathbb{N}$ such that $\lambda x_k$ is an upper bound for the sequence $(x_n)$.

  Similarly, if the sequence $(x_n)$ is decreasing, then it is self order--bounded from below iff for every $\mu\in(0;1)$ there exists $k\in\mathbb{N}$ such that $\mu x_k$ is an lower bound for the sequence $(x_n)$.
\end{remark}

The following propositions put in evidence some connections between self order bounded sequences and $d$-Cauchy sequences.
\begin{prop}\label{p1.s-bd-seq}
Let $X$ be a vector space ordered   by a cone $K$.
\begin{enumerate}
 \item[{\rm 1.}] Any $d$-Cauchy sequence in $K$ is self-bounded.
\item[{\rm 2.}] An increasing sequence in $K$ is upper self-bounded iff it is $d$-Cauchy.
\item[{\rm 3.}] A decreasing sequence in $K$ is lower self-bounded iff it is $d$-Cauchy.
\end{enumerate}
\end{prop}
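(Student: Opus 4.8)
The plan is to translate both ``$d$-Cauchy'' and ``self-bounded'' into statements about membership in the sets $\sigma(x_m,x_n)$, and then read off all three assertions from the definition $d=\inf\sigma$ together with the inclusion $(d(x,y);\infty)\subset\sigma(x,y)$ recorded in Proposition \ref{p2.T-metric}.2. The advantage of working through $\sigma$ directly is that no Archimedean hypothesis is ever needed: I only use that every $s>d(x,y)$ lies in $\sigma(x,y)$, never that the infimum is attained.

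For part 1 I would first note that a $d$-Cauchy sequence eventually lies in a single component of $K$: applying the Cauchy condition with $\varepsilon=1$ yields an index beyond which all pairwise distances are finite, hence all later terms are linked. To prove upper self-boundedness, fix $\lambda>1$ and set $\varepsilon=\ln\lambda>0$. The Cauchy condition provides $k$ with $d(x_n,x_k)<\varepsilon$ for all $n\ge k$; since $\varepsilon>d(x_n,x_k)$, Proposition \ref{p2.T-metric}.2 gives $\varepsilon\in\sigma(x_n,x_k)$, that is $e^{-\varepsilon}x_k\le x_n\le e^{\varepsilon}x_k$. The right-hand inequality is exactly $x_n\le\lambda x_k$. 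Lower self-boundedness is identical with the choice $\varepsilon=-\ln\mu$ for $\mu\in(0;1)$, this time reading off the left-hand inequality $\mu x_k\le x_n$. Thus the sequence is self-bounded.

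For the converse directions in parts 2 and 3 (the forward direction of each being immediate from part 1), the key observation is that \emph{monotonicity automatically supplies one of the two inequalities} needed to place $\varepsilon$ in $\sigma(x_m,x_n)$. In part 2, with $(x_n)$ increasing and upper self-bounded, fix $\varepsilon>0$, apply self-boundedness with $\lambda=e^{\varepsilon}$ to obtain $k$ with $x_n\le e^{\varepsilon}x_k$ for all $n\ge k$, and take $m,n\ge k$ with $m\le n$. Monotonicity gives $x_k\le x_m\le x_n$, so the lower half $e^{-\varepsilon}x_m\le x_m\le x_n$ comes for free (as $e^{-\varepsilon}<1$ and $x_m\in K$), while the upper half $x_n\le e^{\varepsilon}x_k\le e^{\varepsilon}x_m$ follows from self-boundedness together with $x_k\le x_m$. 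Hence $\varepsilon\in\sigma(x_m,x_n)$ and $d(x_m,x_n)\le\varepsilon$, so $(x_n)$ is $d$-Cauchy. Part 3 is the mirror image: for a decreasing, lower self-bounded sequence, take $\mu=e^{-\varepsilon}$; now decreasingness gives the upper half $x_n\le x_m\le e^{\varepsilon}x_m$ for free, whereas $x_n\ge\mu x_k\ge\mu x_m=e^{-\varepsilon}x_m$ comes from self-boundedness and $x_k\ge x_m$, again yielding $\varepsilon\in\sigma(x_m,x_n)$.

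The only point I would watch carefully is the bookkeeping of which inequality monotonicity contributes and the matching choice of $\lambda$ versus $\mu$: in the increasing case the free inequality is the lower bound and self-boundedness must deliver the upper bound, while in the decreasing case the roles are reversed. Everything else reduces to the definition of $d$ and the inclusion from Proposition \ref{p2.T-metric}.2, so I expect no genuine difficulty beyond getting these two sandwich estimates pointed the right way.
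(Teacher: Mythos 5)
Your proposal is correct and follows essentially the same route as the paper: part 1 converts the Cauchy condition into the sandwich inequalities $e^{-\varepsilon}x_k\le x_n\le e^{\varepsilon}x_k$ and reads off the relevant half, while part 2 uses exactly the paper's chain $e^{-\varepsilon}x_m\le x_m\le x_n\le \lambda x_k\le \lambda x_m$, with monotonicity supplying the free inequality and self-boundedness the other (part 3 being the mirror image, which the paper likewise omits). The only cosmetic difference is that you phrase everything through membership in $\sigma(x_m,x_n)$ via Proposition \ref{p2.T-metric}.2, which makes explicit the point--also implicit in the paper--that no Archimedean hypothesis is needed.
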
\begin{proof}
1.\; Let $(x_n)$ be a $d$-Cauchy sequence in $K$. If $\lambda>1,$ then for   $\varepsilon:=\ln\lambda>0$ there exists $k\in \mathbb{N}$ such that
$$e^{-\varepsilon}x_n\le x_k\ge e^{\varepsilon}x_n\iff \lambda^{-1}x_n\le x_k\ge \lambda x_n\,, $$
for all $n\ge k.$
Consequently   $x_n\le \lambda x_k, $ for all $n\ge k,$ proving that $(x_n)$ is upper self--bounded.

The lower self-boundedness of $(x_n)$ is proved similarly, taking $\varepsilon'=-\ln \mu,$ for $\mu\in(0;1).$

2. It suffices to prove that an increasing upper self-bounded sequence is $d$-Cauchy. For $\varepsilon >0$ let $\lambda=e^\varepsilon$ and $k\in\mathbb{N}$ such that $x_n\le\lambda x_k$ for all $n\ge k.$ It follows that
$$
e^{-\varepsilon}x_{m}\le x_m\le x_n\le \lambda x_k\le\lambda x_m=e^{-\varepsilon}x_m\, ,$$
 for all $n\ge m\ge k$.
Consequently, $d(x_n,x_m)\le \varepsilon,$ for all $n\ge m\ge k$, which shows that the sequence $(x_n)$ is $d$-Cauchy.

The proof of 3 is similar to the proof of 2, so we omit it.
\end{proof}

\begin{prop}\label{p2.s-bd-seq}
Let $X$ be a vector space ordered   by an Archimedean cone $K$ and $(x_n)$ an increasing sequence in $K$. The following statements are equivalent.
\begin{enumerate}
 \item[{\rm 1.}] The sequence $(x_n)$ is $d$-convergent.
\item[{\rm 2.}] The sequence $(x_n)$ is $d$-Cauchy and has a supremum.
\item[{\rm 3.}]The sequence $(x_n)$ is upper self-bounded and has a supremum.
\end{enumerate}

In the affirmative case $x_n\xrightarrow{d}\sup_nx_n.$
\end{prop}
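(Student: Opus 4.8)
The plan is to establish the cycle of implications $1\Rightarrow 2\Rightarrow 3\Rightarrow 1$, drawing on the two characterizations already at hand: Proposition \ref{p1.s-bd-seq}, which ties upper self-boundedness of an increasing sequence to the $d$-Cauchy property, and Proposition \ref{p.T-converg-seq}, which characterizes $d$-convergence of an increasing sequence in an Archimedean cone through the two conditions (i) $x_n\le x$ for all $n$ and (ii) for every $\lambda>1$ there is $k$ with $x\le\lambda x_k$, and which moreover identifies the limit with $\sup_nx_n$. Since $K$ is assumed Archimedean, Proposition \ref{p.T-converg-seq} applies throughout, and the final assertion $x_n\xrightarrow{d}\sup_nx_n$ will fall out of the proof of the last implication.

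For $1\Rightarrow 2$, I would note that a $d$-convergent sequence is automatically $d$-Cauchy, and that Proposition \ref{p.T-converg-seq}.1 forces its limit to equal $\sup_nx_n$; hence a supremum exists. The implication $2\Rightarrow 3$ is immediate from Proposition \ref{p1.s-bd-seq}.2: an increasing $d$-Cauchy sequence is upper self-bounded, while the supremum is simply carried over from the hypothesis.

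The substance of the argument is $3\Rightarrow 1$. Writing $x=\sup_nx_n$, I would verify the two conditions of Proposition \ref{p.T-converg-seq}.1. Condition (i) is immediate, since $x$ is an upper bound. For (ii), fix $\lambda>1$; upper self-boundedness supplies $k$ with $x_n\le\lambda x_k$ for all $n\ge k$. The key step is to promote this to a bound valid for every index: because the sequence is increasing and $\lambda>1$, one also has $x_n\le x_k\le\lambda x_k$ for $n<k$, so $\lambda x_k$ is an upper bound for the whole sequence; as $x$ is the least upper bound, $x\le\lambda x_k$, which is exactly (ii). Proposition \ref{p.T-converg-seq}.1 then yields $x_n\xrightarrow{d}x=\sup_nx_n$, closing the cycle and proving the concluding statement at the same time.

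The only (mild) obstacle is precisely this promotion of the self-bounding inequality from the tail $n\ge k$ to all indices, followed by the passage to the supremum; everything else is bookkeeping assembled from the two cited propositions, with the Archimedean hypothesis entering solely through Proposition \ref{p.T-converg-seq}.
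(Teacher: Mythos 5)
Your proposal is correct and follows essentially the same route as the paper: both reduce everything to Proposition \ref{p1.s-bd-seq} (Cauchy $\leftrightarrow$ self-bounded for monotone sequences) and Proposition \ref{p.T-converg-seq}.1, and in the implication 3$\Rightarrow$1 both extend the tail bound $x_n\le\lambda x_k$ ($n\ge k$) to all indices via monotonicity and then pass to the supremum to get $x\le\lambda x_k$. The only cosmetic difference is the citation for 2$\Rightarrow$3 (you invoke Proposition \ref{p1.s-bd-seq}.2 where the paper cites \ref{p1.s-bd-seq}.1), which is immaterial since either part yields the claim.
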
\begin{proof}
  1\;$\Rightarrow$\;2\; Follows from Proposition \ref{p.T-converg-seq}.

 2\;$\Rightarrow$\;3.\; Follows from Proposition \ref{p1.s-bd-seq}.1

  3\;$\Rightarrow$\;1.\; If  $x=\sup_nx_n,$ then
  $x_n\le x$ for all $n\in\mathbb{N},$ showing that condition (i) from   Proposition \ref{p.T-converg-seq}.1 holds.
  Now let  $\lambda>1.$  Since $(x_n)$ is upper self-bounded there exists $k\in\mathbb{N}$ such that
  $x_n\le \lambda x_k$ for all $n\ge k,$ and so $x_n\le \lambda x_k$ for all $n\in\mathbb{N}$ (because $(x_n)$ is increasing).  But then $x=\sup_nx_n\le \lambda x_k,$ which shows that condition (ii) of  the same proposition is also fulfilled.
  Consequently $x_n\xrightarrow{d}x.$

  The last assertion follows by the same proposition.
  \end{proof}

 Similar equivalences, with similar proofs, hold for decreasing sequences.
 \begin{prop}\label{p3.s-bd-seq}
Let $X$ be a vector space ordered   by an Archimedean cone $K$ and $(x_n)$ a decreasing sequence in $K$. The following statements are equivalent.
\begin{enumerate}
 \item[{\rm 1.}] The sequence $(x_n)$ is $d$-convergent.
\item[{\rm 2.}] The sequence $(x_n)$ is $d$-Cauchy and has  an infimum.
\item[{\rm 3.}]The sequence $(x_n)$ is lower self-bounded and has  an infimum.
\end{enumerate}

In the affirmative case $x_n\xrightarrow{d}\inf_nx_n.$
\end{prop}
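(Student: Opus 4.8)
The plan is to establish the three conditions in the cycle $1\Rightarrow 2\Rightarrow 3\Rightarrow 1$, mirroring the proof of Proposition \ref{p2.s-bd-seq} by a systematic dualization: everywhere the increasing case invokes $\sup$, the constraint $\lambda>1$, and upper self-boundedness, the decreasing case will instead invoke $\inf$, the constraint $\lambda\in(0;1)$, and lower self-boundedness. The two external tools I would lean on are the characterization of $d$-convergent decreasing sequences in Proposition \ref{p.T-converg-seq}.2 and the relation between $d$-Cauchy and self-bounded sequences in Proposition \ref{p1.s-bd-seq}.

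The two easy implications come first. For $1\Rightarrow 2$: if $(x_n)$ is $d$-convergent then it is trivially $d$-Cauchy, and Proposition \ref{p.T-converg-seq}.2 identifies the limit as $x=\inf_n x_n$, so the sequence has an infimum. For $2\Rightarrow 3$: a $d$-Cauchy sequence is self-bounded by Proposition \ref{p1.s-bd-seq}.1, hence in particular lower self-bounded, while the existence of an infimum is simply carried over from hypothesis 2.

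The substantive step is $3\Rightarrow 1$, where I would verify conditions (i) and (ii) of Proposition \ref{p.T-converg-seq}.2 for the point $x:=\inf_n x_n$. Condition (i), that $x\le x_n$ for all $n$, is just the definition of the infimum. For condition (ii), fix $\lambda\in(0;1)$; lower self-boundedness supplies $k\in\mathbb{N}$ with $x_n\ge\lambda x_k$ for all $n\ge k$. The one point needing attention is extending this to all indices: for $n<k$ monotonicity gives $x_n\ge x_k\ge\lambda x_k$ (using $x_k\in K$ together with $0<\lambda<1$), so $x_n\ge\lambda x_k$ holds for every $n$. Thus $\lambda x_k$ is a lower bound for $\{x_n\}$, whence $x=\inf_n x_n\ge\lambda x_k$, which is precisely (ii). Proposition \ref{p.T-converg-seq}.2 then delivers $x_n\xrightarrow{d}x=\inf_n x_n$, which simultaneously proves implication $3\Rightarrow 1$ and the final assertion.

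I expect no genuine obstacle; the only place demanding slight care is the monotonicity argument extending the self-bounding inequality from $n\ge k$ to all $n$, since the direction of the inequality is reversed relative to the increasing case, and one must remember that multiplying by $\lambda<1$ weakens rather than strengthens a lower bound. The Archimedean hypothesis on $K$ enters only implicitly, through Proposition \ref{p.T-converg-seq}, whose applicability it guarantees.
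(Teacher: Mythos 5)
Your proposal is correct and is exactly the argument the paper intends: the paper proves the increasing case (Proposition \ref{p2.s-bd-seq}) via the cycle $1\Rightarrow 2\Rightarrow 3\Rightarrow 1$ using Propositions \ref{p.T-converg-seq} and \ref{p1.s-bd-seq}, and dismisses the decreasing case as ``similar proofs''; you have carried out that dualization faithfully, including the one delicate step of extending $x_n\ge\lambda x_k$ from $n\ge k$ to all $n$ via monotonicity and $0<\lambda<1$. The only (harmless) omission is noting that $x:=\inf_n x_n$ lies in $K$, which is immediate since $0$ is a lower bound of $\{x_n\}\subset K$.
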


The following proposition emphasizes a kind of duality between upper and lower self-bounded sequences. If   $Y$ is a subset of an ordered set $X, $ then one denotes by $\sup\!|_Y A$ ($\inf\!|_Y A$)    the supremum (resp. infimum) in $Y$ of a subset $A$ of $Y$.  This may differ from the supremum (resp. infimum) of the set $A$ in $X$.

\begin{prop}\label{p4.s-bd-seq}
Let $X$ be a vector space ordered   by a  cone $K,$   $(x_n)$ an increasing, upper self-bounded sequence in $K$, and $(t_k)$ a decreasing sequence of real numbers, convergent to 1. Then there exists a subsequence $(x_{n_k})$ of $(x_n)$ such that the following conditions  are satisfied.
\begin{enumerate}
 \item[{\rm 1.}] The sequence $(y_k)$ given by $y_k=t_kx_{n_k},\, k\in\mathbb{N},\, $ is decreasing and lower self-bounded and
\begin{equation}\label{eq1.p4.s-bd-seq}
\forall n,k\in\mathbb{N},\quad x_n\le y_k.\end{equation}
\item[{\rm 2.}] If the cone $K$ is Archimedean, $x$ is an upper bound for $(x_n)$ and $y$ is a lower bound for $(y_k)$, then $y\le x.$

\item[{\rm 3.}]If the cone $K$ is Archimedean and $(x_n)$ lies in a vector subspace $Y$ of $X$, then the following statements are equivalent.
    \begin{itemize}
      \item[{\rm (a)}] \quad $(x_n)$ has suppremum; \hspace{3cm} {\rm (c)} \quad $(y_k)$ has an infimum;
       \item[{\rm (b)}] \quad $(x_n)$ has suppremum in $Y$;\hspace{2cm}\quad {\rm (d)} \quad $(y_k)$ has an infimum in $Y$;
 \item[{\rm (e)}] \quad there exists $x\in K$ such that
 \begin{equation}\label{eq2.p4.s-bd-seq}
\forall n,k\in\mathbb{N},\quad x_n\le x\le  y_k.\end{equation}
    \end{itemize}
\end{enumerate}

In the affirmative case
$$
\sup\{x_n : n\in \mathbb{N}\}= \sup\!|_Y\{x_n : n\in \mathbb{N}\}=\inf\{y_k : k\in \mathbb{N}\}= \inf\!|_Y\{y_k : k\in \mathbb{N}\}=x,$$
and $x_n\xrightarrow{d}x$ and $y_k\xrightarrow{d}x.$
\end{prop}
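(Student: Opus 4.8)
The plan is to make part~2 the engine and to derive everything else from it, but the genuine technical work lies in the construction in part~1. Since $(x_n)$ is increasing and upper self-bounded, Proposition~\ref{p1.s-bd-seq}.2 shows it is $d$-Cauchy, and I would keep at hand the identities $d(\lambda x,\mu x)=|\ln(\lambda/\mu)|$ and $d(\lambda x,\lambda y)=d(x,y)$ from Proposition~\ref{p4.T-metric}.1. Writing $\varepsilon_k=\ln t_k$ (so $\varepsilon_k\searrow 0$ with $\varepsilon_k>0$, the essential case being $t_k>1$ strictly decreasing), I would choose $n_1<n_2<\cdots$ inductively, each $n_k$ taken large enough that \emph{simultaneously} $d(x_n,x_{n_k})\le\varepsilon_k$ for all $n\ge n_k$ and $d(x_{n_{k+1}},x_{n_k})\le\varepsilon_k-\varepsilon_{k+1}$; both are arrangeable because the $d$-Cauchy tails shrink. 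The first estimate gives $x_n\le e^{\varepsilon_k}x_{n_k}=t_kx_{n_k}=y_k$ for $n\ge n_k$, hence by monotonicity for all $n$, which is \eqref{eq1.p4.s-bd-seq}; the second gives $x_{n_{k+1}}\le e^{\varepsilon_k-\varepsilon_{k+1}}x_{n_k}=(t_k/t_{k+1})x_{n_k}$, i.e. $y_{k+1}=t_{k+1}x_{n_{k+1}}\le t_kx_{n_k}=y_k$, so $(y_k)$ is decreasing. I expect this to be the main obstacle: the naive choice making each $y_k$ merely an upper bound yields only $x_{n_{k+1}}\le t_kx_{n_k}$, which is too weak to force monotonicity of $(y_k)$; the decisive idea is to spend the Cauchy gap $\varepsilon_k-\varepsilon_{k+1}$ on controlling \emph{consecutive} terms. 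Finally, the triangle inequality with the two displayed identities gives $d(y_k,y_j)\le|\varepsilon_k-\varepsilon_j|+d(x_{n_k},x_{n_j})\to 0$, so $(y_k)$ is $d$-Cauchy and thus lower self-bounded by Proposition~\ref{p1.s-bd-seq}.3, completing part~1.

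\textbf{Part 2.} Here I would simply chain the two hypotheses through $(y_k)$: as $x$ bounds $(x_n)$ from above we have $x_{n_k}\le x$, whence $y\le y_k=t_kx_{n_k}\le t_kx$ for every $k$. Since $\inf_k t_k=1$, the Archimedean property in the form of Proposition~\ref{p.char.Arch}.5, applied with $A=\{t_k:k\in\mathbb N\}$, yields $y\le x$.

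\textbf{Part 3.} Part~2 then drives the equivalences. For $(\mathrm e)\Rightarrow(\mathrm a)$ I would observe that the sandwiching $x$ is an upper bound of $(x_n)$ and that, being also a lower bound of $(y_k)$, part~2 forces $x\le x'$ for any other upper bound $x'$; hence $x=\sup_nx_n$, and symmetrically $(\mathrm e)\Rightarrow(\mathrm c)$ gives $x=\inf_ky_k$. Conversely, if $s=\sup_nx_n$ exists then each $y_k$ is an upper bound of $(x_n)$ by \eqref{eq1.p4.s-bd-seq}, so $x_n\le s\le y_k$, i.e. $(\mathrm e)$ holds with $x=s$ (and $s\ge x_1\ge 0$ puts $s\in K$); dually $(\mathrm c)\Rightarrow(\mathrm e)$. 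To reach $(\mathrm b)$ and $(\mathrm d)$ I would transport the picture into $Y$: the cone $K\cap Y$ is again Archimedean, the relations $x_n\le\lambda x_m$ are intrinsic to $Y$ (Remark~\ref{re2.T-metric}), and both $(x_{n_k})$ and $(y_k)$ already lie in $Y$; rerunning the above inside $(Y,K\cap Y)$ gives $(\mathrm b)\Leftrightarrow(\mathrm d)\Leftrightarrow$ [a sandwiching element exists in $Y$], together with the trivial $(\mathrm b)\Rightarrow(\mathrm a)$ and $(\mathrm d)\Rightarrow(\mathrm c)$. The delicate direction is $(\mathrm a)\Rightarrow(\mathrm b)$: part~2 makes the sandwiching element unique (any $z$ with $x_n\le z\le y_k$ satisfies $\sup_nx_n\le z\le\inf_ky_k$, so $z=x$), so the task reduces to showing that this common value $x$ actually lies in $Y$. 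I would attempt this through the convergence $x_{n_k}\xrightarrow{d}x$ provided by Proposition~\ref{p.T-converg-seq}.1 together with $x_{n_k}\in Y$; this closedness-type step is the second place where I expect real difficulty.

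\textbf{Affirmative case.} Once the equivalent conditions hold, the uniqueness just noted identifies all four extrema with the single element $x$, and Propositions~\ref{p2.s-bd-seq} and~\ref{p3.s-bd-seq} give $x_n\xrightarrow{d}\sup_nx_n=x$ and $y_k\xrightarrow{d}\inf_ky_k=x$, which is exactly the concluding assertion.
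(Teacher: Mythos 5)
Parts 1 and 2 of your proposal are essentially sound, and your part 1, though phrased metrically, is the paper's own construction in disguise: the paper uses upper self-boundedness directly to pick $n_k$ with $x_n\le \lambda_k x_{n_k}$ for all $n$, where $\lambda_k=t_k/t_{k+1}$, which is precisely your idea of spending the gap $\varepsilon_k-\varepsilon_{k+1}$ on consecutive terms (the two bookkeepings are interchangeable via Proposition \ref{p1.s-bd-seq}.2). Two small repairs: first, part 1 carries \emph{no} Archimedean hypothesis, so $d(x,y)\le\varepsilon$ does not give $y\le e^{\varepsilon}x$, since the infimum defining $d$ need not belong to $\sigma(x,y)$ (Proposition \ref{p2.T-metric}.2); you must arrange the strict inequalities $d(x_n,x_{n_k})<\varepsilon_k$ and $d(x_{n_{k+1}},x_{n_k})<\varepsilon_k-\varepsilon_{k+1}$, which is harmless because $(d(x,y);\infty)\subset\sigma(x,y)$ always. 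Second, (b)$\Rightarrow$(a) is not ``trivial'' (a supremum in $Y$ does not formally yield one in $X$), but your chain (b)$\Rightarrow$(e)$\Rightarrow$(a) covers it. (You are also right that ``decreasing'' must be read strictly; the paper's proof needs $t_k/t_{k+1}>1$ exactly as yours needs $\varepsilon_k-\varepsilon_{k+1}>0$.)

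The genuine gap is the step you flagged yourself: (a)$\Rightarrow$(b), i.e. getting the common sandwich value into $Y$. Your proposed route ($x_{n_k}\xrightarrow{d}x$ with $x_{n_k}\in Y$ plus a ``closedness-type'' argument) cannot work, since a linear subspace is in general not $d$-closed; and in fact no argument can work, because the implication is false as stated. Take $X$ the space of all bounded functions on $[0;1]$ with the pointwise order, $Y$ the subspace of polynomials, $t_k=1+1/k$, and $x_n(t)=\sum_{j=0}^{n}t^j/j!$. This sequence is increasing, lies in $K\cap Y$, and is upper self-bounded, since $x_n\le e^t\le(1+\delta_k)x_k$ with $\delta_k=\sum_{j>k}1/j!\to 0$ and $x_k\ge 1$; the cone is Archimedean. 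Here (a) holds, with $\sup_nx_n=e^t$, hence (e) and (c) hold as well; but (b) fails: by Weierstrass, for every $\epsilon>0$ there is a polynomial $q$ with $e^t\le q\le e^t+\epsilon$, so a least polynomial upper bound of $(x_n)$ would have to equal $e^t$, which is not a polynomial. Since (a) and (b) do not depend on the choice of the subsequence, no choice rescues the equivalence. So the defect lies in the statement itself, and indeed in the paper's own proof, which disposes of exactly this implication with the phrase ``the implications (a)$\Rightarrow$(b) and (c)$\Rightarrow$(d) are obvious''. The equivalence becomes true, and easy, if $Y$ is additionally assumed order-convex, which holds in every application the paper makes of this proposition (Corollary \ref{c1.s-complete}, and $Y=X_u$): then the element $x$ of (e) lies in $[x_1;y_1]_o\subset Y$, and part 2 immediately identifies it as $\sup\!|_Y\{x_n\}$. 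Your instinct that the ``real difficulty'' sits at this point was therefore correct; the honest completion is to add order-convexity of $Y$ (or to claim only the two separate equivalences (a)$\iff$(c)$\iff$(e) and (b)$\iff$(d), together with (b)$\Rightarrow$(e)).
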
\begin{proof} 1.\;
  Since $(t_k)$ is decreasing, $\lambda_k:=t_k/t_{k+1}>1,\,k\in \mathbb{N}.$ The upper self-boundedness of the sequence $(x_n)$ implies the existence of $n_1\in\mathbb{N}$ such that
  \begin{equation}\label{eq3.p4.s-bd-seq}
  \forall n\ge n_1,\quad x_n\le\lambda_1x_{n_1}.\end{equation}

  Since $(x_n)$ is increasing, the inequalities \eqref{eq3.p4.s-bd-seq} hold for all $n\in\mathbb{N}$. If $m_2\in \mathbb{N}$ is such that $x_n\le\lambda_2x_{m_2}$ for all $n\in\mathbb{N},$ then $n_2:=1+\max\{n_1,m_2\}>n_1$ and  $x_n\le \lambda_2x_{n_2}$ for all $n\in\mathbb{N}.$ Continuing in this way one obtains a sequence of indices $n_1<n_2 < \dots$ such that
    \begin{equation}\label{eq4.p4.s-bd-seq}
   \quad x_n\le\lambda_kx_{n_k},
\end{equation}
for all    $ n\in \mathbb{N}$ and all  $k\in \mathbb{N}.$

Let $y_k:=t_kx_{n_k},\, k\in\mathbb{N}.$  Putting $n=n_{k+1}$ in \eqref{eq4.p4.s-bd-seq} it follows $y_{k+1}\le y_k.$
By the same inequality
$$
x_n\le t_{k+1}x_n\le t_kx_{n_k}=y_k,$$
for all $n,k\in\mathbb{N}.$

Let  now  $\mu\in(0;1)$. Since $t_k\to 1$ there exists  $k_0$ such that $t_{k_0}<\mu^{-1}.$ But then, by \eqref{eq1.p4.s-bd-seq},
$$
\mu y_{k_0}\le t_{k_0}^{-1}y_{k_0}=x_{n_{k_0}}\le y_k,$$
  for all $k\in\mathbb{N},$ proving that $(y_k)$ is lower self-bounded.

2.\;  Suppose that $x_n\le x,\, n\in\mathbb{N},$ and  $y_k\ge y,\, n\in\mathbb{N}.$
Then, for all $k\in\mathbb{N},$
$$
y\le y_k=t_kx_{n_k}\le t_k x\, .$$

Since $K$ is Archimedean and $t_k\to 1,$ the inequalities $y\le t_kx,\, k\in\mathbb{N},$ yield for $k\to \infty,\, y\le x.$

3.\;  The implications (a)\,$\Rightarrow$\,(b) and (c)\,$\Rightarrow$\,(d) are obvious.

Let as prove   (b)\,$\Rightarrow$\,(d). Observe first that $y_k\in Y,\, k\in\mathbb{N}.$  Let $x=\sup_Y\{x_n : n\in\mathbb{N}\}.$ By \eqref{eq1.p4.s-bd-seq} $y_k$ is an upper bound for $(x_n),$ for every $k\in\mathbb{N}, $  so that $\,x\le y_k\,,$ for all $k\in\mathbb{N}.$  If $y\in Y$ is such that $y\le y_k$ for all
$k\in\mathbb{N},$ then, by 2, $y\le x,$ proving that $x=\inf_Y\{y_k : k\in\mathbb{N}\}.$ On the way we have shown that $x_n\le x\le y_k$, for all $n,k\in\mathbb{N},$ that is the implication (b)\,$\Rightarrow$\,(e) holds too.

Similar reasonings show that (d)\,$\Rightarrow$\,(b), that is (b)\,$\iff$\,(d). The equivalence (a)\,$\iff$\,(c) can be proved in the same way (just let $Y:=X$).

Finally, let us show that (e)\,$\Rightarrow$\,(c). Assume that for some $x\in K,\, x_n\le x\le y_k$ for all $n,k\in\mathbb{N}.$ Suppose that $y\in K$ is such that $y\le y_k $ for all $k\in\mathbb{N}.$ Then, by 2, these inequalities   imply $y\le x,$ showing that $x =\inf_ky_k.$ (Similar arguments show that $x=\sup_ny_n,$ that is (e)\,$\Rightarrow$\,(a)). The equivalence of the assertions from 3 is (over) proven.

The last assertions of the proposition follow from Propositions \ref{p2.s-bd-seq} and \ref{p3.s-bd-seq}.
\end{proof}

Similar results, with similar proofs,   hold for decreasing lower self-bounded sequences.
\begin{prop}\label{p5.s-bd-seq}
Let $X$ be a vector space ordered   by a  cone $K,$   $(x_n)$ a decreasing, lower self-bounded sequence in $K$, and $(t_k)$ an increasing sequence of real numbers, convergent to 1. Then there exists a subsequence $(x_{n_k})$ of $(x_n)$ such that  following conditions  are satisfied.
\begin{enumerate}
 \item[{\rm 1.}] The sequence $(y_k)$ given by $y_k=t_kx_{n_k},\, k\in\mathbb{N},\, $ is increasing and upper self-bounded and
\begin{equation}\label{eq1.p5.s-bd-seq}
\forall n,k\in\mathbb{N},\quad x_n\ge y_k.\end{equation}
\item[{\rm 2.}] If the cone $K$ is Archimedean, $x$ is a  lower bound for $(x_n)$ and $y$ is an upper bound for $(y_k)$, then $y\ge x.$

\item[{\rm 3.}]If the cone $K$ is Archimedean and $(x_n)$ lies in a vector subspace $Y$ of $X$, then the following statements are equivalent.
    \begin{itemize}
      \item[{\rm (a)}] \quad $(x_n)$ has an infimum; \hspace{3cm} {\rm (c)} \quad $(y_k)$ has a supremum;
       \item[{\rm (b)}] \quad $(x_n)$ has an infimum in $Y$;\hspace{2cm}\quad {\rm (d)} \quad $(y_k)$ has a supremum in $Y$;
 \item[{\rm (e)}] \quad there exists $x\in K$ such that
 \begin{equation}\label{eq2.p5.s-bd-seq}
\forall n,k\in\mathbb{N},\quad x_n\ge x\ge  y_k.\end{equation}
    \end{itemize}
\end{enumerate}

In the affirmative case
$$
\inf\{x_n : n\in \mathbb{N}\}= \inf\!|_Y\{x_n : n\in \mathbb{N}\}=\sup\{y_k : k\in \mathbb{N}\}= \sup\!|_Y\{y_k : k\in \mathbb{N}\}=x,$$
and $x_n\xrightarrow{d}x$ and $y_k\xrightarrow{d}x.$
\end{prop}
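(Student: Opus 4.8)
The plan is to mirror, term by term, the proof of Proposition~\ref{p4.s-bd-seq}, reversing every order relation and interchanging the roles of suprema and infima; the one structural change is that now $(t_k)$ increases to $1$, so each $t_k\in(0;1)$ and the ratios steering the extraction lie below $1$. I set $\mu_k:=t_k/t_{k+1}$, which belong to $(0;1)$ because $(t_k)$ is strictly increasing---precisely the range in which the lower self-boundedness of $(x_n)$ is usable. Applying lower self-boundedness with $\mu_1$ gives an index beyond which $x_n\ge\mu_1x_{n_1}$, and since $(x_n)$ is decreasing and $\mu_1<1$ this in fact holds for all $n$; iterating (and using decreasingness to replace each index by a strictly larger $n_k$) produces $n_1<n_2<\cdots$ with $x_n\ge\mu_kx_{n_k}$ for all $n,k$. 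With $y_k:=t_kx_{n_k}$, multiplying by $t_{k+1}>0$ yields $t_{k+1}x_n\ge t_kx_{n_k}=y_k$, and since $t_{k+1}\le1$ forces $x_n\ge t_{k+1}x_n$, I obtain \eqref{eq1.p5.s-bd-seq}. Putting $n=n_{k+1}$ in $x_n\ge\mu_kx_{n_k}$ gives $y_{k+1}\ge y_k$, so $(y_k)$ is increasing; and for $\lambda>1$, choosing $k_0$ with $t_{k_0}>\lambda^{-1}$, \eqref{eq1.p5.s-bd-seq} gives $y_k\le x_{n_{k_0}}=t_{k_0}^{-1}y_{k_0}\le\lambda y_{k_0}$ for every $k$, i.e.\ $(y_k)$ is upper self-bounded. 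This settles part~1.

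For part~2, from $x\le x_n$ and $y\ge y_k$ I chain $y\ge y_k=t_kx_{n_k}\ge t_kx$, so $t_kx\le y$ for every $k$. Writing $y-t_kx=(y-x)+(1-t_k)x$ and letting $k\to\infty$, the coefficient $1-t_k$ decreases to $0$, so $y-x$ is a limit, along the line through $x$, of points of $K$; since an Archimedean cone is lineally closed (Proposition~\ref{p.char.Arch}), it follows that $y-x\in K$, that is $y\ge x$.

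Part~3 is the same cyclic chase as in Proposition~\ref{p4.s-bd-seq}, with sup and inf swapped. The one substantial step is (b)$\Rightarrow$(d): taking $x=\inf\!|_Y\{x_n\}$, each $y_k$ is a lower bound of $(x_n)$ by \eqref{eq1.p5.s-bd-seq}, hence $y_k\le x$, so $x$ is an upper bound of $(y_k)$ lying in $Y$; any $Y$-upper bound $y$ of $(y_k)$ satisfies $y\ge x$ by part~2, whence $x=\sup\!|_Y\{y_k\}$, and the inequalities $y_k\le x\le x_n$ obtained along the way give (e) (here $x\in K$ because $x\ge y_1\ge0$). The reverse (d)$\Rightarrow$(b) is symmetric, (a)$\Leftrightarrow$(c) is this same argument with $Y:=X$, and (e)$\Rightarrow$(a),(c) follows from part~2 just as (b)$\Rightarrow$(e) above. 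Combined with the immediate (a)$\Rightarrow$(b) and (c)$\Rightarrow$(d), these close the cycle and make (a)--(e) equivalent. In the affirmative case the common value $x$ satisfies $x_n\ge x\ge y_k$ and equals all four extrema; the convergences $x_n\xrightarrow{d}x$ and $y_k\xrightarrow{d}x$ are then read off from Propositions~\ref{p3.s-bd-seq} and \ref{p2.s-bd-seq}, applied to the decreasing lower self-bounded $(x_n)$ and the increasing upper self-bounded $(y_k)$, respectively.

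The only place demanding care is the extraction in part~1: one must select the ratios $\mu_k=t_k/t_{k+1}\in(0;1)$ so that lower self-boundedness applies, and then invoke the decreasing monotonicity of $(x_n)$ twice---to make each inequality hold for every index $n$ and to pass to a strictly increasing sequence of indices---while tracking which factors $t_j$ are $\le1$. Once the subsequence is in hand, parts~2 and~3 amount to a routine sign-reversal of the corresponding arguments for Proposition~\ref{p4.s-bd-seq}.
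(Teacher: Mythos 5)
Your parts 1 and 2, together with the links of part 3 that you actually prove, are a correct term-by-term dualization of the paper's proof of Proposition \ref{p4.s-bd-seq} --- which is exactly what the paper offers as its own proof of this statement (``similar results, with similar proofs''). The extraction via $\mu_k=t_k/t_{k+1}\in(0;1)$, the two uses of decreasingness, the verification that $(y_k)$ is increasing and upper self-bounded, the lineally-closed (Archimedean) argument in part 2, and the closing appeal to Propositions \ref{p2.s-bd-seq} and \ref{p3.s-bd-seq} all check out. (Like the paper, you tacitly need $0<t_k<1$ with strict increase, mirroring the tacit $t_k>1$ strictly decreasing in Proposition \ref{p4.s-bd-seq}; this is a shared, statement-level imprecision, not something you introduced.)

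The genuine gap is the clause ``the immediate (a)$\Rightarrow$(b) and (c)$\Rightarrow$(d)''. These implications are not immediate, and for an arbitrary vector subspace $Y$ they are in fact false; the paper's own proof of Proposition \ref{p4.s-bd-seq} makes the identical unjustified claim, so you have faithfully reproduced a flaw rather than created one, but it remains a flaw. The point is that the existence of $\inf\{x_n\}$ in $X$ gives no control over the existence of a greatest lower bound inside the sub-poset $Y$ when that infimum falls outside $Y$. Concretely, take $X=C[0;1]$ ordered by the (closed, hence Archimedean) cone $K$ of nonnegative functions, $Y$ the subspace of polynomials, $f$ continuous, non-polynomial, with $f\ge 2$, and $x_n:=q_n+2\cdot 4^{-n}$, where $q_n$ is a polynomial with $\|q_n-f\|_\infty<4^{-n}$ (Weierstrass). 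Then $(x_n)$ is a decreasing sequence of polynomials in $K$, it is lower self-bounded (for $\mu\in(0;1)$ and $k$ large, $\mu x_k\le \mu f+3\cdot 4^{-k}\le f\le x_n$), and $\inf_X\{x_n\}=f$ exists; so (a), (c), (e) all hold. But (b) fails: a greatest polynomial minorant $p_0$ of $f$ would have to satisfy $f\ge p_0(t)\ge\sup\{p(t): p\in Y,\ p\le f\}=f(t)$ for every $t$ (again by Weierstrass), forcing $p_0=f\notin Y$. What your argument really establishes is (a)$\iff$(c)$\iff$(e), (b)$\iff$(d), and (b)$\Rightarrow$(a). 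The cycle can be closed only under an extra hypothesis on $Y$, order-convexity being the natural one: then the element $x$ of (e) satisfies $x\in[y_1;x_1]_o\subset Y$, which yields (e)$\Rightarrow$(b); notably, this is precisely the hypothesis in force where the paper later uses this proposition (Corollary \ref{c1.s-complete}).
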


The following notions will play a crucial role in the study of completeness of the Thompson metric.

Let $X$ be a vector space ordered   by a cone $K$. A nonempty subset $U$ of $K$ is called:

\textbullet\quad \emph{self order--complete from above}  (or \emph{upper self-complete})  if every increasing self-bounded sequence $(x_n)$ in $U$ has a supremum and $\sup_n x_n \in U.$

\textbullet\quad \emph{self order--complete from below} (or \emph{lower self-complete})  if every decreasing self-bounded sequence $(x_n)$ in $U$ has an infimum and $\inf_nx_n \in U.$

\textbullet\quad \emph{self order--complete}  (or, simply,   \emph{self-complete})  if  it is self order--complete both from below  and from above.

If we do not require the supremum (resp. infimum) to be in $U$, then we say that $U$ is\emph{ quasi upper} (resp. \emph{lower}) \emph{self-complete}.

The duality results given in Propositions \ref{p4.s-bd-seq} and \ref{p5.s-bd-seq} have the following important consequence.
\begin{theo}\label{t1.s-complete}
Let $X$ be a vector space ordered   by an Archimedean  cone $K.$ If $U$ is an order--convex, strictly positively-homogeneous, nonempty subset of $K$, then all  six completeness properties given in the above definitions are equivalent.
\end{theo}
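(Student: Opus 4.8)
The plan is to arrange the six properties into their obvious implication chains and then close the two genuine gaps with the duality propositions. Abbreviate the six conditions, in the order listed, as (US), (LS), (S) (upper self-complete, lower self-complete, self-complete) and (QUS), (QLS), (QS) (their quasi-analogues). Several implications are immediate from the definitions: $(\mathrm{S})\Rightarrow(\mathrm{US}),(\mathrm{LS})$; $(\mathrm{US})\Rightarrow(\mathrm{QUS})$; $(\mathrm{LS})\Rightarrow(\mathrm{QLS})$; and $(\mathrm{QS})\Rightarrow(\mathrm{QUS}),(\mathrm{QLS})$, since dropping the requirement that the supremum or infimum lie in $U$ only weakens a condition, while (S) and (QS) are by definition the conjunctions $(\mathrm{US})\wedge(\mathrm{LS})$ and $(\mathrm{QUS})\wedge(\mathrm{QLS})$. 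It therefore suffices to establish two nontrivial kinds of implication: each quasi one-sided property upgrades to its non-quasi form (e.g.\ $(\mathrm{QUS})\Rightarrow(\mathrm{US})$), and the two quasi one-sided properties are mutually equivalent ($(\mathrm{QUS})\iff(\mathrm{QLS})$). One should first record the elementary fact that an increasing sequence in $K$ is automatically lower self-bounded (and a decreasing one automatically upper self-bounded), so that ``increasing self-bounded'' coincides with ``increasing upper self-bounded'', which is exactly the hypothesis of Propositions \ref{p4.s-bd-seq} and \ref{p5.s-bd-seq}.

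For the upgrade $(\mathrm{QUS})\Rightarrow(\mathrm{US})$, I would take an increasing upper self-bounded sequence $(x_n)$ in $U$ and let $x\in K$ be the supremum furnished by (QUS). Applying Proposition \ref{p4.s-bd-seq} with any decreasing $(t_k)\searrow 1$ produces a subsequence $(x_{n_k})$ and a decreasing sequence $y_k=t_k x_{n_k}$ with $x_n\le y_k$ for all $n,k$ by \eqref{eq1.p4.s-bd-seq}. Since $x=\sup_n x_n$, each $y_k$ is an upper bound, so $x_{n_1}\le x\le y_1$; here both endpoints lie in $U$ (strict positive homogeneity gives $y_1=t_1 x_{n_1}\in U$ because $t_1>0$), so order-convexity of $U$ forces $x\in[x_{n_1};y_1]_o\subset U$, which is (US). The dual upgrade $(\mathrm{QLS})\Rightarrow(\mathrm{LS})$ is identical after replacing Proposition \ref{p4.s-bd-seq} by Proposition \ref{p5.s-bd-seq}, using \eqref{eq1.p5.s-bd-seq}, and reversing the inequalities so that the infimum is trapped as $y_1\le x\le x_{n_1}$ with both endpoints in $U$.

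For the bridge $(\mathrm{QUS})\Rightarrow(\mathrm{QLS})$, I would start from a decreasing lower self-bounded sequence $(x_n)$ in $U$ and invoke Proposition \ref{p5.s-bd-seq} (with an increasing $(t_k)\nearrow 1$) to obtain an increasing upper self-bounded sequence $y_k=t_k x_{n_k}$, which again stays inside $U$ by homogeneity. By (QUS) the sequence $(y_k)$ has a supremum, and the implication (c)$\Rightarrow$(a) of Proposition \ref{p5.s-bd-seq}.3 (taking $Y=X$ and using that $K$ is Archimedean) yields that $(x_n)$ has an infimum, i.e.\ (QLS). The reverse bridge $(\mathrm{QLS})\Rightarrow(\mathrm{QUS})$ is the mirror argument through Proposition \ref{p4.s-bd-seq}, applying (QLS) to the auxiliary decreasing sequence and reading off (c)$\Rightarrow$(a).

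Assembling these gives $(\mathrm{US})\iff(\mathrm{QUS})\iff(\mathrm{QLS})\iff(\mathrm{LS})$, so all four one-sided properties coincide; then $(\mathrm{S})=(\mathrm{US})\wedge(\mathrm{LS})$ and $(\mathrm{QS})=(\mathrm{QUS})\wedge(\mathrm{QLS})$ are each a conjunction of two mutually equivalent conditions and hence equivalent to the common property, which closes the cycle. The delicate point—more a matter of using the hypotheses in exactly the right places than a deep obstacle—is the complementary roles of the two structural assumptions on $U$: strict positive homogeneity is precisely what keeps the rescaled auxiliary sequences $y_k=t_k x_{n_k}$ inside $U$, so that the quasi-completeness hypotheses can be applied to them, whereas order-convexity is precisely what sandwiches the resulting limit between two elements of $U$ and thereby returns it to $U$, converting the quasi property into its full form.
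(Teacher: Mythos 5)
Your proposal is correct and rests on exactly the same ingredients as the paper's proof: the sandwich argument (order--convexity plus strict positive homogeneity traps the supremum between $x_{n_1}$ and a positive multiple of it) to upgrade a quasi property to its full form, and the duality Propositions \ref{p4.s-bd-seq}/\ref{p5.s-bd-seq} to pass between increasing and decreasing sequences, with the Archimedean hypothesis entering only through part 3 of those propositions. The only difference is bookkeeping: the paper chains quasi--upper $\Rightarrow$ upper $\Rightarrow$ lower (applying the already-upgraded upper self-completeness to the auxiliary sequence $y_k=t_kx_{n_k}$ and reading membership in $U$ off the ``affirmative case'' conclusion), whereas you transfer only the quasi property across the duality via (c)$\Rightarrow$(a) and do the membership upgrade separately on each side --- a harmless rearrangement of the same argument.
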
\begin{proof}
  It is a simple observation that the stated equivalences hold if we show that self-completeness is implied by
   each of the conditions of quasi upper self-completeness and quasi lower self-completeness.

Assume  that $U$ is quasi upper self-complete and show first that $U$ is   upper self-complete.

Let  $(x_n)$ be an increasing upper self-bounded sequence   in $U.$
By hypothesis, there exists $x:=\sup_nx_n\in K.$ Also there exists $k\in\mathbb{N}$ such that $x_n\le 2 x_k$ for all $n\in\mathbb{N}.$
Consequently $x_k\le x\le 2x_k.$ Since $x_k$ and $2x_k$ belong to $U$ and $U$ is order--convex, $x\in U$, proving that $U$ is upper self-complete.

Let us show now that $U$ is   lower  self-complete too. Suppose    that $(x_n)$ is a decreasing lower self-bounded sequence   $(x_n)$ in $U$ and let $(t_k)$ be an increasing sequence of positive numbers which converges to 1 (e.g., $t_k=1-\frac{1}{2k})$. By Proposition  \ref{p5.s-bd-seq} there exists a subsequence $(x_{n_k})$ of $(x_n)$ such that the sequence $y_k:=t_kx_{n_k},\, k\in\mathbb{N},\,$ is increasing and upper self-bounded. Since we have shown that $U$ is   upper self-complete, there exists $x:=\sup_ky_k\in U.$  By the last part of the same proposition, $\inf_nx_n=x\in U,$ proving that $U$ is   lower self-complete.

When $U$  is quasi lower self-complete, the proof that $U$ is self-complete follows the same steps as
before,   using Proposition \ref{p4.s-bd-seq}  instead of Proposition \ref{p5.s-bd-seq}.
\end{proof}

The following corollary shows that we can restrict to order--convex subspaces of $X$.

\begin{corol}\label{c1.s-complete}
Let $X$ be a vector space ordered   by an Archimedean  cone $K$ and $Y$ an order--convex vector subspace of $X$.  If $U$ is an order--convex, strictly positively-homogeneous, nonempty subset of $Y\cap K\,$,
then $U$ is self-complete in X iff $U$ is self-complete in $Y$.
\end{corol}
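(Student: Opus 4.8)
The plan is to reduce both self-completeness statements to the single property of \emph{upper} self-completeness by means of Theorem \ref{t1.s-complete}, and then to transfer this one property between $X$ and $Y$ using the duality Proposition \ref{p4.s-bd-seq}.

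First I would check that the hypotheses needed to invoke Theorem \ref{t1.s-complete} hold simultaneously in the ambient space $X$ and in the subspace $Y$. Since the order on $Y$ is the restriction of the order on $X$ and $K$ is Archimedean, the cone $K\cap Y$ is Archimedean in $Y$. Strict positive homogeneity is a property of scalar multiplication alone, hence intrinsic to $U$ and valid in both settings; and $U$ is order--convex in $Y$ because, for $x,y\in U$, the order interval of $Y$ is $[x;y]_o\cap Y\subset [x;y]_o\subset U$. Thus Theorem \ref{t1.s-complete} applies both to $(X,K)$ and to $(Y,K\cap Y)$, and in each of these spaces self-completeness of $U$ is equivalent to upper self-completeness of $U$. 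It therefore suffices to prove that $U$ is upper self-complete in $X$ if and only if it is upper self-complete in $Y$.

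For the core step I would fix an increasing, upper self-bounded sequence $(x_n)$ in $U$ (an increasing sequence is automatically lower self-bounded, so here ``self-bounded'' and ``upper self-bounded'' coincide) and apply Proposition \ref{p4.s-bd-seq} relative to the subspace $Y$, with any decreasing sequence $t_k\searrow 1$. The auxiliary sequence $y_k=t_kx_{n_k}$ lies in $Y$ and, by strict positive homogeneity, in $U$. The equivalence (a)$\iff$(b) of Proposition \ref{p4.s-bd-seq}.3 then shows that $(x_n)$ has a supremum in $X$ exactly when it has a supremum in $Y$, and that in that case the two suprema are the \emph{same} element $x$, sandwiched as $x_{n_k}\le x\le y_k$ between points of $Y$; the order--convexity of $Y$ is precisely what forces this common value to lie in $Y$.

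Because the supremum computed in $X$ and the supremum computed in $Y$ are literally the same point $x$, the membership condition ``$x\in U$'' has identical meaning in the two settings. Consequently every increasing self-bounded sequence in $U$ has its supremum in $U$ when the supremum is taken in $X$ if and only if the same holds when it is taken in $Y$; that is, $U$ is upper self-complete in $X$ iff it is upper self-complete in $Y$. Chaining this with the two applications of Theorem \ref{t1.s-complete} from the first step yields the asserted equivalence. The main obstacle is the identification of the two suprema and the verification that the common value stays inside $Y$ (and hence inside $U$); this is exactly the content supplied by Proposition \ref{p4.s-bd-seq} together with the order--convexity of $Y$, while the remaining reductions are routine hypothesis-checking.
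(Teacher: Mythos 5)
Your proposal is correct and takes essentially the same route the paper intends for this corollary, which it presents as a direct consequence of Theorem \ref{t1.s-complete} and the duality Proposition \ref{p4.s-bd-seq}: reduce self-completeness in both $X$ and $Y$ to upper self-completeness, then transfer that single property via the equivalence (a)$\iff$(b) together with the equality of the two suprema. Your explicit observation that the order--convexity of $Y$ is what forces the sandwiched value $x_n\le x\le y_k$ to lie in $Y$ is precisely where that hypothesis of the corollary enters, so the hypothesis-checking is complete.
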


For a lineally solid cone $K,$ the self-completeness is equivalent to the self-completeness of its algebraic interior.
\begin{prop}\label{p6.s-complete}
Let $X$ be a vector space ordered   by an Archimedean  cone $K$.
\begin{enumerate}
  \item[{\rm 1.}] The cone $K$ is self-complete iff every component  of $K$ is self-complete.
  \item[{\rm 2.}] If, in addition,  $K$ is   lineally solid and $\ainter(K)$ is  self-complete then $K$ is self-complete.
\end{enumerate}
  \end{prop}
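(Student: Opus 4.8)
My plan is to reduce everything to the single, easiest-to-verify condition of \emph{quasi upper self-completeness}, and then let Theorem \ref{t1.s-complete} do the rest. First I would observe that each of the sets in play—$K$ itself, any component $Q$ of $K$, and (under the hypothesis of Part 2) $\ainter(K)$—is an order--convex, strictly positively--homogeneous, nonempty subset of $K$: for $K$ this is immediate, while for a component or for $\ainter(K)$ it is Proposition \ref{p1.equiv-cone}. Consequently Theorem \ref{t1.s-complete} applies to each of them, so for every one of these sets the six completeness notions coincide; in particular ``self-complete'' may be replaced throughout by ``every increasing self-bounded sequence in the set has a supremum''. This reduction is what makes the statement manageable, since it removes the need to track where suprema and infima land and to treat decreasing sequences separately.

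For Part 1 the forward implication would then be nearly immediate: given an increasing self-bounded sequence $(x_n)$ in a component $Q\subset K$, self-completeness of $K$ provides $\sup_n x_n$, so $Q$ is quasi upper self-complete and hence, by the reduction, self-complete. For the converse I would start from an increasing self-bounded sequence $(x_n)$ in $K$ and apply upper self-boundedness with $\lambda=2$ to obtain an index $k$ with $x_k\le x_n\le 2x_k$ for all $n\ge k$; this forces $x_n\in K(x_k)=:Q$ for $n\ge k$, so the tail lives in a single component. Self-completeness of $Q$ then yields $\sup_{n\ge k}x_n$, which by monotonicity equals $\sup_n x_n$, giving quasi upper self-completeness of $K$.

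For Part 2 the main device—and the step I expect to be the crux—is a translation by a fixed interior point. I would choose $e\in\ainter(K)$ (available because $K$ is lineally solid) and, given an increasing self-bounded $(x_n)$ in $K$, pass to $(x_n+e)$. By \eqref{eq2.re.a-int} this sequence lies in $\ainter(K)$; it is clearly increasing, and it is upper self-bounded, since $x_n\le\lambda x_k$ combined with $e\le\lambda e$ (valid as $\lambda>1$) gives $x_n+e\le\lambda(x_k+e)$. Self-completeness of $\ainter(K)$ then produces $y:=\sup_n(x_n+e)$, and a short order argument shows $\sup_n x_n=y-e$: the bound $x_n+e\le y$ gives $x_n\le y-e$, while any upper bound $z$ of $(x_n)$ yields $x_n+e\le z+e$, whence $y\le z+e$ and $y-e\le z$. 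Thus $(x_n)$ has a supremum, $K$ is quasi upper self-complete, and by the reduction $K$ is self-complete.

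The delicate point is contained entirely in this translation: one must check simultaneously that shifting by $e$ keeps the sequence inside the algebraic interior, preserves monotonicity and self-boundedness, and transports the supremum by exactly the constant $e$. Everything else is the formal reduction via Theorem \ref{t1.s-complete} together with routine manipulations of the defining inequalities, and no Archimedean hypothesis beyond the standing one is needed once that reduction is in place.
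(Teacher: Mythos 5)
Your proposal is correct and follows essentially the same route as the paper: the reduction to quasi upper self-completeness via Theorem \ref{t1.s-complete} (with Proposition \ref{p1.equiv-cone} supplying its hypotheses), a tail-in-a-single-component argument for Part 1, and translation by a fixed element of $\ainter(K)$ (using \eqref{eq2.re.a-int}) for Part 2. The only cosmetic difference is that you trap the tail in a component directly with $\lambda=2$, obtaining $x_k\le x_n\le 2x_k$, where the paper invokes the $d$-Cauchy property from Proposition \ref{p1.s-bd-seq}; the two arguments are interchangeable.
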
\begin{proof}
  1.\; Suppose that $K$ is self-complete. Then any component  $Q$   of $K$ is quasi upper self-complete. By Proposition \ref{p1.equiv-cone}, $Q$ satisfies the hypotheses of Theorem \ref{t1.s-complete}, so that it is self-complete.

  Conversely, suppose that every component  of $K$ is self-complete and let $(x_n)$ be an increasing upper self-bounded sequence in $K$. By Proposition \ref{p1.s-bd-seq} the sequence $(x_n)$ is $d$-Cauchy, so there exists $k\in\mathbb{N}$ such that  $d(x_k,x_n)\le 1 <\infty, $ for all $n\ge k,$ implying that the set $\{x_n : n\ge k\}$ is contained in a component $Q$ of $K$. By the self-completeness of $Q$ there exists $x:=\sup\{x_n : n\ge k\}\in Q.$ Since the sequence $(x_n)$ is increasing it follows $x=\sup_nx_n.$ Consequently, $K$ is upper self-complete and, by Theorem \ref{t1.s-complete}, self-complete.

  2.\; Let $(x_n)$ be an increasing, upper self-bounded  sequence in $K$. Fix $x\in \ainter(K).$ Then, by Remark \ref{re.a-int}, the sequence  $y_n:=x_n+x,\, n\in\mathbb{N},$ is contained in $\ainter(K)$ and it is obviously increasing and upper self-bounded. Consequently, $(y_n)$ has a supremum, $y:=\sup_ny_n\in \ainter(K).$  But then there exists  $\sup_nx_n=y-x.$ Therefore the cone $K$ is upper self-complete and, by Theorem \ref{t1.s-complete}, it si self-complete.
  \end{proof}

  \begin{remark}\label{re.s-complete} All the results proven so far can be restated into local versions, by replacing $X $ with     $X_u$, hence $K$ with $K_u$ (where $u\in   K \setminus\{0\}$). In this way, we can weaken the Archimedean condition by requiring only that $K_u$ is Archimedean. In this case, the conditions ``has a supremum", respectively ``has an infimum" must be understood with respect to $X_u$. Consequently, a subset $U$ of $K_u$ can be self-complete in $X_u$, but may  be not self-complete in $X$ (yet, by Corollary \ref{c1.s-complete}, this cannot happen when $K$ is Archimedean).
 Note that the definition of the Thompson metric is not affected by this change
(see Remark \ref{re2.T-metric}).
  \end{remark}

  \subsection{Properties of monotone sequences with respect to order--unit seminorms}\label{Ss.seq-u-semin}
  In this subsection we shall examine the behavior of monotone sequences with respect to to order--unit seminorms, considered in Subsection \ref{Ss.u-semin}. The results are analogous to those established in Subsection \ref{Ss.top-T-metric} for the Thompson metric.

  Throughout this subsection $X$ will be a vector space ordered by a cone $K,\, u\in K\setminus\{0\}$, and $X_u,K_u$ are as in Subsection \ref{Ss.u-semin}. We shall assume also that the cone $K_u=X_u\cap K$ is Archimedean.
  \begin{prop}\label{p1.seq-u-semin}
  Let $(x_n)$ be an increasing sequence in $X_u$ and $x\in X_u\,.$
    \begin{enumerate}
    \item[{\rm 1.}] The sequence $(x_n)$ is $|\cdot|_u$-Cauchy iff
    \begin{equation}\label{eq1.p1.seq-u-semin}
    \forall \varepsilon>0,\; \exists k\in\mathbb{N},\;\forall n\in\mathbb{N},\quad  x_n\le x_k+\varepsilon u.
    \end{equation}
 \item[{\rm 2.}] The sequence $(x_n)$ is $|\cdot|_u$-convergent to $x\in X_u$ iff
  \begin{align*}
    {\rm(i)}\quad &\forall n\in\mathbb{N},\quad x_n\le x;\\
     {\rm(ii)}\quad &\forall \varepsilon>0,\; \exists k\in\mathbb{N},\quad\mbox{such that}\quad  x\le x_k+\varepsilon u.
   \end{align*}
   In the affirmative case, $x=\sup\!|_{X_u}\{x_n :n\in\mathbb{N}\}.$ If $K$ is Archimedean, then $x=\sup_nx_n.$
 \item[{\rm 3.}] The sequence $(x_n)$ is $|\cdot|_u$-convergent to $x\in X_u$ iff   it is $|\cdot|_u$-Cauchy and has a supremum in $X_u$.
     In the affirmative case  $x_n\xrightarrow{|\cdot|_u}\sup\!|_{X_u}\{x_n :n\in\mathbb{N}\}.$
  \end{enumerate}
    \end{prop}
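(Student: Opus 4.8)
The plan is to reduce every statement to a single order-theoretic dictionary for the order-unit seminorm and then to exploit monotonicity. Since $K_u$ is assumed Archimedean, Proposition \ref{p1.Ku}.7 gives $\mathcal M_u(z)=[|z|_u;\infty)$ for every $z\in X_u$, which unpacks to the equivalence
\begin{equation*}
|z|_u\le\lambda \iff -\lambda u\le z\le\lambda u\qquad(z\in X_u,\ \lambda>0).
\end{equation*}
The same proposition tells me that $K_u$ is $|\cdot|_u$-closed, and Proposition \ref{p1.Ku}.2 that $|\cdot|_u$ is a genuine norm, so the limiting arguments below are all legitimate. With this dictionary in hand, each direction of each assertion becomes a short sandwich estimate.

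For assertion 1 I would translate the Cauchy property. If $(x_n)$ is $|\cdot|_u$-Cauchy, then for $\varepsilon>0$ pick $k$ with $|x_n-x_k|_u\le\varepsilon$ for $n\ge k$; the dictionary gives $x_n\le x_k+\varepsilon u$, and for $n<k$ monotonicity yields $x_n\le x_k\le x_k+\varepsilon u$, so \eqref{eq1.p1.seq-u-semin} holds. Conversely, assuming \eqref{eq1.p1.seq-u-semin}, for $n\ge m\ge k$ monotonicity gives $x_k\le x_m$, hence $x_n\le x_k+\varepsilon u\le x_m+\varepsilon u$; together with $x_n-x_m\ge 0$ this sandwiches $0\le x_n-x_m\le\varepsilon u$, so $|x_n-x_m|_u\le\varepsilon$ and $(x_n)$ is Cauchy by symmetry.

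For assertion 2 I would check convergence against (i)--(ii). For the forward direction, (i) follows by fixing $n$, observing $x_m-x_n\in K_u$ for $m\ge n$, and letting $m\to\infty$ using that $K_u$ is $|\cdot|_u$-closed; condition (ii) is simply the lower half of $|x_k-x|_u\le\varepsilon$, which reads $x\le x_k+\varepsilon u$. For the converse, (i) and (ii) combined with monotonicity sandwich $0\le x-x_n\le\varepsilon u$ for $n\ge k$, giving $|x-x_n|_u\le\varepsilon$. To identify the limit as the supremum, note $x$ is an upper bound by (i); if $y\in X_u$ is another upper bound, then (ii) yields $x\le x_k+\varepsilon u\le y+\varepsilon u$, so $x-y\le\varepsilon u$ for every $\varepsilon>0$, and taking $\varepsilon=1/m$ the Archimedean property of $K_u$ (Proposition \ref{p.char.Arch}.4, applied with $u\in K_u$) forces $x\le y$; hence $x=\sup\!|_{X_u}\{x_n\}$. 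When $K$ itself is Archimedean the identical computation carried out in $X$, with $u\in K$, gives $x\le y$ for any upper bound $y\in X$, so $x=\sup_n x_n$.

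Assertion 3 is then a synthesis. The direction ($\Rightarrow$) is immediate: a convergent sequence is Cauchy and, by assertion 2, its limit is $\sup\!|_{X_u}\{x_n\}$. For ($\Leftarrow$), given that $(x_n)$ is Cauchy with $x:=\sup\!|_{X_u}\{x_n\}\in X_u$, condition (i) holds because $x$ is an upper bound, and for (ii) I would invoke assertion 1 to produce, for each $\varepsilon$, an index $k$ with $x_n\le x_k+\varepsilon u$ for all $n$; thus $x_k+\varepsilon u$ is an upper bound in $X_u$, and the least-upper-bound property gives $x\le x_k+\varepsilon u$. Assertion 2 then delivers $x_n\xrightarrow{|\cdot|_u}x$. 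The only genuinely delicate point throughout is the disciplined use of the Archimedean hypothesis: it enters once through Proposition \ref{p1.Ku}.7 to furnish both the seminorm--order dictionary and the closedness of $K_u$ that drives (i), and once directly to cancel the $\varepsilon u$ when identifying the supremum. One must also keep the $X_u$-versus-$X$ distinction in view, since (ii) only certifies $x_k+\varepsilon u$ as an upper bound \emph{inside} $X_u$; the supremum is a priori only an $X_u$-supremum, upgrading to a supremum in $X$ exactly when $K$, and not merely $K_u$, is Archimedean.
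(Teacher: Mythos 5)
Your proof is correct and follows essentially the same route as the paper's: the same translation of the $|\cdot|_u$-Cauchy and $|\cdot|_u$-convergence conditions into two-sided order estimates, the same monotonicity sandwiches for assertions 1 and 2, and the same synthesis of assertion 3 from assertions 1 and 2. The only differences are cosmetic: where the paper lets $\varepsilon\searrow 0$ via the Archimedean property of $K_u$ to get (i), and cites Proposition \ref{p2.order-tvs} to identify the limit as the supremum, you instead use the $|\cdot|_u$-closedness of $K_u$ and a direct upper-bound argument with Archimedean cancellation --- interchangeable ingredients, since by Proposition \ref{p1.Ku}.7 the closedness of $K_u$ and its Archimedean property are equivalent.
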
\begin{proof}
    1.\;  The sequence $(x_n)$ is $|\cdot|_u$-Cauchy iff
  \begin{equation}\label{eq2.p1.seq-u-semin}
         \forall \varepsilon>0,\; \exists k\in\mathbb{N},\; \forall n\ge m\ge k,\quad -\varepsilon u\le x_n-x_m\le\varepsilon u.
   \end{equation}

Suppose that $(x_n)$ is $|\cdot|_u$-Cauchy and for $\varepsilon>0$ let $k$ be given by the above condition. Taking $m=k$ in the right inequality, one obtains
$ x_n\le x_k+\varepsilon u$ for all $n\ge k,$ and so for all $n\in \mathbb{N}.$

 Suppose now that $(x_n)$ satisfies \eqref{eq1.p1.seq-u-semin}. For $\varepsilon >0$ let $k$ be chosen according to this condition.  By the monotony of $(x_n),\,  x_n-x_m\ge 0\ge-\varepsilon u,$ for all $n\ge m,$    and so the left inequality in \eqref{eq2.p1.seq-u-semin} is true. By  \eqref{eq1.p1.seq-u-semin} and the monotony of $(x_n)$,
 $$
 x_n\le x_k+\varepsilon u\le x_m+\varepsilon u,
 $$
 for all $m\ge k,$ so that the right inequality in \eqref{eq2.p1.seq-u-semin} holds too.

 2.\; We have  $x_n\xrightarrow{|\cdot|_u}x $ iff
      $$
      \forall \varepsilon>0,\; \exists k\in\mathbb{N},\; \forall n\ge k,\quad -\varepsilon u\le x-x_n\le\varepsilon u.
      $$

      The left one of the above inequalities  implies $x_n\le x+\varepsilon u$ for all $n\ge k,$ and so, by the monotony of $(x_n)$, for all $n\in\mathbb{N}.$  Since $K_u$ is Archimedean, letting $\varepsilon\searrow 0$ it follows $x_n\le x,\, $ for every $n\in\mathbb{N}.$ The right inequality implies $x\le x_k+\varepsilon u.$

Conversely, suppose that (i) and (ii) hold. For $\varepsilon>0$ choose $k$ according to (ii). Then, by the monotony of $(x_n)$,
$$
x\le x_k+\varepsilon u\le x_n+\varepsilon u , $$
and so $x-x_n\le \varepsilon u,$ for all $n\ge k.$  By (i), $x_n\le x\le x+\varepsilon u,$ and so $x-x_n\ge -\varepsilon u$ for all $n\in\mathbb{N}.$  Consequently, $-\varepsilon u\le x-x_n\le \varepsilon u $ for all $n\ge k.$

By Proposition \ref{p1.Ku}.7, the cone $K_u$ is   $|\cdot|_u$-closed, and so, by Proposition \ref{p2.order-tvs},  $x_n\xrightarrow{|\cdot|_u}x $ implies $x=\sup\!|_{X_u}\{x_n : n\in\mathbb{N}\}.$

Suppose now that the cone $K$ is Archimedean and that $y\in X$ is such that $x_n\le y$ for all $n\in\mathbb{N}.$ By (ii) for every $\varepsilon>0$ there exists $k\in\mathbb{N}$ such that $x\le x_k+\varepsilon u,$ hence $x \le y+\varepsilon u.$ Letting $\varepsilon\searrow 0$ one obtains $x\le y,$ which proves that $x=\sup_nx_n.$

The direct implication in 3 follows from 1 and 2. Suppose that $(x_n)$ is $|\cdot|_u$-Cauchy and let $x=\sup\!|_{X_u}\{x_n :n\in\mathbb{N}\}.$ For $\varepsilon >0$ there exists $k\in\mathbb{N}$ such that $x_n\le x_k+\varepsilon u$ for all $n\in\mathbb{N},$ implying
$x\le x_k+\varepsilon u.$ Taking into account 2, it follows $x_n\xrightarrow{|\cdot|_u}x. $
\end{proof}

As  before,     similar results, with similar proofs, hold for decreasing sequences.
\begin{prop}\label{p2.seq-u-semin}
  Let $(x_n)$ be a decreasing sequence in $X_u$ and $x\in X_u\,.$
    \begin{enumerate}
    \item[{\rm 1.}] The sequence $(x_n)$ is $|\cdot|_u$-Cauchy iff
    \begin{equation}\label{eq1.p2.seq-u-semin}
    \forall \varepsilon>0,\; \exists k\in\mathbb{N}, \;\forall n\in\mathbb{N},\quad  x_n\ge x_k-\varepsilon u.
    \end{equation}
 \item[{\rm 2.}] The sequence $(x_n)$ is $|\cdot|_u$-convergent to $x\in X_u$ iff
  \begin{align*}
    {\rm(i)}\quad &\forall n\in\mathbb{N},\quad x_n\ge x;\\
     {\rm(ii)}\quad &\forall \varepsilon>0,\; \exists k\in\mathbb{N},\quad\mbox{such that}\quad  x\ge x_k-\varepsilon u.
   \end{align*}
   In the affirmative case, $x=\inf\!|_{X_u}\{x_n :n\in\mathbb{N}\}.$ If $K$ is Archimedean, then $x=\inf_nx_n.$
 \item[{\rm 3.}] The sequence $(x_n)$ is $|\cdot|_u$-convergent to $x\in X_u$ iff   it is $|\cdot|_u$-Cauchy and has an infimum in $X_u$.
     In the affirmative case  $x_n\xrightarrow{|\cdot|_u}\inf\!|_{X_u}\{x_n :n\in\mathbb{N}\}.$
  \end{enumerate}
    \end{prop}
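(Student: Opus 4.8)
The plan is to mirror the proof of Proposition~\ref{p1.seq-u-semin}, reversing every inequality: the substitution $x\mapsto -x$ interchanges increasing and decreasing sequences while preserving $|\cdot|_u$, the cone order, and the Archimedean hypothesis on $K_u$, so each assertion here is the exact dual of its counterpart there. I would nonetheless carry this out line by line rather than appeal to the symmetry formally, since the sign of $u$ in the estimates must be tracked by hand.

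For part~1 I would begin from the Cauchy characterization: $(x_n)$ is $|\cdot|_u$-Cauchy iff for each $\varepsilon>0$ there is $k$ with $-\varepsilon u\le x_n-x_m\le\varepsilon u$ for all $n\ge m\ge k$. Assuming this and taking $m=k$ in the \emph{left} inequality gives $x_n\ge x_k-\varepsilon u$ for $n\ge k$, and monotonicity ($x_n\ge x_k$ when $n<k$) extends it to all $n$, yielding \eqref{eq1.p2.seq-u-semin}. Conversely, given \eqref{eq1.p2.seq-u-semin}, the right inequality is free since $x_n-x_m\le 0\le\varepsilon u$ for $n\ge m$, while the chain $x_n\ge x_k-\varepsilon u\ge x_m-\varepsilon u$ (valid because $x_k\ge x_m$ for $m\ge k$) supplies the left one.

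For part~2 I would record that $x_n\xrightarrow{|\cdot|_u}x$ is equivalent to the existence, for each $\varepsilon$, of $k$ with $-\varepsilon u\le x-x_n\le\varepsilon u$ for $n\ge k$. The inequality $x\le x_n+\varepsilon u$ gives $x_n\ge x-\varepsilon u$ for $n\ge k$, which monotonicity extends to all $n$; since $K_u$ is Archimedean, letting $\varepsilon\searrow 0$ yields~(i), namely $x_n\ge x$. Putting $n=k$ in $x_n\le x+\varepsilon u$ then gives~(ii). For the converse, choosing $k$ from~(ii) and using $x_n\le x_k$ for $n\ge k$ produces $x\ge x_n-\varepsilon u$, while~(i) gives $x-x_n\le 0\le\varepsilon u$; together these are the convergence estimate. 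To identify the limit I would invoke Proposition~\ref{p1.Ku}.7 (so that $K_u$ is $|\cdot|_u$-closed) and the decreasing form of Proposition~\ref{p2.order-tvs} to get $x=\inf\!|_{X_u}\{x_n:n\in\mathbb{N}\}$; and if $K$ itself is Archimedean, any lower bound $y$ of $(x_n)$ satisfies $x\ge x_k-\varepsilon u\ge y-\varepsilon u$ by~(ii), so $\varepsilon\searrow 0$ forces $x\ge y$, whence $x=\inf_n x_n$.

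Part~3 is then immediate: the forward direction combines parts~1 and~2, and for the converse the lower bound $x_k-\varepsilon u$ furnished by \eqref{eq1.p2.seq-u-semin} forces $x:=\inf\!|_{X_u}\{x_n\}\ge x_k-\varepsilon u$, which is precisely condition~(ii) of part~2, so $|\cdot|_u$-convergence follows. There is no genuine obstacle here; the only point requiring attention is the bookkeeping of inequality directions under the reversal of monotonicity, together with the correct application of the Archimedean property to $K_u$---through its $|\cdot|_u$-closedness from Proposition~\ref{p1.Ku}.7---rather than to $K$, exactly as in the increasing case.
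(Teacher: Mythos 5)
Your proposal is correct and is exactly what the paper intends: the paper gives no separate proof of this proposition, stating only that it follows "with similar proofs" from Proposition \ref{p1.seq-u-semin}, and your line-by-line dualization (with all inequality reversals, the use of the Archimedean property of $K_u$ via Proposition \ref{p1.Ku}.7, and the decreasing-net form of Proposition \ref{p2.order-tvs}) carries that out faithfully and without gaps.
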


    Now we consider the connection with self-bounded sequences.
    \begin{prop}\label{p3.seq-u-semin}
       Let $(x_n)$ be a $|\cdot|_u$-Cauchy sequence in $K_u.$
       \begin{enumerate}
\item[{\rm 1.}] If there exists $\alpha>0$ and a subsequence $(x_{n_k})$ of $(x_n)$ such that $x_{n_k}\ge\alpha u,$      then $(x_n)$ is self-bounded.
\item[{\rm 2.}] If $(x_n)$ is increasing and there exists $n_0\in\mathbb{N}$ such that $x_{n_0}\in K(u),$ then then $(x_n)$ is self-bounded.
\item[{\rm 3.}] If $(X,\tau)$ is a TVS ordered by a  cone $K$ and $(x_n)$ is a $|\cdot|_u$-Cauchy sequence in $X_u,\, \tau$-convergent to some $x\in X_u,$ then $x_n\xrightarrow{|\cdot|_u}x.$
       \end{enumerate}
    \end{prop}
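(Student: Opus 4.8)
The plan is to treat the three parts in increasing order of independence: Part~2 will reduce immediately to Part~1, Part~3 will mirror the proof of Proposition~\ref{p.T-converg-seq}.3 with $d$ replaced by $|\cdot|_u$, and the genuine content sits in Part~1. Throughout I would use freely the two elementary facts about the Minkowski functional that $|z|_u<\varepsilon$ forces $-\varepsilon u\le z\le\varepsilon u$ (pick $\lambda\in[|z|_u,\varepsilon)$ with $z\in\lambda[-u;u]_o$ and note $\lambda u\le\varepsilon u$), and conversely that $-\varepsilon u\le z\le\varepsilon u$ gives $|z|_u\le\varepsilon$ (since then $\varepsilon\in\mathcal M_u(z)$). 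Both are restatements of the definition and hold even without the standing Archimedean assumption on $K_u$.

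For Part~1 the first step is to propagate the lower bound from the subsequence to a whole tail. Applying the $|\cdot|_u$-Cauchy property with $\varepsilon=\alpha/2$ produces $k_0$ with $|x_n-x_m|_u<\alpha/2$ for all $n,m\ge k_0$; choosing a subsequence term $x_{n_j}$ with $n_j\ge k_0$ and using $x_{n_j}\ge\alpha u$ then yields
\[
x_n\ge x_{n_j}-\tfrac{\alpha}{2}u\ge\tfrac{\alpha}{2}u\qquad(n\ge k_0).
\]
The decisive move is to convert additive $\varepsilon u$-perturbations into multiplicative ones: since $x_k\ge(\alpha/2)u$ gives $u\le(2/\alpha)x_k$ for every $k\ge k_0$, any inequality $x_n-x_k\le\varepsilon u$ upgrades to $x_n\le(1+2\varepsilon/\alpha)x_k$. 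To obtain upper self-boundedness, fix $\lambda>1$, set $\varepsilon=(\lambda-1)\alpha/2$, and use the Cauchy condition to find $k\ge k_0$ with $x_n-x_k\le\varepsilon u$ for all $n\ge k$; the conversion gives $x_n\le\lambda x_k$ for all $n\ge k$. Lower self-boundedness is symmetric: for $\mu\in(0;1)$ take $\varepsilon=(1-\mu)\alpha/2$, find $k\ge k_0$ with $x_k-x_n\le\varepsilon u$, and convert to $x_n\ge\mu x_k$. Hence $(x_n)$ is self-bounded.

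Part~2 follows at once: if $x_{n_0}\in K(u)$ then $x_{n_0}\sim u$, so $u\le\mu x_{n_0}$ for some $\mu>0$, i.e. $x_{n_0}\ge\mu^{-1}u$; since $(x_n)$ is increasing, $x_n\ge x_{n_0}\ge\mu^{-1}u$ for all $n\ge n_0$, which supplies the bounded-below subsequence required by Part~1 (with $\alpha=\mu^{-1}$). For Part~3 I recall that an ordered TVS here carries a $\tau$-closed cone, so by Proposition~\ref{p1.order-tvs} inequalities pass to $\tau$-limits. Given $\varepsilon>0$, pick $n_\varepsilon$ so that $x_m-\varepsilon u\le x_n\le x_m+\varepsilon u$ for all $n,m\ge n_\varepsilon$; fixing $m$ and letting $n\to\infty$ in the $\tau$-topology (the outer bounds are constant in $n$, $x_n\xrightarrow{\tau}x$, and $K$ is $\tau$-closed) yields $x_m-\varepsilon u\le x\le x_m+\varepsilon u$, whence $|x-x_m|_u\le\varepsilon$ for all $m\ge n_\varepsilon$, that is $x_m\xrightarrow{|\cdot|_u}x$.

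The only genuinely delicate point is the additive-to-multiplicative conversion at the core of Part~1: it is precisely the uniform lower bound $x_n\ge(\alpha/2)u$ on a tail that makes $u$ dominated by a fixed multiple of each $x_k$, and without it an $\varepsilon u$-perturbation cannot be absorbed into a factor $\lambda$ close to $1$. Everything else is bookkeeping with the Minkowski functional and, for Part~3, the closedness of the cone.
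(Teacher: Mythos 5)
Your proof is correct and takes essentially the same route as the paper's: the core of Part~1 in both is the additive-to-multiplicative conversion (an $\varepsilon u$ perturbation absorbed into a factor close to $1$ via a lower bound by a positive multiple of $u$), Part~2 reduces to Part~1 identically, and Part~3 passes the inequalities $-\varepsilon u\le x_n-x_m\le\varepsilon u$ to the $\tau$-limit using the closedness of $K$ exactly as the paper does. The only cosmetic difference is in Part~1, where the paper works with the subsequence terms $x_{n_k}$ directly as comparison points (taking $\varepsilon=\alpha(\lambda-1)$), while you first propagate the lower bound to a whole tail as $x_n\ge(\alpha/2)u$, at the harmless cost of a factor $2$.
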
\begin{proof}
      1.\; For $\lambda >1$ put $\varepsilon:=\alpha(\lambda-1).$ Since the sequence $(x_n)$ is $|\cdot|_u$-Cauchy, there exists
      $k\in\mathbb{N}$ such that $x_n\le x_m+\varepsilon u$ for all $n,m\ge k.$ Taking $m=n_k$ and $n\ge n_k (\ge k)$, it follows
      $$
      x_n\le x_{n_k}+(\lambda-1)\alpha u\le x_{n_k}+(\lambda-1)x_{n_k}=\lambda x_{n_k},$$
      proving that $(x_n)$ is upper self-bounded.

The fact that $(x_n)$ is lower self-bounded, can be proved in a similar way, taking $\varepsilon:=\alpha(1-\mu)$ for $0<\mu<1.$

2.\; If $x_{n_0}$ belongs to the component $K(u)$ of $K$, then $x_{n_0}\sim u,$ so there exists $\alpha>0$ such that $x_{n_0}\ge\alpha u.$  It follows $x_{n}\ge\alpha u,$ for all $n\ge n_0,$ and so the hypotheses of 1 are satisfied.

3.\; For $\varepsilon>0$ let $n_\varepsilon\in\mathbb{N}$  be such that $|x_{n+k}-x_n|_u<\varepsilon$ for all $n\ge n_\varepsilon$  and all $k\in\mathbb{N}.$  By \eqref{eq-Mink-u-sn} $\, \varepsilon\in \mathcal M_u(x_{n+k}-x_n),$  that is $-\varepsilon u\le x_{n+k}-x_n\le\varepsilon u,$ for every $n\ge n_\varepsilon$  and all $k\in\mathbb{N}.$ Letting $k\to \infty,$ one obtains $\, -\varepsilon u\le x-x_n\le \varepsilon u,$  implying $|x_n-x|_u\le \varepsilon, $ for all $n\ge n_\varepsilon.$  This shows that $x_n\xrightarrow{|\cdot|_u}x.$
\end{proof}

\subsection{The completeness results}

The following important result shows that the completeness of the Thompson metric $d$ on $K(u)$ and that
of the $u$-norm on $X_u$ are equivalent when $K_u$ is Archimedean (by Remark \ref{re.T-metric-u-semin}   this result is nontrivial)
and also reduces the completeness to the convergence of the monotone Cauchy sequences. We
also show that the completeness of $d$  on $K(u)$ is equivalent to several order--completeness conditions
in $X_u$.

The notation in the following theorem is that of Subsections \ref{Ss.def-T-metric} and \ref{Ss.u-semin}.

\begin{theo}\label{t1.complete-T-m-u-sn}
  Let $X$ be a vector space ordered by a cone $K$ and  let $u\in K\setminus\{0\}$ be such that $K_u$ is Archimedean. Then the following assertions are equivalent.
  \begin{enumerate}
   \item[{\rm 1.}]  $K(u)$ is $d$-complete.
\item[{\rm 2.}] $K(u)$ is self-complete in $X_u$.
\item[{\rm 3.}] $K_u$ is self-complete in $X_u$.
\item[{\rm 4.}] $(X_u,|\cdot|_u)$ is fundamentally $\sigma$-order complete.
\item[{\rm 5.}] $(X_u,|\cdot|_u)$ is monotonically sequentially complete.
\item[{\rm 6.}] $X_u$ is $|\cdot|_u$-complete.
  \end{enumerate}

If, in addition,  $K$ is Archimedean, then the assertions 2 and 3 can be replaced by the stronger versions:
\begin{enumerate}
 \item[{\rm 2$'$.}] $K(u)$ is self-complete (in $X$).
\item[{\rm 3$'.$}] $K_u$ is self-complete (in $ X$).
     \end{enumerate}
\end{theo}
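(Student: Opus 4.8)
The plan is to fix once and for all the standing structure coming from $K_u$ being Archimedean: by Proposition \ref{p1.Ku}, $|\cdot|_u$ is then a \emph{norm} on $X_u$, the cone $K_u$ is generating, normal and $|\cdot|_u$-closed, $B_u[0,1]=[-u;u]_o$, and $K(u)=\ainter(K_u)=\inter_u(K_u)$; all order-theoretic statements are read inside $X_u$ via the local restatement of Remark \ref{re.s-complete}. I would then close a cycle through the six conditions, reserving the genuine work for two transfers. The routine part of the cycle runs as follows. For $1\Rightarrow 2$: an increasing self-bounded sequence in $K(u)$ is $d$-Cauchy (Proposition \ref{p1.s-bd-seq}), hence $d$-convergent by hypothesis to some $x\in K(u)$ which equals $\sup_n x_n$ (Proposition \ref{p2.s-bd-seq}), so $K(u)$ is upper self-complete, and being order-convex and positively homogeneous (Proposition \ref{p1.equiv-cone}) it is self-complete by Theorem \ref{t1.s-complete}. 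For $2\Rightarrow 5$ I use a shift-by-$u$ device: given an increasing $|\cdot|_u$-Cauchy sequence $(x_n)$ in $K_u$ (enough by Proposition \ref{p3.o-complete}), the shifted sequence $x_n+u$ lies in $K(u)$, satisfies $u\le x_n+u$, and is therefore self-bounded by Proposition \ref{p3.seq-u-semin}; condition $2$ yields $\sup_n(x_n+u)\in K(u)$, and Proposition \ref{p2.s-bd-seq} together with the topological equivalence of Theorem \ref{t1.T-metric-u-sn} converts this into $|\cdot|_u$-convergence of $x_n+u$, hence of $x_n$. The equivalence $4\iff 5$ is immediate from Proposition \ref{p1.o-complete} (generating plus normal is used for $4\Rightarrow 5$), while $3\Rightarrow 2$ is one clause of Proposition \ref{p6.s-complete}. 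Normality enters decisively in $4\Rightarrow 3$: an increasing self-bounded sequence in $K_u$ is order-bounded (take $\lambda=2$), hence $|\cdot|_u$-bounded by Proposition \ref{p1.normal-cone-bd}, and the sandwich $0\le x_n-x_k\le(e^{\varepsilon}-1)x_k$ together with monotonicity of $|\cdot|_u$ makes it $|\cdot|_u$-Cauchy; fundamental $\sigma$-order completeness then gives a supremum in $K_u$, and Theorem \ref{t1.s-complete} upgrades this to self-completeness.

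The first substantial step is $2\Rightarrow 1$, where I must produce a $d$-limit for an \emph{arbitrary}, non-monotone $d$-Cauchy sequence $(x_n)$ in $K(u)$. Passing to a subsequence with $d(x_k,x_{k+1})\le\delta_k$ and $\sum_k\delta_k<\infty$, I set $r_k=\sum_{j\ge k}\delta_j$ and form the sandwich $a_k=e^{-r_k}x_k$ and $b_k=e^{r_k}x_k$. A short computation shows $(a_k)$ is increasing, $(b_k)$ is decreasing, $a_k\le x_k\le b_k$, and $b_k=e^{2r_k}a_k$, so $(a_k)$ is upper self-bounded; condition $2$ furnishes $a:=\sup_k a_k\in K(u)$ with $a_k\xrightarrow{d}a$ (Proposition \ref{p2.s-bd-seq}), and since $d(x_k,a_k)=r_k\to 0$ the subsequence $d$-converges to $a$, whence the whole Cauchy sequence does. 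This construction is the heart of the matter, because it is exactly where the multiplicative Thompson data is turned into a monotone self-bounded sequence to which condition $2$ applies.

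The second substantial step is $5\Rightarrow 6$, where the generating property of $K_u$ must be used \emph{with norm control}. Because $B_u[0,1]=[-u;u]_o$, any $x\in X_u$ with $|x|_u\le 1$ satisfies $-u\le x\le u$ and decomposes as $x=\tfrac12(u+x)-\tfrac12(u-x)$ with both summands in $[0;u]_o\subset K_u$ of $|\cdot|_u$-norm at most $1$; rescaling gives, for every $x\in X_u$, a splitting $x=p-q$ with $p,q\in K_u$ and $|p|_u,|q|_u\le|x|_u$. Applying this to the telescoping differences of a subsequence with $|z_{n_{k+1}}-z_{n_k}|_u\le 2^{-k}$ yields two increasing $|\cdot|_u$-Cauchy series in $K_u$, which converge by $5$ (Proposition \ref{p3.o-complete}); their difference exhibits the $|\cdot|_u$-limit of the subsequence, and hence of the original Cauchy sequence, so $(X_u,|\cdot|_u)$ is complete. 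The implication $6\Rightarrow 5$ is contained in Proposition \ref{p1.o-complete}, closing the cycle. For the final assertion, that under the \emph{global} Archimedean hypothesis conditions $2,3$ may be sharpened to $2'$ and $3'$, I would invoke Corollary \ref{c1.s-complete} with $Y=X_u$, after checking that $X_u$ is an order-convex subspace of $X$ (if $-\lambda u\le a\le z\le b\le\mu u$ then $z\in X_u$) and that $K(u)$ and $K_u$ are order-convex, positively homogeneous subsets of $X_u\cap K=K_u$.

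The main obstacle I anticipate is precisely the interface between the multiplicative world of the Thompson metric (self-boundedness and the $e^{\pm r_k}$ sandwich) and the additive world of the order-unit norm (order completeness and the $\tfrac12(u\pm x)$ decomposition). The two hard transfers $2\Rightarrow 1$ and $5\Rightarrow 6$ are exactly the places where this interface is crossed, and they are also where normality (order-bounded $\Rightarrow$ norm-bounded) and the generating property ($X_u=K_u-K_u$ with norm control) are genuinely needed; the delicate point is arranging the sandwich and the decomposition so that the estimates line up and the resulting monotone sequences fall under the hypotheses of Theorem \ref{t1.s-complete} and condition $5$. As the paper already notes, this is nontrivial precisely because $d$ and $|\cdot|_u$ are only topologically, not metrically, equivalent (Remark \ref{re.T-metric-u-semin}), so completeness cannot simply be transported along the identity map.
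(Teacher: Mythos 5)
Your proposal is correct: every auxiliary result you invoke (Propositions \ref{p1.equiv-cone}, \ref{p1.s-bd-seq}, \ref{p2.s-bd-seq}, \ref{p3.seq-u-semin}, \ref{p6.s-complete}, \ref{p1.o-complete}--\ref{p3.o-complete}, Theorems \ref{t1.s-complete}, \ref{t1.T-metric-u-sn}, Corollary \ref{c1.s-complete}) is applied within its hypotheses, reading everything locally in $(X_u,K_u)$ as Remark \ref{re.s-complete} permits, and your implications do close all six equivalences. But your route is genuinely different from the paper's. The paper proves $1\Rightarrow 2\Rightarrow 4\Leftrightarrow 5\Rightarrow 6\Rightarrow 1$ (with $2\Leftrightarrow 3$ on the side), so its only return to $d$-completeness is $6\Rightarrow 1$: a $d$-Cauchy sequence with $d(x_{n+1},x_n)\le 2^{-n}$ is trapped in $\big[e^{-(2+s_0)}u;e^{2+s_0}u\big]_o$, whence $|x_{n+1}-x_n|_u\le\big(e^{1/2^n}-1\big)e^{2+s_0}$, and norm-completeness produces a limit that order-closedness and Theorem \ref{t1.T-metric-u-sn} identify as the $d$-limit. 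You instead return via $2\Rightarrow 1$: the multiplicative sandwich $a_k=e^{-r_k}x_k\le x_k\le e^{r_k}x_k=e^{2r_k}a_k$ turns a $d$-Cauchy subsequence into an increasing upper self-bounded sequence, and self-completeness plus Proposition \ref{p2.s-bd-seq} deliver the $d$-limit with no norm estimates at all --- essentially Thompson's original device, keeping that leg entirely in the multiplicative/order world. Dually, your $5\Rightarrow 6$ splits increments as $x=\tfrac12(u+x)-\tfrac12(u-x)$ into two increasing positive series (using $B_u[0,1]=[-u;u]_o$), where the paper shifts by $\big(2-2^{1-n}\big)u$ to make a single increasing positive sequence; and your $2\Rightarrow 5$ (shift by $u$ plus Proposition \ref{p3.seq-u-semin}) replaces the paper's $2\Rightarrow 4$ (restriction to $K(u)=\ainter(K_u)$ in the spirit of Proposition \ref{p4.o-complete}, plus self-boundedness of increasing Cauchy sequences). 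What each buys: the paper isolates the advertised transfer ``$|\cdot|_u$-complete $\Rightarrow$ $d$-complete'' in a single implication with explicit quantitative estimates, whereas your arrangement is softer and more modular (the only analytic input is summability of $2^{-k}$), at the price of obtaining that transfer only by circumnavigating the cycle $6\Rightarrow 5\Rightarrow 4\Rightarrow 3\Rightarrow 2\Rightarrow 1$. Two microscopic repairs: in $4\Rightarrow 3$ the Cauchy estimate should read $0\le x_n-x_m\le(e^{\varepsilon}-1)x_k$ for $n\ge m\ge k$ (valid since $x_m\ge x_k$), with the norm bound on the sequence fixed before $\varepsilon$ is chosen; and in $2\Rightarrow 1$ the inequalities $e^{-\delta_k}x_k\le x_{k+1}\le e^{\delta_k}x_k$ need $\delta_k\in\sigma(x_k,x_{k+1})$, which holds either by choosing $\delta_k$ strictly above $d(x_k,x_{k+1})$ or by Proposition \ref{p2.T-metric}.2 since $K_u$ is Archimedean.
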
\begin{proof}
  1\;$\Rightarrow$\;2.\; If $(x_n)$ is an increasing, upper self-bounded sequence in $K(u)$, then, by Proposition \ref{p1.s-bd-seq}.2, it is $d$-Cauchy, so that it is $d$-convergent to some $x\in K(u)$, and, by Theorem \ref{t1.T-metric-u-sn}, also $|\cdot|_u$-convergent to $x$.  By Proposition \ref{p1.Ku} the cone $K_u$ is $|\cdot|_u$-closed in $X_u,$ so that, by Proposition \ref{p2.order-tvs}.4, $\, x=\sup_nx_n.$ Consequently, $K(u)$
  is upper self-complete in $X_u$. But then, by Proposition \ref{p1.equiv-cone} and Theorem \ref{t1.s-complete}, self-complete in $X_u$.

2$\iff$3.\;  By \eqref{eq1.p1.Ku}, $K(u)=\ainter(K_u),$ so that, by Proposition \ref{p6.s-complete}, $K_u$ is self-complete iff $K(u)$ is self-complete.

4$\iff$5.\; The implication 5\;$\Rightarrow$\;4 is trivial and 4\;$\Rightarrow$\;5 follows by  Propositions \ref{p1.seq-u-semin}.3 and \ref{p2.seq-u-semin}.3.

2\;$\Rightarrow$\;4.\; Using again the fact that $K(u)=\ainter(K_u),$   it is sufficient to show that every increasing $|\cdot|_u$-Cauchy sequence in $K(u)$ has a supremum in $X_u$. Indeed, by Proposition \ref{p2.o-complete} this is equivalent to the fact that  the space $(X_u,|\cdot|_u)$ is fundamentally $\sigma$-order complete. By Proposition \ref{p1.Ku}.4, the cone $K_u$ is normal and generating, so that, by Proposition \ref{p1.o-complete}.3, it is monotonically sequentially complete.

 But, by Proposition
\ref{p3.seq-u-semin}.2, the sequence $(x_n)$ is self-bounded so it has a supremum in $X_u.$

 5\;$\Rightarrow$\;6.\; Since a Cauchy sequence is convergent if has a convergent subsequence, it is sufficient to show that every   sequence $(x_n)$ in $X_u$ satisfying
\begin{equation}\label{eq1.t1.complete-T-m-u-sn}
\forall n\in\mathbb{N}_0,\quad |x_{n+1}-x_n|_u\le \frac{1}{2^n},
\end{equation}
is convergent in $(X_u,|\cdot|_u)$, where $\mathbb{N}_0=\mathbb{N}\cup\{0\}.$

The inequality \eqref{eq1.t1.complete-T-m-u-sn} implies
\begin{equation}\label{eq2.t1.complete-T-m-u-sn}
 -\frac{1}{2^n} \, u\le x_{n+1}-x_n \le \frac{1}{2^n} \, u,
\end{equation}
for all $n\in\mathbb{N}_0$. Writing \eqref{eq2.t1.complete-T-m-u-sn} for $0,1,\dots,n-1$ and adding the obtained inequalities, one obtains
$$
-\left(2-\frac{1}{2^{n-1}}\right)\le x_n-x_0\le \left(2-\frac{1}{2^{n-1}}\right),$$
for all $n\in\mathbb{N}.$ Putting $y_n:=x_n-x_0+\left(2-\frac{1}{2^{n-1}}\right)\,u$ it follows
$$
0\le y_n\le \left(4-\frac{1}{2^{n-2}}\right)\,u,\quad n\in\mathbb{N},
$$
which proves that $y_n\in K_u$ for all $n\in\mathbb{N}.$  Also from $y_{n+1}-y_n=x_{n+1}-x_n+ \frac{1}{2^{n}}\,u$ and \eqref{eq2.t1.complete-T-m-u-sn}, one obtains
$$
0\le y_{n+1}-y_n\le \frac{1}{2^{n-1}}\,u,$$
implying
$$
0\le y_{n+k}-y_n\le \left(\frac{1}{2^{n-1}}+\frac{1}{2^{n}}+\dots+\frac{1}{2^{n+k-2}}\right)\,u<\frac{1}{2^{n-2}}\,u.$$

It follows that $(y_n)$ is an increasing $|\cdot|_u$-Cauchy sequence  in $K_u,$ hence  it is $|\cdot|_u$-convergent to some $y\in X_u$. But then
$$
x_n=y_n+x_0-\left(2-\frac{1}{2^{n-1}}\right)\,u$$
is $|\cdot|_u$-convergent to $y+x_0-2 u\in X_u.$

6\;$\Rightarrow$\;1.\; Again, to prove the completeness of $(K(u),d)$ it is sufficient to show that every sequence $(x_n)$ in $K(u)$ which satisfies
\begin{equation}\label{eq3.t1.complete-T-m-u-sn}
\forall n\in\mathbb{N}_0,\quad d(x_{n+1},x_n)\le \frac{1}{2^{n}},
\end{equation}
is convergent in  $(K(u),d)$. Let $s_0=d(x_0,u).$ Then, by the triangle inequality and \eqref{eq3.t1.complete-T-m-u-sn} applied successively,
$$
d(x_n,x_0)\le 1+\frac{1}{2}+\dots+\frac{1}{2^{n-1}}<2,$$
so that
\begin{equation*}
d(x_n,u)\le d(x_n,x_0)+ d(x_0,u)<2+s_0,
\end{equation*}
implying
\begin{equation}\label{eq4.t1.complete-T-m-u-sn}
e^{-(2+s_0)}u\le x_n \le e^{2+s_0}u
\end{equation}
for all $n\in\mathbb{N}_0.$ The inequality \eqref{eq3.t1.complete-T-m-u-sn} implies
$$
x_{n+1}\le e^{1/2^n}x_n, $$
and
$$
 x_n\le  e^{1/2^n}x_{n+1},$$
so that, taking into account the second inequality in \eqref{eq4.t1.complete-T-m-u-sn}, one obtains the inequalities
$$
x_{n+1}-x_n\le \big(e^{1/2^n}-1\big)x_n\le \big(e^{1/2^n}-1\big)e^{2+s_0}u,$$
and
$$
x_{n+1}-x_n\ge -\big(e^{1/2^n}-1\big)x_n\ge -\big(e^{1/2^n}-1\big)e^{2+s_0}u,$$
which, in their turn, imply
$$
|x_{n+1}-x_n|_u\le \big(e^{1/2^n}-1\big)e^{2+s_0},$$
for all $n\in\mathbb{N}_0.$ The convergence of the series $\sum_n\big(e^{1/2^n}-1\big)e^{2+s_0}$\,\footnote{Follows from the inequality $e^{1/2^n}-1=\frac{1}{2^n}+\frac{1}{2!}\cdot\frac{1}{2^{2n}}+\dots<\frac{1}{2^n}(1+\frac{1}{2!}+\dots)= \frac{1}{2^n}(e-1)$.}and the above inequalities imply that the sequence $(x_n)$ is $|\cdot|_u$-Cauchy, and so   it is $|\cdot|_u$-convergent to some $x\in X_u$.
By Proposition \ref{p2.order-tvs} the order intervals in $X_u$ are $|\cdot|_u$-closed and, by \eqref{eq4.t1.complete-T-m-u-sn}, $x_n\in \big[e^{-(2+s_0)}u;e^{2+s_0}u\big]_u$ it follows $x\in \big[e^{-(2+s_0)}u;e^{2+s_0}u\big]_u\subset K(u).$ Since, by Theorem \ref{t1.T-metric-u-sn}, $d$ and $|\cdot|_u$ are topologically equivalent on $K(u),$ it follows $x_n\xrightarrow{d}x.$  \end{proof}

Combining  Proposition \ref{p6.s-complete} and Theorem \ref{t1.complete-T-m-u-sn} one obtains the following corollaries.
\begin{corol}\label{c1.complete-T-m-u-sn}
If $K$ is Archimedean, then $d$ is complete iff $K $ is self-complete.
\end{corol}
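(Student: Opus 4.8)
The plan is to reduce the statement to a single component and then simply chain together the two results being combined. Since $d$ is an extended metric whose finite-distance classes are precisely the components $K(u)$ of $K$ (Remark \ref{re.extended-T-metric}), any $d$-Cauchy sequence $(x_n)$ is eventually trapped in one component: choosing $n_0$ with $d(x_n,x_{n_0})\le 1<\infty$ for all $n\ge n_0$ forces $x_n\sim x_{n_0}$, hence $x_n\in K(x_{n_0})$ for $n\ge n_0$, and any $d$-limit lies in the same component. Consequently $d$ is complete on $K$ if and only if $d$ restricted to each component $K(u)$ is complete (the component $K(0)=\{0\}$ being trivially complete), exactly as described at the beginning of Subsection \ref{Ss.top-T-metric}.

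Next I would verify that the hypothesis of Theorem \ref{t1.complete-T-m-u-sn} is met for every $u\in K\setminus\{0\}$. Because $K$ is Archimedean and the order $\le_{K_u}$ is just the restriction of $\le_K$ to $X_u$, the cone $K_u=K\cap X_u$ is again Archimedean: if $x,y\in X_u$ satisfy $nx\le y$ for all $n\in\mathbb N$, then $x\le 0$ already in $X$, hence in $X_u$. Thus Theorem \ref{t1.complete-T-m-u-sn} is applicable on every component, and since $K$ is Archimedean we may invoke the primed form of its equivalences, namely $1\iff 2'$, which gives: $K(u)$ is $d$-complete if and only if $K(u)$ is self-complete in $X$.

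Combining the two steps, $d$ is complete on $K$ if and only if every component $K(u)$ is $d$-complete, if and only if every component $K(u)$ is self-complete in $X$. By Proposition \ref{p6.s-complete}.1 this last condition is exactly the self-completeness of $K$, which completes the argument. There is no substantial obstacle here beyond careful bookkeeping; the only two points that genuinely require attention are the reduction of global $d$-completeness to componentwise $d$-completeness (which rests on the extended-metric structure of $d$), and the observation that the Archimedean property of $K$ propagates to every $K_u$, so that Theorem \ref{t1.complete-T-m-u-sn} and its primed equivalences are legitimately available on each component.
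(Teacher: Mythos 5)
Your proof is correct and takes essentially the same route as the paper, which obtains this corollary precisely by combining Theorem \ref{t1.complete-T-m-u-sn} with Proposition \ref{p6.s-complete}. Your write-up simply makes explicit the two points the paper leaves implicit: the reduction of $d$-completeness to componentwise $d$-completeness via the extended-metric structure, and the fact that the Archimedean property of $K$ passes to each $K_u$ so that the primed equivalence $1\iff 2'$ of Theorem \ref{t1.complete-T-m-u-sn} applies on every component.
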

\begin{corol}\label{c2.complete-T-m-u-sn}
  If $K$ is Archimedean and lineally solid, then the following conditions are equivalent.
  \begin{enumerate}
\item[{\rm 1.}] The Thompson metric $d$ is complete.
\item[{\rm 2.}]  The cone $K$ is self-complete.
\item[{\rm 3.}]  The algebraic interior $\ainter(K)$ of $K$ is self-complete.
\item[{\rm 4.}] The algebraic interior $\ainter(K)$ of $K$ is $d$ -complete.
  \end{enumerate}\end{corol}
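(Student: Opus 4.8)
The plan is to assemble the four equivalences from the two cited results, exploiting the fact that the lineal solidity of $K$ forces $\ainter(K)$ to be a single component. Fix, once and for all, an element $u \in \ainter(K)$, which exists since $K$ is lineally solid. Two preliminary observations set the stage. By Proposition \ref{p1.equiv-cone}.1, $\ainter(K)$ is a component of $K$; as $u\in\ainter(K)$ and $K(u)$ is by definition the component containing $u$, this gives $K(u)=\ainter(K)$. Moreover $K_u=K\cap X_u$ is Archimedean: for $x,y\in X_u$ one has $y-x\in K_u \iff y-x\in K$, so the order that $K_u$ induces on $X_u$ is merely the restriction of the order of $K$, and the Archimedean property of $K$ therefore transfers verbatim to $K_u$. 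This is exactly the hypothesis needed to invoke Theorem \ref{t1.complete-T-m-u-sn}, and since $K$ itself is Archimedean we may use its strengthened conclusions $2'$ and $3'$.

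I would then close the cycle of equivalences as follows. The equivalence $(1)\iff(2)$ is precisely Corollary \ref{c1.complete-T-m-u-sn} (for which only the Archimedean property of $K$ is required). For $(2)\iff(3)$, the forward direction is Proposition \ref{p6.s-complete}.1 applied to the component $\ainter(K)$, while the reverse direction is Proposition \ref{p6.s-complete}.2, and this is the single place where lineal solidity is genuinely used. Finally, for $(3)\iff(4)$, I apply Theorem \ref{t1.complete-T-m-u-sn} to the fixed $u$: since $K(u)=\ainter(K)$, its condition $1$ reads ``$\ainter(K)$ is $d$-complete'', i.e.\ statement $(4)$, and its condition $2'$ reads ``$\ainter(K)$ is self-complete (in $X$)'', i.e.\ statement $(3)$. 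The theorem supplies $1\iff 2'$, hence $(4)\iff(3)$, and the four statements are all equivalent.

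Because every ingredient is already established, there is no substantive obstacle in this argument; it is a bookkeeping exercise in routing the general results through the distinguished component $\ainter(K)$. The only points demanding care are the two preliminary verifications, namely that $K(u)=\ainter(K)$ and that $K_u$ inherits the Archimedean property, since these are what make Theorem \ref{t1.complete-T-m-u-sn} applicable and, in particular, license the use of its strengthened form $1\iff 2'$ (as opposed to the weaker ``in $X_u$'' versions). Once these are in place, the equivalence of $(1)$--$(4)$ follows immediately.
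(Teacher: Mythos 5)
Your proof is correct and takes essentially the same route as the paper, whose own justification is precisely "combine Proposition \ref{p6.s-complete} and Theorem \ref{t1.complete-T-m-u-sn}" (together with Corollary \ref{c1.complete-T-m-u-sn}, itself that same combination): you use Corollary \ref{c1.complete-T-m-u-sn} for $1\iff 2$, the two parts of Proposition \ref{p6.s-complete} for $2\iff 3$, and the equivalence $1\iff 2'$ of Theorem \ref{t1.complete-T-m-u-sn} for $3\iff 4$. The two preliminary checks you isolate --- that $K(u)=\ainter(K)$ via Proposition \ref{p1.equiv-cone}.1 and that $K_u=K\cap X_u$ inherits the Archimedean property from $K$ --- are exactly the glue needed to apply the theorem in its strengthened form, and you handle both correctly.
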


\subsection{The completeness of the Thompson metric in LCS}
In this subsection we shall prove the completeness of the Thompson metric $d$ corresponding to a normal cone $K$ in a sequentially complete LCS $X$.  In the case of a Banach space the completeness was proved by  Thompson \cite{thomp63}. In the locally convex case we essentially follow   \cite{Hy-Is-Ras}.

Note that if $(X,\rho)$ is an extended metric space, then the completeness of $X$ means the completeness of every component of $X$. Indeed, if $(x_n)$ is a $d$-Cauchy sequence, then there exits $n_0\in\mathbb{N}$ such that $d(x_n,x_{n_0})\le 1, $ for all $n\ge n_0,$ implying that $x_n\in Q,$ for all $n\ge n_0,$ where $Q$ is the component  of $X$ containing $x_{n_0}.$  Also if $x_n\xrightarrow{d}x,$ then there exists $n_1>n_0$ in $\mathbb{N}$ such that $d(x_n,x)\le 1$ for all $n\ge n_1,$ implying that the limit $x$ also belongs to $Q$.

\begin{theo}\label{t1.complete-T-metric}
 Let $(X,\tau)$ be a locally convex space, $K$ a  sequentially complete closed normal  cone in $X$.    Then each component of $K$ is a complete metric space with respect to the Thompson metric  $d.$
  \end{theo}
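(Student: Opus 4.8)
The plan is to fix a component $Q$ of $K$ and transfer the problem to the order-unit norm on a suitable subspace. The component $\{0\}$ is a one-point space, hence trivially $d$-complete, so I may assume $Q\ne\{0\}$ and pick $u\in Q$; then $u\in K\setminus\{0\}$ and $Q=K(u)$. Since $K$ is a closed cone in the locally convex space $X$, Proposition \ref{p2.order-tvs} guarantees that $\tau$ is Hausdorff and that $K$ is Archimedean; because the order on $X_u$ is just the restriction of the order on $X$, the subcone $K_u=K\cap X_u$ is Archimedean as well. This places me exactly in the hypotheses of Theorem \ref{t1.complete-T-m-u-sn}, which tells me that $(K(u),d)$ is $d$-complete if and only if $(X_u,|\cdot|_u)$ is monotonically sequentially complete (its condition~5). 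So the whole theorem reduces to showing that every increasing $|\cdot|_u$-Cauchy sequence in $X_u$ is $|\cdot|_u$-convergent.

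Next I would explain why the weaker hypothesis ``$K$ sequentially complete'' suffices here: the monotone reduction forces the relevant sequences into the cone. Given an increasing $|\cdot|_u$-Cauchy sequence $(x_n)$ in $X_u$, replace it by $z_n:=x_n-x_0$; this is still increasing and $|\cdot|_u$-Cauchy, and now $z_n\ge 0$, so $z_n\in K\cap X_u=K_u$. By normality the interval $[-u;u]_o$ is bounded, so $|\cdot|_u$ is a norm, and the embedding $(X_u,|\cdot|_u)\hookrightarrow(X,\tau)$ is continuous---the inequality $p(x)\le 2\gamma p(u)|x|_u$ established in the proof of Theorem \ref{t1.complete-Xu} uses only normality, not any completeness of $X$. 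Hence $(z_n)$ is $\tau$-Cauchy, and since it lies in $K$ and $K$ is sequentially complete, it $\tau$-converges to some $z\in K$.

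It then remains to upgrade the $\tau$-limit to a $|\cdot|_u$-limit and to verify $z\in X_u$. For this I would use that $(z_n)$ is $|\cdot|_u$-Cauchy: for $\varepsilon>0$ there is $N$ with $-\varepsilon u\le z_n-z_m\le\varepsilon u$ for all $n,m\ge N$. Fixing $m\ge N$ and letting $n\to\infty$, the preservation of inequalities under $\tau$-limits (Proposition \ref{p1.order-tvs}, equivalently the $\tau$-closedness of order intervals in Proposition \ref{p2.order-tvs}) yields $-\varepsilon u\le z-z_m\le\varepsilon u$. This simultaneously shows $z-z_m\in[-\varepsilon u;\varepsilon u]_o\subset X_u$, whence $z\in X_u$, and that $|z-z_m|_u\le\varepsilon$ for all $m\ge N$, i.e. $z_m\xrightarrow{|\cdot|_u}z$. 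Consequently $x_n=z_n+x_0\xrightarrow{|\cdot|_u} z+x_0$ in $X_u$, which is the required monotone sequential completeness; Theorem \ref{t1.complete-T-m-u-sn} then delivers the $d$-completeness of $K(u)$.

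The main obstacle, and the reason a direct appeal to Theorem \ref{t1.complete-Xu} does not close the argument, is precisely the weakening of the completeness hypothesis from $X$ to $K$: an arbitrary $|\cdot|_u$-Cauchy sequence need not lie in $K$, so its $\tau$-limit is not furnished by sequential completeness of the cone alone. Routing the proof through the monotone-completeness criterion of Theorem \ref{t1.complete-T-m-u-sn} is exactly what dissolves this difficulty, since after translation the Cauchy sequences I must control are increasing and therefore sit inside $K$, where the limit is guaranteed.
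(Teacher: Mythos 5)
Your proof is correct, but it takes a genuinely different route from the paper's. The paper argues directly on the Thompson metric: it fixes a $d$-Cauchy sequence $(x_n)$ in a component $Q$, shows it is $\tau$-bounded via the monotonicity of the generating seminorms, then invokes a Krause--Nussbaum-type inequality (Lemma \ref{le.T-normal-cone}: $p(x-y)\le\bigl(2e^{d(x,y)}+e^{-d(x,y)}-1\bigr)\min\{p(x),p(y)\}$ for monotone seminorms $p$) to conclude that $(x_n)$ is $p$-Cauchy for every $p$, hence $\tau$-convergent to some $x\in K$ by sequential completeness; closedness of $K$ puts $x$ in $Q$, and Proposition \ref{p.T-converg-seq}.3 upgrades $\tau$-convergence of a $d$-Cauchy sequence to $d$-convergence. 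You instead leverage the heavy machinery of Theorem \ref{t1.complete-T-m-u-sn} (whose proof is independent of the present theorem, so there is no circularity): after checking that $K_u$ is Archimedean, the whole statement reduces to monotone sequential completeness of $(X_u,|\cdot|_u)$, which you establish by translating an increasing $|\cdot|_u$-Cauchy sequence into the cone, using the normality-only embedding inequality $p(x)\le 2\gamma p(u)|x|_u$ to get $\tau$-Cauchyness, sequential completeness of $K$ to get a $\tau$-limit, and closedness of $K$ (via closed order intervals) to see that the limit lies in $X_u$ and is a $|\cdot|_u$-limit. What the paper's route buys is a self-contained argument whose key inequality is of independent interest and which never leaves the Thompson-metric setting; what your route buys is economy --- no new metric estimates are needed --- together with a clean isolation of the one point where sequential completeness of $K$ (rather than of $X$, as in Theorem \ref{t1.complete-Xu}.2) suffices, namely that monotone Cauchy sequences can always be shifted into the cone. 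One remark: you could shorten your middle step further by observing that $[-u;u]_o$ is itself sequentially complete (shift a Cauchy sequence by $u$ into $K$ and use closedness of the order interval), so that Theorem \ref{t.B-disc}.2 applied to the Banach disc $A=[-u;u]_o$ directly yields condition 6 of Theorem \ref{t1.complete-T-m-u-sn}; but your argument via condition 5 is equally valid.
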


  By Theorem \ref{t2.char-normal-cone} one can suppose that the topology $\tau$ is generated by a family $P$ of monotone seminorms.

  We start by a lemma which is an adaptation of Lemma 2.3.ii in \cite{kraus-nuss93}, proved for Banach spaces,  to the locally  convex case.
  \begin{lemma}\label{le.T-normal-cone} Let $(X,\tau)$ be a Hausdorff LCS ordered by a closed normal cone $K$ and $d$ the Thompson metric corresponding to $K$. Supposing that $P$ is a directed family of monotone seminorms generating the topology $\tau,$  then for every $x,y\in K\setminus\{0\}$ and every $p\in P,$ the following inequality holds
  \begin{equation}\label{eq1.le.T-normal-cone}
  p(x-y)\le \big(2e^{d(x,y)}+ e^{-d(x,y)}-1\big)\cdot \min\{p(x),p(y)\}\,.
  \end{equation}
  \end{lemma}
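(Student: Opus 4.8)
The plan is to recycle, almost verbatim, the argument used for the second inequality in Proposition \ref{p1.T-metric-u-sn}.6, replacing the order-unit seminorm $|\cdot|_u$ by an arbitrary $p\in P$. The only feature of the seminorm that actually enters is its monotonicity, which every member of $P$ enjoys by hypothesis. First I would dispose of the trivial case: if $x$ and $y$ lie in different components of $K$, then $d(x,y)=\infty$ and the right-hand side of \eqref{eq1.le.T-normal-cone} is $\infty$, so the inequality holds automatically. Hence I may assume $x\sim y$, so that $\sigma(x,y)\neq\emptyset$ and $d(x,y)<\infty$. Since $K$ is closed, Proposition \ref{p2.order-tvs}.2 shows that $K$ is Archimedean, and then Proposition \ref{p2.T-metric}.2 gives $\sigma(x,y)=[d(x,y);\infty)$; in particular $d(x,y)\in\sigma(x,y)$.

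Next I would fix $s\in\sigma(x,y)$, i.e. $e^{-s}x\le y\le e^{s}x$. The left inequality yields $x-y\le(1-e^{-s})x$ and the right one yields $x-y\ge-(e^{s}-1)x$; adding $(e^{s}-1)x$ throughout produces the key sandwich
\begin{equation*}
0\le (x-y)+(e^{s}-1)x\le (e^{s}-e^{-s})x .
\end{equation*}
Applying the monotonicity of $p$ to this pair of inequalities between nonnegative elements gives $p\big((x-y)+(e^{s}-1)x\big)\le (e^{s}-e^{-s})p(x)$, and the triangle inequality then yields
\begin{equation*}
p(x-y)\le p\big((x-y)+(e^{s}-1)x\big)+(e^{s}-1)p(x)\le \big(2e^{s}-e^{-s}-1\big)p(x).
\end{equation*}
Taking $s=d(x,y)$ (legitimate since $K$ is Archimedean) and using the harmless estimate $2e^{s}-e^{-s}-1\le 2e^{s}+e^{-s}-1$ gives $p(x-y)\le\big(2e^{d(x,y)}+e^{-d(x,y)}-1\big)p(x)$.

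Finally, since $\sigma(x,y)=\sigma(y,x)$, $d(x,y)=d(y,x)$ and $p(x-y)=p(y-x)$, interchanging the roles of $x$ and $y$ in the previous step produces the same bound with $p(y)$ in place of $p(x)$. Combining the two estimates yields
\begin{equation*}
p(x-y)\le\big(2e^{d(x,y)}+e^{-d(x,y)}-1\big)\min\{p(x),p(y)\},
\end{equation*}
which is exactly \eqref{eq1.le.T-normal-cone}. I expect no genuine obstacle here: the whole content is the order-theoretic sandwich above, and once $p$ is known to be monotone the seminorm estimate is forced. The only points deserving a word of care are the reduction to linked $x,y$ (so that the right-hand side is finite, and so that one may invoke the Archimedean property to use $s=d(x,y)$ directly rather than passing to the limit $s\searrow d(x,y)$), and the observation that the sharper constant $2e^{s}-e^{-s}-1$ is deliberately relaxed to the symmetric-looking $2e^{s}+e^{-s}-1$ merely to match the form in which the inequality will be applied in the completeness proof.
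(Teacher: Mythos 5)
Your proof is correct and follows essentially the same route as the paper's: reduce to linked $x,y$, use closedness $\Rightarrow$ Archimedean to get $d(x,y)\in\sigma(x,y)$, form the sandwich $0\le (x-y)+(e^{s}-1)x\le (e^{s}-e^{-s})x$, apply monotonicity of $p$ and the triangle inequality, then symmetrize in $x$ and $y$. Your closing observation is also accurate: the argument actually yields the sharper constant $2e^{d(x,y)}-e^{-d(x,y)}-1$, and the paper silently relaxes it to the stated $2e^{d(x,y)}+e^{-d(x,y)}-1$.
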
\begin{proof} We can suppose $d(x,y)<\infty\,$ (i.e. $x\sim y$).  By Proposition \ref{p2.order-tvs} the cone $K$ is Archimedean, so that, by Proposition \ref{p2.T-metric}, $d(x,y)\in \sigma(x,y).$ Putting $\alpha =e^{d(x,y)},$ it follows
  $$
  \alpha^{-1}x\le y\le \alpha x,$$
  so that $(\alpha-1)x\le x-y\le (1-\alpha^{-1})x,$ and so
  $$
  0\le (x-y)+(\alpha-1)x\le (\alpha-\alpha^{-1})x\, .$$

  Let $p\in P.$ By the monotony of $p$ the above inequalities yield
  $$
  p(x-y)-(\alpha-1)p(x)\le p((x-y)+(\alpha-1)x)\le (\alpha-\alpha^{-1})p(x),$$
  and so
  $$
  p(x-y) \le (2\alpha-\alpha^{-1}-1)p(x).$$

  Interchanging the roles of $x$ and $y$ one obtains,
  $$
  p(x-y) \le (2\alpha-\alpha^{-1}-1)p(y),$$
  showing that \eqref{eq1.le.T-normal-cone} holds.
  \end{proof}
\begin{proof}[Proof of Theorem \ref{t1.complete-T-metric}]
  Let $(x_n)$ be a $d$-Cauchy sequence in a component $Q$ of $K$.

  Observe first that the sequence $(x_n)$ is $\tau$-bounded, that is $p$-bounded for every $p\in P.$

  Indeed, if $n_0\in\mathbb{N}$ is such that $d(x_n,x_{n_0})\le 1, $ for all $n\ge n_0,$ then $x_n\le e^{d(x_n,x_{n_0})}x_{n_0} \le e  x_{n_0},$
  for all $n\ge n_0.$ By the monotony of $p$, it follows $p(x_n)\le ep(x_{n_0})$ for all $n\ge n_0$ and every $p\in P.$
  This fact and  the   inequality \eqref{eq1.le.T-normal-cone}   imply that $(x_n)$ is $p$-Cauchy for every $p\in P$, hence it is $P$-convergent to some $x\in X.$

  If $n_0$ is as above, then the inequalities $e^{-1}x_{n_0}\le x_n\le e x_{n_0},$ valid for  all $n\ge n_{0}$, yield for $n\to \infty,\; e^{-1}x_{n_0}\le x\le e x_{n_0},$ showing that $x\sim x_{n_0},$ that is $x\in Q.$

  Since $(x_n)$ is $d$-Cauchy and $\tau$-convergent to $x$, Proposition \ref{p.T-converg-seq}.3 implies that $x_n\xrightarrow{d}x,$ proving the completeness of $(K,d)$.
  \end{proof}

  \subsection{The case of Banach spaces}
  We have seen in the previous subsection that the normality of a cone $K$ in a sequentially complete LCS $X$ is a sufficient condition for the completeness of $K$ with respect to the Thompson metric. In this subsection we   show that, in the case when  $X$ is a Banach space  ordered by a cone $K$,  the completeness of $d$ implies the   normality of $K$. The proof will be based on the following result.

  \begin{theo}\label{t2.complete-T-m-u-sn}
   Let $(X,\|\cdot\|)$ be a Banach space ordered by a   cone $K$ and $u\in K\setminus\{0\}$. Then the following assertions are equivalent.

   \begin{enumerate}
\item[{\rm 1.}] The Thompson metric $d$ is complete on $K(u)$.
\item[{\rm 2.}] $(X_u,|\cdot|_u)$ is a Banach space.
\item[{\rm 3.}] The embedding of $(X_u,|\cdot|_u)$ into $(X,\|\cdot\|)$ is continuous.
\item[{\rm 4.}] The order interval $[0;x]_o$ is $\|\cdot\|$-bounded for every $x\in K(u).$
\item[{\rm 5.}] The order interval $[0;u]_o$ is $\|\cdot\|$-bounded.
\item[{\rm 6.}] Any sequence $(x_n)$ in $K(u)$ which is $d$-convergent to $x\in K(u)$ is also $\|\cdot\|$-convergent to $x$.
   \end{enumerate}
  \end{theo}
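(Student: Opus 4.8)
The plan is to treat boundedness of the order interval $[0;u]_o$ as the pivotal condition and to run all the equivalences through it, importing the genuinely hard equivalence $1\iff 2$ (completeness of $d$ on $K(u)$ versus completeness of $(X_u,|\cdot|_u)$) from Theorem \ref{t1.complete-T-m-u-sn}. To invoke that theorem I first record that, $X$ being a Banach space ordered by the (closed) cone $K$, Proposition \ref{p2.order-tvs} makes $K$ Archimedean; since every line in $X_u$ is a line in $X$, the restriction $K_u=K\cap X_u$ is again lineally closed, hence Archimedean, so Theorem \ref{t1.complete-T-m-u-sn} applies verbatim and yields $1\iff 2$ (its conditions ``$K(u)$ is $d$-complete'' and ``$X_u$ is $|\cdot|_u$-complete''). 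I regard this as the step carrying most of the conceptual weight, and the reason it is not circular is precisely Remark \ref{re.T-metric-u-semin}: $d$ and $|\cdot|_u$ are only topologically, not metrically, equivalent on $K(u)$, so completeness cannot be transferred by a Lipschitz comparison and must go through the order-theoretic machinery.

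Next I would settle the purely order-theoretic equivalence $4\iff 5$: if $x\in K(u)$ then $x\le\beta u$ for some $\beta>0$, whence $[0;x]_o\subset\beta[0;u]_o$, so boundedness of $[0;u]_o$ forces boundedness of every $[0;x]_o$; the converse is immediate on taking $x=u$. I would also note that $[0;u]_o$ is bounded iff $[-u;u]_o$ is, via the inclusion $[-u;u]_o\subset 2[0;u]_o-u$, so that condition $5$ may freely be read as boundedness of $A:=[-u;u]_o$.

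Then comes the boundedness circle $5\Rightarrow 3\Rightarrow 6\Rightarrow 5$. For $5\Rightarrow 3$ I use the Minkowski functional: if $\|z\|\le M$ on $A$, then for $x\in X_u$ and every $\lambda>|x|_u$ one has $x/\lambda\in A$, hence $\|x\|\le M\lambda$ and so $\|x\|\le M|x|_u$, which simultaneously shows $|\cdot|_u$ is a norm and that the embedding into $(X,\|\cdot\|)$ is continuous. For $3\Rightarrow 6$ I combine this continuity with the topological equivalence of $d$ and $|\cdot|_u$ on $K(u)$ (Theorem \ref{t1.T-metric-u-sn}): $d$-convergence in $K(u)$ forces $|\cdot|_u$-convergence, which the continuous embedding turns into $\|\cdot\|$-convergence. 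The step I expect to be the main new obstacle is $6\Rightarrow 5$, proved by contraposition: if $[0;u]_o$ is unbounded, choose $z_n\in[0;u]_o$ with $\|z_n\|\ge 1$ and $\|z_n\|\to\infty$, set $\varepsilon_n=\|z_n\|^{-1/2}$ and $x_n=u+\varepsilon_n z_n$; then $u\le x_n\le(1+\varepsilon_n)u$ gives $x_n\in K(u)$ and $d(u,x_n)\le\ln(1+\varepsilon_n)\to 0$ by Proposition \ref{p4.T-metric}, while $\|x_n-u\|=\varepsilon_n\|z_n\|=\|z_n\|^{1/2}\to\infty$, contradicting $6$.

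Finally I close the diagram into the completeness conditions. For $5\Rightarrow 2$ I observe that $A=[-u;u]_o=(-u+K)\cap(u-K)$ is $\|\cdot\|$-closed (as $K$ is closed) and bounded, hence complete in the Banach space $X$; Theorem \ref{t.B-disc} then gives that $(X_u,|\cdot|_u)=(X_A,p_A)$ is a Banach space, i.e. condition $2$. For the return $2\Rightarrow 3$ I apply the closed graph theorem to the embedding $(X_u,|\cdot|_u)\to(X,\|\cdot\|)$: if $x_n\to 0$ in $|\cdot|_u$ and $x_n\to y$ in $\|\cdot\|$, then $-\varepsilon u\le x_n\le\varepsilon u$ eventually, and closedness, hence the Archimedean property, of $K$ forces $y=0$, so the graph is closed. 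Combining $1\iff 2$ from the first paragraph, $4\iff 5$ and $3\iff 5\iff 6$ from the middle ones, and the bridge $5\Rightarrow 2$, $2\Rightarrow 3$ just described, all six assertions collapse to one equivalence class. The recurring point demanding care is that every passage from a norm limit to an order inequality consumes the closedness of $K$; this is exactly where the Banach (not merely normed) hypothesis and the closed-cone convention for ordered topological vector spaces are used.
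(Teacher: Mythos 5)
Your proof is correct, but it is organized around a different hub than the paper's. The paper runs the cycle 1$\iff$2 (imported, as you do, from Theorem \ref{t1.complete-T-m-u-sn}), then 2$\Rightarrow$3 by the closed graph theorem, 3$\Rightarrow$4$\Rightarrow$5$\Rightarrow$3 for the boundedness conditions, 3$\Rightarrow$2 by transferring $|\cdot|_u$-Cauchy sequences through the continuous embedding and invoking Proposition \ref{p3.seq-u-semin}.3, and 3$\iff$6 via Theorem \ref{t1.T-metric-u-sn} plus a shift trick: for 6$\Rightarrow$3 the paper takes a $|\cdot|_u$-convergent sequence $(x_n)$ in $X_u$, pushes it into $K(u)$ by $y_n=x_n+(\alpha+1)u$, and applies 6 to $(y_n)$. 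You instead make condition 5 the hub, with three genuine substitutions: (i) 4$\iff$5 by the purely order-theoretic inclusion $[0;x]_o\subset\beta[0;u]_o$, where the paper derives 4 from 3 via monotonicity of $|\cdot|_u$; (ii) 6$\Rightarrow$5 by contraposition with the explicit sequence $x_n=u+\|z_n\|^{-1/2}z_n$, $z_n\in[0;u]_o$, $\|z_n\|\to\infty$, which is $d$-convergent to $u$ by Proposition \ref{p4.T-metric}.1(iii) but norm-divergent; (iii) 5$\Rightarrow$2 by the Banach disc Theorem \ref{t.B-disc} applied to $A=[-u;u]_o=(-u+K)\cap(u-K)$, which is absolutely convex, norm-closed (hence sequentially complete in the Banach space $X$) and bounded by 5. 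All three are sound, and your implication graph (1$\iff$2, 2$\Rightarrow$3, 3$\Rightarrow$6, 6$\Rightarrow$5, 5$\Rightarrow$3, 5$\Rightarrow$2, 4$\iff$5) is strongly connected, so all six assertions are indeed equivalent. As to what each approach buys: your route localizes the completeness of $X$ in the single Banach-disc step and gives a concrete, quantitative witness for the failure of 6 when order intervals are unbounded; the paper's route avoids the Banach-disc machinery, staying within its own sequence-transfer propositions, and its 6$\Rightarrow$3 argument directly yields the comparison of topologies that Theorem \ref{t2.complete-T-metric-B-sp} reuses in its condition 4. Your two preliminary checks---that $K_u$ inherits the Archimedean property from the closed cone $K$ (needed to invoke Theorem \ref{t1.complete-T-m-u-sn}), and that boundedness of $[0;u]_o$ is equivalent to that of $[-u;u]_o$---are both correct and genuinely needed for your version to go through.
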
\begin{proof}
  The equivalence 1$\iff$2 is in fact the equivalence 1$\iff$6 in Theorem \ref{t1.complete-T-m-u-sn}.

  2\; $\Rightarrow$\;3.\; Since both $(X,\|\cdot\|)$ and $(X_u,|\cdot|_u)$ are Banach spaces, by the closed graph theorem it suffices to show that the embedding mapping $I:X_u\to X, \, I(x)=x,\,$ has closed graph. This means that for every sequence $(x_n)$ in  $X_u,\, x_n\xrightarrow{|\cdot|_u}x$ and $ x_n\xrightarrow{\|\cdot\|}y$ imply $y=x.$   Passing to limit for $n\to \infty$  with respect to $\|\cdot\|$ in the inequalities
 $$
    x_n-x+|x_n-x|_u u\ge 0\quad\mbox{and}\quad   x_n-x+|x_n-x|_u u\ge 0\,,
 $$
 and taking into account the fact that  the cone  $K$ is  $\|\cdot\|$-closed,  one obtains
$$
y-x\ge 0\quad\mbox{and}\quad x-y\ge 0,$$
that is $y=x.$

3\; $\Rightarrow$\;4.\; By the continuity of the embedding, there exists $\gamma>0$ such that $\|x\|\le\gamma |x|_u$ for all $x\in X_u.$ By Proposition \ref{p1.Ku} the norm $ |\cdot|_u$ is monotone, so that $0\le z\le x$ implies
$\|z\|\le\gamma |z|_u \le\gamma |x|_u,$ for all $z\in[0;x]_o.$

The implication 4\; $\Rightarrow$\;5\; is obvious.

5\; $\Rightarrow$\;3.\; Let $\gamma>0$ be such that $\|z\|\le \gamma$ for every $z\in[0;u]_u.$  For $x \ne 0$ in $X_u$, the inequalities $\,-|x|_u u\le x\le |x|_u u,$ imply
$$
\frac{x+|x|_uu}{2 |x|_u}\in [0;u]_o,$$
so that $\|x+|x|_uu\|\le 2 \gamma |x|_u \,.$

Hence,
$$
\|x\|- |x|_u\|u\|\le \|x+|x|_uu\| \le 2 \gamma |x|_u ,$$
and so
$$
\|x\|\le (2 \gamma+\|u\|)|x|_u),$$
for all $x\in X_u$, proving the continuity of the embedding of $(X_u,|\cdot|_u)$ into $(X,\|\cdot\|)$.

3\; $\Rightarrow$\;2.\; Let $(x_n)$ be a $|\cdot|_u$-Cauchy sequence in $X_u$. The continuity of the embedding implies that it is $\|\cdot\|$-Cauchy and so, $\|\cdot\|$-convergent to some $x\in X.$ But then, by Proposition \ref{p3.seq-u-semin}.3, $(x_n)$ is $|\cdot|_u$-convergent to $x.$

The implication 3\;$\Rightarrow$\;6 follows by Theorem \ref{t1.T-metric-u-sn}.

6\;$\Rightarrow$\;3.\; Let $(x_n)$ be a sequence in $X_u$ which is $|\cdot|_u$-convergent to $x\in X_u.$ Then $(x_n)$ is $|\cdot|_u$-bounded, so there exists $\alpha>0$ such that $-\alpha u\le x_n\le\alpha u.$  It follows that
$y_n:=x_n+(\alpha+1)u\in[u;(2\alpha+1)u]_o,$ and so $y_n\in K(u),\, n\in\mathbb{N},$ and $y_n\xrightarrow{|\cdot|_u}x+(\alpha+1)u.$ By Theorem \ref{t1.T-metric-u-sn}, $\,y_n\xrightarrow{d}x+(\alpha+1)u,$ so that, by hypothesis, $y_n\xrightarrow{\|\cdot\|}x+(\alpha+1)u.$ It follows $x_n\xrightarrow{\|\cdot\|}x ,$ proving the continuity of the embedding.
\end{proof}

   Now we present several conditions equivalent to the completeness of the Thompson metric.
  \begin{theo}\label{t2.complete-T-metric-B-sp}
  Let $(X,\|\cdot\|)$ be a Banach space ordered by a   cone $K$. The following assertions are equivalent.
    \begin{enumerate}
\item[{\rm 1.}]  The Thompson metric $d$ is complete.
\item[{\rm 2.}] The cone $K$ is self-complete.
\item[{\rm 3.}] The cone $K$ is normal.
\item[{\rm 4.}] The norm topology on $K$ is weaker than the topology of $d$.
    \end{enumerate}
  \end{theo}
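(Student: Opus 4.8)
The plan is to handle $d$-completeness componentwise and to read normality off the boundedness of order intervals, while the equivalence with self-completeness comes almost for free. Throughout I use the standing convention that a cone in a TVS is closed, so $K$ is closed and hence Archimedean (Proposition \ref{p2.order-tvs}.2); this is what lets me invoke the Archimedean hypotheses built into the local results. I also use the observation made just before Theorem \ref{t1.complete-T-metric} that, for an extended metric, completeness of $(K,d)$ means completeness of every component $(K(u),d)$. With this in hand the equivalence $1\iff2$ is immediate: since $K$ is Archimedean, Corollary \ref{c1.complete-T-m-u-sn} says precisely that $d$ is complete if and only if $K$ is self-complete.

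For $1\iff3$ I would argue through the local theorem. Fix $u\in K\setminus\{0\}$. Since $K$ is Archimedean so is $K_u=K\cap X_u$ (the order on $X_u$ is the restriction of that on $X$), so Theorem \ref{t2.complete-T-m-u-sn} applies and gives, via its equivalence $1\iff5$, that $(K(u),d)$ is complete if and only if the order interval $[0;u]_o$ is $\|\cdot\|$-bounded. Hence $d$ is complete if and only if $[0;u]_o$ is bounded for every $u\in K\setminus\{0\}$. Now every order interval satisfies $[x;y]_o=x+[0;y-x]_o$ and is empty unless $x\le y$, in which case $y-x\in K$; so boundedness of all the intervals $[0;u]_o$ with $u\in K$ is equivalent to boundedness of all order intervals $[x;y]_o$. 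By Theorem \ref{t4.char-normal-cone} (equivalence $1\iff5$) this last property is exactly the normality of $K$, which yields $1\iff3$.

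It remains to fit in condition $4$, which I would obtain by proving $3\iff4$ directly. For $3\Rightarrow4$, after replacing $\|\cdot\|$ by an equivalent monotone norm (Theorem \ref{t4.char-normal-cone}.2), I apply Lemma \ref{le.T-normal-cone} with the single monotone seminorm $\|\cdot\|$: if $x_n\xrightarrow{d}x$ then eventually $x_n\in K(x)$ and
\[
\|x_n-x\|\le\bigl(2e^{d(x_n,x)}+e^{-d(x_n,x)}-1\bigr)\,\|x\|\longrightarrow 0,
\]
so $x_n\xrightarrow{\|\cdot\|}x$, i.e. the norm topology is weaker than the $d$-topology on $K$. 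For the converse $4\Rightarrow3$ I argue by contraposition: if $K$ is not normal then, by Theorem \ref{t4.char-normal-cone}, some $[0;u]_o$ is unbounded, so there are $z_n$ with $0\le z_n\le u$ and $\|z_n\|\to\infty$. Setting $w_n=z_n/\|z_n\|$ (so $\|w_n\|=1$ and $0\le w_n\le\|z_n\|^{-1}u$) and $v_n=u+w_n$, one has $u\le v_n\le(1+\|z_n\|^{-1})u$, whence $d(u,v_n)\le\ln(1+\|z_n\|^{-1})\to0$ by Proposition \ref{p4.T-metric}.1(iii). Thus $v_n\xrightarrow{d}u$ while $\|v_n-u\|=1\not\to0$, contradicting $4$.

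I expect the main obstacle to be the bookkeeping in $1\iff3$: transferring the componentwise, local statement of Theorem \ref{t2.complete-T-m-u-sn} (which lives in the auxiliary spaces $X_u$ and tacitly needs $K_u$ Archimedean) into a global statement about $K$, and then recognizing that ``every $[0;u]_o$ bounded'' coincides with the interval-boundedness characterization of normality. The topological comparison $3\iff4$ is softer and is, as above, best handled by the two explicit sequence constructions rather than by manipulating Cauchy sequences.
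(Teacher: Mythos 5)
Your proposal is correct and, for the core of the theorem, follows the paper's own path: your $1\iff 2$ is exactly the paper's (Archimedeanness of the closed cone plus Corollary \ref{c1.complete-T-m-u-sn}), and your $1\iff 3$ is the paper's $2\iff 3$ in thin disguise --- both reduce the matter, via Theorem \ref{t2.complete-T-m-u-sn}, to norm-boundedness of the intervals $[0;u]_o$ and then quote Theorem \ref{t4.char-normal-cone}; you globalize over components using the remark that completeness of an extended metric means completeness of every component, where the paper instead invokes Proposition \ref{p6.s-complete}, an immaterial difference. Where you genuinely diverge is condition 4. The paper obtains $1\iff 4$ in one line from the equivalence $1\iff 6$ of Theorem \ref{t2.complete-T-m-u-sn}, applied componentwise; you instead prove $3\iff 4$ directly, getting $3\Rightarrow 4$ from a monotone renorming (Theorem \ref{t4.char-normal-cone}.2) combined with Lemma \ref{le.T-normal-cone}, and $4\Rightarrow 3$ by manufacturing, from an unbounded interval $[0;u]_o$, unit vectors $w_n$ with $0\le w_n\le \|z_n\|^{-1}u$ and points $v_n=u+w_n$ that $d$-converge to $u$ while staying at norm-distance $1$ from it. Your route costs a little more work but is more self-contained for this implication (it does not pass through the closed-graph argument buried in Theorem \ref{t2.complete-T-m-u-sn}) and it exhibits concretely how non-normality destroys the comparison of the two topologies; the paper's route is shorter given the machinery already in place. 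One caveat: the bound you quote, $\|x_n-x\|\le\bigl(2e^{d(x_n,x)}+e^{-d(x_n,x)}-1\bigr)\|x\|$, does \emph{not} tend to $0$ as $d(x_n,x)\to 0$ (it tends to $2\|x\|$). This is a sign typo in the paper's statement of Lemma \ref{le.T-normal-cone}: its proof actually establishes $p(x-y)\le\bigl(2e^{d(x,y)}-e^{-d(x,y)}-1\bigr)\min\{p(x),p(y)\}$, and this constant does vanish at $d=0$. The same typo occurs in Proposition \ref{p1.T-metric-u-sn}.6 and is repeated in the paper's proof of Theorem \ref{t1.T-metric-u-sn}, so your argument is sound once the corrected constant is substituted; just rewrite that display with the minus sign.
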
\begin{proof}
  The equivalence 1$\iff$2 follows by Corollary \ref{c1.complete-T-m-u-sn} (remind that, by Proposition \ref{p2.order-tvs}, the  cone $K$ is Archimedean).

2$\iff$3.\; By Proposition \ref{p6.s-complete}, the cone $K$ is self-complete iff each component of $K$ is self-complete. By Theorem \ref{t2.complete-T-m-u-sn}, this happens exactly when the order interval $[0;x]_o$ is $\|\cdot\|$-bounded for every $x\in K,$ which is equivalent to the fact that the order intervals $[x;y]_o$ are $\|\cdot\|$-bounded for all $x,y\in K. $ By Theorem \ref{t4.char-normal-cone} this is equivalent to the normality of  $K$.

1$\iff$4.\; By Theorem \ref{t2.complete-T-m-u-sn} the cone $K$ is $d$-complete iff the norm topology on each component of $K$ is weaker that the topology generated by $d$, and this is equivalent to 4.
  \end{proof}

\begin{remark}
  By Theorem \ref{t2.complete-T-metric-B-sp} in the case of an ordered Banach space the normality of the cone is both necessary and sufficient for the completeness of the Thompson metric. The proof, relying on Theorem  \ref{t2.complete-T-m-u-sn}, uses  the closed graph theorem and the fact that a   cone in a Banach space is normal iff every order interval is norm bounded. As these results are not longer true in arbitrary LCS, we ask the following question.

 \textbf{Problem.} \emph{ Characterize  the class of LCS for which the normality of $K$ is also necessary for the completeness of the Thompson metric (or, at least, put in evidence a reasonably large class of such spaces).}
\end{remark}

\providecommand{\bysame}{\leavevmode\hbox to3em{\hrulefill}\thinspace}
\providecommand{\MR}{\relax\ifhmode\unskip\space\fi MR }
\providecommand{\MRhref}[2]{%
  \href{http://www.ams.org/mathscinet-getitem?mr=#1}{#2}
}
\providecommand{\href}[2]{#2}

\end{document}